\newcommand{\makebib}{%
\bibliography{biblio}}
\DeclareMathAlphabet\EuRoman{U}{eur}{m}{n}
\SetMathAlphabet\EuRoman{bold}{U}{eur}{b}{n}
\definecolor{scolor}{rgb}{0,0,0}
\crefname{enumerate}{Part}{Parts}
\crefname{assumption}{Assumption}{Assumptions}
\crefname{claim}{Claim}{Claims}
\declaretheorem[style=plain,name=Theorem]{theorem}
\declaretheorem[style=plain,name=Lemma]{lemma}
\declaretheorem[style=plain,name=Proposition]{proposition}
\declaretheorem[style=plain,name=Corollary]{corollary}
\declaretheorem[style=definition,name=Definition]{definition}
\declaretheorem[style=definition,name=Assumption]{assumption}
\declaretheorem[style=definition,name=Example]{example}
\declaretheorem[numbered=no,style=remark,name=Remark]{remark}
\newenvironment{mycondition}[1]
 {\conditionINNER}
 {\endconditionINNER}
\crefname{conditionINNER}{Condition}{Conditions}
\renewcommand{\algocf@captiontext}[2]{#1\algocf@typo. \AlCapFnt{}#2} %
\def\@algocf@capt@plain{top}
\renewcommand{\algocf@makecaption}[2]{%
  \addtolength{\hsize}{\algomargin}%
  \sbox\@tempboxa{\algocf@captiontext{#1}{#2}}%
  \ifdim\wd\@tempboxa >\hsize%
    \hskip .5\algomargin%
    \parbox[t]{\hsize}{\algocf@captiontext{#1}{#2}}%
  \else%
    \global\@minipagefalse%
    \hbox to\hsize{\box\@tempboxa}%
  \fi%
  \addtolength{\hsize}{-\algomargin}%
}
\def\[#1\]{\begin{align}#1\end{align}}
\def\*[#1\]{\begin{align*}#1\end{align*}}
\def\clap#1{\hbox to 0pt{\hss#1\hss}}
\newcommand{\defas}{\vcentcolon=}  %
\newcommand{\argdot}{\cdot}
\newcommand{\st}{\,:\,}
\newcommand{\Reals}{\mathbb{R}}
\newcommand{\Nats}{\mathbb{N}}
\newcommand{\NNReals}{\Reals_{\ge 0}}
\newcommand{\PosReals}{\Reals_{> 0}}
\newcommand{\dee}{\mathrm{d}}
\DeclareMathOperator*{\essinf}{ess\,inf}
\DeclareMathOperator*{\newlim}{\mathrm{lim}\vphantom{\mathrm{infsup}}}
\DeclareMathOperator*{\newmin}{\mathrm{min}\vphantom{\mathrm{infsup}}}
\DeclareMathOperator*{\newmax}{\mathrm{max}\vphantom{\mathrm{infsup}}}
\DeclareMathOperator*{\newinf}{\mathrm{inf}\vphantom{\mathrm{infsup}}}
\DeclareMathOperator*{\newsup}{\mathrm{sup}\vphantom{\mathrm{infsup}}}
\renewcommand{\lim}{\newlim}
\renewcommand{\min}{\newmin}
\renewcommand{\max}{\newmax}
\renewcommand{\inf}{\newinf}
\renewcommand{\sup}{\newsup}
\newcommand{\cH}{\mathcal H}
\newcommand{\cR}{\mathcal R}
\newcommand{\cP}{\mathcal P}
\newcommand{\fS}{\PowerSet_{\smash{f}}(\Theta)}
\def\EE{\mathbb{E}}
\newcommand{\defn}[1]{\textit{#1}}
\newcommand{\floor}[1]{\lfloor #1 \rfloor}
\newcommand{\cF}{\mathcal F}
\newcommand{\set}[2][]{#1\{#2 #1\}}
\newcommand{\parens}[2][]{#1(#2 #1)}
\newcommand{\abs}[2][]{#1\lvert#2 #1\rvert}
\newcommand{\norm}[2][]{#1\lVert#2 #1\rVert}
\newcommand{\tuple}[2][]{#1 \langle #2 #1 \rangle}
\newcommand{\supnorm}[2][]{\norm[#1]{#2}_{\infty}}
\newcommand{\lipnorm}[2][]{\norm[#1]{#2}_{\mathrm{Lip}}}
\newcommand{\Linfnorm}[3][]{\norm[#1]{#3}_{L^\infty\optparen{#2}}}
\newcommand{\dsup}{d_{\sup}}
\newcommand{\BorelSets}[1]{\mathcal{B}_{#1}}
\newcommand{\NSE}[1]{{^{*}#1}}
\newcommand{\ST}{\mathsf{st}}
\newcommand{\SP}[1]{{^{\circ}#1}}
\newcommand{\PowerSet}{\mathcal{P}}
\newcommand{\HReals}{\NSE{\Reals}}
\newcommand{\Model}{P}
\newcommand{\NS}[1]{\mathrm{NS}(#1)}
\newcommand{\cD}{\mathcal{D}}
\newcommand{\cA}{\mathcal{A}}
\newcommand{\cC}{\mathcal{C}}
\newcommand{\IntFuncs}[2]{\mathrm{I}(#2^{#1})}
\newtheorem{open problem}{Open Problem}
\newcommand{\Loeb}[1]{\overline{#1}}
\newcommand{\LoebAlgebraX}[2]{\overline{#1}_{#2}}
\newcommand{\ProbMeasures}[1]{\mathcal{M}_1(#1)}
\newcommand{\interior}[1]{%
  {\kern0pt#1}^{\mathrm{o}}%
}
\newcommand{\refproof}[1]{See \cref{#1} for \IfSubStr{#1}{,}{proofs}{a proof}. }
\newcommand{\TTheta}{T_{\Theta}}
\newcommand{\PM}[1]{\mathcal{M}(#1)}
\newcommand{\PD}[1]{\mathcal{D}(#1)}
\newcommand{\FPM}[1]{\mathcal{M}_{\mathrm{fin}}(#1)}
\newcommand{\Lip}[2]{\mathcal{L}_{#2}(#1)}
\newcommand{\Main}{M}
\newcommand{\oMain}{\ccc{\Main_{o}}}
\newcommand{\ssetPM}[2]{\NSE{\!\!\!\mathcal{M}}_{#1}(#2)}
\newcommand{\Q}[1]{Q_{#1}}
\newcommand{\FS}{\mathcal{F}_{\mathrm{fin}}}
\newcommand{\AGD}[2]{\mathcal{A}_{#2,#1}}
\newcommand{\expect}{\mathbb{E}}
\newcommand{\pd}[1]{{#1}_p}
\newcommand{\closure}[1]{\overline{#1}}
\newcommand{\IVSpace}[2]{\mathrm{IV}({#1}^{#2})}
\newcommand{\boundary}{\partial}
\renewcommand{\:}{\colon}
\renewcommand{\defas}{\overset{\text{\smash{\tiny{def}}}}{=}}
\newcommand\optparen[1]{\ifthenelse{\isempty{#1}}{}{(#1)}}
\newcommand\optargs[3]{\ifthenelse{\isempty{#1}}{#2}{#3}}
\newcommand{\corr}[2]{A_{#2,#1}}
\newcommand{\mm}[1]{\textcolor{red}{#1}}
\renewcommand{\mm}[1]{#1}
\newcommand{\ccc}[1]{\textcolor{blue}{#1}}
\renewcommand{\ccc}[1]{#1}
\newcommand{\candidgen}[9]{\mm{\psi^{\smash{\prime}}_{#1#2#3#4#5#6}(#9,#8\,;#7)}} %
\newcommand{\candid}[2]{\candidgen{B;}{\alpha}{\beta}{\gamma}{\delta}{\eta}{#1}{#2}{\cdot}}
\newcommand{\papf}[3]{\candidgen{}{\alpha}{\beta}{\gamma}{\delta}{\eta}{#1}{#3}{#2}}
\newcommand{\finalsetgen}[9]{\mm{\psi_{#1#2#3#4#5#6}\optargs{#7#8}{}{(#9,#8 \,;#7)}}}
\newcommand{\finalset}[2]{\finalsetgen{B;}{\alpha}{\beta}{\gamma}{\delta}{\eta}{#1}{#2}{\cdot}}
\newcommand{\gfinalset}[3][\cdot]{\finalsetgen{}{\alpha}{\beta}{\gamma}{\delta}{\eta}{#2}{#3}{#1}}
\newcommand{\sfrset}[3]{#2 \times #3 \times #1}
\newcommand{\bapf}[5][]{\ccc{\psi^{#1}\optargs{#3#4#5}{}{(#4,#5\,;#3)}}} %
\newcommand{\prset}[1]{\varphi\optargs{#1}{}{(#1)}} %
\newcommand{\rset}[2]{\varphi\optargs{#1#2}{}{(#1,#2)}} %
\newcommand{\frset}[4][]{\ccc{\varphi^{#1}\optargs{#2#3#4}{}{(#3,#4\,;#2)}}} %
\newcommand{\frsetalt}[3]{\ccc{\varphi'\optargs{#1#2#3}{}{(#2,#3\,;#1)}}} %
\newcommand{\credset}[4]{\psi_{#1;#2\ifthenelse{\equal{#3}{}}{}{,#3}}\ifthenelse{\equal{#4}{}}{}{(#4)}}
\newcommand{\tBall}{I}
\newcommand{\tHPD}{H}
\newcommand{\credball}[3]{\credset{\tBall}{#1}{#2}{#3}}
\newcommand{\hpdreg}[3]{\credset{\tHPD}{#1}{#2}{#3}}
\newcommand{\cbL}[1]{L_{\tBall;\alpha}(#1)}
\newcommand{\cbLR}[1]{L_{\tBall;\alpha,\beta}(#1)}
\newcommand{\cbLv}[2]{L_{\tBall;#2}(#1)}
\newcommand{\hpdL}[1]{L_{\tHPD;\alpha}(#1)}
\newcommand{\hpdLR}[1]{L_{\tHPD;\alpha,\beta}(#1)}
\newcommand{\ffB}[2]{f_{\tBall;#1}\ifthenelse{\equal{#2}{}}{}{(#2)}}
\newcommand{\ffHPD}[2]{f_{\tHPD;#1}\ifthenelse{\equal{#2}{}}{}{(#2)}}
\newcommand{\CRegions}[3]{(#1_{#2})_{#2 \in #3}}
\newcommand{\CRegionsv}[4]{(#1_{#2})_{#3 \in #4}}
\newcommand{\sTheta}{\NSE{\Theta}}
\newcommand{\Priors}{\PM{\Theta}}
\newcommand{\AltPriors}{\PM{\Theta'}}
\newcommand{\sPriors}{\NSE{\!\PM{\Theta}}}
\newcommand{\charfunc}[1]{\chi_{#1}}
\newcommand{\SModel}{\tuple{X,\Theta,P}}
\newcommand{\SModelfullalt}{\tuple{X,\Theta',P'}}
\newcommand{\SModelalt}[1]{\tuple{X,#1,\Restrict{P}{#1}}}
\newcommand{\sSModel}{\tuple{\NSE{X},\sTheta,\NSE{P}}}
\newcommand{\sSModelalt}[1]{\tuple{\NSE{X},#1,\Restrict{\NSE{P}}{#1}}}
\newcommand{\Post}[2]{P^{\vphantom{x}#1}_{#2}\!}
\newcommand{\Postalt}[2]{P^{'\vphantom{x}#1}_{#2}\!}
\newcommand{\marg}[1]{#1{P}}
\newcommand{\margalt}[1]{#1{P'}}
\newcommand{\rbcs}[4]{\psi_{#4;#1,#2,#3}}
\newcommand{\rcb}[3]{\rbcs{#1}{#2}{#3}{\tBall}}
\newcommand{\rhpd}[3]{\rbcs{#1}{#2}{#3}{\tHPD}}
\newcommand{\bg}{\ccc{g}}
\newcommand{\pb}[1]{\ccc{h_{#1}}}
\newcommand{\fTheta}{\ccc{\Theta'}}
\newcommand{\fPriors}{\ProbMeasures{\fTheta}}
\newcommand{\cd}{\ccc{q}}
\newcommand{\pt}[2]{\ccc{{#1}_{#2}}}
\newcommand{\extn}[2]{\ccc{#1^{#2}}}
\newcommand{\Restrict}[2]{#1|_{#2}}
\newcommand{\mpf}{\ccc{acceptance probability function}}
\newcommand{\hr}[2]{h_r(#1,#2)}
\newcommand{\dX}{d_{X}}
\newcommand{\dT}{d_{\Theta}}
\newcommand{\CFuncs}{\mathcal{C}}
\newcommand{\mC}{\ccc{C'}}
\newcommand{\rtau}{\bar\tau}
\newcommand{\PPriors}{\mathcal M_{\gamma}(\Theta)}
\newcommand\hl{\bgroup\markoverwith
  {\textcolor{yellow}{\rule[-.5ex]{.1pt}{2.5ex}}}\ULon}
\providecommand*{\toclevel@definition}{0}
\providecommand*{\toclevel@theorem}{0}
\providecommand*{\toclevel@lemma}{0}
\definecolor{WowColor}{rgb}{.75,0,.75}
\definecolor{SubtleColor}{rgb}{0.9,0,0}
\newcounter{margincounter}
\newcounter{latercounter}
\begin{document}
\bibliographystyle{abbrvnat}

\newcommand{\affil}[1]{\address{#1}}
\title%
{
Existence of matching priors on compact spaces
}

\author{HAOSUI DUANMU}
\affil{Institute for Advanced Study in Mathematics, Harbin Institute of Technology\\
Department of Economics, University of California, Berkeley}

\author{DANIEL M. ROY}
\affil{Department of Statistical Sciences, University of Toronto; Vector Institute}

\author{AARON SMITH}
\affil{Department of Mathematics and Statistics, University of Ottawa}

\begin{abstract}
A matching prior at level $1-\alpha$ is a prior such that an associated $1-\alpha$ credible region is also a $1-\alpha$ confidence set.
We study the existence of matching priors for general families of credible regions.  Our main result gives topological conditions under which matching priors for specific families of credible regions exist. Informally, we prove that, on compact parameter spaces, a matching prior exists if the so-called rejection-probability function is jointly continuous when we adopt the Wasserstein metric on priors. In light of this general result, we observe that typical families of credible regions, such as credible balls, highest-posterior density regions, quantiles, etc., fail to meet this topological condition. We show how to design approximate posterior credible balls and highest-posterior-density regions that meet these topological conditions, yielding matching priors. Finally, we evaluate a numerical scheme for computing approximately matching priors based on discretization and iteration. The proof of our main theorem uses tools from nonstandard analysis and establishes new results about the nonstandard extension of the Wasserstein metric that may be of independent interest.
\end{abstract}

\maketitle

           \thispagestyle{empty}

\let\oldgamma\gamma
\newcommand{\abver}{h}
\renewcommand{\epsilon}{\varepsilon}
\newcommand{\scare}[1]{#1}

\renewcommand{\defas}{=}
\section{Introduction}

In both Bayesian and frequentist statistics, set estimators are used to summarize inferences about model parameters. 
Although both the usual frequentist set estimators (called confidence sets) and the usual Bayesian set estimators (called credible regions) satisfy many desirable properties,
in general they do not enjoy the \emph{same} desirable properties.
Hence, there is a long-standing interest in constructing sets that have \emph{both} types of desirable properties.
The recent book by \cite{datta2012probability} gives a detailed history.

In the literature, if a prior is associated with a credible region that is also a confidence set of the same level,
then the prior is called a \emph{matching prior.}
Besides constructing set estimators with both good Bayesian and frequentist properties,
there are other, more elaborate,
justifications for studying matching priors.
There is substantial interest in finding so-called default or objective priors for Bayesian inference,
and matching priors have been suggested as good candidates in this search.
See, e.g., \citet{ghosh2011objective} for a survey on objective priors, including an extensive discussion of matching priors. 
Even for those with no interest in Bayesian statistics,  matching priors are useful in constructing confidence sets with certain desirable properties \citep{stein1985coverage,tibshirani1989noninformative,daita1995priors}.
Although we do not attempt to survey the literature on objective and matching priors, 
we note that matching priors are closely related to so-called bet-proof confidence sets, as the bet-proof condition is  a relaxation of the matching-prior condition \citep{buehler1959some}. 
Bet-proof confidence sets for invariant problems were recently studied by \citet{mullereconometrica}.

Much of the work on existence and computation of matching priors is summarized in the survey by \citet{datta2005probability}.
For the purposes of this article, we are most interested in distinguishing between three main lines of research on matching priors.
The first concerns the construction of explicit matching priors in specific statistical models for specific families of credible regions---especially posterior quantiles, highest posterior density regions, and their posterior predictive counterparts.
This explicit work generally focuses on problems that have a great deal of symmetry.
For certain families of credible regions, under strong enough conditions, there is a unique matching prior that is moreover independent of the level \citep{Severini}.
The second and largest line of research is concerned with priors that are \emph{asymptotically} matching at some rate $c > 0$.
These are priors for which the associated credible region at level $1-\alpha$ is also a confidence set at level $1-\alpha + O(n^{-c})$,
where $n$ is the number of i.i.d data points and one attempts to show that $c$ is as large as possible; $c = {1}/{2}$ is often immediate by, e.g., the central limit theorem. This subject was initiated by \citet{welch1963formulae}, and \citet{datta2012probability} summarize modern work on the subject.

The third line of research concerns the existence of exact matching priors for general families of credible regions.
Negative results abound: 
 \citet[][Ex.~1 and Thm.~1]{datta2000bayesian} gives a concrete example where matching priors for certain families of credible regions do not exist and %
\citet{sweeting2008predictive} gives necessary conditions for the existence of an exact matching prior on $\Reals^{d}$.
Conversely, recent work by \citet{muller2016coverage} shows that matching priors \emph{do} exist for finite parameter spaces under very weak conditions.
The present paper is most closely related to this third line of research, and in particular the approach of \citeauthor{muller2016coverage}.
Its main contribution is to partially fill the gap between the results of \citeauthor{datta2000bayesian}\ and \citeauthor{muller2016coverage},
showing that large classes of matching priors exist for \emph{compact} parameter spaces. %

We use tools in mathematical logic and nonstandard analysis \citep{AR65} to establish our main result.
These tools have been used recently to establish  general complete class theorems connecting Bayesian and frequentist statistics \citep{DR2017}.
The nonstandard analytic approach provides techniques for extending results on finite discrete spaces to continuous ones,
and it seems likely that there are many more potential applications within statistics.

We mention one quirk of terminology that can be misleading. Although the term ``matching prior" emphasizes the role of the prior and the literature emphasizes the role of the statistical model, questions of existence, uniqueness, and (efficient) computability of matching priors are in general answered with respect to \emph{both} a fixed statistical model \emph{and} a fixed family of credible regions.
More carefully, a family of credible regions
is in fact a map from the collection of prior measures and points in the sample space to the collection of credible regions. Whether matching priors exist depends on both the model \emph{and} the family of credible regions under consideration.

This dependance on the particular choice of family of credible regions turns out to be significant. A central contribution of this paper is the construction of  families of credible regions that \emph{both} yield matching priors \emph{and} have many of the same desirable properties as typical families of credible regions, \emph{even when} the typical families do not yield matching priors. 

We look closely at two typical families: credible balls and highest posterior density regions.
These families are not guaranteed, by our results, to have matching priors, due to severe discontinuity. 
Instead, our results produce matching priors for explictly constructed families of credible regions that closely approximate them.
We consider the Bernoulli model as a concrete example. 

\section{Methodological questions}

Our theoretical inquiry is motivated by the following informal methodological questions: \emph{(i) should one expect matching priors to exist and (ii) where should one look for them?}
It may not be possible to ever provide definitive answers to these question, but theoretical research surely guides the expectations of practicing statisticians.
From our perspective, nearly all previous work suggests that the answers are (i) \emph{matching priors do not exist, except in a few very special cases, often when there is considerable symmetry} and (ii) \emph{when matching priors exist, they are easy to find}. As an example that illustrates this dominant point of view, \citet{fraser2011bayes} simply asserts ``the Bayes and the confidence results [are] different when the model [is] not location." 
It was well-known that exceptions to this assertion are \emph{possible}, but we are not aware of anyone suggesting that exceptions are \emph{common} in any statistically interesting settings.

The results by \citet{muller2016coverage} suggest a very different viewpoint, showing that matching priors are in fact ubiquitous for \emph{discrete} models, despite the lack of symmetry.
A reader of the work of \citet{muller2016coverage} and \citet{fraser2011bayes} might come to a heuristic along the following lines: matching priors are ubiquitous for \emph{discrete} models and very rare for \emph{continuous} models.
We suspect that this heuristic is largely correct \emph{when one restricts one's attention to highest-posterior density regions.}

The results in this paper suggest to us a different modification to the heuristic in \cite{fraser2011bayes}: 
one can find matching priors \emph{easily} if one has a highly symmetric or discrete model; 
if not, one can often find them anyway with some extra work. 
The main methodological contribution of this paper is demonstrating that, in some cases, the extra work consists of finding a suitably continuous families of credible regions. 
We, however, do not know of other generic strategies for finding families of credible regions that yield matching priors, and view this as the main methodological question that is suggested and left open by the results in this paper.

\section{Confidence sets and credible regions}\label{secsummary}

\subsection{Models and preliminaries}
Fix a \defn{statistical model}, i.e., a triple $\SModel$,
where $X$ and $\Theta$ are a sample space and parameter space, respectively,
and $P = (P_\theta)_{\theta \in \Theta}$ specifies a family of probability distributions on $X$ indexed by elements of $\Theta$. 
We assume $(X,\dX)$ and $(\Theta,\dT)$ are metric spaces, write $\BorelSets X$ and $\BorelSets \Theta$ to denote their respective Borel $\sigma$-algebras, and
let $\PM{X}$ and $\Priors$ denote the space of (Borel) probability measures on each space. Note that these spaces of probability measures are themselves measurable spaces under the Borel $\sigma$-algebras generated by their weak topologies. Formally, we assume the model $(P_\theta)_{\theta \in \Theta}$ is a probability kernel $P : \Theta \to \PM{X}$, i.e., a measurable map from $\Theta$ to the space of probability measures on $X$. 
Let $P_\theta$ denote $P(\theta)$.

We assume that the model is \defn{dominated},
i.e., there exists a $\sigma$-finite measure $\nu$ on $(X,\BorelSets X)$
and a product measurable function $\cd \: \Theta \times X \to \NNReals$
such that, for all $\theta \in \Theta$, $P_{\theta}$ admits a density $\cd(\theta,\argdot)$ with respect to $\nu$.
Let $\cd_{\theta}$ denote $\cd(\theta,\cdot)$. We sometimes refer to $\cd$ as the conditional density.

Our focus is on summaries of Bayesian inference and so we begin by introducing notation for posterior distributions.
For a given prior $\pi \in \Priors$,
the \defn{marginal distribution}
$\marg{\pi} \in \PM{X}$
is the distribution on the sample space defined by
\[
(\marg{\pi})(A)
= \int_{\Theta} P_{\theta}(A) \pi(\dee \theta), \quad A\in \BorelSets X.
\]
Write
$\Post{x}{\pi}$ to denote the posterior distribution given an observation $x \in X$,
which necessarily satisfies, for $\marg{\pi}$-almost all $x$ and all $B \in \BorelSets \Theta$,
\[\label{stEOSUThs}
\Post{x}{\pi}(B)
= \frac { \int_B \cd(\theta,x) \pi(\dee \theta)} { \int_{\Theta} \cd(\theta,x) \pi(\dee \theta) }.
\]
The map $x \mapsto \Post{x}{\pi}$ is measurable.
As is well known, conditional distributions are only defined up to a $\marg{\pi}$-measure one set.
If $\marg{\pi}$ dominates $\nu$ then all subsequent definitions are invariant
to which version of the posterior distribution is used.
In general, different versions can affect frequentist properties, and so we assume that we have fixed some version of the conditional distribution for all $\pi$ and $x$. 
In general, continuous versions, if they exist, are unique on the support.
We make repeated use of the fact that, in the setting outlined here, the posterior is dominated by the prior \citep[Thm.~1.31]{schervish1995theory}.

\subsection{Families of credible regions and matching priors}

One of the central structures in this paper is that of a credible region.
For a fixed \emph{level} $1-\alpha \in [0,1]$ and prior $\pi \in \Priors$,
a \emph{$(1-\alpha)$ credible region (with respect to $\pi$)} 
associates, to $\marg{\pi}$-almost all possible observations $x \in X$, a Borel subset $\tau(x) \in \BorelSets{\Theta}$ 
whose posterior probability is $1-\alpha$. (We are emphasizing the view of $\tau$ as a set estimator, mapping $X$ to $\BorelSets{\Theta}$, to allow us to later consider frequentist properties.)
Before introducing a more formal definition, we note that credible regions may not exist for a particular level $1-\alpha$ when the posterior has atoms. 
A typical fix, which is also used by \citet{muller2016coverage}, 
is to introduce \emph{randomization} and to ask for the correct level \textit{in expectation}. 

\newcommand{\credibility}[2]{\mathbb{E}[#1(#2)]}
More carefully, 
for $S \subseteq \Theta$, let $\charfunc{S} : \Theta \to \set{0,1}$ denote the characteristic function of $S$.
Viewing $[0,1]$ as a probability space under Lebesgue measure, 
a \defn{random set} is a map $u \mapsto T(u) : [0,1] \to \BorelSets{\Theta}$ such that the map $(\theta,u) \mapsto \charfunc{T(u)}(\theta)$ is product measurable. 
It is useful to introduce nomenclature for the expected value of a random set's characteristic function and measure:
The \defn{acceptance probability function} of a random set $T$ is the measurable map $\psi : \Theta \to [0,1]$ given by 
$\psi(\theta) = \int_{[0,1]} \charfunc{T(u)}(\theta)\, \dee u$,
and the \defn{$\mu$-credibility of $T$}, for $\mu \in \Priors$, is the expectation
$\int_{\Theta} \psi(\theta)\, \mu(\dee \theta)$.
Finally, a randomized set estimator is a map $\tau : X \times [0,1] \to \BorelSets{\Theta}$
  such that $(\theta,x,u) \mapsto \charfunc{\tau(x,u)}(\theta) : \Theta \times X \times [0,1] \to \set{0,1}$ is product measurable. In particular, $\tau(x,\cdot)$ is a random set for every $x \in X$.

\begin{definition}[Credible regions, families thereof]\label{credfamdefn}
  Let $\alpha \in (0,1)$ and $\pi \in \Priors$.
  A \defn{$(1-\alpha)$ credible region} (with respect to $\pi$, in $\SModel$) 
  is a randomized set estimator $\tau$ such that, for $\marg{\pi}$-almost all $x \in X$,
 $\tau(x,\cdot)$ has $1-\alpha$ $\Post{x}{\pi}$-credibility.
  A \defn{family of $1-\alpha$ credible regions (in $\SModel$)} 
  is a collection $\CRegions{\tau}{\pi}{F}$, indexed by some fixed subset $F \subseteq \Priors$ of priors,
  such that, for each $\pi \in F$, $\tau_\pi$ is a $1-\alpha$ credible region with respect to $\pi$.
\end{definition}

Many common families of credible regions---e.g., credible balls and highest-posterior density regions---arise from fixed maps $\rho : \Priors \to \BorelSets{\Theta}$ taking (posterior) distributions to sets (or random sets) with the desired posterior credibility. %

The measurability requirements in our definition allow us to simultaneously consider the frequentist coverage of a credible region.
The acceptance probability function and rejection probability function for a set estimator $\tau$ are the measurable maps given by, respectively
  \[
  \psi(\theta,x) = \int_{[0,1]} \charfunc{\tau(x,u)}(\theta) \, \dee u
  \text{\quad and \quad}
  \rset{\theta}{x} = 1- \psi(\theta,x), \qquad  \theta \in \Theta, \ x \in X.
  \]
The set estimator $\tau$ is a \defn{$1-\alpha$ (randomized) confidence set} if and only if,
  for all $\theta \in \Theta$,
  $
  \int_{[0,1]} \rset{\theta}{x}\,  P_{\theta}(\dee x) \le \alpha.
  $
  We sometimes adopt the convention of saying that $\tau$ \defn{has coverage} $1-\alpha$.
  For both credible regions and confidence sets, we also refer to $1-\alpha$ as the \defn{level}. 

A matching prior delivers both Bayesian and frequentist properties:
\begin{definition}%
  \label{defnmatching}
  A prior $\pi_0$ is a matching prior (at level $1-\alpha$) for a randomized set estimator $\tau$ (in $\SModel$) if
  $\tau$ is both a $1-\alpha$ credible region with respect to $\pi_0$ and a $1-\alpha$ confidence set.
  A prior $\pi_0 \in F$ is a matching prior for a family $\CRegions{\tau}{\pi}{F}$ if it is matching for $\tau_{\pi_0}$.
\end{definition}

By allowing randomization, one can construct trivial examples of set estimators that are both  $1-\alpha$ confidence sets and $1-\alpha$ credible regions, irrespective of the prior. One example is the set estimator that is the empty set with probability $\alpha$ and is the set $\Theta$ otherwise. 
Such trivial families are of little interest because they lack other desirable properties, relating, e.g., to their size, shape, and location.
Rather than being focused on any one type of credible region, 
our main result provides sufficient conditions on families of credible regions
for a matching prior to exist. We can then choose to apply these theorems to non-trivial families of credible regions.

We pause to present a concrete and simple example, based on a Bernoulli observation. 

\begin{example}[Bernoulli model] \label{ExBernoulliMatching}
Consider a sample space $X = \{0,1\}$, parameter space $\Theta = [0,1]$, and model with conditional density $\cd(\theta,x) = \theta^{x} (1 - \theta)^{1-x}$. 
The prior $\pi_{\alpha} = \mathrm{Unif}(\{\alpha,1-\alpha\})$ 
is a matching prior at level $1-\alpha$  for the 
(nonrandomized) set estimator $\tau$ given by $\tau(0,\cdot) = [0,1-\alpha)$ and $\tau(1,\cdot)=(\alpha,1]$.
See \cref{ThmNewBernEx1} (\cref{SecAppExamplesBernoulli}) for a rigorous proof.
See \cref{SecAppExamplesBinomial,SecAppExamplesGaussian}
for, resp., a Binomial and Gaussian example (with an improper prior).
\end{example}

The rejection (or acceptance) probability function of a set estimator suffices to determine whether it is a credible region or confidence set.
In general, every rejection probability function $\prset{}{}$ may correspond to several credible regions. 
One such credible region $\tau$ 
is that given by $\charfunc{\tau(x,u)}(\theta) = 0$ if and only if $u \le \rset{\theta}{x}$.
Rejection (acceptance) probability functions for families will be a primary focus:

\begin{definition}[Rejection probability function]
  \label{rpfunc}
  A \defn{rejection probability function}, over some family $F \subseteq \Priors$, is a function
  $(\theta,x,\pi) \mapsto \frset{\pi}{\theta}{x} \: \sfrset{F}{\Theta}{X} \to [0,1]$ 
  such that,
  for every $\pi \in F$,
  the function $\frset{\pi}{\cdot}{\cdot} : \Theta \times X \to [0,1]$ is product measurable.
  The rejection probability function $\frset{}{}{}$
  \defn{for a family of credible regions $\tau = \CRegions{\tau}{\pi}{F}$}
  is one that satisfies $\frset{\pi}{\theta}{x} = 1 - \int_{[0,1]} \charfunc{\tau_{\pi}(x,u)}(\theta) \, \dee u$
  for all $\pi \in F$, $\theta \in \Theta$, and $x \in X$.
  We refer to $1-\frset{}{}{}$ as the acceptance probability function for the family.
\end{definition}

The rejection probability function contains the same information about coverage 
as the full family of credible regions. It also determines the ``support'' of the credible regions:
Recall that the support of a function $f: \Theta \to [0,1]$ to be the closure of the set $\{\theta \in \Theta: f(\theta) \neq 0\}$. 
\begin{definition}[Support of a random set]
The \defn{support of a random set} $T : [0,1] \to \BorelSets{\Theta}$  
is 
the support of the map
$\theta \mapsto \int_{\smash{[0,1]}} \charfunc{T(u)}(\theta) \, \dee u : \Theta \to [0,1]$.
For a credible region $\tau$, %
the \defn{support of $\tau$ at $x$} is defined to be the support of the random set $\tau(x,\cdot)$.
\end{definition}

\section{First generic result: Existence on compact spaces under continuity} \label{SecGenRes}

For a metric Borel measurable space $(\Omega, \BorelSets \Omega, d)$ and $\mu,\nu \in \PM{\Omega}$, recall that the \defn{Wasserstein distance between $\mu$ and $\nu$}  is
  $
  W_{1}(\mu,\nu)=\inf_{\tau \in \Pi(\mu,\nu)} \int d(x,y) \, \dee \tau(x,y),
  $
where $\Pi(\mu,\nu)$ is the collection of Borel measures on $\Omega \times \Omega$ with marginals $\mu$ and $\nu$.

We begin with an easy-to-state continuity condition, which we later relax: 

\begin{assumption}\label{assumptionjc}
  The rejection probability function $\frset{}{}{}$ for a family $\tau$ %
  is such that, for every $\theta\in \Theta$,
  the function $\frset{\cdot}{\theta}{\cdot} \: X\times\Priors\to [0,1]$ is jointly continuous,
  where $\Priors$ is equipped with the Wasserstein metric $W_1$.
\end{assumption}

This assumption leads to our first generic existence theorem:

\begin{theorem}\label{ThmMainExistenceResult}
  Let $\tau = \CRegions{\tau}{\pi}{\Priors}$ be a family of $1-\alpha$ credible regions.
  Suppose $\Theta$ is compact and $\tau$ satisfies \cref{assumptionjc}.
  Then there exists a matching prior for $\tau$.

\end{theorem}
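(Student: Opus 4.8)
The plan is to reduce the existence of a matching prior to a fixed-point problem on the compact convex set $\Priors$ (compact because $\Theta$ is compact, so $W_1$ metrizes weak convergence and $\Priors$ is weak-$*$ compact). The key object is a ``coverage deficit'' associated to each prior: given $\pi \in \Priors$, form the average rejection probability against $\pi$ itself, namely for each $\theta$ set $D(\pi)(\theta) = \int_X \frset{\pi}{\theta}{x}\, P_\theta(\dee x)$. Then $\pi$ is a matching prior exactly when $\int_{[0,1]} A_{\pi,\theta}(u)\,\dee u = 1-\int_X \frset{\pi}{\theta}{x} P_\theta(\dee x) \ge 1-\alpha$ for all $\theta$, i.e. $D(\pi)(\theta) \le \alpha$ for all $\theta$. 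Since for \emph{every} $\pi$ one has the Bayesian identity $\int_\Theta D(\pi)(\theta)\, \marg{\pi}(\dee\theta) = \int_X \int_\Theta \frset{\pi}{\theta}{x}\Post{x}{\pi}(\dee\theta)\,\marg{\pi}(\dee x) = \alpha$ (this is just the credible-region condition integrated against the marginal, using that the posterior is dominated by the prior so the almost-everywhere credibility statement suffices), the average of $D(\pi)$ under $\pi$ is always exactly $\alpha$. Intuitively a prior that puts its mass where $D(\pi)$ is large will be ``pushed'' toward regions of lower deficit; a fixed point of this push is a matching prior.

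**Setting up the map and applying a fixed-point theorem.** Concretely I would follow the scheme of \citet{muller2016coverage}: define a best-response correspondence on $\Priors$ by
\[
\Phi(\pi) = \Big\{ \pi' \in \Priors \st \int_\Theta D(\pi)(\theta)\, \pi'(\dee\theta) = \max_{\pi'' \in \Priors}\int_\Theta D(\pi)(\theta)\,\pi''(\dee\theta) \Big\},
\]
i.e. $\pi'$ is supported on the $\theta$'s maximizing the deficit $D(\pi)$. By \cref{assumptionjc}, $(\theta,x)\mapsto \frset{\pi}{\theta}{x}$ is continuous in $\pi$ (uniformly, after handling the $x$-integration) and continuous in $\theta$; combined with continuity of $\theta \mapsto P_\theta$ in an appropriate sense, $D(\pi)$ is a continuous function of $\theta$ for each $\pi$, and $\pi \mapsto D(\pi)$ is continuous as a map into $C(\Theta)$. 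Hence $\Phi$ has nonempty convex values (the maximizers of a continuous linear functional over $\Priors$ form a nonempty face), and is upper hemicontinuous with closed graph. The Kakutani–Fan–Glicksberg fixed-point theorem then yields $\pi_0 \in \Phi(\pi_0)$. For such a $\pi_0$, $\pi_0$ is supported on $\arg\max_\theta D(\pi_0)(\theta)$, so $\int_\Theta D(\pi_0)\,\dee\pi_0 = \max_\theta D(\pi_0)(\theta)$. But this integral equals $\alpha$ by the Bayesian identity above, so $\max_\theta D(\pi_0)(\theta) = \alpha$, giving $D(\pi_0)(\theta)\le\alpha$ for all $\theta$ — that is, $\pi_0$ is a matching prior.

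**Where the work concentrates.** Two points need care. The first is the measurability/continuity bookkeeping to get $\pi\mapsto D(\pi)\in C(\Theta)$ continuous from \cref{assumptionjc}: one needs to interchange the $x$-integral with limits in $\pi$, which requires either a dominated-convergence argument (the integrands are uniformly bounded by $1$) together with continuity of $\theta\mapsto P_\theta$ under weak convergence, or a compactness argument showing $\{\frset{\pi}{\cdot}{\cdot}\}$ is suitably equicontinuous on the compact $\Theta$; this is likely the main technical obstacle, and it is presumably \emph{why} the paper reaches for nonstandard analysis rather than a direct Kakutani argument — the nonstandard extension lets one transfer a finite-space fixed point and avoids delicate continuity interchanges. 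The second is verifying the Bayesian identity rigorously given that the credible-region condition in \cref{credfamdefn} holds only $\marg{\pi}$-a.e.\ and that posteriors are fixed versions; here the cited fact that the posterior is dominated by the prior (\citep[Thm.~1.31]{schervish1995theory}) is exactly what makes the a.e.\ statement enough, since $\marg{\pi}$-null sets are harmless when we integrate deficits against $\pi$ and then push through to $\marg{\pi}$. Given these, the fixed-point argument closes the proof; I expect the actual paper to implement the same logic through a nonstandard transfer of the finite-$\Theta$ result of \citeauthor{muller2016coverage}, using their continuity hypothesis to guarantee that the nonstandard matching prior has a standard part that remains matching.
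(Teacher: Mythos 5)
Your reduction to a fixed point of a best-response correspondence is a natural idea, and the Bayesian identity $\int_\Theta D(\pi)(\theta)\,\pi(\dee\theta)=\alpha$ (you wrote $\marg{\pi}(\dee\theta)$, but the measure should be $\pi$ on $\Theta$) is correct and is used in the paper. But there is a genuine gap, and it lies exactly where you flag a ``technical obstacle'' and hope can be handled: your argument needs $\theta\mapsto D(\pi)(\theta)$ to be (at least upper semi-)continuous, and \cref{assumptionjc} does not give you that. Read it again: the hypothesis is that for \emph{each fixed} $\theta$ the map $(x,\pi)\mapsto\frset{\pi}{\theta}{x}$ is jointly continuous. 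There is no regularity in $\theta$ at all, nor any assumption that $\theta\mapsto P_\theta$ is weakly continuous. Consequently $D(\pi)$ is merely a bounded measurable function of $\theta$, the linear functional $\pi''\mapsto\int_\Theta D(\pi)\,\dee\pi''$ need not be weak-$*$ continuous on $\Priors$, its supremum over $\Priors$ need not be attained, and $\Phi(\pi)$ can be empty — Kakutani--Fan--Glicksberg never gets off the ground. Even if a fixed point existed, concluding $\max_\theta D(\pi_0)(\theta)=\alpha$ from $\int D(\pi_0)\,\dee\pi_0=\alpha$ relies on $\pi_0$ charging an actual argmax set, which again needs closedness/semicontinuity in $\theta$.

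This is not a bookkeeping issue that a dominated-convergence argument fixes; it is the reason the paper reaches for a different mechanism. The nonstandard route (Sections~\ref{SecExNSMatch}--\ref{SecMainThmRes}) discretizes $\Theta$ into a hyperfinite set $T\supseteq\Theta$ and transfers the \emph{finite}-$\Theta$ result of \citeauthor{muller2016coverage} (\cref{muresult}) to $T$, producing an internal prior $\Pi$ with $\NSE{z_\theta}(\Pi)\le 0$ for \emph{every} $\theta\in T$ — and in particular for every standard $\theta\in\Theta$, with no $\theta$-regularity needed, because $\Theta\subseteq T$ by construction. The only place continuity enters is in showing the pushed-down standard prior inherits the bound (\cref{phipush,pdclose}), and that step uses precisely the $(x,\pi)$-continuity of \cref{assumptionjc} together with the infinitesimal $\NSE{W_1}$-distance between $\Pi$ and $\NSE{p\Pi}$. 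So the nonstandard machinery is doing real work that your direct Kakutani argument cannot replicate under the stated hypotheses. If you additionally assumed $\theta\mapsto\frset{\pi}{\theta}{x}$ continuous uniformly in $(x,\pi)$ and $\theta\mapsto P_\theta$ weakly continuous, your proof sketch would be a viable alternative route; but then you would be proving a strictly weaker theorem.
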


\cref{assumptionjc} is not necessary. \cref{WeakThmMainExistenceResult} (\cref{sec:existenceproof}, supplementary material) gives the same conclusion under much weaker conditions. We highlight one difference that is critical for many applications: \cref{ThmMainExistenceResult} requires the map in \cref{assumptionjc} to be continuous \emph{everywhere}, while \cref{WeakThmMainExistenceResult} allows some discontinuities as long as they can be controlled.

\section{Applications} \label{SecApplHead}

\subsection{Overview}

As noted immediately after \cref{credfamdefn}, there are always matching priors for certain \emph{trivial} families of credible regions. 
In this section, we apply \cref{ThmMainExistenceResult} and its generalizations to prove the existence of matching priors for models and families of credible regions that are statistically interesting, \emph{even when} \cref{assumptionjc} fails to hold. Before giving details, we discuss the main obstacles to obtaining an interesting result using \cref{ThmMainExistenceResult}. 

Recall that \cref{assumptionjc} requires the map sending data and priors to a rejection probability function be continuous. It is natural to try to prove this by establishing the continuity of \emph{both} 
(i)
the map sending data and prior to a posterior, \emph{and}
(ii)
the map sending a posterior to a rejection probability function.
Unfortunately, \emph{both} maps are discontinuous for many popular models and families of credible regions: the first map has two discontinuities even for the Bernoulli model, while, as illustrated in \cref{secrelation}, the second map is typically discontinuous for both credible balls and highest-posterior density regions on continuous parameter spaces.

Fortunately, both obstacles can be overcome. In \cref{simplexist}, we show that there exists a family that is, in a technical sense, arbitrarily close to the usual family of credible balls \emph{and} for which the second map above is continuous. In \cref{SecAppBern}, we show that it is possible to ignore discontinuities in the first map that are in a certain sense isolated. 
See \cref{SecSimpleExamples} in the supplemental material for further applications, including analogous results for the highest-posterior density family of credible regions.

\subsection{Application to approximate credible balls}
\label{simplexist}

For simplicity, we assume the parameter space $\Theta$ is a compact subset of $\Reals^d$ for some $d$, equipped with the usual Euclidean metric denoted $\norm{\cdot}$. We also assume that $\Theta$ has positive Lebesgue measure. As in the previous section, all definitions are relative to a statistical model, but we elide this dependence. We begin by defining credible balls formally.

\begin{definition} [Credible balls]  \label{DefUsualInterval}
  Let $\alpha \in (0,1)$ and $\mu\in \Priors$, and assume $\mu$ has mean $M(\mu)$. 
  Let $B(r) = \set{\theta \in \Theta \st \norm{\theta - M(\mu)} \leq r }$ be the closed ball centered at $M(\mu)$ with radius $r$
  and let
  \[ \label{cbLdefn}
  \cbL{\mu} \defas \inf \set[\big]{ r > 0 \st \mu\parens[\big]{B(r)} \geq 1 - \alpha }.
  \]
  The \defn{$1-\alpha$ $\mu$-credible ball} is the subset $\smash{B(\cbL{\mu})}$ of $\Theta$ with characteristic function $\credball{\mu}{\alpha}{}$.
  A family $\CRegions{\tau}{\pi}{F}$ of $1-\alpha$ credible regions
  is a \defn{family of $1-\alpha$ credible balls} if,
  for all $\pi \in F$ and
  $\marg{\pi}$-almost all $x \in X$,
  the posterior $\Post{x}{\pi}\,$ has a mean and, for almost all $u \in [0,1]$,
  $\charfunc{\tau_{\pi}(x,u)} = \credball{\Post{x}{\pi}}{\alpha}{}$, i.e.,
  $\tau_{\pi}(x,u)$ is a $1-\alpha$ $\Post{x}{\pi}$-credible ball.
\end{definition}

Note that the $\mu$-measure of a $1-\alpha$ $\mu$-credible ball is no less than $1-\alpha$, and may be strictly more. For this and other reasons, we do not apply our main result to this family of credible regions. Instead, we show that there exists a family of credible regions that both \emph{approximates} this family \emph{and} has matching priors. See \cref{SecConstructions} for the explicit construction.

We formalize our notion of approximation in terms of \defn{fattenings}  of the supports of randomized set estimates. Recall that, for any metric space $Y$, subset $A \subseteq Y$, and real $\epsilon>0$, the \emph{$\epsilon$-fattening} of $A$ (in $Y$) is the set $A_{\epsilon}=\{y\in Y \, : \, \inf_{x \in A} d(x,y)<\epsilon\}$.
Using the notion of support of a random set (such as a credible region), our main result, \cref{ThmMatchingPriorSimpleCred}, gives containment relationships between (fattenings of) the supports of our approximate credible balls and those of standard credible balls. 
(The proof of \cref{ThmMatchingPriorSimpleCred} is deferred to \cref{SecSimpleExamples}.)

Before we can state \cref{ThmMatchingPriorSimpleCred}, we must introduce some regularity conditions on the model. Setting notation, for two metric spaces $(\Omega_{i}, d_{i})_{i \in \set{1,2} }$,
let $(\Omega_{1} \times \Omega_{2}, d_{1} \otimes d_{2})$ denote the usual product metric space, with metric
\[
(d_{1} \otimes d_{2}) ((x_{1},x_{2}), (y_{1},y_{2})) = d_{1}(x_{1},y_{1}) + d_{2}(x_{2},y_{2})
\]
for $x_{1},y_{1} \in \Omega_{1}$ and $x_{2},y_{2} \in \Omega_{2}$.

\begin{assumption}[Unnecessary but straightforward]
\label{assumptionde}
  There exists a function $\cd \: \Theta \times X \to \Reals$ such that,
  (i)
  for all $\theta \in \Theta$, $\cd(\theta,\cdot)$ is a density of $P_{\theta}$ with respect to $\nu$;
 (ii)
  $\cd : (\Theta \times X, \norm{\cdot} \otimes \dX) \to (\Reals, \norm{\cdot})$ is $\mC$-Lipschitz continuous for some $\mC\in [0,\infty)$;
  and (iii)
  $\log \cd$ is bounded.
\end{assumption}

This assumption, which we relax in %
 \cref{sec:weakerconditions}, allows us to prove that our \scare{approximate} credible balls are credible regions that meet the hypotheses of our main theorem.

We reiterate that \cref{assumptionde}, like \cref{assumptionjc}, is stronger than necessary. 
In particular, \cref{assumptionde} rules out the many statistical models whose conditional density functions are not bounded away from zero. This includes, e.g., the usual Bernoulli model on $\{{0,1}\}$, whose density is not bounded away from 0 in the neighbourhood of $0$ or $1$. While we give an overly strong assumption in this section, our final results and the approximate credible balls that we define are not limited to models with densities bounded away from zero. To demonstrate this, in \cref{SecAppBern}, we show how to obtain the same conclusions for the Bernoulli model, even though it does not satisfy \cref{assumptionde}.

Our main application is: 

\begin{theorem}\label{ThmMatchingPriorSimpleCred}
  Suppose \cref{assumptionde} holds
  and $\Theta$ is a compact subset of $\Reals^{d}$.
  Let $\alpha \in (0,1)$ and $\epsilon \in (0,\alpha)$.
  There exists
  a family of $1-\alpha$ credible regions $\tau = \CRegions{\tau}{\pi}{\Priors}$
  and prior $\pi_0 \in \Priors$
  such that
  (i)
  $\pi_0$ is a matching prior for $\tau$; and %
  (ii)
  for all $\pi \in \Priors$ and $x \in X$,
    the support of $\tau_{\pi}$ at $x$
    is contained in the $\epsilon$-fattening of the support of
    the ordinary $1-(\alpha-\epsilon)$ $\Post{x}{\pi}$-credible ball
    and
    the support of 
    the ordinary $1-\alpha$ $\Post{x}{\pi}$-credible ball
    is contained in
    the $\epsilon$-fattening of the support of $\tau_{\pi}$ at $x$.
\end{theorem}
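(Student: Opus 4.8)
The plan is to reduce \cref{ThmMatchingPriorSimpleCred} to \cref{ThmMainExistenceResult} (or its weaker version \cref{WeakThmMainExistenceResult}) by constructing a family of credible regions that interpolates, in a quantitatively controlled way, between the ordinary $1-(\alpha-\epsilon)$ and $1-\alpha$ credible balls, and for which the associated rejection probability function is jointly continuous in $(x,\pi)$ under the Wasserstein metric on priors. The key idea is that the ordinary credible ball $\cbL{\mu}$ has a discontinuous radius because the threshold $\mu(B(r)) \ge 1-\alpha$ is a hard cutoff; smoothing this cutoff over a window of width roughly $\epsilon$ in the coverage parameter removes the discontinuity while keeping the support of the randomized set pinched between the two nominal balls. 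Concretely, I would define a randomized set $\tau_\pi(x,\cdot)$ whose "radius" is chosen (randomly, via the auxiliary uniform $u$) to be the $1-\alpha'$ credible ball radius $\cbLv{\Post{x}{\pi}}{\alpha'}$ for $\alpha'$ drawn from an appropriate interval $[\alpha-\epsilon,\alpha]$, arranged so that (a) the expected posterior mass equals exactly $1-\alpha$, and (b) the support is exactly a ball whose radius lies between $\cbLv{\Post{x}{\pi}}{\alpha-\epsilon}$ and $\cbLv{\Post{x}{\pi}}{\alpha}$. Claim (ii) of the theorem then follows essentially by construction, modulo a small $\epsilon$-fattening to absorb the effect of the smoothing near the boundary of $\Theta$.

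The substantive work is verifying \cref{assumptionjc} (or its weakening) for the constructed family. I would break this into two continuity statements, following the outline in the Applications section: (i) the map $(x,\pi)\mapsto \Post{x}{\pi}$ from data and prior to the posterior, in Wasserstein distance, and (ii) the map from a posterior $\mu$ to the smoothed rejection probability function $(\theta)\mapsto \frset{\pi}{\theta}{x}$. For (i), \cref{assumptionde}(ii)--(iii) — Lipschitz density bounded away from zero and infinity — gives uniform control of the Radon–Nikodym derivative $\cd_\theta(x)/\int \cd_{\theta'}(x)\,\pi(d\theta')$ in both $x$ and $\pi$, so the posterior depends continuously (indeed Lipschitz-continuously) on $(x,\pi)$ in total variation, hence in $W_1$ since $\Theta$ is compact (bounded diameter). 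For (ii), I would show that the map $\mu \mapsto \cbLv{\mu}{\alpha'}$ and, more to the point, the map $\mu\mapsto\mu(B_{M(\mu)}(r))$ are sufficiently regular: the mean $M(\mu)$ is $W_1$-continuous (in fact $1$-Lipschitz), and the function $r\mapsto \mu(B_{M(\mu)}(r))$ is monotone, so after smoothing the cutoff over an $\epsilon$-window the resulting rejection probability at each fixed $\theta$ becomes continuous in $\mu$ — the atoms on sphere boundaries that cause the usual discontinuity are integrated out. Here I expect to need that $\Theta$ has positive Lebesgue measure and that the construction (detailed in \cref{SecConstructions}) uses a density-based rather than purely set-based definition, so that the smoothing is genuinely continuous rather than merely measurable.

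Having established that the constructed $\tau$ is a bona fide family of $1-\alpha$ credible regions satisfying the joint-continuity hypothesis with $\Theta$ compact, \cref{ThmMainExistenceResult} immediately yields a matching prior $\pi_0$, giving claim (i). Claim (ii) is then read off from the construction as indicated above. The main obstacle, and the step I would spend the most care on, is the continuity of the smoothed rejection probability function in $\pi$ under $W_1$: one must rule out the discontinuities flagged in \cref{secrelation} (jumps in the credible ball as probability mass crosses a sphere, and non-uniqueness/instability of the radius), and the smoothing window of width $\epsilon$ must be shown to be exactly what is needed to do so while simultaneously preserving the two-sided support containment in claim (ii). A secondary technical point is handling the boundary of the compact set $\Theta\subseteq\Reals^d$: truncation of balls at $\partial\Theta$ can reintroduce irregularity, and this is presumably why the theorem states containment only up to $\epsilon$-fattenings rather than exact inclusion.
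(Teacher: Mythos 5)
Your high-level plan — construct a perturbed family of credible balls, verify the joint continuity hypothesis \cref{assumptionjc}, invoke \cref{ThmMainExistenceResult}, and read off the support containment — is the same reduction the paper carries out via \cref{ThmMatchingPriorsExist} and the constructions in \cref{SecConstructions}. But the construction you describe drops a step that is not optional, and the continuity argument collapses without it.

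You propose smoothing only over the \emph{level}: draw $\alpha'\in[\alpha-\epsilon,\alpha]$ randomly and output the ordinary $1-\alpha'$ credible ball. The paper's construction performs two distinct smoothings that do different work. First it replaces the indicator of the ball with a $\beta$-Lipschitz ``ramp'' acceptance function (\cref{DefSimpleCredInt}, the function $\ffB{r}{\theta}$), and only then averages over the level (\cref{tfrsetdefnMain}). The ramp is load-bearing: the Wasserstein continuity needed for \cref{assumptionjc} is established by bounding $|\Post{x}{\mu}(\psi)-\Post{y}{\nu}(\psi)|\le\lipnorm{\psi}\,W_1(\Post{x}{\mu},\Post{y}{\nu})$ (see \cref{LemmaContinuityLAnnoying,LemmaContinuityTildeRAnnoying}), which requires the acceptance function $\psi$ to be Lipschitz in $\theta$ with a uniform constant. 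Level-averaging the ordinary indicator-based balls does not yield an acceptance function that is Lipschitz, or even continuous, in $\theta$: if the posterior $\mu=\Post{x}{\pi}$ puts an atom of mass $m$ on the sphere of radius $d_0$ about $M(\mu)$, then $\alpha'\mapsto\cbLv{\mu}{\alpha'}$ is constant, equal to $d_0$, over an interval of levels of length $m$, and averaging the indicator over a window contained in that interval leaves a jump of size up to $1$ in your acceptance function as $\norm{\theta-M(\mu)}$ crosses $d_0$. Such posteriors are unavoidable under \cref{assumptionde}, since the prior is arbitrary and the posterior is dominated by the prior. Your aside that the construction must be ``density-based rather than purely set-based'' gestures at the missing ingredient, but attributes it to measurability rather than to the Lipschitz regularity in $\theta$ that the Kantorovich duality argument actually uses.

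A secondary error: you route the continuity of the posterior map through total variation (``the posterior depends \dots Lipschitz-continuously on $(x,\pi)$ in total variation, hence in $W_1$''). The hypothesis topologizes the prior by $W_1$, and the map from $W_1$ on the prior to TV on the posterior is \emph{not} continuous: with $\pi_1=\delta_{\theta_0}$ and $\pi_2=\delta_{\theta_0+se}$ for a unit vector $e$ and small $s>0$, we have $W_1(\pi_1,\pi_2)=s$, yet the posteriors are point masses a TV distance $1$ apart. The correct statement (\cref{implication4}) proves Lipschitz continuity from $(X\times\Priors,\dX\otimes W_1)$ to $(\Priors,W_1)$ directly, again via Kantorovich--Rubinstein and the Lipschitz bound on the density $\cd$; the argument must be run entirely in $W_1$.
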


The proof of this result follows from \cref{ThmMatchingPriorsExist}, as described at the end of \cref{sec:weakerconditions}.
We pause again to make several remarks.
First, unlike the literature on asymptotically matching priors,
  the parameter $\epsilon > 0$ appearing in this theorem \emph{does not} depend on the data; it can be chosen by the statistician.
  That is, this result guarantees the existence of a large family of \emph{exact} matching priors
  and shows that their support is close to that of the usual credible balls.
  Although this theorem only claims existence, \cref{SecConstructions} gives explicit constructions.
  These explicit credible regions are not much more difficult to work with or compute than the usual credible regions.

  Second, this theorem is closely related to a result by \citet[Thm.~3.3]{muller2016coverage} that attempts to extend their result for finite spaces to more general spaces.
  In contrast to their result, our theorem guarantees the existence of a matching prior that induces a confidence set at a specified coverage level $1-\alpha$. The result by \citeauthor{muller2016coverage} does not guarantee the existence of a matching prior,  only the existence of a prior that induces a set with coverage at a level that is not too different from $1-\alpha$.
  We also note that our guarantees are explicit and non-asymptotic in the parameter $\epsilon$ that controls the degree to which the supports of our sets agree with typical credible regions.

\subsection{Unbounded log-density: Application to Bernoulli model} \label{SecAppBern}

\cref{ThmMatchingPriorSimpleCred} does not apply as written to models for which the log density $\log q$ is not bounded, since \cref{assumptionde} is not satisfied. Fortunately, it is not too difficult to avoid this problem, essentially by checking that a large class of prior distributions will not concentrate \scare{too much} mass on regions for which $\log q$ is \scare{too large}. 

The following is a concrete application of this idea to the Bernoulli model with sample space $X = \{0,1\}$, parameter space $\Theta = [0,1]$ and conditional density $\cd(\theta,x) = \theta^{x} (1 - \theta)^{1-x}$:

\begin{theorem} \label{ThmBernoulliMain}
\cref{ThmMatchingPriorSimpleCred} holds without \cref{assumptionde} for $\alpha \in (0,0.2)$.
\end{theorem}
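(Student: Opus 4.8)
The plan is to reduce to Theorem \ref{ThmMatchingPriorSimpleCred} (equivalently, to the underlying existence result \cref{ThmMatchingPriorsExist}) by restricting attention to a class of priors on which the offending unboundedness of $\log \cd$ is harmless. The density $\cd(\theta,x) = \theta^{x}(1-\theta)^{1-x}$ fails \cref{assumptionde}(iii) only because it vanishes at $\theta = 0$ (when $x=1$) and at $\theta = 1$ (when $x=0$); away from the endpoints it is smooth and bounded away from zero. So the first step is to fix a small $\eta > 0$ and consider the sub-simplex of priors $\Priors^{\eta} \subseteq \Priors$ consisting of those $\pi$ whose posteriors $\Post{x}{\pi}$, for each of the two data values $x \in \{0,1\}$, place at least some controlled amount of mass on the ``good'' region $[\eta, 1-\eta]$ and, crucially, do not put too much mass near the bad endpoints. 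Because $X$ has only two points, there are only two posteriors to control, which makes the bookkeeping concrete: for a prior with density proportional to $\theta^{a-1}(1-\theta)^{b-1}$-like behaviour the posteriors are explicit, and one checks directly that the credible-ball construction of \cref{SecConstructions} and the continuity estimates behind \cref{assumptionjc} go through uniformly on $\Priors^{\eta}$.

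The second step is to verify that the constructions and estimates used in the proof of \cref{ThmMatchingPriorSimpleCred} only ever invoked the Lipschitz bound and the lower bound on $\cd$ \emph{through integrals against priors in the relevant class}; that is, one needs a bound of the form $\int \abs{\log \cd(\theta,x)} \,\pi(\dee\theta) < \infty$ together with a modulus-of-continuity control on $\pi \mapsto \Post{x}{\pi}$ in the Wasserstein metric, rather than a pointwise bound on $\log\cd$ itself. This is exactly the content promised by the relaxation in \cref{sec:weakerconditions}: the approximate credible balls remain credible regions satisfying the hypotheses of the generic existence theorem, provided the prior class is chosen so that these integral quantities are uniformly finite and vary continuously. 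For the Bernoulli model one can take $\Priors^{\eta}$ to be, e.g., the priors whose restriction to $[0,\eta]\cup[1-\eta,1]$ has density bounded by a fixed constant (or more simply a mixture $(1-\delta)\pi' + \delta\,\mathrm{Unif}[\eta,1-\eta]$ with $\pi'$ arbitrary), and check that $\int \abs{\log\theta}\,\Post{x}{\pi}(\dee\theta)$ and $\int \abs{\log(1-\theta)}\,\Post{x}{\pi}(\dee\theta)$ are then uniformly bounded — a one-line computation using that $\int_0^{1}\abs{\log\theta}\,\dee\theta = 1$.

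The third step is to rerun the fixed-point / nonstandard existence argument on the compact convex set $\Priors^{\eta}$ in place of $\Priors$. Compactness of $\Theta = [0,1]$ is immediate; what one needs is that $\Priors^{\eta}$ is nonempty, convex, and closed (hence compact) in the Wasserstein topology, and that the self-map whose fixed point is the matching prior maps $\Priors^{\eta}$ into itself. The latter is the only genuinely new point: one must arrange the construction so that the prior output by the existence argument still lies in $\Priors^{\eta}$, which is why the $\alpha \in (0,0.2)$ restriction enters — with the credible level $1-\alpha > 0.8$, a $1-(\alpha-\epsilon)$ credible ball necessarily captures a large share of the posterior mass, forcing its support (and hence the relevant masses) to stay away from a pathological concentration at an endpoint, so the invariance of $\Priors^{\eta}$ can be guaranteed for a suitable $\eta = \eta(\alpha)$. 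Once invariance holds, \cref{ThmMatchingPriorsExist} produces $\pi_0 \in \Priors^{\eta} \subseteq \Priors$ and the family $\tau$ with the stated matching and $\epsilon$-fattening properties, which is precisely the conclusion of \cref{ThmMatchingPriorSimpleCred} for this model.

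The main obstacle I expect is the third step: showing that the prior class $\Priors^{\eta}$ can be chosen simultaneously (a) large enough to be invariant under the existence map and to contain the fixed point, and (b) small enough that the uniform integrability and Wasserstein-continuity estimates from \cref{sec:weakerconditions} hold on it. Getting these two requirements to coexist is where the numerical threshold $0.2$ on $\alpha$ is forced, and pinning down the exact dependence $\eta(\alpha)$ — and checking the invariance of the self-map with that choice — is the delicate part; the rest is a matter of transcribing the now-standard construction with the pointwise bound on $\log\cd$ replaced everywhere by its integrated counterpart.
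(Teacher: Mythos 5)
Your high-level plan is the same as the paper's: restrict to a class of priors that cannot pile up mass near the Bernoulli density's zeros, verify the posterior map is continuous on that class, and then show the map whose fixed point produces the matching prior preserves that class --- which is where the numerical bound on $\alpha$ enters. The paper implements this by taking $\Main = \AGD{a}{b} = \{\pi : \pi([0,a]\cup[1-a,1]) \le 1-b\}$, $\oMain$ its Wasserstein-fattening, and a carefully chosen collection $\FS$ of finite grids; it verifies the four clauses of \cref{assumptionjcAlt} (the crucial one is \cref{cimage}, the $\Q_S$-invariance, which is \cref{LemmaZInjBern}) and then invokes \cref{WeakThmMainExistenceResult} rather than \cref{ThmMainExistenceResult}. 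You have correctly identified that the invariance step is the heart of the matter and that the $\alpha<0.2$ restriction lives there.

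Where your sketch departs, and where I think it would run into trouble if carried out literally: (i) Your proposed prior classes --- a density cap on $[0,\eta]\cup[1-\eta,1]$, or the parametric mixture $(1-\delta)\pi' + \delta\,\mathrm{Unif}[\eta,1-\eta]$ --- are not obviously closed under the correction map $\Q_S$ (especially the mixture form, which is a very rigid subset of $\Priors$), whereas $\AGD{a}{b}$ is an inequality constraint on tail mass for which the paper establishes invariance by an explicit two-case argument: if $\pi$ already lies in $\AGD{a}{b}$, $\Q_S(\pi)$ spreads mass out and stays there; if $\pi$ concentrates near an endpoint, the posterior given the corresponding observation concentrates there too, making the rejection probability equal $1$ on a large middle region, which drives $\Q_S(\pi)$ to be nearly uniform. (ii) Your step~2 frames the needed regularity as an integrated bound $\int |\log\cd|\,\dee\pi < \infty$ replacing a pointwise one, but the paper's continuity argument (\cref{LemmaContPriorPosteriorBern}) does not run through integrated log-density bounds at all; it reruns the weak-convergence argument of \cref{implication4}, using that the posterior normalizing constant $\Phi(\Theta) \ge ba(1-a)$ is bounded below for $\pi\in\AGD{a}{b}$ and handling the bad region $[0,\epsilon]$ directly by the pointwise bound $\cd(\theta,1)\le\epsilon$. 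Integrability of $\log\cd$ alone would not give the Wasserstein-Lipschitz control of $\pi \mapsto \Post{x}{\pi}$ that the downstream continuity lemmas need. (iii) Your explanation of why $\alpha<0.2$ helps (credible ball captures a lot of mass, so it stays away from a pathological concentration) is not quite the mechanism: the paper uses concentration near an endpoint to show the credible ball fits \emph{inside} $[0,a+2b+c]$ once $a+2b+c<\alpha$, hence the rejection probability is $1$ on a macroscopic set, hence $\Q_S$ uniformizes; the $0.2$ then emerges from the specific numerical constants in the definition of $\FS$ and the slack budgeted for $\beta^{-1},\gamma,\epsilon$. So the architecture matches, but the choice of invariant class, the continuity mechanism, and the numerics would all need to be replaced by the paper's versions to close the argument.
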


The proof is deferred to \cref{SecAppBernProofs}, and contains a discussion of how the basic proof strategy can be extended to other common probability models with unbounded log-densities.

\section{Constructions} \label{SecConstructions}

\subsection{Construction of uniformly Lipschitz credible regions}\label{secfamilymain}

Our main result, \cref{ThmMainExistenceResult}, establishes the existence of matching priors for families of credible regions that satisfy certain continuity conditions. It turns out that these conditions are not satisfied by most popular families of credible regions, but that it is often easy to find small perturbations that \emph{do} satisfy our requirements. This section gives a proof-free, step-by-step guide for perturbing credible balls, with a concrete example for illustration. See \cref{SecSimpleExamples} of the Supplementary Materials for proofs and an analogous construction for highest-posterior density regions.
Up to this point, we have been working with rejection probability functions $\varphi$. In this section, we use acceptance probability functions $\psi = 1- \varphi$, as they lead to more natural definitions and proofs.

The first step is to define relaxed versions of credible balls, whose acceptance probability functions are Lipschitz continuous:

\begin{definition} [Relaxed credible balls] \label{DefSimpleCredInt}
  Fix a level $1-\alpha \in (0,1)$, \defn{slope} $\beta \in (0,\infty)$,
  and distribution $\mu\in \Priors$ with mean $M(\mu)$.  %
  For every $r > 0$, let
  \begin{gather} \label{EqLevelsBlah}
  \ffB{r}{\theta} \defas \min(1, \, \max(0,\, r - \beta \norm{\theta - M(\mu)})),
  \\
  \cbLR{\mu} \defas \inf \set[\Big]{ r > 0 \st \int_{\Theta} \ffB{r}{} \,\dee\mu \geq  1- \alpha }, %
  \quad\text{and}\quad
     \rcb{\mu}{\alpha}{\beta} \defas \ffB{\cbLR{\mu}}{}.
  \end{gather}
  A \defn{$(1-\alpha)$ relaxed $\mu$-credible ball with slope $\beta$} is
  any random set with \mpf\ 
  $\rcb{\mu}{\alpha}{\beta}$. %
\end{definition}

\begin{figure}[t]
\centering
\includegraphics[width=.49\linewidth]{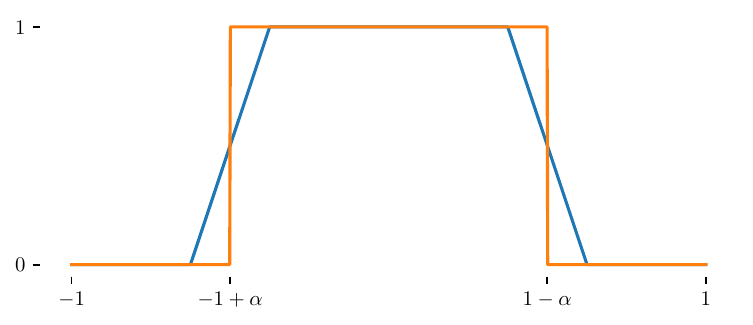}%
\includegraphics[width=.49\linewidth]{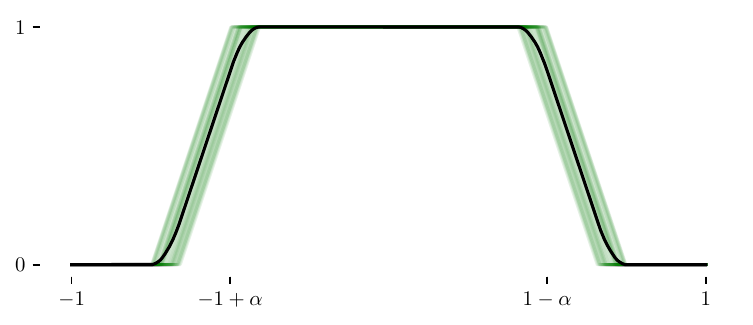}

\caption{Consider a flat posterior on [-1,1]. The left figure portrays (the acceptance probability function of) a standard $1-\alpha$ credible ball (red, rectilinear) and a \emph{relaxed} credible ball with slope $4$ (blue). 
The right figure portrays the corresponding $(1/4,1/4)$-\emph{perturbed} credible ball (solid black), computed using a Monte-Carlo estimate (green) of the defining integral, \cref{tfrsetdefnMain}.
}
\end{figure}

We know that $1-\alpha$ credible balls do not always have $1-\alpha$ credibility, e.g., in the presence of atoms. Further, their rejection probability functions are never Lipschitz. In contrast, we establish in \cref{rcbisacredset} that relaxed credible balls always have the right level and their rejection probability functions are $\beta$-Lipschitz.

\subsection{Perturbed credible regions}\label{subsecpertAlt}

In our next step, we give  a general construction for  perturbing a generic family of credible regions whose rejection probability functions are uniformly Lipschitz in the data and prior. 

The basic idea behind our construction is to smoothly average the usual acceptance probability function across some small range of levels, then perform a small post-averaging correction to obtain the exact level.
To set notation, for $\eta \in (0,1)$, let $\pb{\eta} \: \NNReals \to \Reals$ be the density of some probability distribution on $\NNReals$ such that the support of $\pb{\eta}$ is $[\eta,1]$ and $\pb{\eta}$ is continuous. For $c > 0$, let $\pb{\eta}^{(c)}(x) = c^{-1} \pb{\eta}(\frac{x}{c})$ be the probability density after the change of variable transformation $x \mapsto c x$. 
We use this density to define a random inflation to the level of an acceptance probability function.

\renewcommand{\gamma}{}

We now introduce a base family of credible regions. 
Fix $\beta > 0$.
For every $\alpha \in (0,1)$,
let $\rtau^{\alpha} = \CRegionsv{\rtau^\alpha}{\pi}{\pi}{\Priors}$ be a family of $1-\alpha$ credible regions
with acceptance probability function $\bapf[\alpha]{\beta}{}{}{}$
such that $\bapf[\alpha]{\beta}{\pt{\pi}{\gamma}}{\theta}{x}$ is $\beta$-Lipschitz in $\theta \in \Theta$ for every $\pi \in \Priors$ and $x \in X$. Let $\rtau = (\rtau^\alpha)_{\alpha \in (0,1)}$.

For $\alpha,\eta \in (0,1)$ and $\delta \in (0,\alpha)$, 
the \defn{$(\eta,\delta)$-perturbation of $\bapf[\alpha]{\beta}{}{}{}$} is the function 
$(\theta,x,\pi) \mapsto \papf{\pi}{\theta}{x} : \Theta \times X \times \Priors \to [0,1]$ given by
\[ \label{tfrsetdefnMain}
\papf{\pi}{\theta}{x}  
= \int_{[\delta \eta,\delta]} \bapf[\alpha-z]{\beta}{\pt{\pi}{\gamma}}{\theta}{x} \,\pb{\eta}^{(\delta)}(z) \dee z.
\]

In general, $\papf{\cdot}{\cdot}{\cdot}$ is not an acceptance probability function for a family of $1-\alpha$ credible regions. However, it has  credibility \emph{at least} $1-\alpha$, and it is $\beta$-Lipschitz if each of the associated acceptance probability functions are.

For $x \in X$ and $\pi \in \Priors$,
define
\[
\corr{\pi}{x} (r) = \Post{x}{\pi}(\max(0, \papf{\pi}{\cdot}{x} - r)), \qquad r \in [0,1],
\]
and
\[
R(x,\pi) = \sup \set{ r \in [0,1] \st \corr{\pi}{x}(r) \geq  1 - \alpha }.
\]

Under our assumptions, one can check that $R$ takes finite values (see \cref{minorlem}). We are now in a position to give the main definition of this section: %

\begin{definition} [Perturbed credible regions] \label{DefContFamilyCredibleMain}
Fix $\beta > 0$
and let $\rtau,\alpha,\eta,\delta$ be defined as above.
The \defn{family of $(\delta,\eta)$-perturbed $1-\alpha$ credible regions based on $\rtau$} 
is one whose acceptance probability function $\gfinalset{}{}$ satisfies
\[ \label{credformMain}
\gfinalset{\pi}{x} = \max(0, \papf{\pi}{\cdot}{x} - R(x,\pi))
\]
for all $x \in X$ and $\pi \in \Priors$. 
\end{definition}

This family will serve as our approximation of credible balls.
In particular, the family of credible regions shown to exist in the proof of \cref{ThmMatchingPriorSimpleCred} is exactly that in \cref{DefContFamilyCredibleMain}, with  $\rtau$ given by the family constructed in \cref{DefSimpleCredInt}. The choice of parameters is given explicitly in \cref{EqExpParam} in \cref{SubsecProofMainThm}.

\renewcommand{\gamma}{\oldgamma}

\section{Unsuitability of several credible regions}\label{secrelation}

\subsection{Lack of continuity} \label{SubsecLacCont}

In order to apply our techniques, we require  our families of credible regions to be continuous functions of their associated prior distributions.
We point out credible balls (\cref{DefUsualInterval}) and highest-posterior density regions (\cref{DefUsualHPD}) do not have this property.

\begin{example} [Credible ball discontinuity] \label{ExCredBallDisc}
Define the family $\set{ \mu_{c} }_{c \in [0,1]}$ of distributions 
\[
\mu_{c} = (1 - c)\, \mathrm{Unif}([-0.1,0.1]) + c \, \mathrm{Unif}([-11,-10] \cup [10,11]), \ \text{for }c \in [0,1].
\]
For any fixed level $1-\alpha$, the support of the (characteristic function of the) credible ball $\credball{\mu_{c}}{\alpha}{}$ jumps abruptly from a subset of the interval $[-0.1,0.1]$ to a superset of the interval $(-10,10)$ as $c$ goes from just above $1-\alpha$ to just below $1-\alpha$.
In particular the map $c \mapsto \credball{\mu_{c}}{\alpha}{}$ is discontinuous at $c=1-\alpha$ in any reasonable topology on continuous functions.
\end{example}

\begin{example} [Highest-posterior density region discontinuity]
Define the parameterized families $\set{ \mu_{i,c} }_{c\in[0,1]}$, for $i \in \set{1,2}$, of distributions by their densities
\begin{align}
\rho_{1,c}(\theta) &\propto 2 + c \, \sin(\theta), \qquad \theta \in [0,2 \pi], \text{ and }\\
\rho_{2,c}(\theta) &\propto 2 - c \, \sin(\theta), \qquad \theta \in [0,2 \pi].
\end{align}
It is clear that 
$\mu_{1,c}, \mu_{2,c}$ converge to the uniform measure in any reasonable topology, as $c$ goes to zero; and
at level $1-\alpha = \frac{1}{2}$,
the supports of
the level $1-\alpha$ highest-posterior density regions for $\mu_{1,c}$ and $\mu_{2,c}$
are $[0,\pi]$ and $[\pi,2\pi]$ respectively, for all $c > 0$.
In particular, these observations imply that the \emph{measures} $\mu_{1,c}, \mu_{2,c}$ converge to the same point as $c \to 0$, but the associated credible regions do not.
\end{example}

Although both these examples are very simple, the same phenomena occur for many other distributions and families of credible regions. To construct other distributions that give discontinuities for the same families of credible regions, one could, e.g., take any mixture of a well-behaved distribution with one of these pathological examples. On the other hand, Example \ref{ExCredBallDisc} gives a similar discontinuity for posterior-quantile credible regions (though at a different level). 

\section{Finding Matching Priors} \label{SecFindingMP}

As in the work of \citet{muller2016coverage}, our results show matching priors \textit{exist} but not how to \textit{find} them. It is an open problem 
to identify interesting general families of credible regions that reliably yield matching priors described by simple formulas or efficient exact computations.
In lieu of this, we might seek an approximation algorithm with a guarantee such as,
for any user-specified error $\epsilon>0$, 
the computed prior $\pi_{\epsilon}$ %
yields credible regions at level $1-\alpha$ that are also confidence sets at level (at least) $1-\alpha - \epsilon$.
We say such a prior is \emph{approximately matching}.
Note that the error $\epsilon$ should \emph{not} depend on the amount of data, in contrast to the literature on asymptotically matching priors \citep{datta2012probability}.

Finding such an algorithm is a problem left open by this work. 
As a potential step towards an approximation algorithm,
a key observation is that our existence proof relies on finding the fixed point of a map (see \cref{EqNMMap}). As suggested by \citet{muller2016coverage}, it is natural to investigate whether or not iterates of this map converge to a fixed-point. In the remainder of this section, we pursue this question for the Bernoulli model presented in \cref{ExBernoulliMatching}.

Recall from \cref{ExBernoulliMatching} that, in the Bernoulli model, the uniform prior on $\{\alpha,1-\alpha\}$ is a matching prior at level $1-\alpha$ for the set estimator $\tau$ given by $\tau(0,\cdot) = [0,1-\alpha)$ and $\tau(1,\cdot) = (\alpha,1]$.  (See \cref{ThmNewBernEx1} for the proof.)
What relationship does this exact matching prior have to fixed points of our map? Towards an answer, we discretize the parameter space, $[0,1]$, of the Bernoulli model, making it straightforward to both iterate the map and to compute the coverage. 
Taking the slope $\beta=100$ in order to produce relaxed credible balls with sharp edges,
we find set estimators with coverage close to the nominal level ($\approx 0.949$ versus $0.95$),
after only 25 iterations on a uniform 500-point discretization.
Using interpolation to produce densities, 
\cref{fig:iterations} visualizes the sequence of priors corresponding to iterates of our map.
Numerically, the final, approximately matching prior is close to our hand-crafted exact matching prior.  

Figure \ref{fig:iterations} is far from a proof of convergence: in principle this sequence might achieve coverage of $0.9499$ but never reach, e.g., $0.949998$. While we do not have proof of convergence in general, for this specific example, we can check that a natural simple sequence of priors and credible regions inspired by this picture \textit{both} (i) converge to a limit that is an exact matching prior \textit{and} (ii) have coverage converging to the nominal level $1-\alpha$; see \cref{ThmNewBernEx2} (\cref{SecAppFindingMP}).

Since it is straightforward to numerically estimate the coverage of a proposed credible region in the regime of \textit{finite} sample spaces and \textit{low-dimensional} parameter spaces, the same approach of guessing a matching prior, potentially improving it via iteration, and then checking if it works may be useful for other problems in this regime.
In \cref{SecAppBin}, we repeat the numerical part of this investigation in the binomial model with $n\le 5$ independent trials ($n=1$ being the Bernoulli model). For a range of slopes $\beta$, we quickly arrive at approximately matching priors. %
For $n=2$ trials, the limiting behaviour (as the slope $\beta$ increases) is similar to the $n=1$ case and there is numerical agreement with a hand-crafted exact matching prior. 
The limiting behaviour appears to shift, however, for $n\ge3$, possibly due to slower convergence and more challenging numerical issues. 
While the algorithm we study here may point in a fruitful direction, 
new algorithmic ideas may be needed to scale  to more complex problems.

\begin{figure}[t]
\centering
\includegraphics[height=3cm,trim=10pt 45pt 30pt 10pt,clip]{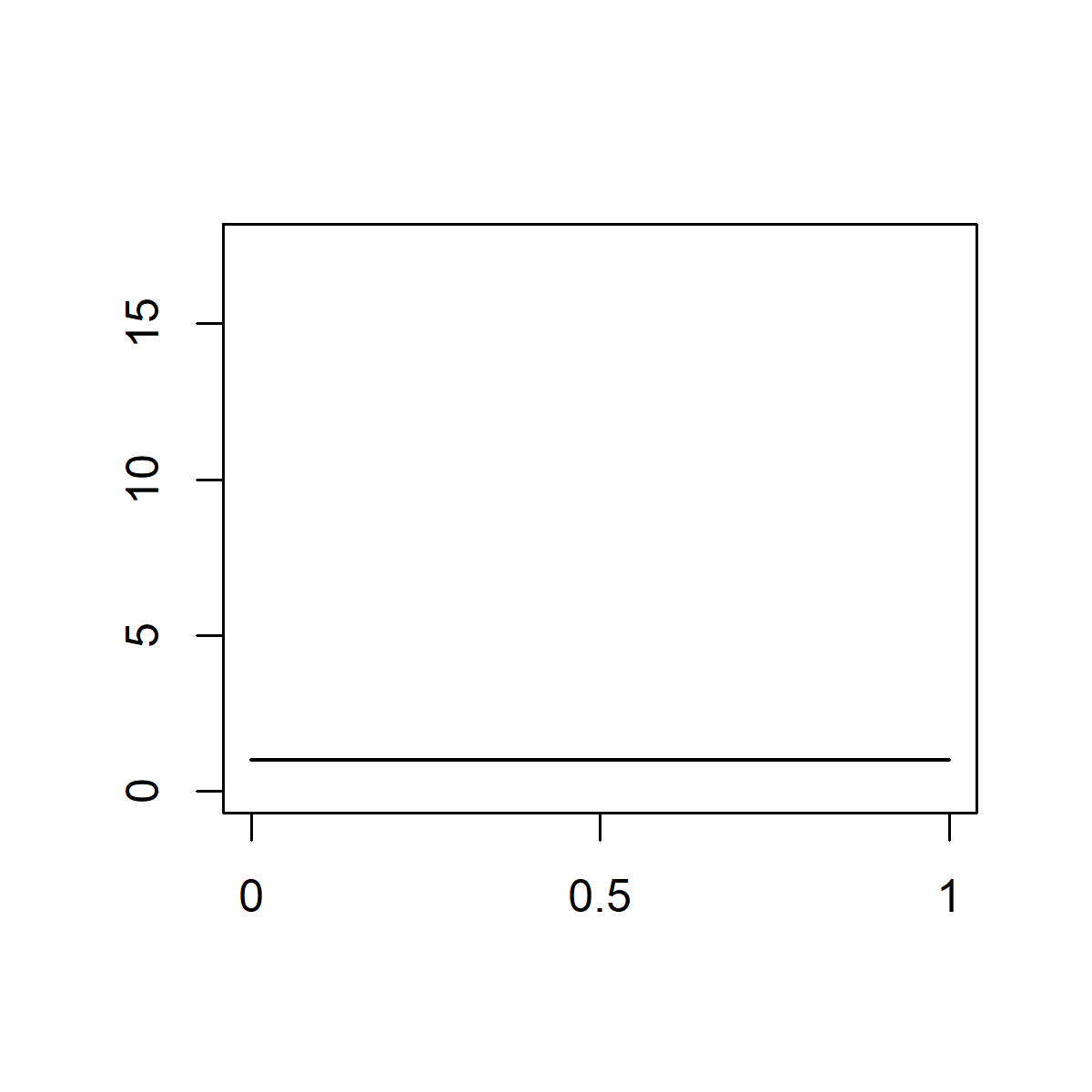}%
\includegraphics[height=3cm,trim=47pt 45pt 30pt 10pt,clip]{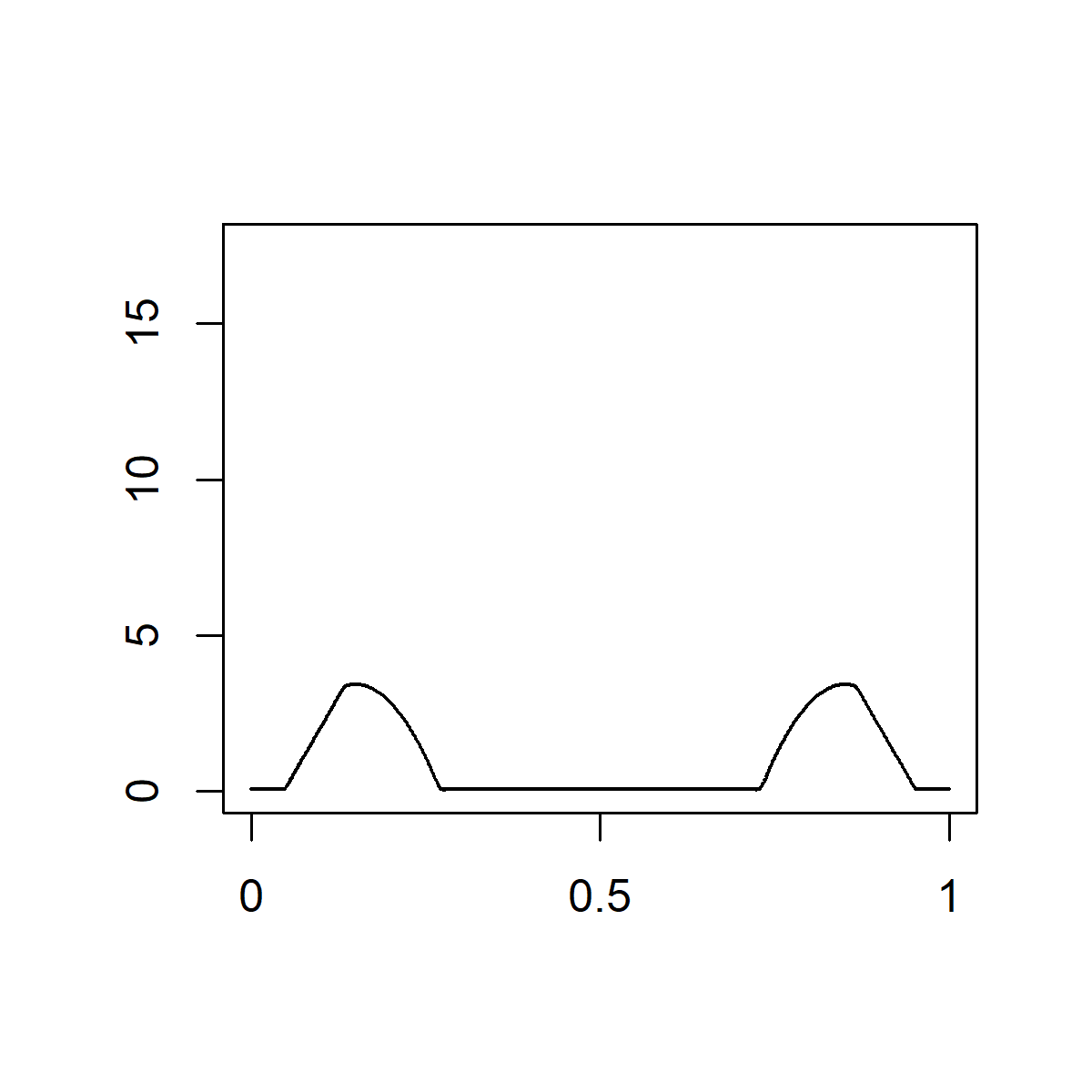}%
\includegraphics[height=3cm,trim=47pt 45pt 30pt 10pt,clip]{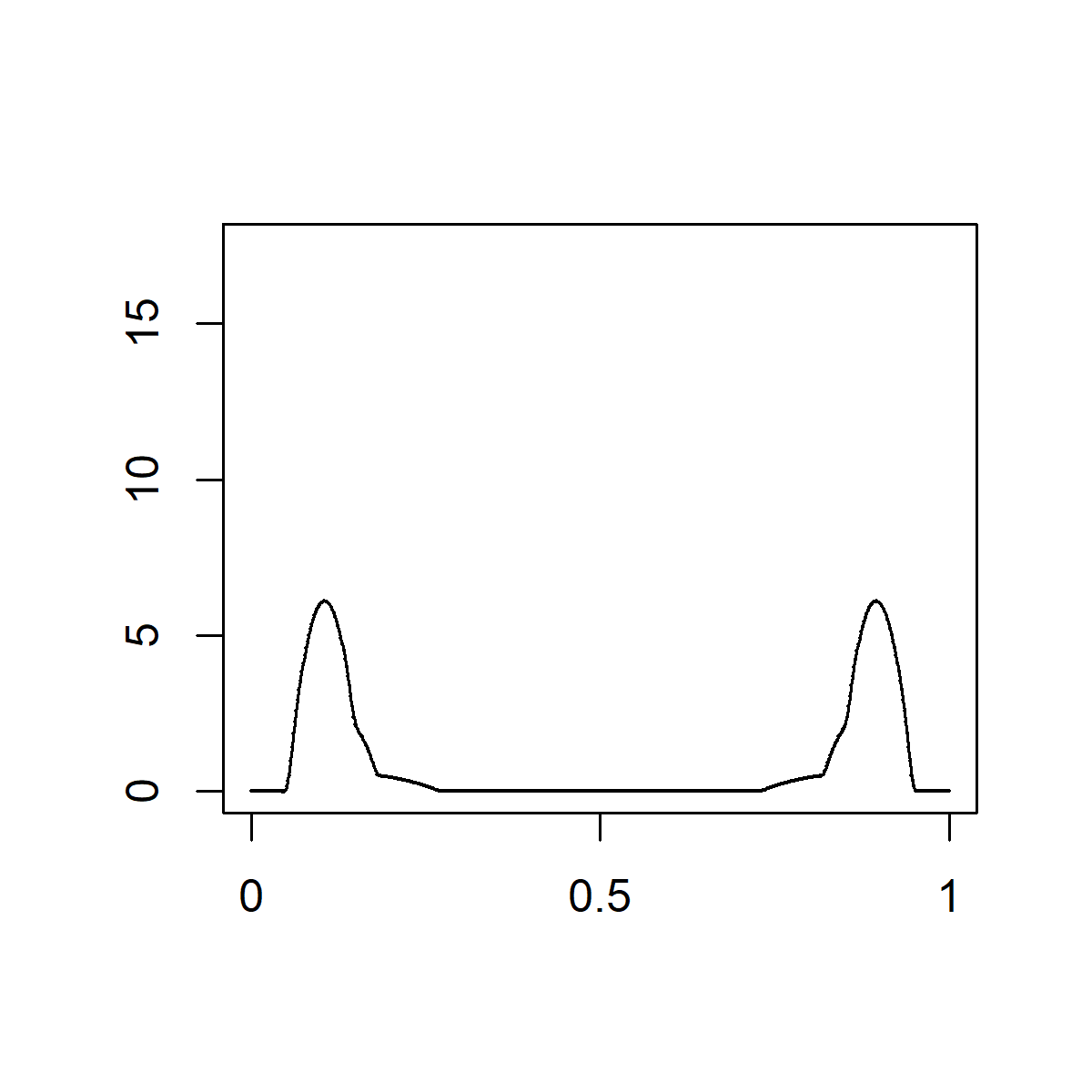}%
\includegraphics[height=3cm,trim=47pt 45pt 30pt 10pt,clip]{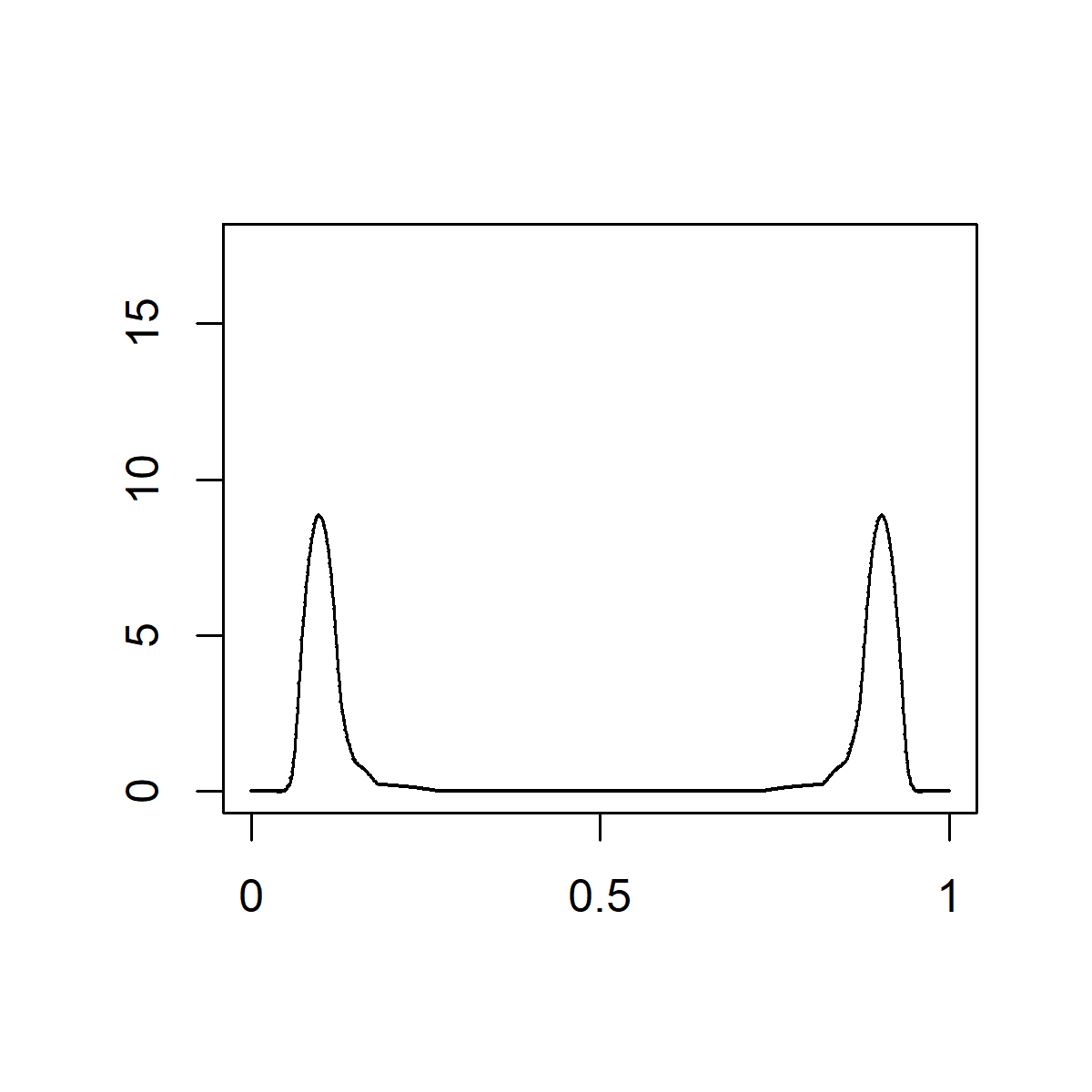}
\includegraphics[height=3cm,trim=47pt 45pt 30pt 10pt,clip]{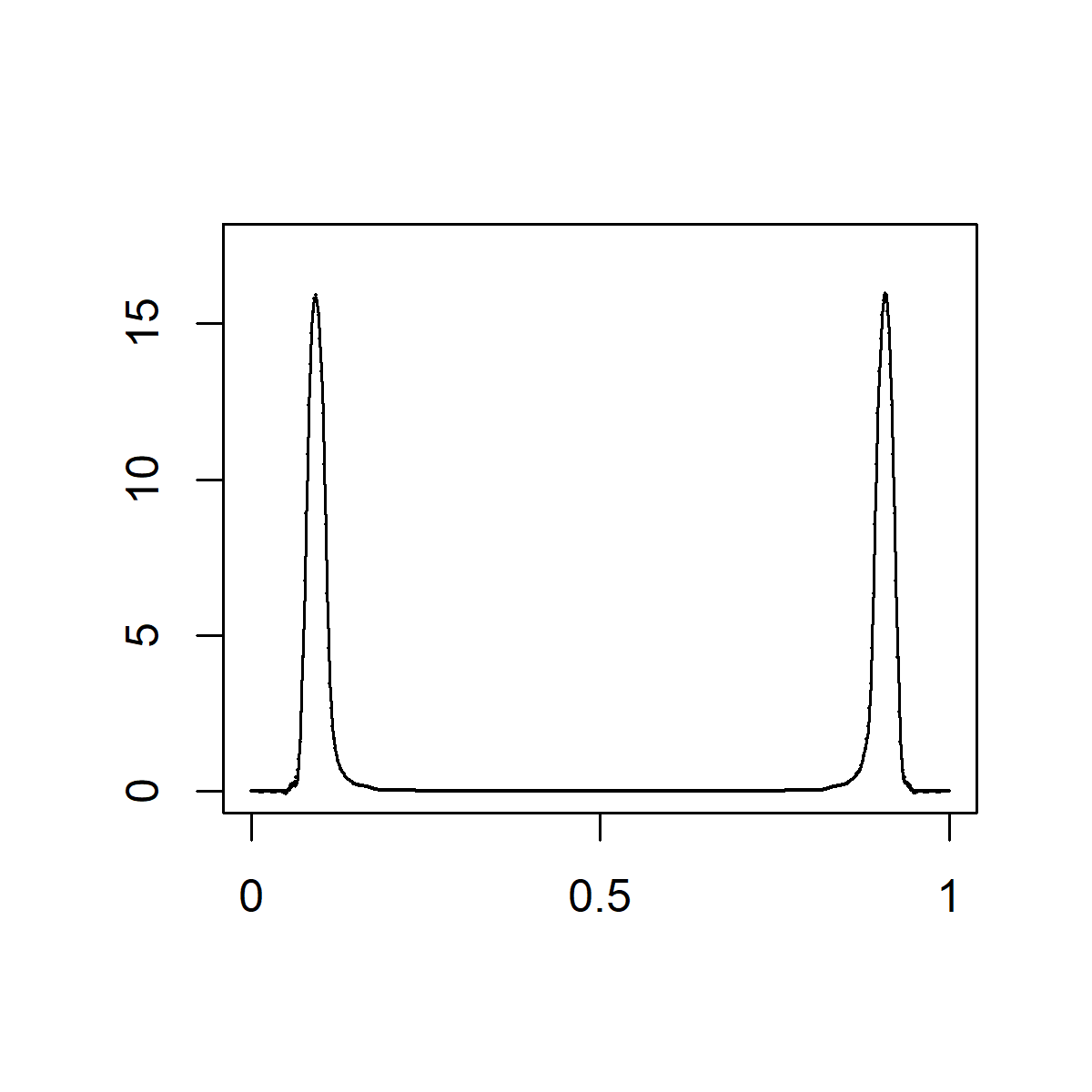}

\caption{Densities of estimated matching priors for discretized Bernoulli model 
after $0,1,3,6$, and $25$ iterations of $Q_{S}$
for relaxed credible balls (\cref{DefSimpleCredInt}, $\alpha = 0.05$, $\beta=100$).
Final coverage $1-\alpha' \approx 0.949$.}\label{fig:iterations}

\end{figure}
 
To summarize, for the Bernoulli example, the problem of computing a matching prior seems as well-behaved as we could hope. A natural heuristic algorithm identifies an approximately matching prior after a small number of iterations, it is straightforward to check that the associated credible regions have good coverage, the estimated prior is, visually, close to an exact matching prior, and some natural sequences of priors and credible regions inspired by this figure have coverage that converges to the nominal level (\cref{ThmNewBernEx2}). 
How generically we should expect to find such behaviour is a theoretical problem left open by this work.

\section{Discussion}\label{secdiscussion}

Our work has two main messages: the first is that models with matching priors are common. Informally, we have shown that they exist for large nonparametric families of models, not merely for special parametric ones. The second message is that the existence of a matching prior may depend quite heavily on the specific family of credible regions. Both of these messages are different from those one might glean from other work wherein matching priors are found to be rare for particular families of credible regions \citep{sweeting2008predictive}

Our work does not provide strong guidance on how to find matching priors, but \cref{SecFindingMP} suggests that a very natural algorithm works ``well." A key theoretical problem left unsolved by this paper is finding efficient algorithms with concrete convergence / approximation guarantees.

While we give \textit{sufficient} conditions for the existence of matching priors, we do not provide matching \textit{necessary} conditions. We leave open the question of which conditions can be relaxed. We draw particular attention to the assumption that the parameter space is compact.  
Non-compactness seems to play an important role in negative results \citep{sweeting2008predictive}.

\makebib

\newpage

\renewcommand{\gamma}{\oldgamma}

\section{Guide to supplementary material}

Our supplementary material is divided roughly into two parts. The first part, Sections \ref{sec:existenceproof} to \ref{SecAppBernProofs}, contains proofs to theorems in the main body and also describes extensions to those main results. 
The second part, Sections \ref{AppRoutineAnalysis} to \ref{SecAppFindingMP}, contain proofs of deferred technical details. 

Since the first part of the appendix includes both our main proof techniques \emph{and} details on how our arguments can be extended to situations not directly covered in the main paper, we expect parts of it to be of interest to most readers. We give a quick guide to what is included in these sections.

\cref{sec:existenceproof} contains the main technical details of the paper, including the proof of \cref{ThmMainExistenceResult}. The proof relies on nonstandard analysis, transferring the result of \citet{muller2016coverage} from finite (parameter) spaces to hyperfinite ones, and then using regularity to obtain the result for compact spaces.

We start \cref{sec:existenceproof} by introducing and discussing 
\cref{WeakThmMainExistenceResult}, a substantial strengthening of our main result, \cref{ThmMainExistenceResult}. We then introduce basic notions from nonstandard analysis and define nonstandard counterparts of the basic statistical structures. For hyperfinite parameter spaces, we establish the existence of nonstandard matching priors under moderate regularity conditions.
When the parameter space is compact, every nonstandard prior is associated with a standard prior, namely its so-called push-down. In order to yield a standard result,
we show that the *Wasserstein distance between any nonstandard prior and its push-down is infinitesimal when the parameter space is compact.
We then use this result to show that, under moderate regularity conditions,
the push-down of nonstandard matching prior is a standard matching prior.

\cref{secmorenote,SecSimpleExamples,SecAppBernProofs} contain proofs for extensions of the applications in \cref{SecApplHead}. The main result contained in \cref{secmorenote,SecSimpleExamples} is \cref{ThmMatchingPriorSimple}, which says that we can obtain matching priors for variants of the usual credible balls and highest-posterior density (HPD) regions that satisfy our regularity conditions. We view this result as an extension of the main result of \cref{simplexist}, which only discusses the existence of matching priors for variants of the credible ball regions. \cref{secmorenote} contains a precise statement of \cref{ThmMatchingPriorSimple}, and also gives several lemmas that allow one to more easily verify its main assumptions. \cref{SecSimpleExamples} is the longest section in the supplement. It defines and analyzes explicit constructions for two families of credible regions: the family that is introduced informally in \cref{SecApplHead} to study credible balls, and a new family that is used in \cref{secmorenote} to study HPD regions. The section is long largely because we must verify that these constructions do in fact give credible families, and that they are continuous in the appropriate way.

\cref{SecAppBernProofs} contains a proof of the main result in \cref{SecAppBern}. We recall the following basic problem that motivated \cref{SecAppBern}: the most straightforward way to check our continuity condition involves checking that the map taking data and a prior to a posterior is uniformly continuous. Unfortunately, this map is \emph{not} uniformly continuous, even for many simple textbook models.  \cref{SecAppBernProofs} shows how to slightly tweak \cref{WeakThmMainExistenceResult} so that it applies even if this map is not uniformly continuous. Although we carry out the analysis specifically for the Bernoulli model, the same strategy can be applied to other models whose log-likelihood functions have isolated discontinuities. 
In \cref{AppRoutineAnalysis,AppNSASec,AppSec5Proofs,secbernproof}, we present deferred proofs. %
In \cref{SecAppExamples}, we present exact matching priors for the binomial and Gaussian location model.
Finally, in \cref{sec8app}, we present proofs and experiments referenced in \cref{SecFindingMP}.

\section{Existence of matching priors}
\label{sec:existenceproof}

\subsection{Overview}

In this section, we prove \cref{ThmMainExistenceResult}, our main result on the existence of matching priors. We make heavy use of \emph{nonstandard analysis} \citep{AR65}, an area of mathematical logic with tools that provide a bridge between discrete and continuous sets.
In particular, using \emph{saturation}, one can construct nonstandard models
possessing \emph{hyperfinite sets}, i.e., infinite sets which possess all the first-order logic properties of finite sets.
By considering a hyperfinite cover of the original parameter space, we can transfer the finite result by \citet{muller2016coverage} to this cover, under some moderate regularity conditions. Deriving a standard result from this nonstandard construction is the bulk of the work.

Our proof proceeds in stages:
In \cref{SecWeakUnif}, we introduce significantly weaker conditions than \cref{assumptionjc}.

In \cref{secnotation}, we introduce some notation related to nonstandard analysis.

In \cref{SecExNSMatch}, we show that \emph{nonstandard} matching priors exist under these weaker conditions (\cref{hyperconfidence}).
  We note that this result does not require the parameter space $\Theta$ to be compact.

In \cref{SecWassPuss}, we show that nonstandard distributions can be ``pushed down" to standard distributions that are infinitesimally close, so long as the relevant parameter space is compact (\cref{pdclose}).

In \cref{SecMainThmRes}, we combine these results to prove the most general version of our main theorem (\cref{WeakThmMainExistenceResult}), which implies \cref{ThmMainExistenceResult}.

We have deferred most proofs in this section to \cref{AppNSASec}.
For those who are interested in applications of nonstandard analysis to statistics and probability, \cref{AppNSASec} includes detailed arguments. 

\subsection{Weakening uniform assumptions}\label{SecWeakUnif}

In this section, we weaken \cref{assumptionjc} considerably.
This improvement is critical for obtaining the results in \cref{SecAppBern}, which say that matching priors exist for the Bernoulli model \emph{despite} the fact that its likelihood is not bounded away from zero.
Fix a statistical model $\SModel$, level $1-\alpha \in (0,1)$ and family $\tau=\CRegions{\tau}{\pi}{\Priors}$ of $1-\alpha$ credible regions with rejection probability function $\frset{}{}{}$.
For every $\theta\in \Theta$ and $\pi \in \Priors$, define
\[ \label{EqDefZMap}
z_{\theta}(\pi)=\int_{X} [\frset{\pi}{\theta}{x}-\alpha] P_{\theta}(\dee x).
\]
We also define $z^{+}_{\theta}(\pi)=\max\{0, z_{\theta}(\pi)\}$. 

We need to move between probability measures defined on subspaces to their extensions on the whole space.
For every measurable subset $S\subseteq \Theta$, endowed with the trace $\sigma$-algebra $\mathcal F$ from $\Theta$,
and every probability measure $\pi \in \PM{S}$, 
define $\extn{\pi}{\Theta}$ to be the unique extension in $\Priors$ 
satisfying $\extn{\pi}{\Theta} = \pi$ on $\mathcal F$.
By a slight abuse of notation, for every subset $M\subseteq \PM{\Theta}$, 
define $\Restrict{M}{S}=\{\pi\in \PM{S} : \extn{\pi}{\Theta}\in M\}$ 
to be those measures on $S$ whose extension to $\Theta$ belongs to $M$.
These definitions extend to spaces other than $\Theta$ as expected.
In particular, we define these two notions in the same way for internal probability measures.
Finally, for finite sets $S \subseteq \Theta$, 
we sometimes abuse notation and identify elements of $\PM{S}$ with their probability mass functions.   

Following \cite{muller2016coverage}, for finite $S$
we define $\Q{S} \, : \, \PM{S} \to \PM{S}$ by the formula 
\[ \label{EqNMMap}
\Q{S}(\pi)(s) = \frac{\pi(s) + z^{+}_{s}(\pi)}{\sum_{t \in S} \{ \pi(t) + z^{+}_{t}(\pi) \}}, \quad \pi \in \PM{S},\ s \in S.
\]

\begin{assumption}\label{assumptionjcAlt}
Let $\Priors$ be equipped with Wasserstein metric,
and let $\frset{}{}{}$ be the rejection probability function for the family $\tau$ in $\SModel$.
There exists a set $\FS \subseteq \PowerSet(\Theta)$ consisting of finite subsets of $\Theta$, a nonempty set $\Main \subseteq \Priors$, and an open set $\oMain\subseteq \Priors$ satisfying the following conditions: 
\begin{enumerate}
\item \label{cfinite}
For all finite subsets $A \subseteq \Theta$, there exists $B\in \FS$ such that $A\subseteq B$.
\item \label{ccont}
For every $\theta\in \Theta$, the function $\frset{\cdot}{\theta}{\cdot} \: X\times \oMain \to [0,1]$ is jointly continuous.
\item \label{copen}
There exists an open set $U\subseteq \Priors$ such that $\Main\subseteq U\subseteq \closure{U} \subseteq \oMain$.       
\item \label{cimage}
For all $S \in \FS$, $\Q{S}(\Restrict{\oMain}{S}) \subseteq \Restrict{\Main}{S}$.
\end{enumerate}
\end{assumption}
Clearly, \cref{assumptionjcAlt} is no stronger than \cref{assumptionjc}, as we can always take $\Main= \PM{\Theta}$.
If $\oMain$ is, in particular, the $r$-fattening of $\Main$ (w.r.t.\ Wasserstein metric) for some constant $r>0$,
then \cref{copen} of \cref{assumptionjcAlt} holds, e.g., when $U$ is the $r/2$-fattening of $\Main$. The results in the section imply that \cref{assumptionjcAlt} is strictly weaker than \cref{assumptionjc}, 
since the Bernoulli model does not satisfy \cref{assumptionjc}.

\cref{assumptionjcAlt} was introduced to circumvent a technical obstruction that
 does not appear in the finite setting of  \cite{muller2016coverage} but is 
 very common for models with continuous parameter spaces: the likelihood may not be \textit{bounded away from 0} even if none of the likelihoods are \textit{exactly} 0. For example, this occurs for the Bernoulli model $q(\theta,x) = \theta^{x} (1-\theta)^{1-x}$ in the neighbourhoods of 0 and 1. The map $\Q{S}$ often behaves quite badly for measures that have large support in these ``bad" neighbourhoods, and this causes our proof strategy to fail. The idea behind \cref{assumptionjcAlt} is that it is sometimes enough to study $\Q{S}$ only on a subset of priors that do not concentrate on such ``bad" neighbourhoods, as long as the subset is fixed by $\Q{S}$. Informally, \cref{assumptionjcAlt} says that there exists a large subset of $\PM{\Theta}$ which 
 (i) avoids the ``bad" neighbourhoods enough for the map $\frset{\cdot}{\theta}{\cdot}$ to be jointly continuous and
 (ii) is stable under $\Q{S}$.

Let $\FPM{\Theta}$ be the collection of all priors with finite support.

\begin{mycondition}{CC}[Continuous coverage]\label{assumptionztv}
A family $\tau= \CRegions{\tau}{\pi}{\Priors}$ has \defn{continuous coverage} for a subset $A\subseteq \Priors$
when, for all $\theta\in \Theta$, the map
  $z_{\theta} \: \Priors\to [-1,1]$, 
  is continuous with respect to total-variation distance on $\FPM{\Theta}\cap A$.
\end{mycondition}

We abbreviate this condition by writing ``\cref{assumptionztv} holds for $\tau,A$''. 
We now establish \cref{assumptionztv} from \cref{assumptionjcAlt}. 

\begin{lemma}\label{zcontweak}
  Let $\SModel$ be a statistical model where $\Theta$ is compact and let $\tau$ be a family of credible regions on $\SModel$ with associated rejection probability function $\frset{}{}{}$. 
  Suppose \cref{ccont} of \cref{assumptionjcAlt} holds for some open subset $\oMain \subseteq \Priors$ with respect to Wasserstein metric.
  Then, for every $\theta\in \Theta$, the function $z_{\theta} \: \Priors\mapsto [-1,1]$ is continuous on $\oMain$ with respect to Wasserstein metric.
\end{lemma}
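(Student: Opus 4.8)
The plan is to show that $z_\theta$ is continuous on $\oMain$ by writing it as an integral, $z_\theta(\pi) = \int_X [\frset{\pi}{\theta}{x} - \alpha]\, P_\theta(\dee x)$, and passing to the limit under the integral sign using dominated convergence together with \cref{ccont} of \cref{assumptionjcAlt}. The integrand is uniformly bounded (by $1$ in absolute value, since $\frset{}{}{}$ takes values in $[0,1]$ and $\alpha \in (0,1)$), so the dominating function is the constant $1$, which is $P_\theta$-integrable because $P_\theta$ is a probability measure. The only thing that needs care is the pointwise (in $x$) convergence of the integrands along a Wasserstein-convergent sequence of priors.

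First I would fix $\theta \in \Theta$ and a sequence $\pi_n \to \pi$ in $\oMain$ with respect to $W_1$. Since $\oMain$ is open, we may assume $\pi_n \in \oMain$ for all large $n$. By \cref{ccont}, for each fixed $x \in X$ the map $(x', \pi') \mapsto \frset{\pi'}{\theta}{x'}$ is jointly continuous on $X \times \oMain$; in particular, holding $x$ fixed, $\pi' \mapsto \frset{\pi'}{\theta}{x}$ is continuous, so $\frset{\pi_n}{\theta}{x} \to \frset{\pi}{\theta}{x}$ for every $x \in X$. Hence the integrands $x \mapsto \frset{\pi_n}{\theta}{x} - \alpha$ converge pointwise to $x \mapsto \frset{\pi}{\theta}{x} - \alpha$, and are dominated by the constant $1 \in L^1(P_\theta)$. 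The dominated convergence theorem then gives $z_\theta(\pi_n) \to z_\theta(\pi)$. Since the sequence was arbitrary, $z_\theta$ is continuous at $\pi$, and since $\pi \in \oMain$ was arbitrary, $z_\theta$ is continuous on $\oMain$.

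I expect the main (very mild) obstacle to be a bookkeeping point rather than a genuine difficulty: one must confirm that \cref{ccont} really does supply continuity of $\pi' \mapsto \frset{\pi'}{\theta}{x}$ for \emph{each fixed} $x$ (it does, as a special case of joint continuity), and that the measurability needed to integrate $\frset{\pi_n}{\theta}{\cdot}$ against $P_\theta$ is already guaranteed by the defining measurability of a rejection probability function (\cref{rpfunc}). Note that compactness of $\Theta$, though assumed in the statement, is not actually used in this argument — it is presumably retained only for uniformity with the surrounding development and because $\oMain \subseteq \Priors$ is most naturally understood in that setting. No appeal to the structure of $W_1$ beyond the fact that it metrizes a topology in which the relevant continuity hypothesis is phrased is required; the real content is entirely in \cref{ccont} plus dominated convergence.
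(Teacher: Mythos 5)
Your proof is correct, but it takes a genuinely different (and simpler) route than the paper's. The paper's argument fixes $\pi_0 \in \oMain$, picks a compact $K \subseteq X$ with $P_\theta(K) > 1-\epsilon/3$ by Radonness, takes a closed ball $\closure{B} \subseteq \oMain$ around $\pi_0$, and then uses that $\closure{B}$ is Wasserstein-compact (because $\Theta$ is compact, so $\Priors$ is) to invoke uniform continuity of $\frset{\cdot}{\theta}{\cdot}$ on $K \times \closure{B}$, finishing with an $\epsilon/3$ decomposition. Your argument replaces all of this with pointwise convergence of the integrands (from joint, hence separate, continuity in $\pi$) plus dominated convergence against the constant dominating function $1 \in L^1(P_\theta)$. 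Your version is shorter and, as you correctly observe, does not actually use compactness of $\Theta$ at all — it only uses that $W_1$ is a metric so that sequential continuity suffices, plus the defining product measurability of the rejection probability function to make sense of $x \mapsto \frset{\pi_n}{\theta}{x}$ as an integrand. What the paper's approach buys, implicitly, is a bit more quantitative control: it produces a local modulus of continuity for $z_\theta$ near $\pi_0$ tied to the uniform modulus of $\frset{\cdot}{\theta}{\cdot}$ on $K \times \closure{B}$, which could matter if one later wanted equicontinuity or rate statements; but since the lemma only asserts continuity, your dominated-convergence argument is fully adequate and more transparent.
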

\begin{proof}
  Fix $\theta\in \Theta$.
  Pick $\epsilon>0$. Then there exists a compact set $K\subseteq X$ such that $P_{\theta}(K)>1-\frac{\epsilon}{3}$.
  It follows that
  \[ \label{IneqL341}
  \abs[\Big]{
    \int_{K}[\frset{\pi}{\theta}{x}-\alpha] P_{\theta}(\dee x)
    - \int_{X}[\frset{\pi}{\theta}{x}-\alpha] P_{\theta}(\dee x)
  }
  < \frac{\epsilon}{3}
  \]
  for all $\pi\in \Priors$.
  Pick $\pi_0\in \oMain$. 
  As $\oMain$ is open, pick $B$ to be an open ball centered at $\pi_0$ such that $\closure{B}\subseteq \oMain$. 
  We use $r_0$ denote the radius of $B$. 
  As $\Priors$ is compact under the Wasserstein metric, $\closure{B}$ is compact under the Wasserstein metric.  
  As $K\times \closure{B}$ is compact, by \cref{assumptionjcAlt},
  $\frset{\cdot}{\theta}{\cdot}$ is uniformly continuous on $K\times \closure{B}$,
  and so there exists $0<\delta<r_0$ such that $|\frset{\pi}{\theta}{x}-\frset{\pi_0}{\theta}{x}|<\frac{\epsilon}{3}$
  for all $x\in K$ and all $\pi\in \Priors$ with $W(\pi,\pi_0)<\delta$.
  Hence we have
  \[ \label{IneqL342}
  \abs[\Big] {
    \int_{K}[\frset{\pi}{\theta}{x}-\alpha]P_{\theta}(\dee x)
    - \int_{K}[\frset{\pi_0}{\theta}{x}-\alpha]P_{\theta}(\dee x)
  }
  < \frac{\epsilon}{3}
  \]
  for all $\pi\in \Priors$ with $W(\pi,\pi_{0})<\delta$.
  It follows from Inequalities~\eqref{IneqL341} and \eqref{IneqL342} that $z_{\theta}(\cdot)$ is continuous on $\oMain$ with respect to Wasserstein metric.
\end{proof}

As the Wasserstein metric induces a coarser topology than the total-variation metric, the following corollary is an immediate consequence of \cref{zcontweak}.

\begin{corollary}\label{jcimplyztv}
Let $\Theta$, $\tau$, and $\frset{}{}{}$ be as in \cref{zcontweak},
and suppose \cref{ccont} of \cref{assumptionjcAlt} holds for some open subset $\oMain \subseteq \Priors$ with respect to Wasserstein metric.
Then \cref{assumptionztv} holds for $\tau, \oMain$.
\end{corollary}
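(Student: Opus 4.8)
The plan is to chain \cref{zcontweak} with the elementary observation that the Wasserstein topology on $\Priors$ is coarser than the total-variation topology, so that continuity in $W_1$ automatically upgrades to continuity in total variation; the corollary then drops out immediately.

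First I would apply \cref{zcontweak}. Its hypotheses are exactly those assumed here: $\Theta$ is compact, $\tau$ is a family of credible regions with rejection probability function $\frset{}{}{}$, and \cref{ccont} of \cref{assumptionjcAlt} holds for the open set $\oMain$ with respect to the Wasserstein metric. The lemma therefore yields that, for every $\theta \in \Theta$, the map $z_{\theta} \: \Priors \to [-1,1]$ is continuous on $\oMain$ with respect to $W_1$.

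Next I would record the topological comparison. Since $\Theta$ is compact it is bounded, say of diameter $D<\infty$, and for any $\mu,\nu\in\Priors$ one has $W_1(\mu,\nu)\le D\, d_{\mathrm{TV}}(\mu,\nu)$ (couple $\mu$ and $\nu$ by the identity on their common mass and arbitrarily on the remainder). Hence the identity map $(\Priors,d_{\mathrm{TV}})\to(\Priors,W_1)$ is Lipschitz, so every $W_1$-open subset of $\Priors$ is $d_{\mathrm{TV}}$-open; i.e.\ the Wasserstein topology is coarser than the total-variation topology. A function that is continuous on $\oMain$ for the coarser topology is therefore continuous on $\oMain$ for the finer one, and restricting to any subset — in particular to $\FPM{\Theta}\cap\oMain$ — preserves $d_{\mathrm{TV}}$-continuity.

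Combining the two steps, for every $\theta\in\Theta$ the map $z_{\theta}$ is continuous with respect to total-variation distance on $\FPM{\Theta}\cap\oMain$, which is precisely the assertion that \cref{assumptionztv} (continuous coverage) holds for $\tau,\oMain$. There is essentially no obstacle: all the analytic content sits in \cref{zcontweak}, and the corollary is just the remark that strengthening the metric on the domain can only make a continuous map ``more'' continuous, together with the trivial bound $W_1\le D\, d_{\mathrm{TV}}$ on a compact space.
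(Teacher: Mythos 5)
Your proof is correct and takes essentially the same route as the paper: invoke \cref{zcontweak} to obtain $W_1$-continuity of $z_\theta$ on $\oMain$, then note that on a bounded space the total-variation topology refines the Wasserstein topology (which the paper asserts in one line and you justify with the bound $W_1 \le D\,d_{\mathrm{TV}}$), so $W_1$-continuity implies TV-continuity, and restriction to $\FPM{\Theta}\cap\oMain$ preserves this. The only difference is that you spell out the coupling argument behind the topology comparison, which the paper leaves implicit.
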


We now state a stronger version of \cref{ThmMainExistenceResult}, replacing \cref{assumptionjc} by  \cref{assumptionjcAlt}.

\begin{theorem}[Main result]\label{WeakThmMainExistenceResult}
  Fix a statistical model $\SModel$, level $1-\alpha \in (0,1)$, and family $\tau = \CRegions{\tau}{\pi}{\Priors}$
  of credible regions with rejection probability function $\frset{}{}{}$.
  Suppose $\Theta$ is compact and \cref{assumptionjcAlt} holds for $\FS$, $\Main$, and $\oMain$.
  Then there exists a matching prior in $\oMain$ for $\tau$. 
\end{theorem}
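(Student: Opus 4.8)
The plan is to lift the finite-space matching-prior theorem of \citet{muller2016coverage} to a hyperfinite cover of $\Theta$ by nonstandard means, and then to recover a standard matching prior by pushing the resulting nonstandard prior down, using compactness of $\Theta$ to control the return trip. First I would combine \cref{cfinite} of \cref{assumptionjcAlt} with saturation of the nonstandard model: for each $\theta \in \Theta$ the internal set $\{S \in \NSE{\FS} : \NSE{\theta} \in S\}$ is nonempty, and by \cref{cfinite} these internal sets have the finite intersection property, so in a sufficiently saturated model there is a single hyperfinite $S \in \NSE{\FS}$ with $\NSE{\theta} \in S$ for every $\theta \in \Theta$. Thus $\{\NSE{\theta} : \theta \in \Theta\} \subseteq S \subseteq \NSE{\Theta}$, and every point of $S$ is nearstandard because $\Theta$ is compact.

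Next I would carry out the Müller--Norets construction internally on $S$. Their finite-space argument amounts to a fixed-point argument for the renormalization map $\Q{S}$ of \cref{EqNMMap}, which behaves well on a region once each $z_{\theta}$ depends continuously on the prior there; \cref{ccont} of \cref{assumptionjcAlt}, via \cref{jcimplyztv} and \cref{zcontweak}, makes each $z_{\theta}$ Wasserstein-continuous on $\oMain$, while \cref{cimage} of \cref{assumptionjcAlt} guarantees that $\Q{S}$ maps $\Restrict{\oMain}{S}$ into $\Restrict{\Main}{S} \subseteq \Restrict{\oMain}{S}$, so the iterates never leave the region where continuity holds. Transferring their finite existence theorem --- this is the content of \cref{hyperconfidence}, and it needs no compactness --- then yields a nonstandard prior $\hat\pi \in \NSE{\Main}$ concentrated on $S$, hence $\hat\pi \in \NSE{U}$ by \cref{copen} of \cref{assumptionjcAlt}, whose associated internal randomized credible regions have internal frequentist coverage at least $1-\alpha$ at every point of $S$; equivalently, $\NSE{z}_{\theta}(\hat\pi) \leq 0$ for all $\theta \in S$, and in particular for every $\theta$ of the form $\NSE{\theta_0}$ with $\theta_0 \in \Theta$.

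Now compactness of $\Theta$ makes $(\Priors, W_1)$ a compact metric space, so $\hat\pi$ has a standard part; let $\pi_0 \in \Priors$ be the push-down of $\hat\pi$. By \cref{pdclose}, $\NSE{W}_1(\hat\pi, \NSE{\pi_0})$ is infinitesimal, so $\pi_0$ is exactly this standard part. Since $\hat\pi \in \NSE{U}$ with $U$ open and $\closure{U} \subseteq \oMain$ by \cref{copen} of \cref{assumptionjcAlt}, the standard part lies in $\closure{U}$, whence $\pi_0 \in \oMain$. Finally, fix any $\theta_0 \in \Theta$: by \cref{zcontweak} the map $z_{\theta_0}$ is Wasserstein-continuous on $\oMain$, and $\hat\pi$ lies in the Wasserstein monad of $\pi_0 \in \oMain$, so $\NSE{z}_{\theta_0}(\hat\pi) \approx z_{\theta_0}(\pi_0)$; the left-hand side is $\leq 0$ and a weak inequality survives taking standard parts, so $z_{\theta_0}(\pi_0) \leq 0$. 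As $\theta_0 \in \Theta$ was arbitrary, $\tau_{\pi_0}$ is a $1-\alpha$ confidence set, so $\pi_0 \in \oMain$ is a matching prior for $\tau$, as required.

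The main obstacle is \cref{pdclose}: proving that the push-down of a nonstandard prior is $\NSE{W}_1$-infinitesimally close to it. Compactness of $\Theta$ is genuinely needed here --- the statement fails on $\Reals^{d}$ --- and the argument calls for a careful study of the nonstandard extension of the Wasserstein metric, for instance relating $\NSE{W}_1$ to internal couplings and then to a Loeb-measure coupling between $\hat\pi$ and its push-down. A secondary, more bookkeeping-heavy difficulty is setting up the internal model on the hyperfinite cover $S$ so that the transferred Müller--Norets hypotheses genuinely hold there --- in particular that the restricted credible regions still carry posterior mass $1-\alpha$ --- and checking that \cref{cimage} and \cref{ccont} of \cref{assumptionjcAlt} are precisely what keeps the internal fixed point inside $\Restrict{\NSE{\Main}}{S}$, where the continuity their argument relies on is available.
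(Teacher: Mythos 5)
Your proposal follows the same three-stage strategy as the paper's own proof: (i) use saturation together with Condition~(1) of \cref{assumptionjcAlt} to obtain a hyperfinite $T \in \NSE{\FS}$ covering $\Theta$; (ii) transfer the \citeauthor{muller2016coverage} finite-space result via \cref{hyperconfidence} to get an internal prior $\Pi$ supported on $T$ with $\NSE{z_\theta}(\Pi) \leq 0$ for all $\theta \in T \supseteq \Theta$; (iii) take the Wasserstein push-down $\pd{\Pi}$, invoke \cref{pdclose} to see it is $\NSE{W}_1$-infinitesimally close to $\Pi$, and use $W_1$-continuity of $z_\theta$ on $\oMain$ to conclude $z_\theta(\pd\Pi) \leq 0$. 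The one step that, as written, would not quite go through is your application of \cref{hyperconfidence} with $\Main$: that theorem requires $\Restrict{\NSE{\Main}}{T}$ to be a nonempty \emph{*compact} subset of $\IVSpace{\HReals}{|T|}$, and $\Main$ is not assumed closed, so this can fail. This is exactly the role of Condition~(3): the paper applies \cref{hyperconfidence} with $\closure{U}$ in place of $\Main$, where $\Restrict{\NSE{\closure{U}}}{T}$ \emph{is} *compact (as $\closure{U}$ is a closed subset of the compact $(\Priors,W_1)$ and $\FPM{\Theta}$ is dense), and Condition~(4) then gives $\NSE{\Q{T}}(\Restrict{\NSE{\closure{U}}}{T}) \subseteq \Restrict{\NSE{\Main}}{T} \subseteq \Restrict{\NSE{\closure{U}}}{T}$, so the internal matching prior lands in $\NSE{\closure{U}} \subseteq \NSE{\oMain}$ as needed. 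With this substitution your argument is sound; the final continuity step via \cref{zcontweak} is a cleaner statement of the same reasoning the paper encapsulates in \cref{phipush} (which combines \cref{phiscts} with Loeb integration to handle the integral over $\NSE X$ directly), and either route works because $z_\theta$ is already bounded.
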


\subsection{Notation from nonstandard analysis}\label{secnotation}

We briefly introduce notation from nonstandard analysis.
For those who are not familiar with nonstandard analysis, \citet[][App.~A]{DR2017} provide a review tailored to their statistical application.
\citet{NSAA97,NDV} provide thorough introductions.

We use $\NSE{}$ to denote the nonstandard extension map taking elements, sets, functions, relations, etc., to their nonstandard counterparts.
In particular, $\HReals$ and $\NSE{\Nats}$ denote the nonstandard extensions of the reals and natural numbers, respectively.
Given a topological space $(Y,T)$ and a subset $X \subseteq \NSE{Y}$,
let $\NS{\NSE{X}} \subseteq \NSE{X}$  denote the subset of near-standard elements (defined by the monadic structure induced by $T$)
and let $\ST : \NS{\NSE{Y}} \to Y$ denote the standard part map taking near-standard elements to their standard parts.
In both cases, the notation elides the underlying space $Y$ and the topology $T$, because the space and topology will always be clear from context.
As an abbreviation,
we write $^\circ x$ for $\ST(x)$
for atomic elements $x$.
For functions $f$, we write $^\circ f$ for the composition $x \mapsto \ST(f(x))$.

An internal probability space is a triple $(\Omega,\cF,P)$ such that
\begin{enumerate}
\item $\Omega$ is an internal set;
\item $\cF$ is an internal subalgebra of $\PowerSet{(\Omega)}$; and
\item $P: \cF\to \NSE{[0,1]}$ is an internal function such that $P(\emptyset)=0$, $P(\Omega)=1$ and $P(A\cup B)=P(A)+P(B)-P(A\cap B)$ whenever $A,B\in \cF$.
\end{enumerate}
Note that an internal probability space is hyperfinitely additive. 
Given an internal probability space $(\Omega,\cF,P)$,
let $(\Omega,\LoebAlgebraX{\cF}{P},\Loeb{P})$ denote the corresponding Loeb space, i.e., the completion of the unique extension of $P$ to the (usually external) $\sigma$-algebra generated by $\cF$.
Write $\Loeb{\cF}$ for $\LoebAlgebraX{\cF}{P}$ when the context is clear.
For two internal sets $A$ and $B$, let $\IntFuncs{A}{B}$ denote the set of all internal functions from $A$ to $B$.

\subsection{Existence of nonstandard matching priors} \label{SecExNSMatch}

The basic idea is to extend the statistical model and family of credible regions to the nonstandard universe,
and then restrict our attention to a hyperfinite parameter space $\TTheta$ satisfying $\Theta \subseteq \TTheta \subseteq \sTheta$.
The existence of such a set follows from saturation.
On this hyperfinite parameter space, we may apply a transfer of the result of \citeauthor{muller2016coverage}.
This approach produces an internal probability measure that is, informally, a nonstandard matching prior.
The bulk of the work, which we carry out in later subsections,
is to use regularity conditions to construct a corresponding standard matching prior for the standard model.

We begin by quoting the result of \citet{muller2016coverage},
expressed in our notation.
 
\begin{theorem}%
\label{muresultOrig}
  Fix a statistical model $\SModel$,
  level $1-\alpha \in (0,1)$, and
  family $\tau = \CRegions{\tau}{\pi}{\Priors}$ of $1-\alpha$ credible regions.
  Suppose $\Theta$ is finite and, for every $\theta \in \Theta$,
  $z_{\theta} \: \Priors\to [-1,1]$ defined for $\tau = \CRegions{\tau}{\pi}{\Priors}$
  is continuous with respect to total-variation distance on $\FPM{\Theta}$.
  Then there exists a matching prior for $\tau$.
\end{theorem}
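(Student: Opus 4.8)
The natural proof of this result --- the finite-$\Theta$ existence theorem of \citet{muller2016coverage} --- is a Brouwer fixed-point argument applied to the re-weighting map $\Q{\Theta}$ of \eqref{EqNMMap}. First observe that, since $\int_{[0,1]} A_{\pi,\theta}(u)\,\dee u = \int_{X}\int_{[0,1]} \charfunc{\tau_{\pi}(x,u)}(\theta)\,\dee u\,P_{\theta}(\dee x) = 1 - \int_{X}\frset{\pi}{\theta}{x}\,P_{\theta}(\dee x)$ by \cref{rpfunc}, the prior $\pi$ is a matching prior for $\tau$ in the sense of \cref{defnmatching} if and only if $z_{\theta}(\pi) \le 0$ for every $\theta \in \Theta$, with $z_{\theta}$ as in \eqref{EqDefZMap}; so it suffices to produce some $\pi^{*} \in \Priors$ with $z_{\theta}(\pi^{*}) \le 0$ for all $\theta$. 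When $\Theta$ is finite, $\Priors = \FPM{\Theta}$ is the standard simplex, the total-variation metric on it agrees up to a constant with the $\ell^{1}$ metric, and the hypothesis is exactly that each coordinate $z_{\theta}$ of the map \eqref{EqDefZMap} is continuous on this simplex. The plan is that $\Q{\Theta}$ nudges prior mass onto those $\theta$ at which $z_{\theta}(\pi) > 0$, i.e.\ where coverage currently falls short, so that a fixed point of $\Q{\Theta}$ can admit no such $\theta$.

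The one substantive ingredient is an exact identity: for every $\pi \in \Priors$,
\[
\sum_{\theta \in \Theta} \pi(\theta)\, z_{\theta}(\pi) = 0 .
\]
To establish it I would substitute $z_{\theta}(\pi) = \int_{X} [\frset{\pi}{\theta}{x} - \alpha]\,P_{\theta}(\dee x)$ and interchange the $\Theta$- and $X$-integrals, using the disintegration $\pi(\dee\theta)\,P_{\theta}(\dee x) = \Post{x}{\pi}(\dee\theta)\,\marg{\pi}(\dee x)$ underlying \eqref{stEOSUThs}; Fubini applies because $\frset{\pi}{\cdot}{\cdot}$ is product measurable and bounded. For $\marg{\pi}$-almost every $x$, \cref{rpfunc} and condition (ii) of \cref{credfamdefn} give $\int_{\Theta} \frset{\pi}{\theta}{x}\,\Post{x}{\pi}(\dee\theta) = 1 - \int_{[0,1]} \Post{x}{\pi}(\tau_{\pi}(x,u))\,\dee u = 1 - (1-\alpha) = \alpha$, so the inner integral equals $\alpha - \alpha = 0$; integrating against $\marg{\pi}$ yields the identity, the exceptional $\marg{\pi}$-null set of $x$ being harmless.

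With this in hand the proof assembles quickly. Writing $z^{+}_{\theta}(\pi) := \max\{z_{\theta}(\pi),0\}$, the map $\Q{\Theta}$ has nonnegative numerators and denominator $1 + \sum_{\theta} z^{+}_{\theta}(\pi) \ge 1$, so it sends the (compact, convex) simplex into itself, and it is continuous there because each $z_{\theta}$ is; Brouwer's theorem then yields a fixed point $\pi^{*}$. Setting $D := 1 + \sum_{\theta} z^{+}_{\theta}(\pi^{*}) \ge 1$, the fixed-point equation reads $(D-1)\pi^{*}(\theta) = z^{+}_{\theta}(\pi^{*})$ for every $\theta$. Suppose, toward a contradiction, that $z_{\theta^{*}}(\pi^{*}) > 0$ for some $\theta^{*}$; then $z^{+}_{\theta^{*}}(\pi^{*}) > 0$, which forces $D > 1$, and hence $z^{+}_{\theta}(\pi^{*}) > 0$ precisely when $\pi^{*}(\theta) > 0$. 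In particular $z_{\theta}(\pi^{*}) = z^{+}_{\theta}(\pi^{*}) > 0$ for every $\theta$ in the support of $\pi^{*}$, so $\sum_{\theta}\pi^{*}(\theta)\,z_{\theta}(\pi^{*}) > 0$, contradicting the identity. Therefore $z_{\theta}(\pi^{*}) \le 0$ for all $\theta$, i.e.\ $\pi^{*}$ is a matching prior for $\tau$. I expect the identity to be the only delicate point --- in particular, justifying the interchange of integrals and the use of the credible-region property, which holds only for $\marg{\pi}$-almost every $x$; the self-map verification, the appeal to Brouwer, and the fixed-point case analysis are all routine.
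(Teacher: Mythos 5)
The paper does not prove this statement: \cref{muresultOrig} is quoted verbatim from \citet{muller2016coverage} (``We begin by quoting the result of...''), and the same is true of the variant \cref{muresult} that the paper actually invokes. Your proposal supplies a self-contained proof, and it is correct; it is in fact a faithful reconstruction of the Brouwer fixed-point argument in the cited paper, using the same re-weighting map $\Q{\Theta}$ and the same balance identity $\sum_{\theta}\pi(\theta)\,z_{\theta}(\pi)=0$. The three ingredients --- the reformulation of the matching condition as $z_\theta(\pi)\le 0$ for all $\theta$, the exact identity via the disintegration $\pi(\dee\theta)P_\theta(\dee x)=\Post{x}{\pi}(\dee\theta)\,\marg{\pi}(\dee x)$ and the exact credibility property, and the fixed-point contradiction --- are all carried out correctly, and the continuity hypothesis enters exactly where it should (to make $\Q{\Theta}$ continuous so Brouwer applies).

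One point worth flagging explicitly rather than silently: you define $z^{+}_{\theta}(\pi):=\max\{z_{\theta}(\pi),0\}$, whereas the displayed definition immediately after \eqref{EqDefZMap} in the paper reads $z^{+}_{\theta}(\pi)=\min\{0,z_{\theta}(\pi)\}$. The paper's version makes $\Q{S}$ ill-behaved (numerators can go negative, the denominator is $\le 1$ and can vanish, and the intended ``push mass toward under-covered $\theta$'' intuition reverses) and is inconsistent with later uses, e.g.\ the lower bound on the denominator in the proof of \cref{LemmaZInjBern}, which requires $z^{+}_{\theta}\ge 0$. So the paper's displayed formula is a typo and your choice is the correct one; it would strengthen the write-up to say so explicitly rather than quietly adopting the other convention.

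Two small cosmetic remarks: the observation that $\FPM{\Theta}=\Priors$ for finite $\Theta$ and that total variation is (up to a constant) the $\ell^1$ metric on the simplex is worth keeping, since the hypothesis is phrased in terms of total variation on $\FPM{\Theta}$; and in the identity step, since $\Theta$ is finite and the integrand is bounded, the Fubini/Tonelli appeal is immediate without any delicacy, so you can state that part more flatly.
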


Inspecting the proof of this result, one observes that its authors, in fact, establish a formally stronger result:

\begin{theorem}%
\label{muresult}
  Fix a statistical model $\SModel$,
  level $1-\alpha \in (0,1)$, and
  family $\tau = \CRegions{\tau}{\pi}{\Priors}$ of $1-\alpha$ credible regions.
  Suppose $\Theta$ is finite and let $A$ be a nonempty subset of $\Priors$ such that $A$ is compact when viewed as a subset of $\Reals^{|\Theta|}$.
  Suppose $\Q{\Theta}(A)\subseteq A$ and \cref{assumptionztv} holds for $\tau, A$. Then there exists a matching prior for $\tau$ and, moreover, it belongs to $A$.
\end{theorem}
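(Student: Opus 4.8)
The plan is to prove \cref{muresult} by a fixed-point argument applied to the map $\Q{\Theta}$, essentially reorganizing the argument that \citet{muller2016coverage} use for \cref{muresultOrig}. First I would observe that, since $\Theta$ is finite, $\PM{\Theta}$ is a simplex in $\Reals^{|\Theta|}$, and $A$ is a nonempty compact convex-or-not subset with $\Q{\Theta}(A) \subseteq A$. To run a topological fixed-point theorem (Brouwer, or Schauder since we are finite-dimensional), I would want $A$ to be convex, or at least homeomorphic to a convex compact set; if the statement as given does not assume convexity, I would first check whether the relevant $A$ arising in the application is convex, or else pass to $\conv(A)$ and argue $\Q{\Theta}$ still maps it into itself (this may require the continuous-coverage hypothesis and the explicit form of $\Q{\Theta}$). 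The map $\Q{\Theta}$ is continuous on the set where the denominator $\sum_{t} \{\pi(t) + z_t^+(\pi)\}$ is nonzero: continuity of $z_\theta$ in total variation on $\FPM{\Theta}$ — which on a finite space is just continuity on all of $\PM{\Theta}$, i.e.\ \cref{assumptionztv} for $\tau, A$ — gives continuity of each $z_\theta^+ = \min\{0, z_\theta\}$, hence of the numerator and denominator.

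The key step is to rule out the denominator vanishing and then to interpret the fixed point. I would argue: if $\sum_t \{\pi(t) + z_t^+(\pi)\} = 0$ for some $\pi \in A$, then since each summand $\pi(t) + z_t^+(\pi) \le \pi(t) \le 1$ and they sum to zero while $\sum_t \pi(t) = 1$, we would need $\sum_t z_t^+(\pi) = -1$ with equality forced componentwise in a way that contradicts $z_t^+(\pi) = \min\{0, z_t(\pi)\} \ge -1$ unless... — here I would use that $z_t(\pi) = \int [\varphi - \alpha] P_t(\dee x) \ge -\alpha > -1$, so $z_t^+(\pi) > -1$ for every $t$, hence $\sum_t \{\pi(t)+ z_t^+(\pi)\} > 1 - |\Theta| \cdot 0$... more carefully, $\pi(t) + z_t^+(\pi) \ge \pi(t) - \alpha$, which can be negative, so I instead bound the total: $\sum_t \{\pi(t) + z_t^+(\pi)\} = 1 + \sum_t z_t^+(\pi) \ge 1 - \sum_t (-z_t^+(\pi))$, and I need $\sum_t (-z_t^+(\pi)) < 1$. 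This is where I expect the real work: one shows that the rejection probabilities cannot be simultaneously too large at all parameters for a credible region of level $1-\alpha$, using the defining credibility identity $\int \varphi(\pi, \cdot, x)\, \Post{x}{\pi}(\dee x)$-type averaging together with Fubini over the prior. This positivity-of-denominator lemma is the technical heart and is exactly what \citeauthor{muller2016coverage} establish; I would cite or reprove it.

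Granting continuity of $\Q{\Theta}$ on $A$ and $\Q{\Theta}(A)\subseteq A$ with $A$ (reduced to) compact convex, Brouwer's fixed-point theorem yields $\pi_0 \in A$ with $\Q{\Theta}(\pi_0) = \pi_0$. The final step is to read off the matching property: $\Q{\Theta}(\pi_0)(s) = \pi_0(s)$ for all $s$ means $\pi_0(s) + z_s^+(\pi_0) = c\, \pi_0(s)$ where $c = \sum_t \{\pi_0(t) + z_t^+(\pi_0)\}$; summing over $s$ gives $c = c$, vacuous, so instead I subtract to get $z_s^+(\pi_0) = (c-1)\pi_0(s)$ for all $s$. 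Summing: $\sum_s z_s^+(\pi_0) = c - 1 = \sum_s z_s^+(\pi_0)$, again vacuous — the real extraction is: $z_s^+(\pi_0)/\pi_0(s)$ is the same constant $c - 1 \le 0$ for every $s$ in the support, and if this constant were strictly negative then $z_s(\pi_0) < 0$, i.e.\ the coverage $\int A_{\pi_0, s}$ is strictly below $1-\alpha$ at every $s$, which contradicts the averaged credibility identity (the prior-average of the coverage deficit is nonnegative for a genuine credible region). Hence $c = 1$, so $z_s^+(\pi_0) = 0$, i.e.\ $z_s(\pi_0) \ge 0$ for all $s$, which is precisely the statement that $\tau_{\pi_0}$ is a $1-\alpha$ confidence set — a matching prior, lying in $A$. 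The main obstacle, as flagged, is the denominator-positivity / coverage-averaging lemma; everything else is a routine compactness-plus-Brouwer argument once that is in hand.
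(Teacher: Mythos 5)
Your overall plan---set up the M\"uller--Norets iteration map $\Q{\Theta}$, show it is a continuous self-map of a compact set, invoke a fixed-point theorem, and read the matching property off a fixed point---is the right one. Note that the paper itself gives no proof of this theorem; it simply asserts that inspecting the proof of \cref{muresultOrig} in \citet{muller2016coverage} yields this stronger statement, so you are reconstructing that argument from scratch.

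However, your final step has the sign backwards, and this is not cosmetic. Since $\frset{}{}{}$ is the \emph{rejection} probability, the matching-prior (confidence) condition at $\theta$ is $z_\theta(\pi) = \int_X[\frset{\pi}{\theta}{x} - \alpha]\,P_\theta(\dee x) \le 0$, not $\ge 0$. Yet from the fixed-point identity $z^+_s(\pi_0) = (c-1)\pi_0(s)$, with $c \le 1$ forced to $c = 1$ by the Fubini identity $\sum_t \pi_0(t)\,z_t(\pi_0) = 0$, you conclude $z_s(\pi_0) \ge 0$ for all $s$ and call this ``precisely the statement that $\tau_{\pi_0}$ is a $1-\alpha$ confidence set.'' It is the opposite, and your gloss of $z_s < 0$ as ``coverage strictly below $1-\alpha$'' is likewise reversed. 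The source of the trouble is the definition $z^+_\theta = \min\{0, z_\theta\}$ copied from the text around \cref{EqNMMap}: with $\min$, the numerator $\pi(s) + z^+_s(\pi)$ can be negative (take $\pi(s)$ small and $z_s(\pi)$ close to $-\alpha$), so $\Q{\Theta}$ need not even land in $\PM{\Theta}$ as the text claims, and---as your own algebra correctly shows---a fixed point would yield uniformly \emph{low} coverage, not uniformly high. Replacing $\min$ by $\max$ (the usual positive part) repairs both problems simultaneously: the numerator becomes nonnegative so $\Q{\Theta}: \PM{\Theta} \to \PM{\Theta}$ is well defined, $c \ge 1$, the same Fubini identity forces $c = 1$, and the fixed point yields $z_s(\pi_0) \le 0$ for all $s$, which is exactly the matching condition. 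A blind reconstruction should have caught this inconsistency rather than absorbing two compensating sign errors so that the conclusion comes out looking correct. You are also right that Brouwer needs $A$ convex (or homeomorphic to a ball) while the statement only assumes compactness, and you leave that unresolved; $\Q{\Theta}$-invariance of $\conv(A)$ does not follow from $\Q{\Theta}(A) \subseteq A$, so this remains a genuine gap in the argument as you have written it.
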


Note that this strengthens the result in two ways: 
(1) we need only have continuity on the smaller set $A$; and 
(2) we are guaranteed that there exists a matching prior in the smaller set $A$.

In order to state the main theorem in this section, we must introduce several nonstandard structures associated with a statistical model $\SModel$.
First, the set of priors is $\Priors$, and so the set of *priors is $\sPriors$.
The model $P$ is a measurable function from $\Theta$ to $\PM{X}$, and so
its extension $\NSE{P}$ is a *measurable function from $\sTheta$ to $\NSE{\PM{X}}$.
By the transfer principle, for every $\theta \in \sTheta$, $\NSE{P}(\theta)$ is an internal probability measure on the *measurable space $\NSE{(X,\BorelSets X)} = (\NSE{X},\NSE{\BorelSets{X}})$.
For all $\theta \in \sTheta$, we define $\NSE{P}_{\theta} = \NSE{P}(\theta)$.
By convention, we drop the star from the notation $\NSE{\!\!\int}$
for the nonstandard extension of the integral operator
$(f,\mu) \mapsto \int f \dee \mu$. 
For $N\in \NSE{\Nats}$, we write $\IVSpace{\HReals}{N}$ for the set of internal functions from $\{0,1,\dotsc, N-1\}$ to $\HReals$.

\begin{theorem}\label{hyperconfidence}
Fix a statistical model $\SModel$,
level $1-\alpha \in (0,1)$, and
family $\tau = \CRegions{\tau}{\pi}{\Priors}$ of $1-\alpha$ credible regions with
rejection probability function $\frset{}{}{}$.
Suppose \cref{assumptionztv} holds for $\tau, \Main$ for some $\Main\subseteq \Priors$. 
Then, for every hyperfinite set $T \subseteq \sTheta$ such that $\NSE{\Q{T}}(\Restrict{\NSE{\Main}}{T})\subseteq \Restrict{\NSE{\Main}}{T}$ and $\Restrict{\NSE{\Main}}{T}$ is a nonempty *compact set when viewed as a subset of $\IVSpace{\HReals}{|T|}$, 
there exists a *prior $\Pi \in \NSE{\Main}$, with $\Pi(T) = 1$, such that the following statement holds: 
\[\label{saeouNDOEU}
\forall \theta \in T,\ \NSE{z_{\theta}}(\Pi) \leq 0.
\]
\end{theorem}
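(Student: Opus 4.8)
The statement to be proved, \cref{hyperconfidence}, asks: given a hyperfinite $T$ with $\Theta \subseteq T \subseteq \sTheta$ (well, not quite — just $T \subseteq \sTheta$) such that $\NSE{\Q{T}}$ maps $\Restrict{\NSE{\Main}}{T}$ into itself and this set is a nonempty $^*$compact subset of $\IVSpace{\HReals}{|T|}$, there is a $^*$prior $\Pi \in \NSE{\Main}$ with $\Pi(T)=1$ and $\NSE{z_\theta}(\Pi) \le 0$ for all $\theta \in T$. The plan is to obtain this by \emph{transfer} of \cref{muresult}, applied inside the nonstandard universe to the hyperfinite ``finite'' parameter space $T$.

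\textbf{Step 1: Set up the transferred statement.} \cref{muresult} is a first-order statement (relative to the superstructure) of the form: for every statistical model with \emph{finite} parameter space, every level $1-\alpha$, every family of credible regions $\tau$, and every nonempty subset $A$ of the prior simplex that is compact in $\Reals^{|\Theta|}$, if $\Q{\Theta}(A) \subseteq A$ and \cref{assumptionztv} holds for $\tau,A$, then there is a matching prior for $\tau$ lying in $A$. Being first-order over the superstructure, it transfers: the $^*$-version asserts the same with ``finite'' replaced by ``hyperfinite'', $\Reals^{|\Theta|}$ by $\IVSpace{\HReals}{|T|}$, and all the statistical objects replaced by their internal $^*$-counterparts. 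I would first carefully identify the internal model $(\NSE{X},T,\Restrict{\NSE{P}}{T})$, the internal family of credible regions $\NSE{\tau}$ restricted to priors supported on $T$, the internal rejection probability function $\NSE{\varphi}$, and the internal map $\NSE{z_\theta}$ — checking that $\NSE{z_\theta}(\Pi)$ for $\Pi$ supported on $T$ agrees (by transfer of \cref{EqDefZMap}) with the internal integral $\NSE{\!\int}_{\NSE{X}} [\NSE{\varphi}_\Pi(\theta,x) - \alpha] \, \NSE{P}_\theta(\dee x)$, and that matching in the transferred sense is exactly \cref{saeouNDOEU}.

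\textbf{Step 2: Verify the hypotheses of the transferred theorem.} Take $A = \Restrict{\NSE{\Main}}{T}$, viewed as an internal subset of $\IVSpace{\HReals}{|T|}$. By hypothesis it is nonempty and $^*$compact, and by hypothesis $\NSE{\Q{T}}(A) \subseteq A$. The only remaining ingredient is the transferred continuous-coverage condition: $\NSE{z_\theta} : \NSE{\Priors} \to \NSE{[-1,1]}$ is $^*$continuous with respect to $^*$total-variation distance on $\NSE{\FPM{T}} \cap A$ — equivalently, $^*\cref{assumptionztv}$ holds for $\NSE{\tau},A$. But $^*\cref{assumptionztv}$ for $A = \Restrict{\NSE{\Main}}{T}$ is just the transfer of the hypothesis ``\cref{assumptionztv} holds for $\tau,\Main$'', restricted along the embedding to the hyperfinite subspace $T$; since \cref{assumptionztv} holds for $\tau,\Main$ by assumption, transfer delivers $^*\cref{assumptionztv}$ for $\NSE{\tau},\NSE{\Main}$, and one checks this restricts to $T$ and $A = \Restrict{\NSE{\Main}}{T}$. (A small point: one should phrase \cref{assumptionztv} as a property of the pair $(\tau,A)$ uniformly over models, so that transfer applies cleanly; alternatively, directly transfer \cref{muresult} with $A$ as a bound variable and feed in $\Restrict{\NSE{\Main}}{T}$.) Then the transferred \cref{muresult} yields an internal $\Pi \in \Restrict{\NSE{\Main}}{T}$, hence $\Pi \in \NSE{\Main}$ with $\Pi(T)=1$, such that $\NSE{z_\theta}(\Pi) \le 0$ for all $\theta \in T$ — which is exactly the conclusion.

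\textbf{Main obstacle.} The conceptual content is light — it is ``transfer \cref{muresult}'' — so the real work is \emph{bookkeeping}: (a) confirming that \cref{muresult} as stated really is a single first-order sentence in the language of the superstructure, with ``finite parameter space'', ``family of credible regions'', ``compact subset of $\Reals^{|\Theta|}$'', and ``\cref{assumptionztv} holds for $\tau,A$'' all expressible by bounded quantification over standard entities; and (b) matching up the internal objects produced by $\NSE{}$ with the hypotheses of \cref{hyperconfidence}, in particular checking that a $^*$prior on $T$ identified with an element of $\IVSpace{\HReals}{|T|}$ corresponds to an element of $\Restrict{\NSE{\Main}}{T}$ precisely when its extension lies in $\NSE{\Main}$, and that $\NSE{\Q{T}}$ is the internal map given by the transferred formula \cref{EqNMMap}. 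Once the transfer is set up correctly, the proof is a one-line invocation; I expect essentially no further analysis is needed in this particular theorem (the genuinely hard analytic work — pushing $\Pi$ down to a standard matching prior — happens in later results using \cref{pdclose}).
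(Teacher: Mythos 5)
Your proposal is correct and follows essentially the same route as the paper: transfer \cref{muresult} and apply it to the hyperfinite model $\sSModelalt{T}$ with $A = \Restrict{\NSE{\Main}}{T}$. The one check you flag but do not carry out — that \cref{assumptionztv} for $(\NSE\tau,\NSE\Main)$ passes to the restriction $(\xi, \Restrict{\NSE{\Main}}{T})$ together with the identity $\frsetalt{\pi}{\theta}{x}=\frset{\extn{\pi}{\Theta}}{\theta}{x}$ needed to pull $\NSE{z_\theta}$ back to the full model — is precisely the content the paper isolates as \cref{restrictionpreserves}, and is transferred along with everything else.
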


Our proof relies on restricting a family of credible regions to a \scare{submodel}:
For a function $f: Y \to Z$, write $\Restrict{f}{Y'}$ for the restriction of $f$ to $Y' \subseteq Y$.  A \defn{submodel} of $\SModel$ 
is any statistical model of the form $\SModelalt{\fTheta}$ for $\fTheta \in \BorelSets{\Theta}$. Supermodels are defined as expected.

\begin{definition} \label{DefSubModel}
  Let $\SModelalt{\fTheta}$ be a submodel of $\SModel$,
  and let $\tau = \CRegions{\tau}{\pi}{G}$ be a family of $1-\alpha$ credible regions in $\SModel$.
  Then \defn{the restriction of $\tau$ to $\SModelalt{\fTheta}$} is
  the family $\tau' = \CRegions{\tau'}{\pi}{\Restrict{G}{\fTheta}}$
  and, for all $x \in X$, $u \in [0,1]$, and $\pi \in \Restrict{G}{\fTheta}$,
  \[\label{aoeuduinhd2}
  \tau'_{\smash{\pi}}(x,u) = \tau_{\extn{\pi}{\Theta}}(x,u) \cap \fTheta.
  \]
\end{definition}

The extension and restriction of *credible regions can be defined similarly. 
We now prove the simple (standard) fact that if \cref{assumptionztv} holds for a family,
then it holds for any restriction to a submodel.

\begin{lemma}\label{restrictionpreserves}
  Let $\SModelalt{\fTheta}$ be a submodel of $\SModel$,
  let $\tau = \CRegions{\tau}{\pi}{G}$ be a family of $1-\alpha$ credible regions in $\SModel$
  with rejection probability function $\frset{}{}{}$,
  and let $\tau' = \CRegions{\tau}{\pi}{\Restrict{G}{\fTheta}}$ be the restriction of $\tau$ to $\SModelalt{\fTheta}$.
  Then $\tau'$ is a family of $1-\alpha$ credible regions in $\SModelalt{\fTheta}$
  and, moreover, its rejection probability function $\frsetalt{}{}{}$ satisfies
  $\frsetalt{\pi}{\theta}{x} = \frset{\extn{\pi}{\Theta}}{\theta}{x}$
  for all $\theta \in \fTheta$, $x \in X$, and $\pi \in \Restrict{G}{\fTheta}$.
  Finally, if \cref{assumptionztv} holds for the pair $\tau$, $\Main$ in $\SModel$, then it holds for the pair $\tau'$, $\Restrict{\Main}{\fTheta}$ in $\SModelalt{\fTheta}$. 
\end{lemma}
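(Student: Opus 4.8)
The plan is to unwind the definitions; write $\extn{\pi}{\Theta}\in\Priors$ for the zero-extension of $\pi\in\PM{\fTheta}$ and $e\colon\PM{\fTheta}\to\Priors$ for the map $\pi\mapsto\extn{\pi}{\Theta}$. The only point that is not purely formal is the observation that a prior concentrated on $\fTheta$ has, almost surely, a posterior concentrated on $\fTheta$; this is what permits passing between $\SModel$ and the submodel $\SModelalt{\fTheta}$. I would split the argument into the three assertions of the lemma: (a) $\tau'$ is a family of $1-\alpha$ credible regions in $\SModelalt{\fTheta}$; (b) its rejection probability function satisfies $\frsetalt{\pi}{\theta}{x}=\frset{\extn{\pi}{\Theta}}{\theta}{x}$ for $\theta\in\fTheta$; and (c) \cref{assumptionztv} descends to the pair $\tau',\Restrict{\Main}{\fTheta}$.

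First I would dispatch (b) together with the measurability half of (a). By \eqref{aoeuduinhd2}, for $\theta\in\fTheta$ one has $\charfunc{\tau'_{\pi}(x,u)}(\theta)=\charfunc{\tau_{\extn{\pi}{\Theta}}(x,u)\cap\fTheta}(\theta)=\charfunc{\tau_{\extn{\pi}{\Theta}}(x,u)}(\theta)$, so $(\theta,x,u)\mapsto\charfunc{\tau'_{\pi}(x,u)}(\theta)$ on $\fTheta\times X\times[0,1]$ is the restriction of the product-measurable map attached to $\tau$, hence product measurable; integrating out $u$ yields $\frsetalt{\pi}{\theta}{x}=\frset{\extn{\pi}{\Theta}}{\theta}{x}$ and, with it, product measurability of $\frsetalt{}{}{}$ on $\fTheta\times X$. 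For the credibility half of (a), I would first note $\margalt{\pi}=\marg{\extn{\pi}{\Theta}}$, because $\extn{\pi}{\Theta}$ puts all its mass on $\fTheta$ and agrees with $\pi$ there, and then fix the version of the submodel posterior to be $\Postalt{x}{\pi}=\Restrict{\Post{x}{\extn{\pi}{\Theta}}}{\fTheta}$, which is legitimate because \eqref{stEOSUThs} for $\SModelalt{\fTheta}$ is exactly the restriction to $\fTheta$ of \eqref{stEOSUThs} for $\SModel$ evaluated at $\extn{\pi}{\Theta}$ (on the exceptional set where this restriction is not a probability measure, define $\Postalt{x}{\pi}$ arbitrarily). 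By \citet[Thm.~1.31]{schervish1995theory}, $\Post{x}{\extn{\pi}{\Theta}}$ is dominated by $\extn{\pi}{\Theta}$, so $\Post{x}{\extn{\pi}{\Theta}}(\fTheta)=1$ for $\marg{\extn{\pi}{\Theta}}$-almost all $x$; for such $x$, using \eqref{aoeuduinhd2}, $\int_{[0,1]}\Postalt{x}{\pi}(\tau'_{\pi}(x,u))\,\dee u=\int_{[0,1]}\Post{x}{\extn{\pi}{\Theta}}(\tau_{\extn{\pi}{\Theta}}(x,u))\,\dee u=1-\alpha$, the last equality because $\tau$ is a family of $1-\alpha$ credible regions in $\SModel$ and $\extn{\pi}{\Theta}\in\Priors$. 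Since $\margalt{\pi}=\marg{\extn{\pi}{\Theta}}$, this is (a).

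Finally, for (c): for $\theta\in\fTheta$ the map in \eqref{EqDefZMap} built from $\tau'$ in $\SModelalt{\fTheta}$ equals $z_{\theta}\circ e$, where $z_{\theta}$ here denotes the map in \eqref{EqDefZMap} for $\tau$ in $\SModel$, since $(\Restrict{P}{\fTheta})_{\theta}=P_{\theta}$ and $\frsetalt{\pi}{\theta}{x}=\frset{\extn{\pi}{\Theta}}{\theta}{x}$ by (b). The map $e$ is a total-variation isometry---extending a measure by zero leaves the supremum over measurable sets unchanged---and it sends $\FPM{\fTheta}$ into $\FPM{\Theta}$ (the support is preserved) and $\Restrict{\Main}{\fTheta}$ into $\Main$ (by the very definition of $\Restrict{\Main}{\fTheta}$), hence $\FPM{\fTheta}\cap\Restrict{\Main}{\fTheta}$ into $\FPM{\Theta}\cap\Main$, on which $z_{\theta}$ is total-variation continuous by hypothesis. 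A composition of a continuous map with an isometry is continuous, so $z_{\theta}\circ e$ is total-variation continuous on $\FPM{\fTheta}\cap\Restrict{\Main}{\fTheta}$, which is \cref{assumptionztv} for $\tau',\Restrict{\Main}{\fTheta}$. The one step that is more than bookkeeping is the choice of posterior version in (a) together with the appeal to \citet{schervish1995theory} forcing $\Post{x}{\extn{\pi}{\Theta}}$ to concentrate on $\fTheta$; I expect that to be the sole spot a careful reader will want spelled out, the remainder being definitional.
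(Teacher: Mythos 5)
Your proof is correct and follows essentially the same route as the paper: the characteristic-function identity $\charfunc{\tau'_{\pi}(x,u)}(\theta)=\charfunc{\tau_{\extn{\pi}{\Theta}}(x,u)}(\theta)$ for $\theta\in\fTheta$ gives measurability and the rejection-probability identity; concentration of $\Post{x}{\extn{\pi}{\Theta}}$ on $\fTheta$ (via domination of posterior by prior) gives credibility; and $z'_\theta = z_\theta\circ e$ together with total-variation continuity of the zero-extension map $e$ gives Condition (CC). Your phrasing of the last part as a composition with an isometry is a little slicker than the paper's explicit preimage chase, but the underlying argument is the same.
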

\begin{proof}
See \cref{Appdef43}. 
\end{proof}

We are now in a position to give a proof sketch whose details could easily be filled in by a reader versed in nonstandard analysis.
For the benefit of readers less familiar with nonstandard analysis, we provide a proof in \cref{Appdef43} where the details are made explicit.

\begin{proof}[sketch of \cref{hyperconfidence}]
  Let $T \subseteq \sTheta$ be nonempty hyperfinite such that $\NSE{\Q{T}}(\Restrict{\NSE{\Main}}{T})\subseteq \Restrict{\NSE{\Main}}{T}$ and $\Restrict{\NSE{\Main}}{T}$ is nonempty and *compact.
  It follows from $T$ being nonempty and hyperfinite that $T \in \NSE{\BorelSets{\Theta}}$ and $\sSModelalt{T}$ is a *submodel of $\sSModel$.
  Let $\NSE{\tau}$ be the extension of $\tau$ and let $\xi = \CRegions{\NSE{\tau}}{\pi}{\NSE{\PM{T}}}$
  be the *restriction of $\NSE{\tau}$ to the *submodel $\sSModelalt{T}$.
  By assumption, \cref{assumptionztv} holds for $\tau, \Main$.
  By the transfer principle, the transfer of \cref{assumptionztv} holds for $\NSE{\tau}, \NSE{\Main}$ in $\sSModel$. 
  Recall that $\Restrict{\NSE{\Main}}{T}=\{\Pi\in \NSE{\PM{T}} : \extn{\Pi}{\NSE{\Theta}}\in \NSE{\Main}\}$.
  The transfer of \cref{restrictionpreserves} implies that the *restriction $\xi$ is a *family of credible regions and
  that the transfer of \cref{assumptionztv} holds for $\xi, \Restrict{\NSE{\Main}}{T}$ in $\sSModelalt{T}$.

  Thus, by transfer of \cref{muresult}, taking $A=\Restrict{\NSE{\Main}}{T}$,
  there exists a *matching *prior $\Pi' \in \NSE{\PM{T}}$ for $\xi$.
  The *prior $\Pi'$ extends to a *prior $\Pi$ on $\NSE{\Theta}$ with $\Pi(T) = 1$.
  By transfer of the definition of matching priors
  and the rejection probability function identity of \cref{restrictionpreserves},
  this implies \cref{saeouNDOEU}, completing the proof.
\end{proof}

\subsection{Push-down measures and Wasserstein metric} \label{SecWassPuss}

In this section, we start by discussing how to construct standard probability measures from internal (hyperfinitely additive) probability measures. There exists a rich literature on this subject (see, e.g., \citep{anderson87,renderpush,aldazrepresent,rossinfinite}).
Such a standard probability measure is usually constructed via the standard part map, and is called the \defn{push-down} of the internal probability measure.
We then show that the *Wasserstein metric between an internal probability measure and the nonstandard extension of its push-down is infinitesimal.
We begin with the definition of a push-down measure:

\begin{definition}\label{defnpushdown}
  Let $Y$ be a Hausdorff space with Borel $\sigma$-algebra $\BorelSets Y$,
  let $\nu$ be an internal probability measure defined on $(\NSE{Y},\NSE{\BorelSets Y})$,
  and let
  \[
  \cC=\{C\subseteq Y \st \ST^{-1}(C)\in \Loeb{\NSE{\BorelSets Y}} \}.
  \]
  The push-down of $\nu$, denoted $\pd{\nu}$, is the countably additive measure on $\cC$ given by
  $\pd{\nu}(C)=\Loeb{\nu}(\ST^{-1}(C))$.
\end{definition}

For general Hausdorff Borel measurable space $(Y,\BorelSets Y)$, the push-down measure $\pd{\nu}$ may not be a probability measure.
In fact, if $\Loeb{\nu}(\NS{\NSE{Y}})=0$ then $\pd{\nu}$ is the null measure on $(Y,\BorelSets Y)$.
However, when $Y$ is compact, the following theorem guarantees that $\pd{\nu}$ is a probability measure on $(Y,\BorelSets Y)$.

\begin{theorem}%
\label{pushdown}
  Let $Y$ be a compact Hausdorff space equipped with Borel $\sigma$-algebra $\BorelSets {Y}$,
  and let $\nu$ be an internal probability measure defined on $(\NSE{Y}, \NSE{\BorelSets{Y}})$.
  Then the push-down measure $\pd{\nu}$ of $\nu$ is the completion of a regular Borel probability measure on $(Y,\BorelSets Y)$.
\end{theorem}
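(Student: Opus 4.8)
\textbf{Proof plan for \cref{pushdown}.}

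The plan is to reduce the statement to two classical facts: (i) on a compact Hausdorff space $Y$, the standard part map $\ST : \NSE{Y} \to Y$ is defined on all of $\NSE{Y}$ (every point of $\NSE{Y}$ is near-standard, by compactness and saturation), and it is $\Loeb{\NSE{\BorelSets{Y}}}$-measurable; and (ii) the image of a Loeb probability measure under a measurable map is a countably additive probability measure, and on a compact Hausdorff space the resulting Borel measure is automatically regular. So the first thing I would do is record that $\NS{\NSE{Y}} = \NSE{Y}$ when $Y$ is compact, hence $\Loeb{\nu}(\NS{\NSE{Y}}) = \Loeb{\nu}(\NSE{Y}) = 1$, which already rules out the degenerate case discussed immediately before the theorem.

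Next I would verify that $\pd{\nu}$, as defined on the $\sigma$-algebra $\cC = \{C \subseteq Y : \ST^{-1}(C) \in \Loeb{\NSE{\BorelSets{Y}}}\}$ by $\pd{\nu}(C) = \Loeb{\nu}(\ST^{-1}(C))$, is a well-defined countably additive probability measure: $\cC$ is a $\sigma$-algebra because $\ST^{-1}$ commutes with complements and countable unions, $\pd{\nu}(Y) = \Loeb{\nu}(\NSE{Y}) = 1$, and countable additivity is inherited from countable additivity of the Loeb measure $\Loeb{\nu}$ together with the fact that $\ST^{-1}$ preserves disjointness. The key measure-theoretic input here is that $\ST^{-1}(C) \in \Loeb{\NSE{\BorelSets{Y}}}$ for every Borel $C \subseteq Y$, so that $\BorelSets{Y} \subseteq \cC$; this is a standard consequence of the fact that for a closed set $F \subseteq Y$ one has $\ST^{-1}(F) = \bigcap_{U \supseteq F \text{ open}} \NSE{U}$, an intersection that can be replaced by a countable one using that $\Loeb{\nu}$ is a (complete) measure, so $\ST^{-1}(F)$ is Loeb measurable, and then one propagates through the Borel hierarchy. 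Restricting $\pd{\nu}$ to $\BorelSets{Y}$ then gives a Borel probability measure, and $\cC$ contains the $\pd{\nu}$-completion of $\BorelSets{Y}$ because any $\pd{\nu}$-null set's preimage sits inside a Loeb-null preimage of a Borel set.

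Finally I would establish regularity. Outer regularity with respect to open sets and inner regularity with respect to closed sets follow from the corresponding properties of the Loeb measure: for a Borel $C$ and $\epsilon > 0$, by regularity of $\Loeb{\nu}$ on the Loeb $\sigma$-algebra one finds an internal set trapped between $\ST^{-1}(C)$ up to $\Loeb{\nu}$-measure $\epsilon$, and then one uses the correspondence between internal sets and (countable intersections/unions of) nonstandard extensions of open and closed sets in $Y$, pushing down via $\ST$; here compactness is used again to guarantee that $\ST$ maps internal sets to compact (hence closed) sets, since $\ST(A)$ is the standard part of an internal, hence *compact, set. Inner regularity with respect to compact sets then coincides with inner regularity with respect to closed sets because closed subsets of the compact space $Y$ are compact. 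This yields that $\pd{\nu}$ is the completion of a regular Borel probability measure, as claimed.

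The main obstacle I anticipate is the careful bookkeeping in showing $\BorelSets{Y} \subseteq \cC$ and, simultaneously, getting regularity: both rest on the non-trivial fact that $\ST^{-1}$ of a closed set is Loeb measurable and is well-approximated from inside by $\NSE{}$-images and from outside by nonstandard extensions of open sets. All of this is standard Loeb-measure theory (as in the references cited just before the definition), so I would expect the actual proof to cite Anderson's construction and the Loeb-measure regularity theorem rather than reprove them, and to spend its real effort only on the reduction and on checking that $\cC$ contains the completion of $\BorelSets{Y}$.
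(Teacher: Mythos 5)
The paper does not actually prove \cref{pushdown}; it is stated without proof and the sentence immediately preceding \cref{defnpushdown} cites the relevant literature (Anderson, Render, Aldaz, Ross). You correctly anticipate this at the end of your plan, and your sketch is a reasonable reconstruction of the argument that those references give, so there is nothing in the paper's own treatment to compare against.

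That said, one step in your sketch is stated with the wrong mechanism. You claim the (possibly uncountable) intersection $\ST^{-1}(F)=\bigcap_{U\supseteq F\text{ open}}\NSE{U}$ can be replaced by a countable one ``using that $\Loeb{\nu}$ is a complete measure.'' Completeness alone does not do this. The actual driver is $\kappa$-saturation applied to the downward-directed family $\{\NSE{U}\}$: any internal $A\supseteq\ST^{-1}(F)$ must contain some $\NSE{U_0}$, which yields the outer regularity identity $\Loeb{\nu}^{*}(\ST^{-1}(F))=\inf_{U}\SP{\nu(\NSE{U})}$; dually, any internal $B\subseteq\ST^{-1}(F^{c})=\bigcup_{C\subseteq F^{c}\text{ closed}}\NSE{C}$ is contained in some $\NSE{C_0}$, giving $\Loeb{\nu}_{*}(\ST^{-1}(F^{c}))=\sup_{C}\SP{\nu(\NSE{C})}=1-\inf_{U}\SP{\nu(\NSE{U})}$. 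From these two saturation facts one extracts a countable $\bigcap_{n}\NSE{U_n}$ and $\bigcup_{n}\NSE{C_n}$ sandwiching $\ST^{-1}(F)$ with equal Loeb measure, and only now does completeness of $\Loeb{\nu}$ close the argument. Presenting it as a consequence of completeness hides where the real work is. A related slip: you justify that $\ST(A)$ is compact ``since $A$ is internal, hence $*$compact''; internal subsets of $\NSE{Y}$ need not be $*$closed, so $*$compactness of $A$ is not automatic. Compactness of $\ST(A)$ is true for every internal $A$ consisting of near-standard points, but the proof is again a saturation argument on open covers, not a transfer of $*$compactness.

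Finally, your plan shows only that $\cC$ \emph{contains} the completion of $\BorelSets{Y}$, whereas the statement asserts that $\pd{\nu}$ \emph{is} the completion, which also requires the reverse inclusion. The reverse inclusion follows from the regularity you establish (approximate $\ST^{-1}(C)$ by internal sets from inside and outside, push them down to compact and open subsets of $Y$ sandwiching $C$ with negligible $\pd{\nu}$-gap), but as written your regularity step is logically downstream of ``the measure is the completion,'' so the order of argument should be: first establish inner and outer regularity of $\pd{\nu}$ on $\cC$ directly from the Loeb-regularity sandwich, then deduce $\cC$ equals the completion.
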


It is natural to hope that the ``distance" between an internal probability measure and the nonstandard extension of its push-down measure is small.
The following example shows that the *total-variation distance between these two internal probability measures may be large.

\begin{example}
  Let $\nu$ be an internal probability measure on $(\NSE{[0,1]},\NSE{\BorelSets {[0,1]}})$ concentrating on some infinitesimal $\epsilon>0$.
  Then $\pd{\nu}$ is a standard probability measure on $([0,1],\BorelSets {[0,1]})$ concentrating on $\set{0}$.
  Thus, the *total variation distance between $\nu$ and $\NSE{\pd{\nu}}$ is $1$.
\end{example}

However, under moderate conditions, the *Wasserstein metric between an internal probability measure and its push-down is small. The details of the proof of this section's main result are somewhat technical and so are deferred to \cref{appsecdef44}, but may be of independent interest. The main result needed in this paper is:

\begin{theorem}\label{pdclose}
  Suppose $Y$ is a compact metric space with Borel $\sigma$-algebra $\BorelSets Y$.
  Let $\Pi\in \NSE{\PM{Y}}$.
  Then $\NSE{W}(\Pi, \NSE{\pd{\Pi}})\approx 0$.
\end{theorem}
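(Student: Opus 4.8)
The plan is to bound $\NSE{W}(\Pi, \NSE{\pd{\Pi}})$ by an arbitrary standard $\epsilon > 0$, which forces the distance to be infinitesimal. The key structural fact about compact metric spaces I would exploit is that $Y$ is totally bounded, so for each standard $\epsilon > 0$ there is a finite $\epsilon$-net $\{y_1, \dots, y_k\}$ and an induced finite Borel partition $\{A_1, \dots, A_k\}$ of $Y$ with $\mathrm{diam}(A_i) < \epsilon$ and $y_i \in A_i$. First I would use this partition to build an explicit coupling of $\Pi$ with $\NSE{\pd{\Pi}}$ on $\NSE{Y} \times \NSE{Y}$ that moves essentially no mass over a distance exceeding $\epsilon$, and then estimate its cost via transfer of the definition of $W_1$.

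The coupling I have in mind is the standard ``common refinement / diagonal'' construction: on the $\ST^{-1}(A_i)$ pieces (which are Loeb-measurable by \cref{pushdown} and its proof), transport is purely internal within a set of $\NSE{}$-diameter controlled by $\epsilon$. More precisely, let $B_i = \ST^{-1}(A_i) \subseteq \NS{\NSE{Y}}$; these are $\Loeb{\NSE{\BorelSets Y}}$-measurable, pairwise disjoint, and (since $Y$ is compact, so $\Loeb{\Pi}(\NS{\NSE{Y}}) = 1$ by \cref{pushdown}) cover $\NSE{Y}$ up to $\Loeb{\Pi}$-null sets, with $\Loeb{\Pi}(B_i) = \pd{\Pi}(A_i)$ by definition of the push-down. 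The plan is: (a) first approximate $\Pi$ in $\NSE{W}$ by an internal measure $\Pi'$ supported on $\{\NSE{y_1}, \dots, \NSE{y_k}\}$ by pushing, within each internal piece of a suitable internal partition refining the $B_i$'s, all mass to the representative $\NSE{y_i}$; since each such piece sits inside a set of $\NSE{}$-diameter $< \NSE{\epsilon} + (\text{infinitesimal})$, we get $\NSE{W}(\Pi, \Pi') \lesssim \epsilon$. I would need a transferred version of the elementary bound ``moving each point by at most $\delta$ changes $W_1$ by at most $\delta$,'' which is immediate by transfer. (b) Independently, $\NSE{\pd{\Pi}}$ is (the $\NSE{}$-extension of) a genuine Borel measure on $Y$; its $\NSE{}$-mass of $\NSE{A_i}$ is $\NSE{(\pd{\Pi}(A_i))} = \NSE{(\Loeb{\Pi}(B_i))}$, and by the same discretization applied to the standard measure $\pd{\Pi}$ — then transferred — we get a discrete internal measure $\Pi''$ on $\{\NSE{y_1}, \dots, \NSE{y_k}\}$ with $\NSE{W}(\NSE{\pd{\Pi}}, \Pi'') \lesssim \epsilon$ and with weights $\NSE{(\pd{\Pi}(A_i))}$.

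The main obstacle, and the step I would spend the most care on, is reconciling the weights of $\Pi'$ and $\Pi''$: $\Pi'$ assigns to $\NSE{y_i}$ the internal mass $\Pi(\text{its piece})$, which is some hyperreal $p_i$, while $\Pi''$ assigns the mass $\NSE{(\Loeb{\Pi}(B_i))} = \NSE{(\pd{\Pi}(A_i))}$, which is the $\NSE{}$-image of the standard part ${}^\circ(\text{something like } p_i)$. These differ, but only by an infinitesimal per atom: $\Loeb{\Pi}(B_i) = {}^\circ(\Pi(\text{piece}))$ essentially by definition of the Loeb measure (modulo the care that $B_i$ is a countable-type Loeb set, not an internal set, so one approximates it internally from inside/outside and uses that the overflow is infinitesimal). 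Summing over the $k$ atoms — $k$ standard, hence the sum of $k$ infinitesimals is infinitesimal — and invoking, via transfer, the Lipschitz dependence of $W_1$ on the marginal weights for measures on a fixed finite set of bounded diameter, gives $\NSE{W}(\Pi', \Pi'') \approx 0$. Then the $\NSE{}$-triangle inequality for $\NSE{W}$ (transfer of the triangle inequality for $W_1$) yields $\NSE{W}(\Pi, \NSE{\pd{\Pi}}) \le \NSE{W}(\Pi,\Pi') + \NSE{W}(\Pi',\Pi'') + \NSE{W}(\Pi'', \NSE{\pd{\Pi}}) \lesssim 2\epsilon + (\text{infinitesimal})$. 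Since $\epsilon > 0$ was an arbitrary standard real, $\NSE{W}(\Pi, \NSE{\pd{\Pi}})$ is smaller than every standard positive real, i.e., $\NSE{W}(\Pi, \NSE{\pd{\Pi}}) \approx 0$, as claimed. I would also double-check the boundedness of the diameter of $Y$ (automatic, $Y$ compact) so that all $W_1$ quantities in play are finite and the Lipschitz-in-weights estimate has a finite constant; and I would make sure the $\epsilon$-net manipulations are carried out for \emph{standard} $\epsilon$ before transferring, so no illicit external object enters an internal statement.
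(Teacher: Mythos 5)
Your proposal is correct, but it follows a genuinely different route from the paper's. You work on the primal (coupling) side of $W_1$: you discretize both $\Pi$ and $\NSE{\pd\Pi}$ onto a finite $\epsilon$-net to obtain $\Pi'$ and $\Pi''$, bound $\NSE{W}(\Pi,\Pi')$ and $\NSE{W}(\NSE{\pd\Pi},\Pi'')$ by roughly $\epsilon$ via explicit internal transport maps, show that $\Pi'$ and $\Pi''$ have infinitesimally close atom weights (via $\Loeb{\Pi}(\ST^{-1}(A_i))=\pd\Pi(A_i)$ and inner regularity of the Loeb measure), and finish with the triangle inequality. The paper instead works on the dual (Kantorovich--Rubinstein) side: it fixes a *1-Lipschitz $F$, discretizes $\int F\,d\Pi$ and $\int F\,d\NSE{\pd\Pi}$ over a partition of $Y$ into small $\pd\Pi$-\emph{continuity} sets (\cref{cpartition}), subtracts off the common constant $F(y_1)$ to avoid the issue that $F$ may take infinite values, and observes that the remaining sum is a finite sum of near-standard numbers times infinitesimals.

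The two approaches trade different technicalities. The paper's use of continuity sets makes $\Pi(\NSE{V_i})\approx\NSE{\pd\Pi}(\NSE{V_i})$ available directly for \emph{internal} sets $\NSE{V_i}$, so no inner/outer Loeb approximation is needed; its price is the $F(y_1)$-cancellation device, which is needed because $F(y_i)$ can be an unbounded hyperreal. Your approach avoids any issue with unbounded integrands but must contend with the external character of $\ST^{-1}(A_i)$. You flag this correctly: for a generic Borel $A_i$, $\Pi(\NSE{A_i})$ and $\NSE{\pd\Pi}(\NSE{A_i})$ need \emph{not} be close (the atom-at-infinitesimal example in the paper shows this can fail badly), so one cannot simply take the internal partition $\{\NSE{A_i}\}$; instead one must pass to internal sets $C_i'\subseteq\ST^{-1}(A_i)$ with $\Pi(C_i')\approx\Loeb\Pi(\ST^{-1}(A_i))$, collect the leftover internal remainder of infinitesimal mass, and absorb its transport cost using that $Y$ has finite diameter. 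You should also include the small but necessary estimate that, on a fixed finite support of standard size in a bounded space, $W_1$ is controlled by total variation, so infinitesimal per-atom weight discrepancies give infinitesimal $\NSE{W}(\Pi',\Pi'')$. With those two details spelled out, your argument closes; it is a sound and somewhat more ``measure-theoretic'' alternative to the paper's duality-based proof.
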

\begin{proof}
See \cref{appsecdef44}.
\end{proof}

Subsequent to the completion of this work, \citet{starmeasure} extended \cref{cpartition,pdclose} to hold for bounded $\sigma$-compact metric spaces. 

\subsection{Matching priors for compact parameter spaces} \label{SecMainThmRes}
In this section, we establish a standard result using push-down techniques under regularity conditions. Note that the parameter space, $\Theta$, is compact. 
Let $\Priors$ be equipped with Wasserstein metric.
As $\Theta$ is compact, the Wasserstein metric on $\Priors$ metrizes weak convergence. Thus,  $\Priors$ equipped with Wasserstein metric is a compact space by Prokhorov's theorem.

Fix $\alpha \in (0,1)$ and a family $\tau = (\tau_{\pi})_{\pi \in \Priors}$ 
of $1-\alpha$ credible regions with rejection probability function $\frset{}{}{}$. 
The following proposition is the main technical result:

\begin{proposition}\label{phipush}
  Suppose \cref{ccont} of \cref{assumptionjcAlt} holds for some open subset $\oMain \subseteq \Priors$ with respect to Wasserstein metric.
  Let $\Pi\in \sPriors$ and let $\pd{\Pi}$ denote its push-down.
  Suppose $\pd{\Pi}\in \oMain$.
  Then, for all $\theta\in \Theta$,
  \[
  \int \NSE{\frset{\Pi}{\theta}{x}}\NSE{\Model}_{\theta}(\dee x)\approx \int \NSE{\frset{\NSE{\pd{\Pi}}}{\theta}{x}}\NSE{\Model}_{\theta}(\dee x).
  \]
\end{proposition}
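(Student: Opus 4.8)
\textbf{Proof plan for \cref{phipush}.}
The plan is to exploit the joint continuity hypothesis (\cref{ccont} of \cref{assumptionjcAlt}) together with \cref{pdclose}, which tells us that $\NSE{W}(\Pi,\NSE{\pd{\Pi}})\approx 0$. Fix $\theta\in\Theta$ and write $\pi_0 = \pd{\Pi}\in\oMain$. The strategy mirrors the proof of \cref{zcontweak}: first truncate the integral over $X$ to a compact set, then use uniform continuity of $\frset{\cdot}{\theta}{\cdot}$ on a compact neighbourhood of $\pi_0$ in $K\times\oMain$, and finally transfer everything to the nonstandard universe.

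First I would pick an arbitrary standard $\epsilon>0$ and choose a compact $K\subseteq X$ with $P_\theta(K)>1-\epsilon/3$; since $\frset{}{}{}$ takes values in $[0,1]$ (so $\abs{\frset{\cdot}{\theta}{\cdot}-\alpha}\le 1$), transfer gives $\NSE{P}_\theta(\NSE{K})>1-\epsilon/3$ and hence both $\NSE{\!\int}\NSE{\frset{\Pi}{\theta}{x}}\NSE{P}_\theta(\dee x)$ and $\NSE{\!\int}\NSE{\frset{\NSE{\pi_0}}{\theta}{x}}\NSE{P}_\theta(\dee x)$ differ from their restrictions to $\NSE{K}$ by at most $\epsilon/3$ (again mirroring \cref{IneqL341}). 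Next, since $\pi_0\in\oMain$ and $\oMain$ is open, choose a closed ball $\closure{B}\subseteq\oMain$ around $\pi_0$ of some standard radius $r_0>0$; $\closure{B}$ is compact (Prokhorov), so $K\times\closure{B}$ is compact and by \cref{ccont} the map $\frset{\cdot}{\theta}{\cdot}$ is uniformly continuous there. Thus there is a standard $\delta\in(0,r_0)$ with $\abs{\frset{\pi}{\theta}{x}-\frset{\pi_0}{\theta}{x}}<\epsilon/3$ whenever $x\in K$ and $W(\pi,\pi_0)<\delta$. Transferring this statement: $\abs{\NSE{\frset{\Pi'}{\theta}{x}}-\NSE{\frset{\NSE{\pi_0}}{\theta}{x}}}<\epsilon/3$ for all $x\in\NSE{K}$ and all $\Pi'\in\sPriors$ with $\NSE{W}(\Pi',\NSE{\pi_0})<\delta$.

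Now apply this with $\Pi'=\Pi$. By \cref{pdclose}, $\NSE{W}(\Pi,\NSE{\pi_0})\approx 0<\delta$, so the transferred uniform-continuity bound applies on $\NSE{K}$, giving $\abs[\big]{\NSE{\!\int}_{\NSE{K}}\NSE{\frset{\Pi}{\theta}{x}}\NSE{P}_\theta(\dee x) - \NSE{\!\int}_{\NSE{K}}\NSE{\frset{\NSE{\pi_0}}{\theta}{x}}\NSE{P}_\theta(\dee x)}<\epsilon/3$ (this is the nonstandard analogue of \cref{IneqL342}, and needs $\NSE{P}_\theta(\NSE{K})\le 1$). Combining the three $\epsilon/3$ estimates by the triangle inequality shows $\abs[\big]{\NSE{\!\int}\NSE{\frset{\Pi}{\theta}{x}}\NSE{P}_\theta(\dee x) - \NSE{\!\int}\NSE{\frset{\NSE{\pi_0}}{\theta}{x}}\NSE{P}_\theta(\dee x)}<\epsilon$. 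Since $\epsilon>0$ was an arbitrary standard real, the difference is infinitesimal, which is exactly the claim $\int\NSE{\frset{\Pi}{\theta}{x}}\NSE{\Model}_\theta(\dee x)\approx\int\NSE{\frset{\NSE{\pd{\Pi}}}{\theta}{x}}\NSE{\Model}_\theta(\dee x)$.

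The one genuinely delicate point — and the place I'd be most careful — is the transfer of the uniform-continuity statement: I need the modulus of continuity $\delta$ to be a \emph{standard} quantity depending only on $\epsilon$, $K$, $\closure{B}$, and $\theta$, and then to be able to feed the nonstandard object $\Pi$ (rather than a standard prior) into the transferred $\forall$-statement. This works precisely because the transferred sentence quantifies over all $\Pi'\in\sPriors$, and $\Pi$ is such an element with $\NSE{W}(\Pi,\NSE{\pi_0})<\delta$ by \cref{pdclose}; the role of \cref{pdclose} is exactly to certify that this hypothesis is met. A secondary subtlety is that $\pd{\Pi}\in\oMain$ is needed not just to invoke \cref{ccont} but to guarantee that a ball $\closure{B}$ of standard radius around $\pi_0$ still lies inside $\oMain$; this is where the openness of $\oMain$ is used.
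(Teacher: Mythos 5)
Your proof is correct, but it takes a genuinely different route from the paper's. The paper's argument goes through Loeb measure theory: it invokes \cref{bintegral} to replace each internal integral $\int \NSE{\varphi}\,\NSE{P}_\theta(\dee x)$ by the Loeb integral $\int \SP(\NSE{\varphi})\,\Loeb{\NSE{P}_\theta}(\dee x)$ of the standard part; then, using \cref{phiscts} (the $S$-continuity lemma, which is itself proved by a pointwise transfer of the continuity hypothesis) together with \cref{pdclose}, it shows $\SP(\NSE{\frset{\Pi}{\theta}{x}}) = \SP(\NSE{\frset{\NSE{\pd{\Pi}}}{\theta}{x}})$ for every nearstandard $x$, and observes that $\Loeb{\NSE{P}_\theta}(\NS{\NSE{X}}) = 1$ since $\theta$ is standard, so the two Loeb integrals coincide exactly. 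Your proof instead parallels the purely standard argument of \cref{zcontweak}: truncate to a standard compact $K\subseteq X$ to bound the tails, exploit compactness of $K\times\closure{B}$ to get a standard modulus $\delta$ of uniform continuity, transfer the resulting $\forall$-statement, and plug in $\Pi$ using \cref{pdclose} to certify that the $\delta$-hypothesis holds.

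The trade-offs: your route avoids Loeb measure theory entirely (no \cref{bintegral}, no separate $S$-continuity lemma) and makes visible the way $\epsilon$, $K$, and $\delta$ interact, at the cost of a slightly longer three-term estimate that must be checked at the level of transferred sentences. The paper's route isolates the $S$-continuity into \cref{phiscts} (reusable elsewhere), absorbs the tightness of $P_\theta$ into the single fact $\Loeb{\NSE{P}_\theta}(\NS{\NSE{X}}) = 1$, and then the integral comparison is a one-liner from \cref{bintegral}. The substantive content — openness of $\oMain$ buying a standard-radius ball around $\pd{\Pi}$, compactness of $\Priors$ via Prokhorov, the role of \cref{pdclose} as the certificate that $\Pi$ is in the range where continuity of $\frset{}{}{}$ bites — is identical in both. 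One cosmetic slip in your write-up: the $\alpha$ in ``$\abs{\frset{\cdot}{\theta}{\cdot}-\alpha}\le 1$'' is a carry-over from the proof of \cref{zcontweak} (which treats $z_\theta$); here you only need $\frset{}{}{}\in[0,1]$.
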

\begin{proof}
See \cref{appsecdef45} for the proof as well as additional technical lemmas.
\end{proof}

We are now ready to prove our main theorem, which ensures the existence of matching-prior for compact parameter spaces under sufficient conditions:

\begin{proof} [of \cref{WeakThmMainExistenceResult}]
  Let $T\in \NSE{\FS}$ be a hyperfinite subset of $\NSE{\Theta}$ that contains $\Theta$ as a subset.
  The existence of such $T$ is guaranteed by the saturation principle.
  By \cref{copen} of \cref{assumptionjcAlt}, there exists a nonempty open set $U\subseteq \Priors$ such that $\Main\subseteq U\subseteq \closure{U} \subseteq \oMain$.       
  As $\FPM{\Theta}$ is a dense subset of $\Priors$ under the Wasserstein distance, we know that 
  \[
  \closure{U}\cap \FPM{\Theta}\neq \emptyset.
  \]
  Let $\ssetPM{T}{\NSE{\Theta}}=\{\Pi\in \sPriors : \Pi(T)=1\}$. Note $\ssetPM{T}{\sTheta} \subseteq \NSE{\FPM{\Theta}}$.
  Then, 
  $\NSE{\closure{U}}\cap \ssetPM{T}{\NSE{\Theta}}$ is a nonempty *compact subset of $\ssetPM{T}{\NSE{\Theta}}$ under the *Wasserstein metric. 
  Thus, $\Restrict{\NSE{\closure{U}}}{T}$ is a nonempty *compact subset of $\NSE{\PM{T}}$ under the *Wasserstein metric, 
  which implies that $\Restrict{\NSE{\closure{U}}}{T}$ is *compact when viewed as a subset of $\IVSpace{\HReals}{|T|}$.
  By the transfer principle,  \cref{cimage} of \cref{assumptionjcAlt} implies
  $\NSE{\Q{T}}(\Restrict{\NSE{\closure{U}}}{T})\subseteq \Restrict{\NSE{\Main}}{T}\subseteq \Restrict{\NSE{\closure{U}}}{T}$.
  By \cref{jcimplyztv}, we know that \cref{assumptionztv} holds for $\tau,\closure{U}$.
  Thus, by \cref{jcimplyztv} and \cref{hyperconfidence}, there is a internal prior $\Pi \in \NSE{\closure{U}}$ with $\Pi(T)=1$ such that
  \[
  \forall t\in T, \ \int \NSE{\frset{\Pi}{t}{x}}\NSE{\Model}_{t}(\dee x)\leq \alpha .
  \]
  As $\Theta\subseteq T$, we have $\int \NSE{\frset{\Pi}{\theta}{x}}\NSE{\Model}_{\theta}(\dee x)\leq \alpha$ for all $\theta\in \Theta$.
  Let $\pd{\Pi}$ be the push-down of $\Pi$.
  As $\Theta$ is compact, $\pd{\Pi}$ defines a probability measure on $\Theta$.
  By \cref{pdclose}, we have $\NSE{W}(\Pi, \NSE{\pd{\Pi}})\approx 0$.
  As $\closure{U}$ is compact, by \cref{gencompact}, we know that $\pd{\Pi}\in \NSE{\closure{U}}\subseteq \oMain$.  
  By \cref{phipush}, we have
  \[
  \int \NSE{\frset{\Pi}{\theta}{x}}\NSE{\Model}_{\theta}(\dee x)\approx \int \NSE{\frset{\NSE{\pd{\Pi}}}{\theta}{x}}\NSE{\Model}_{\theta}(\dee x),
  \]
  for all $\theta\in \Theta$.
  As $\Theta\subseteq T$, we know that $\int \NSE{\frset{\NSE{\pd{\Pi}}}{\theta}{x}}\NSE{\Model}_{\theta}(\dee x)\leq \alpha$ for all $\theta\in \Theta$.
  By the transfer principle, we have $\int \frset{\pd{\Pi}}{\theta}{x}\Model_{\theta}(\dee x)\leq \alpha$ for all $\theta\in \Theta$, completing the proof.
\end{proof}

\section{Sufficient conditions yielding exact matching priors}
\label{secmorenote}

\subsection{Overview}

We return to the problem discussed in Section \ref{SecApplHead}: how to prove the existence of  matching priors for families of credible regions that approximate standard families. We begin by stating an analogue of \cref{ThmMatchingPriorSimpleCred}  for highest-posterior density regions (\cref{ThmMatchingPriorSimple} in \cref{secmpcbhpd}). We then prove a similar results under weaker continuity requirements (\cref{sec:weakerconditions}). Finally, our results on highest-posterior density regions require the existence of something that we call a system of prior perturbation; we give a very general construction of such a system in \cref{SubsecExtExist}.

\subsection{Extension to highest-posterior density regions}\label{secmpcbhpd}

We begin by defining highest-posterior density (HPD) regions formally and introducing additional assumptions necessary to establish an analogue of \cref{ThmMatchingPriorSimpleCred} for HPD regions. Let $\PD{\Theta} \subseteq \Priors$ denote the set of absolutely continuous distributions.

\begin{definition} [Highest posterior density regions] \label{DefUsualHPD} 
  Fix a level $1-\alpha \in (0,1)$
  and absolutely continuous distribution $\mu \in \PD{\Theta}$ with density $\rho$.
  Let $R(d) = \set{ \theta \in \Theta \st \rho(\theta) \geq d }$ be the region with density at least $d$
  and let
  \[ \label{hpdLdefn}
  \hpdL{\mu} \defas \sup \set[\big]{ d \ge 0 \st \mu\parens[\big]{R(d)} \geq 1- \alpha }.
  \]
  The \defn{$1-\alpha$ $\mu$-highest posterior density (HPD) region} is the subset $\smash{R(\hpdL{\mu})}$ of $\Theta$ with characteristic function $\hpdreg{\mu}{\alpha}{}$.
  A family $\CRegions{\tau}{\pi}{F}$ of $1-\alpha$ credible regions
  is a \defn{family of $1-\alpha$ HPD regions} if,
  for all $\pi \in F$ and
  $\marg{\pi}$-almost all $x \in X$,
  the posterior $\Post{x}{\pi}$ is an element of $\PD{\Theta}$
  and, for almost all $u \in [0,1]$,
  $\charfunc{\tau_{\pi}(x,u)} = \hpdreg{\Post{x}{\pi}}{\alpha}{}$, i.e., 
  $\tau_{\pi}(x,u)$ is the $1-\alpha$ $\Post{x}{\pi}$-HPD region.
\end{definition}

Note that $\hpdL{\mu}$ is not affected by changes to the density $\rho$ on a Lebesgue measure zero set. On the other hand,
the characteristic function $\hpdreg{\mu}{\alpha}{}$ 
\emph{is} affected by changes to the density $\rho$ on a Lebesgue measure zero set.
In order to specify well-defined frequentist
properties of HPD regions, it is necessary to assume that we have fixed a particular posterior density for every combination of prior and sample. 
For many typical problems, there is a canonical choice. 
For example, there is often a unique continuous version of the density.

One obstruction to studying HPD regions is that most priors do not yield absolutely continuous posteriors, and thus no HPD region can be defined.
To sidestep this issue, we allow the statistician to specify, for every prior $\pi$ and tolerance $\gamma >0$, 
an absolutely continuous measure $\pi_{\gamma}$ that is ``close'' to $\pi$.
In order to give the statistician as much flexibility as possible, we allow this ``perturbed'' prior to live on a larger parameter space in some supermodel.

We recall some notation and basic facts about norms. For $1 \leq p \leq \infty$, let $\norm{\cdot}_{p}$ denote the usual $L^{p}$ norm on $\Reals^n$ and $\Reals^n \to \Reals$ with respect to Lebesgue measure.  
We sometimes abuse notation and write $\norm{\cdot}_{p}$ to denote the metric induced by the $L^{p}$ norm. For $p=2$, we drop the subscript in the notation. %

We recall a fundamental result about the Wasserstein distance. Given metric spaces $(\Omega, d)$, $(\Omega', d')$,
recall that the \defn{Lipschitz norm} of a function $f \: \Omega \to \Omega'$ is
  \[
  \lipnorm{f} \defas \sup_{x \neq y} \frac{ d'(f(x),f(y)) }{ d(x,y) }.
  \]
For $C > 0$, let $\Lip{\Omega}{C} \defas \set{ f \: \Omega \to \Reals \st \lipnorm{f} \leq C }$ 
denote the set of real-valued $C$-Lipschitz functions,
  and let $\Lip{\Omega}{} \defas \bigcup_{C > 0} \Lip{\Omega}{C}$ denote the set of all real-valued Lipschitz functions.
Finally, let $(\Omega, \BorelSets \Omega, d)$ be a metric Borel measurable space,
  and let $\mu,\nu \in \PM{\Omega}$. If $\mu$ and $\nu$ have bounded support, then a classical result by Kantorovich and Rubinstein shows that
\[
  W_{1}(\mu,\nu)=\sup \set[\Big]{ \abs[\Big]{ \int f\dee \mu-\int f \dee \nu}  \st f\in \Lip{Y}{1} }.
\]
For this paper, the metric underlying $W_1$ is the usual Euclidean metric on $\Reals^{d}$.

\begin{definition}\label{peturbsys}
Let $\SModel$ be a submodel of $\SModelfullalt$.
A \defn{system of prior perturbations on $\Theta$ in $\SModelfullalt$}
assigns to every prior distribution $\pi \in \Priors$ and real $\gamma > 0$
a  \defn{perturbed} prior $\pt{\pi}{\gamma} \in \AltPriors$
such that $W_1(\pi,\pt{\pi}{\gamma}) < \gamma$. 
Moreover, there is assumed to be some function $D : (0,\infty) \to (0,\infty)$ such that,
for all pairs of priors $\mu,\nu \in \Priors$ and reals $\gamma > 0$,
we have 
\[ \label{IneqPerturbSysWass}
W_1(\pt{\mu}{\gamma},\pt{\nu}{\gamma}) \le D(\gamma)\, W_1(\mu,\nu).
\]
\end{definition}

A system of prior perturbations exists trivially: we may simply take $\fTheta = \Theta$ and $\Model'=\Model$ and then $\pt{\pi}{\gamma} = \pi$ for all $\pi$ and $\gamma$. 
This trivial system suffices for approximating credible balls.
In order to approximate HPD regions, however, we require the following assumption to hold of the system of prior perturbations:

\begin{assumption}\label{assumptionPerturb}
For every prior $\pi \in \Priors$ and real $\gamma > 0$, the perturbed prior $\pt{\pi}{\gamma}$ is absolutely continuous with Lipschitz density function $p_{\pi,\gamma}$. Moreover, there is assumed to be some function $D' : (0,\infty) \to (0,\infty)$ such that,
for all pairs of priors $\mu,\nu \in \Priors$ and reals $\gamma > 0$,
we have 
\[ \label{IneqPerturbSysSup}
\Linfnorm{}{p_{\mu,\gamma} - p_{\nu,\gamma} } \leq D'(\gamma) W_{1}(\mu,\nu)
\]
and
\[\label{Perturbupbound}
\Linfnorm{}{p_{\mu,\gamma}} \leq D'(\gamma).
\] 
\end{assumption}

We observe that, to find a system of perturbed priors that satisfies 
\cref{assumptionPerturb}, 
it is only necessary to construct a system of perturbed priors for $\gamma$ in some small interval $(0, \gamma_{0}]$; we can then take $\pi_{\gamma} = \pi_{\gamma_{0}}$ for all $\gamma > \gamma_{0}$.

For the case of absolute continuity with respect to Lebesgue measure, 
\emph{every} model satisfying \cref{assumptionde} with a compact parameter space $\Theta \subseteq \Reals^{d}$ can be embedded in a supermodel with a compact parameter space $\fTheta \supseteq \Theta$, $\fTheta \subseteq \Reals^{d}$, that both satisfies \cref{assumptionde}
and yields a system of prior perturbations satisfying \cref{assumptionPerturb}. See \cref{SubsecExtExist} for details, including an explicit construction.

 We now state a generalization of our main application that holds also for HPD regions:

\begin{theorem}\label{ThmMatchingPriorSimple}
Fix $\alpha \in (0,1)$ and 
let $B \in \set{\tBall,\tHPD}$ be the family of credible regions to be extended,
where $B=\tBall$ specifies credible balls (\cref{DefUsualInterval}) and $B=\tHPD$ specifies HPD regions (\cref{DefUsualHPD}).
  Let $\SModel$ be a submodel of a statistical model $\SModelfullalt$ that satisfies \cref{assumptionde},
  and suppose $\Theta, \fTheta$ are compact subsets of $\Reals^{d}$.
  Fix any system of prior perturbations and, in the case $B=\tHPD$, assume the system meets \cref{assumptionPerturb}.
  Assume also that we fix versions of posterior distributions and posterior densities, when the latter exist.
  Let $\epsilon \in (0,\alpha)$.
  There exists
  a family of $1-\alpha$ credible regions $\tau = \CRegions{\tau}{\pi}{\Priors}$,
  real $\gamma \in (0, \epsilon)$, 
  and prior $\pi_0 \in \Priors$
  such that 
  \begin{enumerate}
  \item $\pi_0$ is a matching prior for $\tau$ in $\SModel$;
  \item In the case of $B=\tBall$, for all $\pi \in \Priors$ and $x \in X$,
    the support of $\tau_{\pi}$ at $x$
    is contained in the $\epsilon$-fattening of $\{\theta\in \Reals^{d}: \norm{\theta-M(\Post{x}{\pt{\pi}{\gamma}})} \leq \cbLv{\Post{x}{\pt{\pi}{\gamma}}}{\alpha-\epsilon}\}$
    and
    the support of $\credset{I}{\Post{x}{\pt{\pi}{\gamma}}}{\alpha}{}$
    is contained in
    the $\epsilon$-fattening of the support of $\tau_{\pi}$ at $x$; and
    \item In the case of $B=\tHPD$, for all $\pi \in \Priors$ and $x \in X$, let $\rho$ denote the density function for $\Post{x}{\pt{\pi}{\gamma}}$.
    Let $\Theta_{\tau}, \Theta_{\alpha}$ be the supports of $\tau_{\pi}$ and $\hpdreg{\Post{x}{\pt{\pi}{\gamma}}}{\alpha}{}$, respectively.
    Then, we have 
    \[\label{HPDContainment}
    \essinf_{\theta \in \Theta_{\tau}} \rho(\theta) \geq  \essinf_{\theta \in \Theta_{\alpha-\epsilon}} \rho(\theta)   - \epsilon \lipnorm{\rho}.
    \]
    
  \end{enumerate}
\end{theorem}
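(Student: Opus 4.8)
The plan is to reduce \cref{ThmMatchingPriorSimple} to an application of \cref{WeakThmMainExistenceResult} (equivalently, the push-down machinery of \cref{sec:existenceproof}), applied to the family of perturbed credible regions constructed in \cref{DefContFamilyCredibleMain}. First I would fix the base family $\rtau$: in the case $B = \tBall$, take $\rtau^{\alpha}$ to be a family of relaxed $(1-\alpha)$ credible balls (\cref{DefSimpleCredInt}) with slope $\beta$ to be chosen; in the case $B = \tHPD$, take the analogous relaxed-HPD family constructed in \cref{SecSimpleExamples} (which exists precisely because \cref{assumptionPerturb} supplies Lipschitz posterior densities on the supermodel $\SModelfullalt$). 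In both cases the base family's acceptance probability function $\bapf[\alpha]{\beta}{\pt{\pi}{\gamma}}{\theta}{x}$ is $\beta$-Lipschitz in $\theta$; and because (a) the map $\pi \mapsto \pt{\pi}{\gamma}$ is Wasserstein-Lipschitz with constant $D(\gamma)$, (b) \cref{assumptionde} makes the conditional density jointly Lipschitz and bounded (so the posterior map $(x,\mu) \mapsto \Post{x}{\mu}$ is suitably continuous — the key routine computation, deferred to \cref{minorlem}-type lemmas), and (c) the relaxed-ball / relaxed-HPD construction depends continuously on the posterior, the composite rejection probability function of the $(\delta,\eta)$-perturbed family is jointly continuous in $(x,\pi)$ under the Wasserstein metric. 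That is exactly \cref{assumptionjc}, so \cref{ThmMainExistenceResult} (via \cref{WeakThmMainExistenceResult}) furnishes a matching prior $\pi_{0}$, giving conclusion (1).

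Next I would verify the containment statements (2) and (3), which are the geometric payoff of the perturbation construction. By \cref{DefContFamilyCredibleMain}, $\gfinalset{\pi}{x} = \max(0, \papf{\pi}{\cdot}{x} - R(x,\pi))$, and $\papf{\pi}{\cdot}{x}$ is an average of $\bapf[\alpha-z]{\beta}{\pt{\pi}{\gamma}}{\cdot}{x}$ over $z \in [\delta\eta,\delta]$. The support of $\gfinalset{\pi}{x}$ is therefore contained in the support of $\papf{\pi}{\cdot}{x}$, which in turn is contained in the support of $\bapf[\alpha-\delta]{\beta}{\pt{\pi}{\gamma}}{\cdot}{x}$ (the largest level in the averaging window, hence the largest support). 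For $B=\tBall$ the relaxed ball at level $1-(\alpha-\delta)$ and slope $\beta$ has support within an $\ffB{\cdot}{}$-distance $1/\beta$ of the corresponding ordinary $1-(\alpha-\delta)$ credible ball about $M(\Post{x}{\pt{\pi}{\gamma}})$; choosing $\beta$ large enough that $1/\beta < \epsilon/2$ and $\delta < \epsilon$ and $\gamma < \epsilon$ small, and invoking $W_1(\pi,\pt{\pi}{\gamma}) < \gamma$ together with Lipschitz continuity of the mean and of $\cbL{\cdot}$ in the density (again routine, from \cref{assumptionde}), gives the first $\epsilon$-fattening inclusion of (2). The reverse inclusion — the ordinary $1-\alpha$ credible ball sits inside the $\epsilon$-fattening of $\mathrm{supp}\,\tau_{\pi}$ — follows because the post-averaging correction $R(x,\pi)$ is small (of order $\delta$, controlled via $\corr{\pi}{x}$ and the Lipschitz slope), so $\gfinalset{\pi}{x}$ is positive on a region only slightly smaller than the relaxed ball at level $1-\alpha$, which itself contains the ordinary $1-\alpha$ ball. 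For $B=\tHPD$ the same bookkeeping yields \cref{HPDContainment}: the essential infimum of $\rho$ over $\Theta_{\tau}$ drops below that over the ordinary $1-(\alpha-\epsilon)$ HPD region by at most the Lipschitz slope $\lipnorm{\rho}$ times the $\epsilon$-scale fattening incurred by the relaxation parameter $\beta$ and the perturbation $\gamma$.

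Finally I would collect the parameter choices: set $\gamma, \delta, \eta$ and $\beta$ (the explicit recipe is \cref{EqExpParam}) so that $1/\beta$, $\delta$, $D(\gamma)\gamma$, and the induced shifts are all $< \epsilon$, and so that $\delta < \alpha$ and $\epsilon < \alpha$ keep all levels in $(0,1)$; then conclusions (1)–(3) hold simultaneously for this $\tau = \CRegions{\tau}{\pi}{\Priors}$, $\gamma$, and $\pi_0$. The main obstacle I anticipate is not conclusion (1), which is a black-box application of the main theorem once continuity is checked, but rather the continuity check itself in the HPD case: showing that the relaxed-HPD acceptance probability function depends jointly continuously on $(x,\pi)$ under $W_1$ requires \cref{assumptionPerturb} in an essential way (to guarantee a Lipschitz density on the supermodel and to control $\Linfnorm{}{p_{\mu,\gamma} - p_{\nu,\gamma}}$ by $W_1(\mu,\nu)$), and threading these sup-norm density bounds through the level-set map $d \mapsto R(d)$ to obtain Wasserstein-continuity of the region — rather than mere measure-continuity — is the delicate part; this is precisely why \cref{SecSimpleExamples} is the longest section of the supplement, and I would defer those verifications there.
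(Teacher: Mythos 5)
Your proposal follows the paper's argument closely: you pass to the $(\gamma,\delta,\eta)$-perturbed family built from the relaxed credible-ball or relaxed-HPD base family, establish joint continuity of the rejection probability to invoke \cref{ThmMainExistenceResult}, and then derive the support/density relations from the same lemmas (\cref{basicsupportresultlemma}, \cref{ballcontain}, \cref{ballsupportresult}, \cref{relaxedhpddensprop}) with the parameter schedule of \cref{EqExpParam}. One small caveat: the reverse inclusion in part~(2) does not rely on $R(x,\pi)=O(\delta)$ --- the paper proves only the crude bound $R(x,\pi)\le\alpha$ in \cref{minorlem}, and the $\beta^{-1}$-scale fattening in \cref{ballsupportresult} comes entirely from the $\beta$-Lipschitz slope of the relaxed acceptance function, not from smallness of the post-averaging correction $R$.
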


Part (2) relates the \emph{support} of the credible ball regions, 
while Part (3) relates the \emph{densities} of the points in the HPD regions.
The first of these is easy to interpret: 
the supports of our approximate credible ball regions are almost the same as the usual credible balls in a fairly strong sense.
The density relationship in Part (3) is a natural analogue for HPD regions:
HPD regions are defined by choosing those points in $\Theta$ that have the highest posterior density,
and so we compare the threshold below which points are excluded.
Note that the second containment relationship in Part (2) does in fact hold for HPD regions for many textbook scenarios (though perhaps not for $\pi_0$). 
In particular, if the posterior density is reasonably far from being flat in high-density regions, 
Inequality~\eqref{HPDContainment} immediately implies a containment relationship similar to the second one in Part (2).
  We present further discussion on roadblocks to finding matching priors for HPD regions in \cref{secrelation}.

\subsection{Weaker conditions for approximate credible balls and highest-posterior density regions}
\label{sec:weakerconditions}

\cref{ThmMatchingPriorSimple}
requires the model densities to be uniformly Lipschitz and the log densities to be bounded (\cref{assumptionde}). 
Here we show that these same results hold under the assumption that the 
posterior map $(x,\pi) \to \Post{x}{\pi}$ is sufficiently continuous;
we again elide dependencies on the statistical model $\SModel$ (and supermodels).
Recall that $\Theta$ is a compact subset of $\Reals^{d}$ of positive Lebesgue measure and $\Theta \subseteq \fTheta$.
Assume $\fTheta \subseteq \Reals^d$ is also compact.

\begin{assumption}[Lipschitz posterior map] \label{AssumptionModelSmoothness}
  The map
  \[
  (x, \pi) \mapsto \Post{x}{\pi} \: (X \times \AltPriors, \dX \otimes W_{1} ) \to (\AltPriors, W_{1})
  \]
  is Lipschitz.
\end{assumption}

We now show that \cref{assumptionde} implies \cref{AssumptionModelSmoothness}.

\begin{lemma}\label{implication4}
  Suppose $\Theta$ is compact.  Then \cref{assumptionde} implies \cref{AssumptionModelSmoothness}. 
\end{lemma}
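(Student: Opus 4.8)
The plan is to read the continuity of the posterior map straight off the Bayes formula, using that under \cref{assumptionde} the density is uniformly Lipschitz and its normalising constant is uniformly pinched away from $0$ and $\infty$. It is convenient to prove the general statement: if a statistical model has a compact parameter space and a density that is $\mC$-Lipschitz with bounded logarithm, then its posterior map is jointly Lipschitz in the data and the prior; applying this to the supermodel $\SModelfullalt$ (whose parameter space $\fTheta$ is compact and whose density satisfies \cref{assumptionde}) then yields \cref{AssumptionModelSmoothness} on $\AltPriors$. Accordingly I write $\Theta$ for the (compact) parameter space and $\cd$ for the density. Fix $\mC$ with $\cd$ being $\mC$-Lipschitz on $(\Theta \times X, \norm{\cdot} \otimes \dX)$, fix $M < \infty$ with $e^{-M} \le \cd(\theta,x) \le e^{M}$ for all $(\theta,x)$ (possible since $\log\cd$ is bounded), and let $D$ be the diameter of $\Theta$. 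Then $Z(x,\pi) \defas \int_{\Theta} \cd(\theta,x)\,\pi(\dee\theta) \in [e^{-M}, e^{M}]$ for all $x,\pi$, so the posterior admits the everywhere-defined version $\Post{x}{\pi}(B) = Z(x,\pi)^{-1}\int_{B}\cd(\theta,x)\,\pi(\dee\theta)$, which we use throughout.

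First I would dualise $W_1$. As all relevant measures are supported on the bounded set $\Theta$, Kantorovich--Rubinstein duality gives $W_1(\mu,\nu) = \sup\{\abs{\int f\dee\mu - \int f\dee\nu} : f \in \Lip{\Theta}{1}\}$; since subtracting a constant from $f$ does not change the right-hand side and a $1$-Lipschitz function on a set of diameter $D$ has oscillation at most $D$, one may further insist $\norm{f}_{\infty} \le D$. By the triangle inequality, $W_1(\Post{x}{\pi},\Post{x'}{\pi'}) \le W_1(\Post{x}{\pi},\Post{x'}{\pi}) + W_1(\Post{x'}{\pi},\Post{x'}{\pi'})$, so it suffices to bound the first summand by $L_1\,\dX(x,x')$ and the second by $L_2\,W_1(\pi,\pi')$ for constants $L_1,L_2$ depending only on $\mC,M,D$; then $\max(L_1,L_2)$ works for the posterior map.

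For each summand, fix $f\in\Lip{\Theta}{1}$ with $\norm{f}_{\infty}\le D$ and write the difference of integrals as $A/Z - A'/Z'$, bounded by $\abs{A/Z - A'/Z'} \le \abs{A-A'}/Z + \abs{A'}\,\abs{Z-Z'}/(ZZ')$ with $Z,Z'\ge e^{-M}$ and $\abs{A'}\le De^{M}$. For the data perturbation (fixed $\pi$), Lipschitzness of $\cd$ in its second argument gives $\abs{A-A'}\le \norm{f}_{\infty}\mC\,\dX(x,x')\le D\mC\,\dX(x,x')$ and $\abs{Z-Z'}\le \mC\,\dX(x,x')$, so $L_1 = D\mC(e^{M}+e^{3M})$ suffices. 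For the prior perturbation (fixed $x$), note $\theta\mapsto f(\theta)\cd(\theta,x)$ is Lipschitz with constant at most $D\mC+e^{M}$ (a $1$-Lipschitz function bounded by $D$ times an $\mC$-Lipschitz function bounded by $e^{M}$), so Kantorovich--Rubinstein gives $\abs{A-A'}\le (D\mC+e^{M})\,W_1(\pi,\pi')$, and $\theta\mapsto\cd(\theta,x)$ being $\mC$-Lipschitz gives $\abs{Z-Z'}\le \mC\,W_1(\pi,\pi')$; substituting yields $L_2 = (D\mC+e^{M})e^{M} + D\mC\,e^{3M}$.

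I do not anticipate a real obstacle: this is a routine perturbation estimate for a ratio of integrals, with every constant explicit. The only points that need a moment's care are the legitimacy of working with the everywhere-defined posterior (which rests on the boundedness of $\log\cd$) and the reduction of the Kantorovich--Rubinstein supremum to uniformly bounded test functions (which rests on the boundedness of $\Theta$); both follow at once from compactness and \cref{assumptionde}.
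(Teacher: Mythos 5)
Your proof is correct and takes a cleaner route than the paper's, although the core estimate is the same. The paper proceeds in two steps: it first establishes plain (non-quantitative) continuity of $(x,\pi)\mapsto P^{x}_{\pi}$ on all of $X\times\mathcal{M}(\Theta)$ via a weak-convergence argument with continuity sets; it then proves the Lipschitz bound only on $X\times\mathcal{D}(\Theta)$, introducing densities $g_\mu,g_\nu$ for the priors so that the posteriors admit explicit density formulas; and it finishes by appealing to density of $\mathcal{D}(\Theta)$ in $\mathcal{M}(\Theta)$ under $W_1$. You observe that the density representation is superfluous: the estimate $|A-A'|\le \lipnorm{f\cdot q(\cdot,x)}\,W_1(\pi,\pi')$ from Kantorovich--Rubinstein holds against arbitrary priors, so your argument proves the Lipschitz bound directly on all of $X\times\mathcal{M}(\Theta)$ in one pass. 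This removes both the detour through absolutely continuous priors and the separate continuity-plus-density step, while yielding explicit constants of the same flavour. The individual ingredients---the product-Lipschitz bound $\lipnorm{fg}\le\|f\|_\infty\lipnorm{g}+\|g\|_\infty\lipnorm{f}$, the normalisation $\|f\|_\infty\le D$ by subtracting a constant, and the elementary estimate for the ratio $A/Z-A'/Z'$ using $Z\ge e^{-M}$---all appear, in slightly different dress, in the paper's argument, so the two proofs run in parallel once the computation starts; the savings come entirely from bypassing the reduction to $\mathcal{D}(\Theta)$ and the accompanying continuity preamble.
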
 

\begin{proof}
See \cref{AppConsAssump1}
\end{proof}

In order to approximate HPD regions, 
we require additional regularity of the posterior map on the set of absolutely continuous priors, $\PD{\Theta}$.
For $\mu,\nu \in \PD{\Theta}$, write $\abver_{\mu}$ for an arbitrary version of its probability density, 
and let 
  \[
  \dsup(\mu,\nu) = \Linfnorm{}{ \abver_{\mu} - \abver_{\nu} }
  \]
be the metric on absolutely continuous probability distributions corresponding to the $L^\infty$-norm metric on their underlying density functions.

\newcommand{\BR}{E}
\begin{assumption}[$L^\infty$-norm continuity] \label{AssumptionModelSupNorm} 
 For every subset $\BR \subseteq \PD{\fTheta}$ for which 
 \[\label{eqesssup}
 \sup_{\pi \in \BR} \, 
     \Linfnorm{}{ \abver_{\pi} } 
    < \infty,
 \]
  the map
  \[ \label{esssupconclusion}
  (x, \pi) \mapsto \Post{x}{\pi} \: (X \times \BR, \dX \otimes \dsup ) \to (\PD{\fTheta}, \dsup)
  \]
  from $X \times \BR$ to $\PD{\fTheta}$ is Lipschitz continuous.  Moreover, for $\pi\in \BR$, if $\abver_{\pi}$ is Lipschitz then $p_{\pi,x}$ is Lipschitz, where $p_{\pi,x}$ is the density function of $\Post{x}{\pi}$. 
\end{assumption}

\begin{lemma}\label{implication5}
Suppose $\Theta$ is compact. 
Then \cref{assumptionde} implies \cref{AssumptionModelSupNorm}.
\end{lemma}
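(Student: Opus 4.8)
The plan is to show that Assumption \ref{assumptionde} implies Assumption \ref{AssumptionModelSupNorm} by a direct estimate on the posterior densities, following the same strategy as in the proof of Lemma \ref{implication4} but tracking the $L^\infty$-norm on densities rather than the Wasserstein distance. Fix a set $E \subseteq \PD{\fTheta}$ with $\sup_{\pi \in E} \Linfnorm{}{\abver_\pi} =: M < \infty$. For $\pi \in E$ with density $\abver_\pi$ and $x \in X$, the posterior density with respect to Lebesgue measure is
\[
p_{\pi,x}(\theta) = \frac{\cd(\theta,x)\,\abver_\pi(\theta)}{\int_{\fTheta} \cd(\theta',x)\,\abver_\pi(\theta')\,\dee\theta'}.
\]
Write $Z(\pi,x) = \int_{\fTheta} \cd(\theta',x)\,\abver_\pi(\theta')\,\dee\theta'$ for the normalizing constant. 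By Assumption \ref{assumptionde}(iii), $\log\cd$ is bounded, so there are constants $0 < c_1 \le c_2 < \infty$ with $c_1 \le \cd(\theta,x) \le c_2$ for all $\theta,x$; since $\fTheta$ is compact (hence of finite Lebesgue measure) and $\abver_\pi$ integrates to $1$, this gives $c_1 \le Z(\pi,x) \le c_2$ uniformly over $\pi \in E$ and $x \in X$. Also $\Linfnorm{}{p_{\pi,x}} \le c_2 M / c_1 =: D'$, which is exactly the bound needed for the hypothesis \cref{eqesssup} to propagate (and gives \cref{Perturbupbound}-type control in the application).

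For the Lipschitz claim, fix $\pi,\nu \in E$ and $x,y \in X$. I would split $\dsup(\Post{x}{\pi}, \Post{y}{\nu}) = \Linfnorm{}{p_{\pi,x} - p_{\nu,y}}$ into a term in which only the prior changes and a term in which only the data changes, and bound each. For the prior term, with $x$ fixed,
\[
p_{\pi,x}(\theta) - p_{\nu,x}(\theta)
= \frac{\cd(\theta,x)}{Z(\pi,x)}\bigl(\abver_\pi(\theta) - \abver_\nu(\theta)\bigr)
  + \cd(\theta,x)\,\abver_\nu(\theta)\Bigl(\frac{1}{Z(\pi,x)} - \frac{1}{Z(\nu,x)}\Bigr),
\]
and $|Z(\pi,x) - Z(\nu,x)| \le c_2 \int_{\fTheta}|\abver_\pi - \abver_\nu| \le c_2\,\mathrm{Leb}(\fTheta)\,\dsup(\pi,\nu)$, so this whole difference is bounded by a constant (depending only on $c_1,c_2,M,\mathrm{Leb}(\fTheta)$) times $\dsup(\pi,\nu)$. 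For the data term, with the prior fixed, I use the Lipschitz continuity of $\cd$ in its second argument from Assumption \ref{assumptionde}(ii): $|\cd(\theta,x) - \cd(\theta,y)| \le \mC\,\dX(x,y)$, which controls both $|\cd(\theta,x)\abver_\pi(\theta) - \cd(\theta,y)\abver_\pi(\theta)|$ pointwise (by $\mC M\,\dX(x,y)$) and $|Z(\pi,x) - Z(\pi,y)| \le \mC\,\mathrm{Leb}(\fTheta)\,\dX(x,y)$; combining via the same quotient decomposition gives a bound of a constant times $\dX(x,y)$. Adding the two contributions yields $\dsup(\Post{x}{\pi},\Post{y}{\nu}) \le L\,(\dX(x,y) + \dsup(\pi,\nu))$ for a constant $L$ depending only on the model, which is the asserted Lipschitz continuity from $(X \times E, \dX \otimes \dsup)$ to $(\PD{\fTheta}, \dsup)$.

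Finally, for the last sentence of Assumption \ref{AssumptionModelSupNorm}: if $\abver_\pi$ is Lipschitz, then since $\cd(\cdot,x)$ is Lipschitz in $\theta$ (Assumption \ref{assumptionde}(ii), restricting to fixed $x$) and bounded, the product $\theta \mapsto \cd(\theta,x)\abver_\pi(\theta)$ is Lipschitz (product of bounded Lipschitz functions on a bounded domain), and dividing by the positive constant $Z(\pi,x) \ge c_1$ preserves the Lipschitz property; hence $p_{\pi,x}$ is Lipschitz. The main obstacle is bookkeeping rather than conceptual: one must be careful that the constants genuinely depend only on $M$, $\mC$, the bounds on $\cd$, and $\mathrm{Leb}(\fTheta)$, and not on the particular $\pi,\nu$ — this is where compactness of $\fTheta$ and boundedness of $\log\cd$ are essential, since without them the normalizing constant $Z(\pi,x)$ could fail to be bounded away from zero or the product could fail to be globally Lipschitz. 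Since Lemma \ref{implication4} is already established with the analogous constants, I would lean on the structure of that proof and simply replace the Kantorovich--Rubinstein/Wasserstein estimates by the corresponding $\sup$-norm estimates above.
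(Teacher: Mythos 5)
Your proposal is correct and follows essentially the same route as the paper: both proofs use the boundedness of $\log\cd$ to bracket the normalizing constant, decompose the $L^\infty$-norm difference of posterior densities into pieces where only the prior density, only the normalizing constant, or only the data changes (the paper organizes this as $T_1+T_2+T_3$; you do a two-stage split that amounts to the same estimates), and handle the final Lipschitz-density clause via the product-of-bounded-Lipschitz observation. The constants you identify ($c_1$, $c_2$, $M$, $\mC$, $\mathrm{Leb}(\fTheta)$) match the ones the paper tracks.
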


\begin{proof}
See \cref{AppConsAssump1}.
\end{proof}

In \cref{simplexist} and \cref{secmpcbhpd}, we claimed existence of
matching priors for families that approximate credible balls and HPD regions under \cref{assumptionde}.
In fact, we can establish these result under the weaker hypotheses, \cref{AssumptionModelSmoothness} and \cref{AssumptionModelSupNorm}.

\begin{theorem}\label{ThmMatchingPriorsExist}
  \cref{ThmMatchingPriorSimple} holds with \cref{AssumptionModelSmoothness} in place of \cref{assumptionde} for the case $B=\tBall$
  and \cref{AssumptionModelSmoothness} and \cref{AssumptionModelSupNorm} in place of \cref{assumptionde} for the case $B = \tHPD$.
\end{theorem}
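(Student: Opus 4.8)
The plan is to \emph{reuse the proof of \cref{ThmMatchingPriorSimple} essentially verbatim}, exploiting the fact that that proof depends on \cref{assumptionde} only through the two consequences isolated by \cref{implication4,implication5}: \cref{assumptionde} is invoked once to produce \cref{AssumptionModelSmoothness} (needed for both $B=\tBall$ and $B=\tHPD$) and, in the HPD case, once more to produce \cref{AssumptionModelSupNorm}, after which the conditional density $\cd$ never reappears. Consequently, if one adopts \cref{AssumptionModelSmoothness} directly as a hypothesis (for $B=\tBall$), respectively \cref{AssumptionModelSmoothness} together with \cref{AssumptionModelSupNorm} (for $B=\tHPD$), the identical argument produces the same family $\tau$, prior $\pi_0$, and parameters $\gamma,\beta,\delta,\eta$, and establishes the same conclusions. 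So the real content is bookkeeping: going through the proof of \cref{ThmMatchingPriorSimple} and verifying that no \emph{other} appeal to \cref{assumptionde} is hidden in it.

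Carrying this out for $B=\tBall$: the family $\tau$ is the $(\delta,\eta)$-perturbed family of \cref{DefContFamilyCredibleMain} built on the relaxed credible balls of \cref{DefSimpleCredInt} (using the trivial system of prior perturbations, $\pt{\pi}{\gamma}=\pi$), and the verifications that $\tau$ is a family of $1-\alpha$ credible regions and that, for each $\theta$, its rejection probability function is jointly continuous in $(x,\pi)$ are carried out in \cref{SecSimpleExamples}. The only model-dependent input to the continuity claim is that the posterior map $(x,\pi)\mapsto\Post{x}{\pi}$ is continuous from $(X\times\Priors,\dX\otimes W_{1})$ to $(\Priors,W_{1})$, and this is precisely what \cref{AssumptionModelSmoothness} supplies; the level-averaging in \cref{DefContFamilyCredibleMain} is exactly what absorbs the genuine discontinuities of $\mu\mapsto\rcb{\mu}{\alpha}{\beta}$ (e.g.\ at atoms). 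With \cref{assumptionjc} thus verified for $\tau$, \cref{ThmMainExistenceResult} yields the matching prior $\pi_0$, and the support-containment claim in Part~(2) is read off from the explicit formulas for $\ffB{r}{\cdot}$, $\rcb{\mu}{\alpha}{\beta}$, and the perturbation, exactly as in the original proof.

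For $B=\tHPD$ the argument is identical with the relaxed-HPD family of \cref{SecSimpleExamples} in place of the relaxed credible balls, except that one must feed into it absolutely continuous posteriors with controlled densities. Absolute continuity of the perturbed priors $\pt{\pi}{\gamma}$, their Lipschitz densities, the $L^{\infty}$ bound \eqref{Perturbupbound}, and the $L^{\infty}$-Lipschitz estimate \eqref{IneqPerturbSysSup} are all part of \cref{assumptionPerturb} and are unrelated to \cref{assumptionde}. What \cref{assumptionde} was used for is the statement that $\Post{x}{\pt{\pi}{\gamma}}$ is absolutely continuous with a Lipschitz density depending $L^{\infty}$-continuously on $\pi$; this is now obtained by applying \cref{AssumptionModelSupNorm} to the set $\BR=\{\pt{\pi}{\gamma}:\pi\in\Priors\}$, which is $L^{\infty}$-bounded by \eqref{Perturbupbound} so that \eqref{eqesssup} holds, and then composing the resulting $\dsup$-Lipschitz posterior map with the $\dsup$-Lipschitz map $\pi\mapsto\pt{\pi}{\gamma}$ furnished by \eqref{IneqPerturbSysSup}. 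This gives the joint continuity needed to invoke \cref{ThmMainExistenceResult}, while the ``moreover'' clause of \cref{AssumptionModelSupNorm} guarantees $\lipnorm{\rho}<\infty$ for $\rho$ the density of $\Post{x}{\pt{\pi}{\gamma}}$, which is exactly what the density-threshold inequality~\eqref{HPDContainment} requires.

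The step I expect to be the main obstacle is not any single estimate but this last piece of bookkeeping: confirming that every use of \cref{assumptionde} in the proof of \cref{ThmMatchingPriorSimple} --- in particular each place where boundedness of $\log\cd$ or uniform Lipschitzness of $\cd$ is used to control the posterior map, to guarantee that the relaxed-HPD regions are well defined, or to keep the relevant densities bounded and Lipschitz --- is already subsumed by \cref{AssumptionModelSmoothness} and \cref{AssumptionModelSupNorm}. Since the proof of \cref{ThmMatchingPriorSimple} is deliberately structured to route all such uses through \cref{implication4,implication5}, this verification is routine, and \cref{ThmMatchingPriorsExist} follows at once.
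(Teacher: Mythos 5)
Your overall strategy matches the paper's: \cref{ThmMatchingPriorsExist} is proved directly by running the perturbed-relaxed-credible-region construction and invoking the continuity machinery (\cref{PropContinuity1}, \cref{LemmaContinuityTildeRAnnoying}, \cref{ThmContinuity1}), all of which already depend only on \cref{AssumptionModelSmoothness} (and, for $B=\tHPD$, \cref{AssumptionModelSupNorm}). One framing caveat: there is no independent proof of \cref{ThmMatchingPriorSimple} to ``reuse'' --- in the paper that theorem is \emph{derived} from \cref{ThmMatchingPriorsExist} together with \cref{implication4,implication5} --- so the direct argument you describe is the entire burden of proof, not a modification of an existing one.

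Two concrete gaps. First, for $B=\tBall$ you restrict to the trivial system $\pt{\pi}{\gamma}=\pi$, but \cref{ThmMatchingPriorSimple} fixes an \emph{arbitrary} system of prior perturbations, and Part~(2) of the conclusion refers explicitly to $\Post{x}{\pt{\pi}{\gamma}}$. You need the general perturbed construction \cref{DefContFamilyCredible} from the supplement, not the main-text \cref{DefContFamilyCredibleMain}; the continuity input for a general system is \cref{LemmaSmoothDensitiesMeans}, which uses only \cref{AssumptionModelSmoothness} together with the Lipschitz bound $W_1(\pt{\mu}{\gamma},\pt{\nu}{\gamma})\le D(\gamma)\,W_1(\mu,\nu)$ built into \cref{peturbsys}. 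Second, you assert that the ``same parameters'' $\gamma,\beta,\delta,\eta$ work without exhibiting them. For Parts~(2) and (3) to land with the stated $\epsilon$, one needs the constraints of \cref{ThmContinuity1} (in particular \cref{gammasmall}, $\delta\eta-\beta C\gamma>0$) to hold \emph{simultaneously} with $\max(\beta^{-1},\eta,\delta,\gamma)<\epsilon<\alpha$; this is not automatic, since $\beta^{-1}$ must be small while $\beta\gamma$ must stay below $\delta\eta/C$. The paper discharges it with the explicit one-parameter family $\eta(a)=\delta(a)=a$, $\gamma(a)=a^{4}$, $\beta(a)=a^{-1}$ and a short check that all constraints hold for $a$ below some $A_{0}(\epsilon,\alpha,C)>0$; your sketch is silent on why such simultaneous choices exist, and this step is required, not routine.
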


Clearly \cref{ThmMatchingPriorSimple} follows immediately from \cref{ThmMatchingPriorsExist}, \cref{implication4}, and \cref{implication5}.
We prove \cref{ThmMatchingPriorsExist} in \cref{SecSimpleExamples}.
Then \cref{ThmMatchingPriorSimpleCred} is seen to follow immediately from \cref{ThmMatchingPriorSimple} when one adopts the trivial system of perturbed priors whereby $\pt{\pi}{\gamma} = \pi$ for all priors $\pi$ and $\gamma > 0$.

\subsection{Existence of extensions and systems of perturbations} \label{SubsecExtExist}

In this section, we show that any model $\SModel$ satisfying \cref{assumptionde} 
can be extended to a supermodel $\SModelfullalt$ satisfying \cref{assumptionde} and admitting a 
system of prior perturbations satisfying \cref{assumptionPerturb}. 
The proofs in this section are constructive in the sense that they give explicit formulas that describe the supermodel (see  \cref{EqExplicitSupermodel}) and system of prior perturbations (see \cref{EqExplicitFamilyPerturbations}).

\begin{lemma}\label{LemmaModelExtensionsExist} 
Let $\SModel$ satisfy \cref{assumptionde}, 
and let $\fTheta \supseteq \Theta$ be any compact set. 
Then there exists a supermodel $\SModelfullalt$ of $\SModel$ that satisfies \cref{assumptionde}.
\end{lemma}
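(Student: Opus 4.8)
The plan is to construct the conditional density of the supermodel in two moves: extend $\cd$ off $\Theta$ by an inf-convolution, which preserves Lipschitz continuity, and then re-normalize each slice so that it integrates to one. First I would record what \cref{assumptionde}(iii) gives us: if $M<\infty$ bounds $\abs{\log\cd}$ on $\Theta\times X$, then $e^{-M}\le\cd\le e^{M}$ everywhere, and, fixing any $\theta\in\Theta$, $1=\int_X\cd(\theta,x)\,\nu(\dee x)\ge e^{-M}\nu(X)$, so the dominating measure is finite: $\nu(X)\le e^{M}<\infty$. Finiteness of $\nu$ is exactly what makes the re-normalization step below harmless.

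Next I would define the inf-convolution (McShane) extension $\tilde{\cd}\colon\fTheta\times X\to\Reals$ by
\[\tilde{\cd}(\vartheta,x)=\inf_{\theta\in\Theta}\bigl(\cd(\theta,x)+\mC\,\norm{\vartheta-\theta}\bigr).\]
Routine inf-convolution estimates then give three things. First, $\tilde{\cd}$ is $\mC$-Lipschitz on $(\fTheta\times X,\norm{\cdot}\otimes\dX)$: for fixed $\vartheta$ it is an infimum of maps of $x$ that are $\mC$-Lipschitz uniformly in $\theta$, hence $\mC$-Lipschitz in $x$, while $\vartheta\mapsto\tilde{\cd}(\vartheta,x)$ is $\mC$-Lipschitz by the triangle inequality. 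Second, $\tilde{\cd}=\cd$ on $\Theta\times X$, using that $\cd(\cdot,x)$ is $\mC$-Lipschitz. Third, since $\fTheta$ is compact hence bounded, $e^{-M}\le\tilde{\cd}(\vartheta,x)\le e^{M}+\mC\,\mathrm{diam}(\fTheta)=:B<\infty$. Taking the infimum over a countable dense subset of $\Theta$ shows $\tilde{\cd}$ is product measurable.

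Then I would set $Z(\vartheta)=\int_X\tilde{\cd}(\vartheta,x)\,\nu(\dee x)$ and $\cd'(\vartheta,x)=\tilde{\cd}(\vartheta,x)/Z(\vartheta)$. The bounds on $\tilde{\cd}$ give $e^{-M}\nu(X)\le Z(\vartheta)\le B\,\nu(X)<\infty$ and $Z\equiv 1$ on $\Theta$, and the $\mC$-Lipschitz dependence of $\tilde{\cd}$ on $\vartheta$ gives that $Z$ is Lipschitz. Hence $\cd'$ is product measurable, agrees with $\cd$ on $\Theta\times X$, has every slice $\cd'(\vartheta,\cdot)$ a probability density with respect to $\nu$, has bounded $\log\cd'$, and is Lipschitz on $\fTheta\times X$ --- being the product of the bounded Lipschitz function $\tilde{\cd}$ with $1/Z$, which is Lipschitz because $Z$ is Lipschitz and bounded below by a positive constant, and a product of bounded Lipschitz functions is Lipschitz. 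Finally, defining $P'_{\vartheta}$ by $P'_{\vartheta}(A)=\int_A\cd'(\vartheta,x)\,\nu(\dee x)$ gives a probability kernel $\fTheta\to\PM{X}$ (weak continuity of $\vartheta\mapsto P'_{\vartheta}$ follows from joint continuity and boundedness of $\cd'$), and $\cd'(\theta,\cdot)=\cd(\theta,\cdot)$ on $\Theta$ forces $P'_{\theta}=P_{\theta}$; so $\tuple{X,\fTheta,P'}$ is a supermodel of $\SModel$, dominated by $\nu$, with $\cd'$ witnessing clauses (i)--(iii) of \cref{assumptionde}.

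I expect the only real obstacle to be the tension between the two demands on $\cd'$: joint Lipschitz continuity and the requirement that each slice integrate to one. The inf-convolution supplies the first but destroys the second, and the obvious repair --- dividing by the slice-mass $Z$ --- is legitimate only because $Z$ is Lipschitz and stays bounded away from $0$ and $\infty$, which in turn uses exactly that $\cd$ is bounded above and below (clause (iii)) and hence that $\nu$ is a finite measure. Everything else is bookkeeping.
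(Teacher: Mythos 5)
Correct, and it is essentially the paper's own argument: extend the conditional density off $\Theta$ by a McShane (inf-convolution) construction, then renormalize each $\vartheta$-slice by its $\nu$-mass $Z(\vartheta)$, using boundedness of $\log\cd$ to get $Z$ bounded away from $0$ and $\infty$ and hence Lipschitz. Your inf-convolution over $\theta$ alone (rather than jointly over $(\theta,x)$) and your explicit derivation that $\nu(X)<\infty$ from \cref{assumptionde}(iii) are welcome clarifications, not departures from the paper's route.
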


\begin{proof}
See \cref{AppSupExist}. 
\end{proof}

Next, we show that it is possible to construct a system of prior  perturbations. We require some notation related to probability. For a distribution $\mu$, we write $X \sim \mu$ if the random variable $X$ has distribution $\mu$. For two distributions $\mu, \nu$ on the same vector space, we write $\mu \ast \nu$ for the usual convolution. That is, if $X \sim \mu$ and $Y \sim \nu$ are independent, then $X+Y \sim \mu \ast \nu$.

\begin{lemma} \label{LemmaPriorPertsExist}
Let $\SModel$ satisfy \cref{assumptionde}, with $\Theta$ compact. Then there exists a supermodel $\SModelfullalt$ of $\SModel$ 
that satisfies \cref{assumptionde} and admits a system of prior perturbations that satisfy \cref{assumptionPerturb}.
\end{lemma}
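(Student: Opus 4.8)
The plan is to obtain the supermodel by enlarging $\Theta$ just enough to accommodate mollified copies of priors, and to define the perturbations by convolution against a rescaled smooth kernel. By the observation following \cref{assumptionPerturb}, it suffices to fix once and for all a radius $\gamma_{0}>0$ and to construct the system only for $\gamma\in(0,\gamma_{0}]$, putting $\pt{\pi}{\gamma}=\pt{\pi}{\gamma_{0}}$ for $\gamma>\gamma_{0}$. Let $\fTheta$ be the closed $(\gamma_{0}/2)$-fattening of $\Theta$ in $\Reals^{d}$; it is closed and bounded, hence compact, and contains $\Theta$. Applying \cref{LemmaModelExtensionsExist} with this $\fTheta$ produces a supermodel $\SModelfullalt$ of $\SModel$, with parameter space $\fTheta$, that satisfies \cref{assumptionde}. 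This settles the statement about the supermodel; it remains to equip it with a system of prior perturbations meeting \cref{assumptionPerturb}.

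For the perturbations, fix a reference kernel: let $K\:\Reals^{d}\to\NNReals$ be a fixed $C^{1}$ probability density supported in the closed unit ball, so that $K$ is globally Lipschitz, and for $h>0$ set $K_{h}(x)=h^{-d}K(x/h)$, a probability density supported in $\{x:\norm{x}\le h\}$ with $\Linfnorm{}{K_{h}}=h^{-d}\Linfnorm{}{K}$ and $\lipnorm{K_{h}}=h^{-d-1}\lipnorm{K}$. Put $h(\gamma)=\gamma/2$ for $\gamma\in(0,\gamma_{0}]$ and define
\[
\pt{\pi}{\gamma}=\pi\ast K_{h(\gamma)},\qquad\pi\in\Priors
\]
(extending to $\gamma>\gamma_{0}$ as above). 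Since $\supp(\pi)\subseteq\Theta$ and $\supp(K_{h(\gamma)})\subseteq\{x:\norm{x}\le\gamma/2\}$, the convolution is supported in the closed $(\gamma/2)$-fattening of $\Theta$, which is contained in $\fTheta$; thus $\pt{\pi}{\gamma}\in\AltPriors$, and it is absolutely continuous with density $p_{\pi,\gamma}(\theta)=\int_{\Theta}K_{h(\gamma)}(\theta-s)\,\pi(\dee s)$. This is the explicit system we would record.

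Three bounds then need checking. First, the coupling drawing $\theta\sim\pi$ and $\xi\sim K_{h(\gamma)}$ independently and returning $(\theta,\theta+\xi)$ joins $\pi$ and $\pt{\pi}{\gamma}$ with cost $\int\norm{x}\,K_{h(\gamma)}(x)\,\dee x\le h(\gamma)=\gamma/2<\gamma$, so $W_{1}(\pi,\pt{\pi}{\gamma})<\gamma$ (for $\gamma>\gamma_{0}$ the left side is at most $\gamma_{0}/2<\gamma$). Second, convolution with a fixed probability measure is a $W_{1}$-contraction, because an optimal coupling $(\theta_{1},\theta_{2})$ of $\mu,\nu$ together with an independent draw $\xi\sim K_{h(\gamma)}$ yields the coupling $(\theta_{1}+\xi,\theta_{2}+\xi)$ of $\pt{\mu}{\gamma},\pt{\nu}{\gamma}$ with the same transport cost; hence $W_{1}(\pt{\mu}{\gamma},\pt{\nu}{\gamma})\le W_{1}(\mu,\nu)$ and \cref{peturbsys} holds with $D\equiv1$. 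Third, differentiating under the integral shows $p_{\pi,\gamma}$ is Lipschitz with $\lipnorm{p_{\pi,\gamma}}\le\lipnorm{K_{h(\gamma)}}$ and $\Linfnorm{}{p_{\pi,\gamma}}\le\Linfnorm{}{K_{h(\gamma)}}$, which gives \eqref{Perturbupbound}; and for each fixed $\theta$ the map $s\mapsto K_{h(\gamma)}(\theta-s)$ is $\lipnorm{K_{h(\gamma)}}$-Lipschitz, so the Kantorovich--Rubinstein duality recalled before \cref{peturbsys} gives $\bigl|p_{\mu,\gamma}(\theta)-p_{\nu,\gamma}(\theta)\bigr|=\bigl|\int K_{h(\gamma)}(\theta-s)\,(\mu-\nu)(\dee s)\bigr|\le\lipnorm{K_{h(\gamma)}}\,W_{1}(\mu,\nu)$, and taking the supremum over $\theta$ yields \eqref{IneqPerturbSysSup}. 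One may thus take $D\equiv1$ and $D'(\gamma)=\max\{\Linfnorm{}{K_{h}},\lipnorm{K_{h}}\}$ with $h=h(\gamma)$ for $\gamma\le\gamma_{0}$ and $h=h(\gamma_{0})$ for $\gamma>\gamma_{0}$.

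The one genuinely delicate point is the tension between two requirements: the mollification bandwidth must be allowed to shrink to $0$ as $\gamma\to0$ so that $W_{1}(\pi,\pt{\pi}{\gamma})$ can be made small, yet \emph{every} perturbed prior must live on a single fixed compact parameter space $\fTheta$. Capping $\gamma$ at $\gamma_{0}$ and taking $\fTheta$ to be the $(\gamma_{0}/2)$-fattening of $\Theta$ resolves this. Everything else is routine: \cref{LemmaModelExtensionsExist} already carries the burden of extending the density $q$ to $\fTheta$ while keeping it Lipschitz and its logarithm bounded, so that $\SModelfullalt$ satisfies \cref{assumptionde}, and the perturbation argument uses only that $\fTheta$ is compact and contains the $(\gamma_{0}/2)$-fattening of $\Theta$, together with standard estimates for convolutions and Kantorovich--Rubinstein duality.
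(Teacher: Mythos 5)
Your proof is correct and takes essentially the same route as the paper: you apply \cref{LemmaModelExtensionsExist} with a fixed compact fattening of $\Theta$ for $\fTheta$, define $\pt{\pi}{\gamma}$ by convolution against a rescaled Lipschitz mollifier supported in a ball, and verify the three estimates via the obvious couplings and Kantorovich--Rubinstein duality. The only (minor, but welcome) refinement is your choice of bandwidth $h(\gamma)=\gamma/2$, which cleanly delivers the \emph{strict} inequality $W_1(\pi,\pt{\pi}{\gamma})<\gamma$ demanded by \cref{peturbsys}; the paper's estimate with bandwidth $\gamma$ only yields $\le\gamma$ on its face.
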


\begin{proof}
See \cref{AppSupExist}. 
\end{proof}

\section{Families of coverage-inducing priors} \label{SecSimpleExamples}

\subsection{Overview}

Our main result, \cref{ThmMainExistenceResult}, establishes the existence of matching priors for certain families of credible regions.
Among other assumptions in this theorem, we require that these families of credible regions satisfy certain continuity conditions that are not satisfied by the usual credible ball and HPD regions. 
In this section, we modify the usual credible balls and HPD regions and show that these modifications yield families of credible regions that satisfy the main continuity assumptions of \cref{ThmMainExistenceResult}.
We also show that our modified credible balls and modified HPD regions are very similar to the usual credible balls and HPD regions in a formal sense.
We conclude by proving \cref{ThmMatchingPriorsExist} (and then proving \cref{ThmMatchingPriorSimple} as a corollary). The explicit construction of the credible ball is included in \cref{SecConstructions} from the main paper, but we repeat much of this material here for ease of reference.

We give a short guide to our approach in the following subsections.
We begin with a general construction and theorem in \cref{subsecpert}.
The general construction is a recipe for perturbing a generic family of credible regions whose rejection probability functions are uniformly Lipschitz.
The general result, \cref{PropContinuity1}, says that the resulting family of credible regions satisfies \cref{assumptionjc} as long as the original family's rejection probability functions, viewed as functions of the data and prior, are sufficiently continuous.
In \cref{secfamily}, we construct two families of credible regions that we view as \scare{relaxed} versions of the usual credible ball and HPD regions and show that these families have uniformly Lipschitz rejection probability functions, satisfying the first condition introduced in \cref{subsecpert}.
In \cref{subsecpbpd}, we show that these relaxed families are also continuous functions of the associated data and prior, thus satisfying the second condition introduced in \cref{subsecpert}.

The results up to this point are sufficient to show that our new families of credible regions have matching priors.
In \cref{SubsecSupport}, we show that our relaxed credible ball and HPD regions are also similar to the usual credible ball and HPD regions.
These results allow us to establish our main result, \cref{ThmMatchingPriorsExist}, in \cref{SubsecProofMainThm}; \cref{ThmMatchingPriorSimpleCred} is an immediate corollary.
 
For the remainder of \cref{SecSimpleExamples}, we fix a model $\SModel$ and a supermodel $\SModelfullalt$ 
that both satisfy \cref{AssumptionModelSmoothness},
and let $C$ denote the associated Lipschitz constant from that assumption. 
We fix a system $(\pi_\gamma)$ of perturbed priors (\cref{peturbsys}),
and, abusing notation slightly, write $\Post{x}{\pi_\gamma}$ for $(P')^{x}_{\pi_\gamma}$.
Up to this point, we have been working with rejection probability functions $\varphi$.
In this section, we use acceptance probability functions $\psi = 1- \varphi$, 
as they lead to more natural definitions and proofs.

\subsection{Perturbed credible regions}\label{subsecpert}

In this section, we study credible regions with uniformly Lipschitz acceptance probability functions. We show how to modify such credible regions 
in order to obtain families of credible regions that satisfy \cref{assumptionjc} under some regularity conditions.

The basic idea behind our construction is to smoothly average the usual acceptance probability function across some small range of levels (see \cref{tfrsetdefn}), then perform a small post-averaging correction. 
In particular, for $\eta \in (0,1)$, let $\pb{\eta} \: \NNReals \to \Reals$ be the density of some probability distribution on $\NNReals$ such that the support of $\pb{\eta}$ is $[\eta,1]$ and $\pb{\eta}$ is continuous. For $c > 0$, let $\pb{\eta}^{(c)}(x) = c^{-1} \pb{\eta}(\frac{x}{c})$ be the probability density after the change of variable transformation $x \mapsto c x$. 
We use this density to define a random inflation to the level of an acceptance probability function.

We now introduce a base family of credible regions. 
Fix $\gamma > 0$ and let $\PPriors = \{{\pt{\pi}{\gamma}} : \pi \in \Priors\}$ be the set of perturbed priors. 
Fix $\beta > 0$.
For every $\alpha \in (0,1)$,
let $\rtau^{\alpha} = \CRegionsv{\rtau^\alpha}{\pi}{\pi}{\Priors}$ be a family of $1-\alpha$ credible regions
with acceptance probability function $\bapf[\alpha]{\beta}{}{}{}$
such that $\bapf[\alpha]{\beta}{\pt{\pi}{\gamma}}{\theta}{x}$ is $\beta$-Lipschitz in $\theta \in \Theta$ for every $\pi \in \Priors$ and $x \in X$. Let $\rtau = (\rtau^\alpha)_{\alpha \in (0,1)}$.

For $\alpha,\eta \in (0,1)$ and $\delta \in (0,\alpha)$, 
the \defn{$(\gamma,\eta,\delta)$-perturbation of $\bapf[\alpha]{\beta}{}{}{}$} is the function 
$(\theta,x,\pi) \mapsto \papf{\pi}{\theta}{x} : \Theta \times X \times \Priors \to [0,1]$ given by
\[ \label{tfrsetdefn}
\papf{\pi}{\theta}{x}  
= \int_{[\delta \eta,\delta]} \bapf[\alpha-z]{\beta}{\pt{\pi}{\gamma}}{\theta}{x} \,\pb{\eta}^{(\delta)}(z) \dee z.
\]

Fix $\alpha,\eta,\delta$ as above.
As we show, 
while $\papf{\cdot}{\cdot}{\cdot}$ is not itself an acceptance probability function for family of $1-\alpha$ credible regions,
it can be modified to be one. The following result is immediate from inspection.

\begin{lemma}\label{perlipschitz}
The function $\papf{\pi}{\cdot}{x}$ is $\beta$-Lipschitz for
all $x \in X$ and $\pi \in \Priors$.
\end{lemma}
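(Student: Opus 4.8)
The statement is essentially immediate from the definition \eqref{tfrsetdefn}: $\papf{\pi}{\cdot}{x}$ is an average of the base acceptance probability functions $\bapf[\alpha-z]{\beta}{\pt{\pi}{\gamma}}{\cdot}{x}$ over $z \in [\delta\eta,\delta]$ against the density $\pb{\eta}^{(\delta)}$, and an average of $\beta$-Lipschitz functions against a probability measure is again $\beta$-Lipschitz. The plan is to spell this out in three short steps.

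First I would check that the integrand in \eqref{tfrsetdefn} is legitimate for every $z$ in the range of integration. Since $\delta \in (0,\alpha)$ and $\eta \in (0,1)$, every $z \in [\delta\eta,\delta]$ satisfies $0 < \alpha - z < 1$, so $\rtau^{\alpha - z}$ is one of the families in the collection $\rtau$; by the standing hypothesis on $\rtau$, the map $\theta \mapsto \bapf[\alpha-z]{\beta}{\pt{\pi}{\gamma}}{\theta}{x}$ is then $\beta$-Lipschitz, with a Lipschitz bound uniform in $z$, in $\pi \in \Priors$, and in $x \in X$. Integrability in $z$ is automatic because the integrand is bounded by $1$.

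Second, I would fix $x \in X$, $\pi \in \Priors$, and $\theta_1,\theta_2 \in \Theta$, move the difference inside the integral, and estimate
\[
\abs{\papf{\pi}{\theta_1}{x} - \papf{\pi}{\theta_2}{x}}
&\le \int_{[\delta\eta,\delta]} \abs[\big]{ \bapf[\alpha-z]{\beta}{\pt{\pi}{\gamma}}{\theta_1}{x} - \bapf[\alpha-z]{\beta}{\pt{\pi}{\gamma}}{\theta_2}{x} } \, \pb{\eta}^{(\delta)}(z) \, \dee z \\
&\le \beta \, \norm{\theta_1 - \theta_2} \int_{[\delta\eta,\delta]} \pb{\eta}^{(\delta)}(z) \, \dee z .
\]
Finally, since $\pb{\eta}^{(\delta)}$ is the density obtained from the probability density $\pb{\eta}$ by the change of variables $z \mapsto \delta z$, it is itself a probability density, with support contained in $[\delta\eta,\delta]$ because $\supp \pb{\eta} = [\eta,1]$; hence $\int_{[\delta\eta,\delta]} \pb{\eta}^{(\delta)}(z)\,\dee z = 1$, and the bound above collapses to $\abs{\papf{\pi}{\theta_1}{x} - \papf{\pi}{\theta_2}{x}} \le \beta\,\norm{\theta_1 - \theta_2}$, which is the claim.

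There is no genuine obstacle. The only points meriting a moment's care are verifying $\alpha - z \in (0,1)$ throughout the range of integration, so that the base family $\rtau^{\alpha-z}$ is defined, and the normalization $\int \pb{\eta}^{(\delta)} = 1$; both are one-line checks.
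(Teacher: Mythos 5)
Your proof is correct and spells out exactly what the paper means when it says the result is ``immediate from inspection'': the $(\gamma,\eta,\delta)$-perturbation is a probability-weighted average of $\beta$-Lipschitz acceptance functions, hence $\beta$-Lipschitz, and your checks that $\alpha-z\in(0,1)$ on the range of integration and that $\pb{\eta}^{(\delta)}$ integrates to $1$ are precisely the two one-line verifications required.
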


The next result shows that, for sufficiently small $\gamma$, 
the function $\papf{\pi}{\cdot}{x}$, when viewed as an acceptance probability function,
has more than $1-\alpha$ credibility.
Recall that $C$ is the Lipschitz constant in \cref{AssumptionModelSmoothness}. 

\begin{lemma}\label{techlemma}
Assume $\gamma$ is small enough to satisfy 
\[\label{gammasmall}
F' \equiv \delta \eta - \beta C \gamma > 0.
\]
Let $x \in X$ and $\pi \in \Priors$. Then 
$
\Post{x}{\pi}(\papf{\pi}{\cdot}{x}) \geq 1 - \alpha + F' > 1- \alpha.
$
\end{lemma}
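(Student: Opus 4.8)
The plan is to estimate the $\Post{x}{\pi}$-credibility of $\papf{\pi}{\cdot}{x}$ by first computing it (essentially exactly) against the \emph{perturbed} posterior $\Post{x}{\pt{\pi}{\gamma}}$, where the averaging in \eqref{tfrsetdefn} interacts cleanly with the known credibility of the base family, and then transferring the estimate to $\Post{x}{\pi}$ by a Kantorovich--Rubinstein bound, the transfer error being precisely the $\beta C\gamma$ term in $F'$.

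First I would fix $x \in X$ and $\pi \in \Priors$. For every $z \in [\delta\eta,\delta]$ we have $\alpha - z \in (0,\alpha)\subseteq(0,1)$, because $0 < \delta\eta \le z \le \delta < \alpha$, so $\rtau^{\alpha-z}$ is a bona fide family of $1-(\alpha-z)$ credible regions; hence its acceptance probability function $\bapf[\alpha-z]{\beta}{\pt{\pi}{\gamma}}{\cdot}{x}$ has $\Post{x}{\pt{\pi}{\gamma}}$-credibility exactly $1-(\alpha-z)$, i.e. $\int \bapf[\alpha-z]{\beta}{\pt{\pi}{\gamma}}{\theta}{x}\,\Post{x}{\pt{\pi}{\gamma}}(\dee\theta) = 1-\alpha+z$. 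Substituting the definition \eqref{tfrsetdefn} of $\papf{\pi}{\cdot}{x}$, applying Tonelli's theorem (the integrand is nonnegative and jointly measurable), and using that $\pb{\eta}^{(\delta)}$ is a probability density supported on $[\delta\eta,\delta]$, I get
\[
\Post{x}{\pt{\pi}{\gamma}}\parens[\big]{\papf{\pi}{\cdot}{x}} &= \int_{[\delta\eta,\delta]} (1-\alpha+z)\, \pb{\eta}^{(\delta)}(z)\, \dee z \\
&= 1-\alpha + \int_{[\delta\eta,\delta]} z\, \pb{\eta}^{(\delta)}(z)\, \dee z \ge 1-\alpha+\delta\eta,
\]
the last inequality since $z \ge \delta\eta$ on the support of $\pb{\eta}^{(\delta)}$.

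It then remains to pass from $\Post{x}{\pt{\pi}{\gamma}}$ to $\Post{x}{\pi}$. By \cref{perlipschitz} the function $\papf{\pi}{\cdot}{x}$ is $\beta$-Lipschitz, and both posteriors are supported on the compact --- hence bounded --- set $\fTheta$, so the Kantorovich--Rubinstein duality recalled just before \cref{peturbsys} gives $\abs{\Post{x}{\pi}(\papf{\pi}{\cdot}{x}) - \Post{x}{\pt{\pi}{\gamma}}(\papf{\pi}{\cdot}{x})} \le \beta\, W_1(\Post{x}{\pi}, \Post{x}{\pt{\pi}{\gamma}})$. Then \cref{AssumptionModelSmoothness} (with Lipschitz constant $C$) together with $W_1(\pi, \pt{\pi}{\gamma}) < \gamma$ from \cref{peturbsys} yields $W_1(\Post{x}{\pi}, \Post{x}{\pt{\pi}{\gamma}}) \le C\gamma$. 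Combining, $\Post{x}{\pi}(\papf{\pi}{\cdot}{x}) \ge 1-\alpha+\delta\eta - \beta C\gamma = 1-\alpha+F'$, and $F' > 0$ by \eqref{gammasmall}, which is the claim.

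The only step that needs genuine care is the first one: being precise that the credibility identity for the base family $\rtau^{\alpha-z}$ is to be read against the \emph{perturbed} posterior $\Post{x}{\pt{\pi}{\gamma}}$ (which is why $\gamma$ enters at all, and why the $\beta C\gamma$ correction is exactly what is needed to reach $F'$), and checking the joint measurability required to apply Tonelli. The remaining estimates are immediate from results already available.
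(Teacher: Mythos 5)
Your argument is correct and follows essentially the same route as the paper: both bound $W_1(\Post{x}{\pi},\Post{x}{\pt{\pi}{\gamma}})\le C\gamma$ via \cref{peturbsys} and \cref{AssumptionModelSmoothness}, then combine Kantorovich--Rubinstein with the exact credibility of the base family at level $1-(\alpha-z)$. The only (immaterial) difference is the order of operations: the paper transfers each $\bapf[\alpha-z]{\beta}{\pt{\pi}{\gamma}}{\cdot}{x}$ from $\Post{x}{\pt{\pi}{\gamma}}$ to $\Post{x}{\pi}$ first and then averages in $z$, whereas you average in $z$ first (applying Tonelli) and transfer the resulting $\beta$-Lipschitz function $\papf{\pi}{\cdot}{x}$ once via \cref{perlipschitz}; these commute, and your version is if anything a bit more explicit about the Fubini/Tonelli step that the paper leaves implicit.
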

\begin{proof}
By \cref{peturbsys} and \cref{AssumptionModelSmoothness}, $W_{1}(\Post{x}{\pi},\Post{x}{\pt{\pi}{\gamma}}) \leq C \gamma$.

Then, for $z \in [\delta \eta,\delta]$,
\[
\Post{x}{\pi}(\bapf[\alpha-z]{\beta}{\pt{\pi}{\gamma}}{\cdot}{x}) 
&\ge \Post{x}{\pt{\pi}{\gamma}}(\bapf[\alpha-z]{\beta}{\pt{\pi}{\gamma}}{\cdot}{x}) - \beta C \gamma \\
&= 1 - \alpha + z  - \beta C \gamma \\
&\ge 1 - \alpha + \delta \eta  - \beta C \gamma = 1- \alpha + F'.
\]
\end{proof}

For $x \in X$ and $\pi \in \Priors$,
define
\[
\corr{\pi}{x} (r) = \Post{x}{\pi}(\max(0, \papf{\pi}{\cdot}{x} - r)), \qquad r \in [0,1],
\]
and
\[
R(x,\pi) = \sup \set{ r \in [0,1] \st \corr{\pi}{x}(r) \geq  1 - \alpha }.
\]
The next result shows that $\corr{\pi}{x}$ and $R(x,\pi)$ take finite values.
\begin{lemma} \label{minorlem}
Assume \cref{gammasmall} holds.
Let $x \in X$ and $\pi \in \Priors$.
Then 
$\corr{\pi}{x} (0) \ge 1 - \alpha + F' > 1 - \alpha$ and 
\[\label{Rbound}
R(x,\pi) \in [0,\alpha].
\]
\end{lemma}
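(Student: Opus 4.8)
The plan is to derive both assertions directly from the definitions of $\corr{\pi}{x}$ and $R(x,\pi)$, using \cref{techlemma} as the only nontrivial input. The elementary fact used throughout is that $\papf{\pi}{\cdot}{x}$ is $[0,1]$-valued: by \cref{tfrsetdefn} it is an average of the $[0,1]$-valued functions $\bapf[\alpha-z]{\beta}{\pt{\pi}{\gamma}}{\cdot}{x}$ against the probability density $\pb{\eta}^{(\delta)}$ supported on $[\delta\eta,\delta]$. Moreover $\papf{\pi}{\cdot}{x}$ is $\beta$-Lipschitz by \cref{perlipschitz}, hence Borel measurable and bounded, so $\max(0,\papf{\pi}{\cdot}{x}-r)$ is measurable and bounded for each $r$; this shows $\corr{\pi}{x}(r)$ is well-defined and lies in $[0,1]$, and that the supremum defining $R(x,\pi)$ ranges over a subset of $[0,1]$.

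For the lower bound on $\corr{\pi}{x}(0)$: since $\papf{\pi}{\cdot}{x}\ge 0$ we have $\max(0,\papf{\pi}{\cdot}{x})=\papf{\pi}{\cdot}{x}$, so $\corr{\pi}{x}(0)=\Post{x}{\pi}(\papf{\pi}{\cdot}{x})$, and \cref{techlemma} gives $\corr{\pi}{x}(0)\ge 1-\alpha+F'>1-\alpha$. Because \eqref{gammasmall} assumes $F'>0$, this in particular shows $0$ lies in $\{r\in[0,1]:\corr{\pi}{x}(r)\ge 1-\alpha\}$, so that set is nonempty and $R(x,\pi)\ge 0$.

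For the upper bound $R(x,\pi)\le\alpha$, the key estimate is the pointwise inequality $\max(0,\papf{\pi}{\theta}{x}-r)\le 1-r$ for $r\in[0,1]$ (using $\papf{\pi}{\theta}{x}\le 1$); integrating against $\Post{x}{\pi}$ gives $\corr{\pi}{x}(r)\le 1-r$. Hence any $r$ with $\corr{\pi}{x}(r)\ge 1-\alpha$ satisfies $1-r\ge 1-\alpha$, i.e.\ $r\le\alpha$, so $\{r\in[0,1]:\corr{\pi}{x}(r)\ge 1-\alpha\}\subseteq[0,\alpha]$; taking the supremum yields $R(x,\pi)\le\alpha$, which together with $R(x,\pi)\ge 0$ from above is \eqref{Rbound}. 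There is no real obstacle here: the proof is bookkeeping on top of \cref{techlemma}, and the only point needing a moment's care is the measurability and boundedness of the integrand defining $\corr{\pi}{x}$, which is immediate from \cref{perlipschitz}.
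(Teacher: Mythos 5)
Your proof is correct and follows essentially the same route as the paper's: the upper bound $R(x,\pi)\le\alpha$ comes from $\papf{\pi}{\cdot}{x}\le 1$ (hence $\corr{\pi}{x}(r)\le 1-r$), and the lower bound $R(x,\pi)\ge 0$ comes from \cref{techlemma} applied at $r=0$. You spell out a couple of bookkeeping steps (measurability of the integrand, nonemptiness of the set whose supremum defines $R$) that the paper leaves implicit, but the substance is identical.
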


\begin{proof}
Because $\papf{\pi}{\cdot}{x} \le 1$, we have $R(x,\pi) \le \alpha$.
In light of \cref{techlemma}, we know that 
$\corr{\pi}{x} (0)
=\Post{x}{\pi}(\papf{\pi}{\cdot}{x}) \ge  1- \alpha + F' > 1 - \alpha$.
Thus $R(x,\pi) \ge 0$.
\end{proof}

We are now in a position to give the main definition of this section.

\begin{definition} [Perturbed credible regions] \label{DefContFamilyCredible}
Fix $\gamma,\beta > 0$
and let $\rtau,\alpha,\eta,\delta$ be defined as above, and 
assume \cref{gammasmall} holds.
A \defn{family of $(\gamma,\delta,\eta)$-perturbed $1-\alpha$ credible regions based on $\rtau$} 
is one whose acceptance probability function $\gfinalset{}{}$ satisfies
\[ \label{credform}
\gfinalset{\pi}{x} = \max(0, \papf{\pi}{\cdot}{x} - R(x,\pi))
\]
for all $x \in X$ and $\pi \in \Priors$. 
\end{definition}

It is immediate that
\[\label{StartingHighLemma}
\gfinalset{\pi}{x}
\leq
\papf{\pi}{\cdot}{x}.
\]

\cref{DefContFamilyCredible} specifies a family of credible regions: 

\begin{lemma} \label{LemmaTheComplicatedThingIsACredibleSet}
Assume \cref{gammasmall} holds.
A \defn{family of $(\gamma,\delta,\eta)$-perturbed $1-\alpha$ credible regions based on $\rtau$} 
is a family of $1-\alpha$ credible regions.
\end{lemma}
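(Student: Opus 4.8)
The plan is to check the two conditions of \cref{credfamdefn} for the family specified in \cref{DefContFamilyCredible}, realized concretely via the canonical family attached to its acceptance probability function $\gfinalset{}{}$ --- the one with $\charfunc{\tau_\pi(x,u)}(\theta)=1$ iff $u > \frset{\pi}{\theta}{x}$, where $\frset{}{}{} = 1 - \gfinalset{}{}$, exactly as in the remark following \cref{rpfunc}. For that family the set $\set{u\in[0,1] \st \theta\in\tau_\pi(x,u)}$ is the interval $(\frset{\pi}{\theta}{x},1]$, of Lebesgue measure $\gfinalset[\theta]{\pi}{x}$, so by Tonelli $\int_{[0,1]}\Post{x}{\pi}(\tau_\pi(x,u))\,\dee u = \Post{x}{\pi}(\gfinalset{\pi}{x}) = \corr{\pi}{x}(R(x,\pi))$, using \cref{credform} and the definition of $\corr{\pi}{x}$. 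Thus condition (ii) --- that $\tau_\pi(x,\cdot)$ is a $1-\alpha$ $\Post{x}{\pi}$-credible region for $\marg{\pi}$-almost all (in fact all) $x$ --- reduces to the single identity $\corr{\pi}{x}(R(x,\pi)) = 1-\alpha$.

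To obtain that identity I would record three elementary properties of $r\mapsto\corr{\pi}{x}(r)$ on $[0,1]$: it is non-increasing (since $r\mapsto\max(0,f-r)$ is pointwise non-increasing); it is $1$-Lipschitz (since $\abs{\max(0,f-r)-\max(0,f-r')}\le\abs{r-r'}$ pointwise and $\Post{x}{\pi}$ is a probability measure); and $\corr{\pi}{x}(0) \ge 1-\alpha+F' > 1-\alpha$ by \cref{techlemma} and \cref{minorlem}, while $\corr{\pi}{x}(1) = 0 < 1-\alpha$ because $\papf{\pi}{\cdot}{x}\le 1$. Continuity and monotonicity make $\set{r\in[0,1] \st \corr{\pi}{x}(r)\ge 1-\alpha}$ a closed interval $[0,R(x,\pi)]$ with $R(x,\pi)\le\alpha<1$ (cf.\ \cref{Rbound}); evaluating at $R(x,\pi)$ and approaching it from below (values $\ge 1-\alpha$) and from above (values $<1-\alpha$, which exist since $R(x,\pi)<1$), continuity pins down $\corr{\pi}{x}(R(x,\pi)) = 1-\alpha$, giving (ii).

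For condition (i) it suffices to show $(\theta,x)\mapsto\gfinalset[\theta]{\pi}{x}$ is product measurable, since then $\set{(\theta,x,u) \st u > 1-\gfinalset[\theta]{\pi}{x}}$ lies in the product $\sigma$-algebra. By \cref{credform} and closure of measurability under $\max$ and subtraction, this follows from (a) product measurability of $(\theta,x)\mapsto\papf{\pi}{\theta}{x}$, and (b) measurability of $x\mapsto R(x,\pi)$. For (a) I would combine the product measurability of the base acceptance probability functions (built into \cref{credfamdefn}), joint measurability of $(z,\theta,x)\mapsto\bapf[\alpha-z]{\beta}{\pt{\pi}{\gamma}}{\theta}{x}$ --- which holds for the explicit base families used in the applications, e.g.\ the relaxed credible balls of \cref{SecConstructions}, and can be arranged in general --- and Tonelli applied to the defining integral \cref{tfrsetdefn}. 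For (b), each $x\mapsto\corr{\pi}{x}(r) = \Post{x}{\pi}(\max(0,\papf{\pi}{\cdot}{x}-r))$ is measurable because the integrand is bounded and product measurable and $x\mapsto\Post{x}{\pi}$ is a measurable kernel; $1$-Lipschitz continuity of $\corr{\pi}{x}$ in $r$ then lets us write $R(x,\pi) = \inf\set{r\in\Rational\cap[0,1] \st \corr{\pi}{x}(r)<1-\alpha}$, a countable infimum of measurable functions.

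I expect the only genuine friction to be this measurability bookkeeping, in particular the joint-in-level measurability needed in (a) so that the averaged function $\papf{\cdot}{\cdot}{\cdot}$ inherits product measurability; once that is in hand and $R(\cdot,\pi)$ is known measurable, the exact level $1-\alpha$ in (ii) is just \cref{techlemma}, \cref{minorlem}, and the intermediate value theorem.
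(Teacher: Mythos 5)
Your proof of the credibility condition (ii) is, at its core, the paper's proof: use \cref{minorlem} (and \cref{techlemma}) to get $\corr{\pi}{x}(0)>1-\alpha$, note $\corr{\pi}{x}(1)=0<1-\alpha$ since $\papf{\pi}{\cdot}{x}\le 1$, and use continuity plus monotonicity of $\corr{\pi}{x}$ --- which the paper pulls from \cref{LemmaContinuityOfL} and you rederive directly --- to conclude $\corr{\pi}{x}(R(x,\pi))=1-\alpha$ at the supremum defining $R(x,\pi)$; unwinding via Tonelli and \cref{credform} turns that scalar identity into the required Bayesian coverage. Where you go beyond the paper is in also verifying the product-measurability requirement (i) of \cref{credfamdefn}, which the paper's proof silently omits. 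Your decomposition --- reduce (i) to product measurability of $(\theta,x)\mapsto\gfinalset[\theta]{\pi}{x}$, split that via \cref{credform} into measurability of $\papf{\pi}{\cdot}{\cdot}$ and of $R(\cdot,\pi)$, and obtain the latter as a countable infimum over rational levels using the $1$-Lipschitz continuity of $\corr{\pi}{x}$ in $r$ --- is sound and is bookkeeping the published argument should have recorded. You are also right to flag that product measurability of $\papf{\pi}{\cdot}{\cdot}$ through \cref{tfrsetdefn} requires joint measurability of the base acceptance function in the level variable $z$: this does \emph{not} follow from the hypotheses as stated in \cref{subsecpert}, which assume only that each $\rtau^{\alpha}$ separately is a credible family. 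It does hold for the concrete base families used downstream (the relaxed credible balls and HPD regions, whose level maps $\cbLR{\cdot}$ and $\hpdLR{\cdot}$ vary continuously in $\alpha$), so the omission is harmless in the applications, but you have correctly located a tacit regularity assumption in the construction.
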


\begin{proof}
See \cref{AppSec51Proofs}.
\end{proof}

Let $\CFuncs(Y,Y')$ denote the collection of continuous functions from $Y \subseteq \Reals^{d}$ to $Y' \subseteq \Reals^{d'}$.
We give this set its usual sup-norm metric structure: 
\[
\supnorm{f - g} \defas \sup_{x \in Y} \, \norm{f(x) - g(x)}.
\]

We now state the main result for this section, showing that perturbed families of credible regions meet \cref{assumptionjc} provided the perturbation is continuous.

\begin{proposition}\label{PropContinuity1}
Assume
\cref{gammasmall} holds and
$(x,\pi) \mapsto \papf{\pi}{\cdot}{x}$
is a continuous map from 
$(X\times \Priors, \dX \otimes W_1)$ 
to $(\CFuncs(\Theta,[0,1]), \supnorm{\cdot})$.
Then 
\[
(x,\pi) \mapsto \gfinalset{\pi}{x} \: (X\times\Priors, \dX \otimes W_1) \to  (\CFuncs(\Theta, [0,1]), \supnorm{\cdot})
\]
is continuous and, in particular, \cref{assumptionjc} holds for the family 
$\tau = \CRegions{\tau}{\pi}{\Priors}$ of
$1-\alpha$ credible regions with acceptance probability function
$\gfinalset[\cdot]{\cdot}{\cdot}$.
\end{proposition}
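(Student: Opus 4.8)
The plan is to reduce continuity of $(x,\pi)\mapsto\gfinalset{\pi}{x}$ to continuity of the single scalar $R(x,\pi)$ appearing in \cref{credform}. Since $\gfinalset{\pi}{x}=\max(0,\papf{\pi}{\cdot}{x}-R(x,\pi))$, and $f\mapsto\max(0,f)$ is $1$-Lipschitz on $(\CFuncs(\Theta,\Reals),\supnorm{\cdot})$ while subtracting a constant from a function perturbs its sup-norm by at most that constant, one gets $\supnorm{\gfinalset{\pi'}{x'}-\gfinalset{\pi}{x}}\le\supnorm{\papf{\pi'}{\cdot}{x'}-\papf{\pi}{\cdot}{x}}+|R(x',\pi')-R(x,\pi)|$; so once $(x,\pi)\mapsto R(x,\pi)$ is shown continuous, the hypothesis that $(x,\pi)\mapsto\papf{\pi}{\cdot}{x}$ is sup-norm continuous yields continuity of $(x,\pi)\mapsto\gfinalset{\pi}{x}$, with range in $[0,1]$ since $0\le\gfinalset{\pi}{x}\le\papf{\pi}{\cdot}{x}\le1$ (here $R(x,\pi)\ge0$). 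Throughout I would use the standing \cref{AssumptionModelSmoothness}: it makes $(x,\pi)\mapsto\Post{x}{\pi}$ continuous from $(X\times\Priors,\dX\otimes W_1)$ into $(\Priors,W_1)$, and since $\Theta$ is compact $W_1$ metrizes weak convergence, so $(x_n,\pi_n)\to(x,\pi)$ implies $\Post{x_n}{\pi_n}\to\Post{x}{\pi}$ weakly.

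The substance is therefore the continuity of $R$. The first step is to show that $\corr{\pi}{x}(r)=\Post{x}{\pi}(\max(0,\papf{\pi}{\cdot}{x}-r))$ is \emph{jointly} continuous in $(x,\pi,r)\in X\times\Priors\times[0,1]$. For $(x_n,\pi_n,r_n)\to(x,\pi,r)$, set $g=\max(0,\papf{\pi}{\cdot}{x}-r)$ and estimate $|\corr{\pi_n}{x_n}(r_n)-\corr{\pi}{x}(r)|\le\supnorm{\max(0,\papf{\pi_n}{\cdot}{x_n}-r_n)-g}+|\Post{x_n}{\pi_n}(g)-\Post{x}{\pi}(g)|$; the first summand is at most $\supnorm{\papf{\pi_n}{\cdot}{x_n}-\papf{\pi}{\cdot}{x}}+|r_n-r|\to0$ (sup-norm continuity of the perturbation together with $1$-Lipschitzness of $\max(0,\cdot)$), and the second tends to $0$ because $g$ is a fixed bounded continuous function on $\Theta$ and $\Post{x_n}{\pi_n}\to\Post{x}{\pi}$ weakly. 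The second step records the relevant structure of $r\mapsto\corr{\pi}{x}(r)$ for fixed $(x,\pi)$: it is continuous and non-increasing, with $\corr{\pi}{x}(0)>1-\alpha$ by \cref{minorlem} and $\corr{\pi}{x}(1)=0<1-\alpha$ since $\papf{\pi}{\cdot}{x}\le1$; moreover, $\corr{\pi}{x}(r)>0$ entails $\Post{x}{\pi}(\set{\theta\st\papf{\pi}{\theta}{x}>r})>0$, which forces $\corr{\pi}{x}(r')<\corr{\pi}{x}(r)$ for every $r'>r$. As $1-\alpha>0$, the equation $\corr{\pi}{x}(r)=1-\alpha$ has a unique solution, and by definition of $R(x,\pi)=\sup\set{r\st\corr{\pi}{x}(r)\ge1-\alpha}$ (which lies in $[0,\alpha]$ by \cref{minorlem}) together with continuity of $\corr{\pi}{x}$, that solution equals $R(x,\pi)$.

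Continuity of $R$ then follows by a subsequence argument. Given $(x_n,\pi_n)\to(x,\pi)$, the values $R(x_n,\pi_n)$ all lie in the compact interval $[0,\alpha]$, so it suffices to show every subsequential limit equals $R(x,\pi)$. If $R(x_{n_k},\pi_{n_k})\to r^{\ast}$, then $\corr{\pi_{n_k}}{x_{n_k}}(R(x_{n_k},\pi_{n_k}))=1-\alpha$ for all $k$, so joint continuity of $\corr$ gives $\corr{\pi}{x}(r^{\ast})=1-\alpha$, whence $r^{\ast}=R(x,\pi)$ by the uniqueness above. This proves $R$, and hence $(x,\pi)\mapsto\gfinalset{\pi}{x}$, is continuous into $(\CFuncs(\Theta,[0,1]),\supnorm{\cdot})$. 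For the final clause, \cref{LemmaTheComplicatedThingIsACredibleSet} says the family $\tau$ with acceptance probability function $\gfinalset[\cdot]{\cdot}{\cdot}$ is a family of $1-\alpha$ credible regions; its rejection probability function is $\frset{\pi}{\theta}{x}=1-\gfinalset{\pi}{x}(\theta)$, and evaluation at a fixed $\theta$ is $1$-Lipschitz from $(\CFuncs(\Theta,[0,1]),\supnorm{\cdot})$ to $\Reals$, so each $\frset{\cdot}{\theta}{\cdot} \: X\times\Priors\to[0,1]$ is jointly continuous under $W_1$, which is exactly \cref{assumptionjc}.

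I expect the main obstacle to be the continuity of $R(x,\pi)$: it hinges both on the joint continuity of $\corr$ — where the sup-norm continuity hypothesis on the perturbation and the weak continuity of the posterior map (via \cref{AssumptionModelSmoothness}) enter — and on the transversality of the crossing $\corr{\pi}{x}(r)=1-\alpha$, i.e., strict monotonicity of $\corr{\pi}{x}$ wherever it is positive, which is what makes the supremum defining $R$ stable under perturbation. The remaining steps are routine functional-analytic bookkeeping.
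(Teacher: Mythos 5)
Your argument is correct but establishes the key intermediate fact---continuity of $(x,\pi)\mapsto R(x,\pi)$---by a genuinely different route from the paper's. The paper's \cref{LemmaContinuityLAnnoying} proceeds quantitatively: it first bounds $\supnorm{\corr{\mu}{x}-\corr{\nu}{y}}$ in terms of $W_1(\mu,\nu)+\dX(x,y)$ using the Lipschitz estimate $W_1(\Post{x}{\mu},\Post{y}{\nu})\le C(W_1(\mu,\nu)+\dX(x,y))$ from \cref{AssumptionModelSmoothness} together with the $\beta$-Lipschitzness of $\hr{z}{\xi}$, and then invokes the slope inequality from the second part of \cref{LemmaContinuityOfL} (namely $\corr{\pi}{x}(r)-\corr{\pi}{x}(r+c)\ge c\,\corr{\pi}{x}(r)$) to convert that sup-norm bound on $\corr$ into an explicit modulus $\abs{R(x,\mu)-R(y,\nu)}\le f(x,\mu,y,\nu)/(1-\alpha)$. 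You instead prove joint \emph{sequential} continuity of $(x,\pi,r)\mapsto\corr{\pi}{x}(r)$ using only the qualitative consequence of \cref{AssumptionModelSmoothness} (weak convergence of posteriors, via compactness of $\Theta$ and the fact that $W_1$ metrizes weak convergence there), prove that $\corr{\pi}{x}$ is \emph{strictly} decreasing wherever it is positive (which makes the crossing $\corr{\pi}{x}(r)=1-\alpha$ transversal and identifies $R(x,\pi)$ as its unique solution), and then conclude by a compactness-plus-uniqueness subsequence argument using \cref{minorlem} to confine $R$ to $[0,\alpha]$. Your route avoids the quantitative slope bound of \cref{LemmaContinuityOfL} entirely and is more elementary; the paper's route is heavier but yields an explicit Lipschitz-type modulus of continuity for $R$, which in turn gives a uniform modulus for the whole acceptance probability map. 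Both routes then finish identically, by composition with the continuity of $(f,r)\mapsto\max(0,f-r)$ and evaluation at a fixed $\theta$ to recover \cref{assumptionjc}.
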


\begin{proof}
See \cref{AppSec51Proofs}.
\end{proof}

The following corollary is an immediate consequence of \cref{PropContinuity1} and our main theorem \cref{ThmMainExistenceResult}:

\begin{corollary}\label{maincor}
Under the assumptions of \cref{PropContinuity1},
there exists a matching prior for $\tau$.
\end{corollary}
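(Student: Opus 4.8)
The plan is to chain \cref{PropContinuity1} directly into \cref{ThmMainExistenceResult}; the corollary claims no more than this, so the only real content is confirming that the family $\tau$ constructed in \cref{subsecpert} meets the two hypotheses of the main theorem, namely compactness of $\Theta$ and \cref{assumptionjc}. First I would recall the standing setup of \cref{SecSimpleExamples}: $\Theta$ is a compact subset of $\Reals^{d}$, so the compactness requirement of \cref{ThmMainExistenceResult} is met at once.

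Next I would invoke \cref{PropContinuity1}, whose hypothesis is exactly what is assumed in \cref{maincor}: \cref{gammasmall} holds and $(x,\pi)\mapsto\papf{\pi}{\cdot}{x}$ is continuous from $(X\times\Priors,\dX\otimes W_{1})$ into $(\CFuncs(\Theta,[0,1]),\supnorm{\cdot})$. Under this hypothesis the proposition delivers two things simultaneously. First, by way of \cref{LemmaTheComplicatedThingIsACredibleSet}, the acceptance probability function $\gfinalset[\cdot]{\cdot}{\cdot}$ of \cref{DefContFamilyCredible} is indeed the acceptance probability function of a genuine family $\tau=\CRegions{\tau}{\pi}{\Priors}$ of $1-\alpha$ credible regions, indexed, crucially, by \emph{all} of $\Priors$, which is precisely the index set that \cref{ThmMainExistenceResult} requires. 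Second, the rejection probability function $\frset{}{}{}$ for $\tau$ satisfies \cref{assumptionjc}: for each $\theta\in\Theta$, the map $\frset{\cdot}{\theta}{\cdot}\:X\times\Priors\to[0,1]$ is jointly continuous when $\Priors$ carries the Wasserstein metric $W_{1}$.

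Finally, with both hypotheses of \cref{ThmMainExistenceResult} in hand, I would apply that theorem to $\tau$ and conclude that a matching prior for $\tau$ exists, which is the assertion of \cref{maincor}. I do not expect any genuine obstacle at this stage: all of the difficulty has already been absorbed into \cref{PropContinuity1} (and, through it, into \cref{LemmaTheComplicatedThingIsACredibleSet} and the continuity bookkeeping of \cref{subsecpert}) and into \cref{ThmMainExistenceResult} itself, whose proof rests on the nonstandard-analysis machinery of \cref{sec:existenceproof}. The one point worth a moment's care is bookkeeping rather than mathematics: checking that the object for which \cref{PropContinuity1} verifies continuity really is \emph{the} rejection probability function \emph{for} the family $\tau$ in the sense of \cref{rpfunc}, so that \cref{assumptionjc}, which is phrased in terms of that object, is being established for the correct family; this identification is already built into the statement of \cref{PropContinuity1}, so nothing further is needed.
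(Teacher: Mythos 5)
Your proposal is correct and follows exactly the paper's route: the paper states that the corollary "is an immediate consequence of \cref{PropContinuity1} and our main theorem \cref{ThmMainExistenceResult}," which is precisely the chaining you carry out, with the compactness of $\Theta$ supplied by the standing setup of \cref{SecSimpleExamples} and \cref{assumptionjc} supplied by \cref{PropContinuity1}. The extra bookkeeping you flag (that the family is indexed by all of $\Priors$ and that the continuity is established for the rejection probability function of that family) is a reasonable thing to check and is indeed built into the statement of \cref{PropContinuity1}.
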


\subsection{Construction of uniformly Lipschitz credible regions}\label{secfamily}  

In the previous section, we showed how families of credible regions with uniformly Lipschitz acceptance probability functions can be modified slightly to obtain families of credible regions satisfying \cref{assumptionjc}, provided the perturbation we perform is continuous.

In this section, we construct perturbed variants of the usual credible balls and high posterior density (HPD) regions.  In Section \ref{subsecpbpd}, we show that these perturbed families of credible regions meet the Lipschitz hypotheses of Section \ref{subsecpert}, yielding perturbed families of credible regions with matching priors.  In Section \ref{SubsecSupport}, 
\cref{ballcontain}, \cref{relaxedhpddensprop}, \cref{basicsupportresult}, and \cref{ballsupportresult} show that these perturbed families are ``similar''  to the usual definitions of \cref{DefUsualInterval} and \cref{DefUsualHPD}. Some of the definitions are repeated from \cref{SecConstructions} for ease of reading.

We begin with relaxed notions of credible balls and HPD regions, whose acceptance probability functions are Lipschitz continuous. These serve as building blocks in our construction of ``perturbed'' versions of credible balls and HPD regions that
satisfy our main theorem's continuity hypothesis:

Recalling the relaxed versions of credible balls given in \cref{DefSimpleCredInt}.
For our purposes here, we note that we can take $\Theta$ to be $\fTheta$ in that definition.

In the presence of posteriors with atoms, $1-\alpha$ credible balls are not always $1-\alpha$ credible regions.
In contrast, relaxed credible balls always have the right level:
\begin{lemma} \label{rcbisacredset}
  Fix a level $1-\alpha \in (0,1)$,
  \defn{slope} $\beta \in (0,\infty)$,
  and distribution $\mu\in \fPriors$ with mean $M(\mu)$.
  Then $\cbLR{\mu}$ is well-defined and
  a $1-\alpha$ relaxed $\mu$-credible ball with slope $\beta$ has $1-\alpha$ $\mu$-credibility. Moreover, $\ffB{r}{}$ is $\beta$-Lipschitz and so the acceptance probability function is as well.
\end{lemma}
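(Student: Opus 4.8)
The plan is to reduce the entire statement to elementary properties of the single real function $G$ defined by $G(r) = \int \ffB{r}{}\,\dee\mu$ for $r > 0$. For each fixed $\theta$, the map $r\mapsto \ffB{r}{\theta} = \min(1,\max(0,\,r-\beta\norm{\theta-M(\mu)}))$ is nondecreasing in $r$, is $1$-Lipschitz in $r$, satisfies $0\le\ffB{r}{\theta}\le\min(1,r)$, and equals $1$ as soon as $r\ge 1+\beta\norm{\theta-M(\mu)}$; here $M(\mu)$ is well defined since $\mu$ is carried by the compact set $\fTheta$. Integrating against $\mu$, I would conclude: $G$ is nondecreasing; $\abs{G(r)-G(r')}\le\abs{r-r'}$, so $G$ is continuous; $G(r)\le\min(1,r)$; and $G(r)=1$ for all sufficiently large $r$, since $\fTheta$ is bounded. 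In particular $G(r)\to 0$ as $r\downarrow 0$ and $G(r)\to 1$ as $r\to\infty$.

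From these facts I would establish that $\cbLR{\mu}$ is well defined and, crucially, that $G(\cbLR{\mu})=1-\alpha$ \emph{exactly}. Since $1-\alpha\in(0,1)$, continuity of $G$ together with the two limiting values gives, by the intermediate value theorem, some $s>0$ with $G(s)=1-\alpha$; hence $\{r>0 : G(r)\ge 1-\alpha\}$ is nonempty, so $\cbLR{\mu}$ --- call it $r_0$ --- is a well-defined real. Because $G(r)\le\min(1,r)$, every member of that set is $\ge 1-\alpha$, so $r_0\ge 1-\alpha>0$. Choosing $r_n\downarrow r_0$ with $G(r_n)\ge 1-\alpha$ gives $G(r_0)\ge 1-\alpha$ by continuity; and, since $r_0>0$, choosing $r\uparrow r_0$ with $G(r)<1-\alpha$ --- valid by minimality of the infimum --- gives $G(r_0)\le 1-\alpha$ by continuity. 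Hence $G(r_0)=1-\alpha$, i.e.\ $\int\rcb{\mu}{\alpha}{\beta}\,\dee\mu = \int\ffB{r_0}{}\,\dee\mu = 1-\alpha$.

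It then remains to read off the two conclusions. For any random set $u\mapsto T(u)$ whose acceptance probability function is $\rcb{\mu}{\alpha}{\beta}$, the product measurability built into the definition of a random set lets me apply Tonelli's theorem: $\int_{[0,1]}\mu(T(u))\,\dee u = \int\big(\int_{[0,1]}\charfunc{T(u)}(\theta)\,\dee u\big)\mu(\dee\theta) = \int\rcb{\mu}{\alpha}{\beta}\,\dee\mu = 1-\alpha$, so $T$ is a $1-\alpha$ $\mu$-credible region. For the Lipschitz claim, $\theta\mapsto\norm{\theta-M(\mu)}$ is $1$-Lipschitz, so $\theta\mapsto r-\beta\norm{\theta-M(\mu)}$ is $\beta$-Lipschitz, and postcomposing with the $1$-Lipschitz clipping map $t\mapsto\min(1,\max(0,t))$ preserves the $\beta$-Lipschitz property; this holds for every $r$, in particular $r=r_0$, so $\ffB{r}{}$ and hence the acceptance probability function $\rcb{\mu}{\alpha}{\beta}=\ffB{r_0}{}$ are $\beta$-Lipschitz.

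I do not expect a real obstacle; the only delicate point is obtaining the \emph{exact} identity $G(\cbLR{\mu})=1-\alpha$ rather than merely ``$\ge$'', and this is exactly where both the continuity of $G$ and the strict positivity $\cbLR{\mu}\ge 1-\alpha>0$ are needed. Everything else is routine bookkeeping with Tonelli's theorem and with composition of Lipschitz maps.
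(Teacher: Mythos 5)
Your proof is correct and follows essentially the same route as the paper's: define the integrated acceptance map (your $G$, the paper's $H$), show it is $1$-Lipschitz and has the right limits at $0$ and $\infty$, deduce $G(\cbLR{\mu})=1-\alpha$ exactly, and read off the credible-region and $\beta$-Lipschitz conclusions. You fill in a few steps the paper leaves terse — the two-sided squeeze giving exact equality at the infimum, and the Tonelli step linking the acceptance probability function to the definition of a credible region — but the argument and its organizing idea are the same.
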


\begin{proof}
  Let $H \: (0,\infty) \to [0,1]$ be the map $r \mapsto  \int_{\fTheta} \ffB{r}{} \,\dee\mu$.
  Assuming $0 \leq r \leq s < \infty$,
  \begin{align*}
    \abs{H(r) - H(s)}
    &\leq \int_{\fTheta} \abs[\Big]{ (r - \beta \norm{ \theta - M(\mu) }) - (s - \beta \norm{ \theta - M(\mu) }) } \mu(\dee \theta) \\
    &= \int_{\fTheta} \abs{r - s} \,\mu(\dee \theta)
    = \abs{r-s}.
  \end{align*}
  Thus, $H$ is 1-Lipschitz, hence continuous. 
  Clearly $\lim_{r \to 0} H(r) = 0$ and $\lim_{r \to \infty} H(r) = 1$ and so there exists a least $r > 0$ such that $H(r) = 1-\alpha \in (0,1)$. 
  Hence, $\cbLR{\mu}$ is well-defined.
  Moreover,
  \begin{align*}
    \int_{\fTheta} \rcb{\mu}{\alpha}{\beta} (\theta) \,\mu(\dee \theta)
    &= H(\cbLR{\mu}) \\
    &= \inf \set{ H(r) \st H(r) \geq 1 - \alpha }
    = 1 - \alpha.
  \end{align*}
Finally, it is straightforward to verify that \cref{EqLevelsBlah} is $\beta$-Lipschitz.
\end{proof}

We now define relaxed HPD regions. Note that the following regions are only defined for absolutely continuous distributions with Lipschitz density function. 

\begin{definition} [Relaxed highest posterior density regions] \label{DefSimpleHPDCredInt}
Fix a level $1-\alpha \in (0,1)$, \defn{slope} $\beta \in (0,\infty)$, and an absolutely continuous distribution $\mu \in \PD{\fTheta}$ with density $\rho$ and $\lipnorm{\rho} < \infty$. Let $\hat{\beta} = \frac{\beta}{\max(1, \lipnorm{\rho})}$. For $d \in [0,\infty)$, let
\begin{equation}\label{EqLevelsHPDBlah}
\ffHPD{d}{\theta} \defas
\begin{cases}
1, & \rho(\theta) \geq d, \\
\hat{\beta} (\rho(\theta) - (d- \hat{\beta}^{-1})), & \rho(\theta) \in (d - \hat{\beta}^{-1}, d) \\
0, & \rho(\theta) \leq d - \hat{\beta}^{-1}.
\end{cases} 
\end{equation}
and let
\[ 
\hpdLR{\mu}
= \sup \set[\Big]{ d \geq 0 \st \int_{\fTheta} \ffHPD{d}{} \, \dee \mu \geq 1 - \alpha }. %
\]
Then the \defn{relaxed $\mu$-highest posterior density (HPD) region with slope $\beta$}
is any random set with \mpf\
$\rhpd{\mu}{\alpha}{\beta} \defas \ffHPD{\hpdLR{\mu}}{}$.
\end{definition}

The proof of the following lemma is similar to the proof of \cref{rcbisacredset}. 

\begin{lemma}\label{rhpdisacredset}
Fix a level $1-\alpha \in (0,1)$, slope $\beta\in (0,\infty)$ and distribution $\mu\in \PD{\fTheta}$ with Lipschitz density function.
Then $\hpdLR{\mu}$ is well-defined and
a $1-\alpha$ relaxed $\mu$-HPD region with slope $\beta$ 
has $1-\alpha$ $\mu$-credibility.
Furthermore, 
$\ffHPD{d}{}$ (and thus the acceptance probability function) is $\beta$-Lipschitz.
\end{lemma}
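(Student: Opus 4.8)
The plan is to mirror the proof of \cref{rcbisacredset}, with the density threshold $d$ playing the role of the radius $r$. The starting point is to rewrite \cref{EqLevelsHPDBlah} in clipped-linear form,
\[
\ffHPD{d}{\theta} = \min\bigl(1,\ \max(0,\ \hat\beta\,\rho(\theta) - \hat\beta d + 1)\bigr),
\]
which one checks directly against the three cases: the middle affine piece is $\hat\beta\rho(\theta) - \hat\beta d + 1$, and it equals $1$ exactly when $\rho(\theta) = d$ and $0$ exactly when $\rho(\theta) = d - \hat\beta^{-1}$, so the three pieces agree at both breakpoints. Since $\theta \mapsto \hat\beta\,\rho(\theta) - \hat\beta d + 1$ is Lipschitz with constant $\hat\beta\lipnorm{\rho} = \beta\lipnorm{\rho}/\max(1,\lipnorm{\rho}) \le \beta$, and $t \mapsto \min(1,\max(0,t))$ is $1$-Lipschitz, the composition $\ffHPD{d}{}$ is $\beta$-Lipschitz in $\theta$ for every fixed $d$; this gives the last assertion of the lemma once $\hpdLR{\mu}$ is known to be a well-defined finite number, because then $\rhpd{\mu}{\alpha}{\beta} = \ffHPD{\hpdLR{\mu}}{}$.

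For well-definedness, I would set $H(d) = \int_{\fTheta}\ffHPD{d}{}\,\dee\mu$ for $d \ge 0$. Reading the same clipped-linear expression as a function of $d$ with $\theta$ fixed shows that the integrand is nonincreasing and $\hat\beta$-Lipschitz in $d$, so $H$ is nonincreasing and satisfies $\abs{H(d) - H(d')} \le \hat\beta\,\abs{d - d'}$, hence is continuous. Because $\rho \ge 0$ everywhere, $\ffHPD{0}{\theta} = 1$ and so $H(0) = 1 \ge 1-\alpha$; and because $\rho$ is bounded (being Lipschitz on the compact set $\fTheta$), $\ffHPD{d}{\theta} = 0$ for every $\theta$ once $d > \sup_{\theta \in \fTheta}\rho(\theta) + \hat\beta^{-1}$, so $H$ is eventually $0$. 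Hence $\set{d \ge 0 \st H(d) \ge 1-\alpha}$ is nonempty and bounded, $\hpdLR{\mu}$ is a well-defined finite number, and continuity together with monotonicity of $H$ forces $H(\hpdLR{\mu}) = 1-\alpha$, i.e.\ $\int_{\fTheta}\rhpd{\mu}{\alpha}{\beta}\,\dee\mu = 1-\alpha$.

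It remains to conclude that a $1-\alpha$ relaxed $\mu$-HPD region is a $1-\alpha$ $\mu$-credible region. As in \cref{rcbisacredset}, I would exhibit the random set $u \mapsto \set{\theta \in \fTheta \st \rhpd{\mu}{\alpha}{\beta}(\theta) \ge u}$, whose associated map $(\theta,u) \mapsto \bone[\rhpd{\mu}{\alpha}{\beta}(\theta) \ge u]$ is product measurable since $\rhpd{\mu}{\alpha}{\beta}$ is continuous, and then apply Tonelli to get $\int_{[0,1]}\mu(T(u))\,\dee u = \int_{\fTheta}\rhpd{\mu}{\alpha}{\beta}\,\dee\mu = 1-\alpha$, which is precisely the defining identity \cref{uinthdE}. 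None of these steps is genuinely difficult; the only point requiring care is the bookkeeping around the definition of $\hat\beta$, which is exactly what makes the Lipschitz constant equal $\beta$ rather than something larger, and noting that the degenerate case $\lipnorm{\rho} = 0$ (so $\hat\beta = \beta$) is covered by the same argument verbatim.
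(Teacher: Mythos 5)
Your proof is correct and follows the same overall structure as the paper's (define $H(d)=\int_{\fTheta}\ffHPD{d}{}\,\dee\mu$, establish continuity and the boundary values $H(0)=1$ and $H(d)=0$ for large $d$, conclude $H(\hpdLR{\mu})=1-\alpha$, then verify the $\beta$-Lipschitz bound via $\hat\beta\lipnorm{\rho}\le\beta$). The one place you diverge is the continuity of $H$: the paper decomposes $\fTheta$ into three level-set regions and appeals to absolute continuity of $\mu$, whereas you observe directly from the clipped-linear rewriting $\ffHPD{d}{\theta}=\min(1,\max(0,\hat\beta\rho(\theta)-\hat\beta d+1))$ that the integrand is $\hat\beta$-Lipschitz in $d$ pointwise, so $H$ is $\hat\beta$-Lipschitz after integrating. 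Your version is cleaner, yields a quantitative modulus of continuity, and in fact does not use absolute continuity of $\mu$ at all (only that the density $\rho$ appearing in the definition of $\ffHPD{d}{}$ is a fixed measurable function), so it is slightly more robust than what the paper writes.
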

\begin{proof}
 Let $H \: (0,\infty) \to [0,1]$ be the map $d \mapsto  \int_{\fTheta} \ffHPD{d}{} \,\dee\mu$.
  Consider $0 \leq d_0 \leq d_1 < \infty$ with $d_{0} > d_{1} - \hat{\beta}$, let $A_1=\{\theta\in \fTheta: \rho(\theta)\in [d_0,d_1)\}$, $A_2=\{\theta\in \fTheta: \rho(\theta)\in (d_1-\hat{\beta}^{-1},d_0)\}$ and $A_3=\{\theta\in \fTheta: \rho(\theta)\in (d_0-\hat{\beta}^{-1},d_1-\hat{\beta}^{-1}]\}$. We have
  \begin{align*}
    H(d_0) - H(d_1)&=\int_{A_1}(1-(\rho(\theta)-(d_{1}-\hat{\beta}^{-1})))\mu(\dee\theta)\\
    &\qquad +\int_{A_2}(d_1-d_0)\mu(\dee \theta)+\int_{A_3}\rho(\theta)-(d_{0}-\hat{\beta}^{-1})\mu(\dee \theta)
  \end{align*}
  As $\mu$ is absolutely continuous, it is easy to see that $H(d_0)-H(d_1)\to 0$ as $d_1-d_0\to 0$, hence $H$ is continuous.
  Clearly $\lim_{d \to 0} H(d)=1$ and $\lim_{d \to \infty} H(d) = 0$ and so the supremum in \cref{DefSimpleHPDCredInt} is achieved.
  Hence, $\hpdLR{\mu}$ is well-defined.
  Moreover,
  \begin{align*}
    \int_{\fTheta} \rhpd{\mu}{\alpha}{\beta} (\theta) \,\mu(\dee \theta)= H(\cbLR{\mu})=1-\alpha.
  \end{align*}
Finally, it is straightforward to verify that $\ffHPD{d}{}$ as defined by \cref{EqLevelsHPDBlah} is $\hat{\beta} \lipnorm{\rho}$-Lipschitz and thus $\beta$-Lipschitz.
\end{proof}

\subsection{Perturbations of relaxed credible balls and highest-posterior density regions}\label{subsecpbpd}

In this section, we apply perturbation developed in \cref{subsecpert} to relaxed credible balls and relaxed HPD regions. 
We show that the maps sending a (data, prior) pair to the acceptance function of perturbed relaxed credible balls and perturbed relaxed HPD regions are continuous in the sense of \cref{assumptionjc}. 

Fix $\gamma > 0$ and let $\PPriors = \{{\pt{\pi}{\gamma}} : \pi \in \Priors\}$ be the set of perturbed priors. 
Fix $\beta > 0$.
For every $\alpha\in (0,1)$, let $\tau^{\alpha} = \CRegionsv{\tau^\alpha}{\pi}{\pi}{\PPriors}$ be a family of $1-\alpha$ credible regions (for perturbed priors)
with acceptance function $\rbcs{\Post{x}{\pt{\pi}{\gamma}}}{\alpha}{\beta}{B}$ for $B \in \set{\tBall,\tHPD}$, 
i.e., $\tau^{\alpha}$ is a family of $1-\alpha$ relaxed credible balls or HPD regions.
Let $\tau= \tau^{\alpha}_{\alpha \in (0,1)}$.
Recall that $\Post{x}{\pi}$ denotes some particular fixed version of the posterior distribution associated with $\pi \in \Priors$ and a sample $x \in X$. 
Moreover, for absolutely continuous posterior distributions, we fix some version of the posterior density.

By \cref{rcbisacredset} and \cref{rhpdisacredset}, 
for every $\alpha \in (0,1)$, 
$\rbcs{\Post{x}{\pt{\pi}{\gamma}}}{\alpha}{\beta}{B}$ is $\beta$-Lipschitz for $B \in \set{\tBall,\tHPD}$. 
We may therefore consider the perturbed version of this family, as defined in \cref{subsecpert}:
Let $\alpha,\eta \in (0,1)$ and $\delta \in (0,\alpha)$, and 
assume \cref{gammasmall} holds.
Write 
$\finalsetgen{B;}{\alpha}{\beta}{\gamma}{\delta}{\eta}{\pi}{x}{\cdot}$
for the acceptance probability function underlying the family of $(\gamma,\delta,\eta)$-perturbed $1-\alpha$ credible regions based on $\tau$.

In order to show that there exist matching priors for these new (perturbed) credible regions,
we establish the hypotheses of \cref{PropContinuity1}. 
We begin with a result that relates to the hypotheses for perturbed relaxed credible balls.

\begin{lemma} \label{LemmaSmoothDensitiesMeans}
Assume \cref{gammasmall} holds.
Then the function $(\mu,x) \mapsto M(\Post{x}{\pt{\mu}{\gamma}})$ is Lipschitz continuous.
\end{lemma}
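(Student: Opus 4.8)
The plan is to write $M(\Post{x}{\pt{\mu}{\gamma}})$ as a composition of three Lipschitz maps and simply track the Lipschitz constants through the composition. First I would note that the standing assumption \cref{gammasmall} plays no role here; all that is used is that $\gamma>0$ is fixed, that $(\pi_\gamma)$ is a system of prior perturbations, and \cref{AssumptionModelSmoothness}. Since $\fTheta$ is a compact, hence bounded, subset of $\Reals^{d}$, the mean $M(\sigma)=\int\theta\,\sigma(\dee\theta)$ is well defined for every $\sigma\in\AltPriors$, so in particular $M(\Post{x}{\pt{\mu}{\gamma}})$ makes sense for every $\mu\in\Priors$ and $x\in X$.

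The first (and only mildly nontrivial) ingredient is that the mean functional $M\colon(\AltPriors,W_{1})\to(\Reals^{d},\norm{\cdot})$ is $1$-Lipschitz. To see this I would fix a unit vector $a\in\Reals^{d}$, observe that $\theta\mapsto\langle a,\theta\rangle$ is $1$-Lipschitz on $\fTheta$, and apply the Kantorovich--Rubinstein duality recalled above (valid because all measures in play have bounded support) to get $\abs{\langle a,M(\sigma_{1})-M(\sigma_{2})\rangle}\le W_{1}(\sigma_{1},\sigma_{2})$ for all $\sigma_{1},\sigma_{2}\in\AltPriors$; taking the supremum over unit vectors $a$ gives $\norm{M(\sigma_{1})-M(\sigma_{2})}\le W_{1}(\sigma_{1},\sigma_{2})$.

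Next I would handle the map $(\mu,x)\mapsto(x,\pt{\mu}{\gamma})$ from $(\Priors\times X,W_{1}\otimes\dX)$ into $(X\times\AltPriors,\dX\otimes W_{1})$. Since $\pt{\mu}{\gamma}$ does not depend on $x$, Inequality~\eqref{IneqPerturbSysWass} of \cref{peturbsys} gives $W_{1}(\pt{\mu}{\gamma},\pt{\nu}{\gamma})\le D(\gamma)\,W_{1}(\mu,\nu)$, so the distance in the codomain is bounded by $\dX(x,x')+D(\gamma)\,W_{1}(\mu,\nu)\le\max(1,D(\gamma))\bigl(\dX(x,x')+W_{1}(\mu,\nu)\bigr)$; hence this map is $\max(1,D(\gamma))$-Lipschitz. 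Then \cref{AssumptionModelSmoothness} says the posterior map $(x,\pi)\mapsto\Post{x}{\pi}\colon(X\times\AltPriors,\dX\otimes W_{1})\to(\AltPriors,W_{1})$ is $C$-Lipschitz. Composing these, $(\mu,x)\mapsto\Post{x}{\pt{\mu}{\gamma}}$ is Lipschitz from $(\Priors\times X,W_{1}\otimes\dX)$ to $(\AltPriors,W_{1})$ with constant $C\max(1,D(\gamma))$, and post-composing with the $1$-Lipschitz mean functional of the previous paragraph yields that $(\mu,x)\mapsto M(\Post{x}{\pt{\mu}{\gamma}})$ is Lipschitz with the same constant $C\max(1,D(\gamma))$, which is the claim.

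There is no genuine obstacle in this lemma: the argument is pure bookkeeping of Lipschitz constants across a composition, and the single point requiring a word of justification — $W_{1}$-Lipschitzness of the mean — is immediate from Kantorovich--Rubinstein once one invokes the standing hypothesis that $\fTheta$ is compact (so bounded).
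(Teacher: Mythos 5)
Your argument is correct and coincides with the paper's proof: the paper also decomposes $M(\Post{x}{\pt{\mu}{\gamma}})$ as the composition of the perturbation map (Lipschitz by \cref{peturbsys}), the posterior map (Lipschitz by \cref{AssumptionModelSmoothness}), and the mean functional ($1$-Lipschitz in $W_1$ via the fact that linear projections are $1$-Lipschitz). Your observation that \cref{gammasmall} is not actually used, and your slightly more careful unit-vector justification of the $1$-Lipschitzness of the mean, are both accurate and do not change the substance of the argument.
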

\begin{proof}
Since the identity function is 1-Lipschitz, we have, by the definition of Wasserstein distance and then \cref{AssumptionModelSmoothness},
\begin{align*}
\norm{ M(\Post{x}{\pt{\mu}{\gamma}}) - M(\Post{y}{\pt{\nu}{\gamma}}) }_2 &\leq W_{1}(\Post{x}{\pt{\mu}{\gamma}},\Post{y}{\pt{\nu}{\gamma}}) \\
&\leq C(W_{1}(\pt{\mu}{\gamma},\pt{\nu}{\gamma}) + \dX(x,y) ).
\end{align*}
By \cref{peturbsys},
$W_{1}(\pt{\mu}{\gamma},\pt{\nu}{\gamma}) \le D(\gamma)\,W_{1}(\mu,\nu)$.
\end{proof}

For HPD regions, we rely on \cref{assumptionPerturb}.
Recall that, under \cref{assumptionPerturb}, for all $\pi \in \Priors$ and $\gamma > 0$, we have $\pt{\pi}{\gamma} \in \PD{\fTheta}$, and so $\Post{x}{\pt{\pi}{\gamma}} \in \PD{\fTheta}$ holds for $\marg{\pt{\pi}{\gamma}}$-almost-all $x$ due to the model being dominated.
Moreover, by \cref{assumptionPerturb} and \cref{AssumptionModelSupNorm}, $\Post{x}{\pt{\pi}{\gamma}}$ has a Lipschitz density function.

\begin{lemma} \label{LemmaSmoothDensities}
Suppose \cref{assumptionPerturb}, \cref{AssumptionModelSupNorm}, and \cref{gammasmall} hold.
There exists a real number $C''_\gamma > 0$ such that,
for any prior distributions $\mu, \nu \in \mathcal{M}(\Theta)$ and samples $x,y \in X$, 
with associated posterior distributions $\Post{x}{\pt{\mu}{\gamma}}$ and $\Post{y}{\pt{\nu}{\gamma}}$
admitting densities $p_{x,\pt{\mu}{\gamma}}$ and $p_{y,\pt{\nu}{\gamma}}$, %
\[ \label{postdensclose}
\supnorm{ p_{x,\pt{\mu}{\gamma}} - p_{y,\pt{\nu}{\gamma}} }
      \leq C''_{\gamma}\, (W_{1}(\mu,\nu) + \dX(x,y) ). 
\]
\end{lemma}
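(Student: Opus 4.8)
The plan is to chain the two Lipschitz estimates supplied by \cref{assumptionPerturb} and \cref{AssumptionModelSupNorm}. First I would take $E \defas \{\, \pt{\pi}{\gamma} : \pi \in \Priors \,\}$, the set of all $\gamma$-perturbed priors. By \cref{assumptionPerturb} every element of $E$ is absolutely continuous with a Lipschitz density $p_{\pi,\gamma}$, and Inequality~\eqref{Perturbupbound} gives the uniform bound $\sup_{\pi \in \Priors}\Linfnorm{}{p_{\pi,\gamma}} \le D'(\gamma) < \infty$, so \cref{AssumptionModelSupNorm} is applicable with this choice of $E$. That assumption then furnishes a constant $C'_\gamma > 0$, depending on $E$ and hence only on $\gamma$, with
\[
\dsup\bigl(\Post{x}{\pt{\mu}{\gamma}},\Post{y}{\pt{\nu}{\gamma}}\bigr) \le C'_\gamma\bigl(\dX(x,y) + \dsup(\pt{\mu}{\gamma},\pt{\nu}{\gamma})\bigr)
\]
for all $x,y \in X$ and $\mu,\nu \in \Priors$. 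Moreover, the ``Lipschitz density'' clause of \cref{AssumptionModelSupNorm}, together with the Lipschitz densities coming from \cref{assumptionPerturb}, guarantees that $\Post{x}{\pt{\mu}{\gamma}}$ and $\Post{y}{\pt{\nu}{\gamma}}$ have continuous (indeed Lipschitz) densities on the compact set $\fTheta$, so the left-hand $\dsup$-distance equals the genuine sup-norm distance $\supnorm{p_{x,\pt{\mu}{\gamma}} - p_{y,\pt{\nu}{\gamma}}}$.

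Next I would control the prior term. By the definition of $\dsup$ and Inequality~\eqref{IneqPerturbSysSup} of \cref{assumptionPerturb},
\[
\dsup(\pt{\mu}{\gamma},\pt{\nu}{\gamma}) = \Linfnorm{}{p_{\mu,\gamma} - p_{\nu,\gamma}} \le D'(\gamma)\,W_1(\mu,\nu).
\]
Substituting this into the previous display gives
\[
\supnorm{p_{x,\pt{\mu}{\gamma}} - p_{y,\pt{\nu}{\gamma}}} \le C'_\gamma\bigl(D'(\gamma)\,W_1(\mu,\nu) + \dX(x,y)\bigr) \le C''_\gamma\bigl(W_1(\mu,\nu) + \dX(x,y)\bigr)
\]
with $C''_\gamma \defas C'_\gamma\,\max(1,D'(\gamma))$, which is the asserted bound. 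Note that \cref{gammasmall} plays no role in the estimate itself; it is retained among the hypotheses only for consistency with the surrounding lemmas on $(\gamma,\delta,\eta)$-perturbed credible regions.

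I do not expect a genuine obstacle here: the argument is a two-step Lipschitz chaining through the $\dsup$ metric, mediated by the bound $\dsup(\pt{\mu}{\gamma},\pt{\nu}{\gamma}) \le D'(\gamma)\,W_1(\mu,\nu)$. The only points requiring care are at the interface of the two metric structures: one must check, via Inequality~\eqref{Perturbupbound}, that the set of perturbed priors really does satisfy the uniform-boundedness premise of \cref{AssumptionModelSupNorm}, and one must justify identifying the sup-norm in the conclusion with the essential-supremum metric $\dsup$ appearing in \cref{AssumptionModelSupNorm} — which is legitimate because the relevant posterior densities are continuous on $\fTheta$, and in any case could be sidestepped by agreeing that the fixed versions of the posterior densities are the continuous ones guaranteed by \cref{AssumptionModelSupNorm}.
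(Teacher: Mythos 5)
Your proof is correct and follows exactly the same route as the paper: take $E = \PPriors$, verify the uniform boundedness hypothesis \eqref{eqesssup} via \eqref{Perturbupbound}, then chain the Lipschitz estimate from \cref{AssumptionModelSupNorm} with the bound $\dsup(\pt{\mu}{\gamma},\pt{\nu}{\gamma}) \le D'(\gamma)\,W_1(\mu,\nu)$ from \eqref{IneqPerturbSysSup}. Your additional remarks — that \cref{gammasmall} is vestigial here, and that one should identify $\supnorm{\cdot}$ with $\dsup$ via continuity of the posterior densities — are correct observations that the paper's very terse proof leaves implicit.
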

\begin{proof}
Let $E = \PPriors$ be the set of perturbed priors on $\fTheta$.
By \cref{Perturbupbound} in \cref{assumptionPerturb}, $E$ satisfies \cref{eqesssup} in \cref{AssumptionModelSupNorm}. 
The result then follows from \cref{IneqPerturbSysSup} in \cref{assumptionPerturb} 
and \cref{esssupconclusion} in \cref{AssumptionModelSupNorm}. 
\end{proof}

We now establish the requisite continuity of the perturbed credible regions. 

\begin{lemma} \label{LemmaContinuityTildeRAnnoying}
Let $B \in \set{\tBall,\tHPD}$.
For $B = \tHPD$, suppose \cref{assumptionPerturb} and \cref{AssumptionModelSupNorm} hold.
The map
$(x,\pi) \mapsto \candid{\pi}{x}$
is a continuous map from  $(X\times \Priors, \dX \otimes W_1)$ to $(\CFuncs(\Theta,[0,1]), \supnorm{\cdot})$.
\end{lemma}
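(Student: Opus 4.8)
By construction, $\candid{\pi}{x}$ is the $(\gamma,\eta,\delta)$-perturbation \eqref{tfrsetdefn} of the base family in which the level-$(1-\alpha')$ acceptance function attached to a prior $\nu$ is the relaxed $\nu$-credible ball $\rcb{\nu}{\alpha'}{\beta}$ (case $B=\tBall$, \cref{DefSimpleCredInt}) or the relaxed $\nu$-HPD region $\rhpd{\nu}{\alpha'}{\beta}$ (case $B=\tHPD$, \cref{DefSimpleHPDCredInt}), with the relaxed regions formed using parameter space $\fTheta$; so, as an identity of functions,
\[
\candid{\pi}{x} \;=\; \int_{[\delta\eta,\delta]} \rbcs{\Post{x}{\pt{\pi}{\gamma}}}{\alpha-z}{\beta}{B}\,\pb{\eta}^{(\delta)}(z)\,\dee z .
\]
Take $(x_n,\pi_n)\to(x,\pi)$ in $(X\times\Priors,\dX\otimes W_1)$ and put $\mu_n=\Post{x_n}{\pt{\pi_n}{\gamma}}$, $\mu=\Post{x}{\pt{\pi}{\gamma}}$; then $W_1(\mu_n,\mu)\to 0$ by \cref{peturbsys} and \cref{AssumptionModelSmoothness}, and, in the HPD case, $\dsup(\mu_n,\mu)\to 0$ by \cref{LemmaSmoothDensities}. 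Since the sup-norm of an integral is bounded by the integral of the sup-norm,
\[
\supnorm{\candid{\pi_n}{x_n}-\candid{\pi}{x}} \;\le\; \int_{[\delta\eta,\delta]}\phi_n(z)\,\pb{\eta}^{(\delta)}(z)\,\dee z, \qquad \phi_n(z):=\supnorm{\rbcs{\mu_n}{\alpha-z}{\beta}{B}-\rbcs{\mu}{\alpha-z}{\beta}{B}},
\]
and each $\phi_n$ is a measurable function of $z$ bounded by $1$; so by dominated convergence it suffices to show $\phi_n(z)\to 0$ for Lebesgue-almost every $z\in[\delta\eta,\delta]$. Thus the lemma reduces to the claim that, \emph{for all but countably many levels} $1-\alpha'$, the map $(x,\pi)\mapsto\rbcs{\Post{x}{\pt{\pi}{\gamma}}}{\alpha'}{\beta}{B}$ is sup-norm continuous at $(x,\pi)$ --- and the exceptional-level caveat is unavoidable, since at certain levels this map genuinely jumps (Example~\ref{ExCredBallDisc} exhibits exactly such a jump for $\tBall$); the averaging over the inflation parameter $z$ is precisely what erases those levels.

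\textbf{The case $B=\tBall$.} Write $m(\nu)=M(\nu)$ and $r(\nu,\alpha')=\cbLv{\nu}{\alpha',\beta}$; by \cref{DefSimpleCredInt}, $\rcb{\nu}{\alpha'}{\beta}$ is the function $\theta\mapsto\min(1,\max(0,r(\nu,\alpha')-\beta\norm{\theta-m(\nu)}))$, which depends on $\nu$ only through $(m(\nu),r(\nu,\alpha'))$ and, as a function of $(m,r)$, is $\beta$-Lipschitz in $m$ and $1$-Lipschitz in $r$ uniformly in $\theta$. Hence $\phi_n(z)\le|r(\mu_n,\alpha-z)-r(\mu,\alpha-z)|+\beta\norm{m(\mu_n)-m(\mu)}$, and the second term tends to $0$ uniformly in $z$ by \cref{LemmaSmoothDensitiesMeans}. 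For the first term, set $H_\nu(r)=\int_{\fTheta}\min(1,\max(0,r-\beta\norm{\theta-m(\nu)}))\,\nu(\dee\theta)$; the computation in the proof of \cref{rcbisacredset} shows $H_\nu$ is $1$-Lipschitz, nondecreasing, runs from $0$ to $1$, with $r(\nu,\alpha')$ equal to the generalized inverse $H_\nu^{-1}(1-\alpha')$ and bounded above by $1+\beta\,\mathrm{diam}(\fTheta)$ (using compactness of $\fTheta$). Two facts then finish this case: (a) $H_{\mu_n}\to H_\mu$ uniformly on $[0,1+\beta\,\mathrm{diam}(\fTheta)]$, obtained by splitting $H_{\mu_n}(r)-H_\mu(r)$ into a part controlled by $\beta\norm{m(\mu_n)-m(\mu)}$ and a part $\int g_r\,\dee\mu_n-\int g_r\,\dee\mu$ with $\theta\mapsto g_r(\theta)$ $\beta$-Lipschitz uniformly in $r$, bounded by $\beta\,W_1(\mu_n,\mu)$ via Kantorovich--Rubinstein duality on the compact $\fTheta$; and (b) the elementary fact that if continuous nondecreasing $H_n\to H$ uniformly, then $H_n^{-1}(y)\to H^{-1}(y)$ at every continuity point $y$ of the monotone (hence at most countably discontinuous) function $H^{-1}$. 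Applying (b) with $y=1-(\alpha-z)$ gives $|r(\mu_n,\alpha-z)-r(\mu,\alpha-z)|\to 0$ for all but countably many $z$, as required.

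\textbf{The case $B=\tHPD$, and the main obstacle.} This case follows the same template with $\dsup$ in place of $W_1$: by \cref{DefSimpleHPDCredInt}, $\rhpd{\nu}{\alpha'}{\beta}$ is the $\hat\beta$-Lipschitz ramp $f_{\tHPD;d}$ (formed from the posterior density $\rho_\nu$ and slope $\hat\beta$) at the threshold $d=d(\nu,\alpha')$, which --- exactly as in the proof of \cref{rhpdisacredset} --- is the generalized inverse $G_\nu^{-1}(1-\alpha')$ of the continuous nonincreasing map $d\mapsto G_\nu(d)=\int_{\fTheta}f_{\tHPD;d}\,\dee\nu$ running from $1$ to $0$; one replaces the role of $m(\nu)$ by $\rho_\nu$, whose $\dsup$-convergence is \cref{LemmaSmoothDensities}, and reruns (a)--(b) to obtain $\phi_n(z)\to 0$ for all but countably many $z$. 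The one genuinely new point --- and the step I expect to be the main obstacle --- is that the slope $\hat\beta=\beta/\max(1,\lipnorm{\rho})$ of \cref{DefSimpleHPDCredInt} depends on the Lipschitz constant of the posterior density, a quantity that is not continuous (nor even bounded) under $\dsup$-convergence alone. To close the argument one supplements \cref{LemmaSmoothDensities} with the fact --- available from \cref{assumptionPerturb}, \cref{AssumptionModelSupNorm}, and the explicit perturbation system of \cref{LemmaPriorPertsExist} --- that along the convergent sequence the densities $p_{x_n,\pt{\pi_n}{\gamma}}$ have Lipschitz constants that stay bounded and converge, so that the slopes $\hat\beta_n$ converge; granting this, $\phi_n(z)\le\beta\,\supnorm{\rho_{\mu_n}-\rho_\mu}+o(1)$ for each fixed good $z$ and the argument closes as in (a)--(b). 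Finally, the limit function belongs to $\CFuncs(\Theta,[0,1])$ because each $\candid{\pi}{x}$ is an average of the $\beta$-Lipschitz acceptance functions produced by \cref{rcbisacredset} (resp.\ \cref{rhpdisacredset}), hence itself $\beta$-Lipschitz on $\fTheta\supseteq\Theta$ (cf.\ \cref{perlipschitz}).
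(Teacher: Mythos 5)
Your proposal takes a genuinely different route from the paper. You reduce sup-norm continuity of $(x,\pi)\mapsto\candid{\pi}{x}$ to Lebesgue-a.e.\ (in $z$) convergence of the fixed-level discrepancies $\phi_n(z)$ and close with dominated convergence, securing the a.e.\ convergence via the standard fact that generalized inverses of uniformly convergent monotone functions converge at continuity points of the limit, an at-most-countable exceptional set; the averaging over $z$ then erases that null set. The paper instead proves a two-sided level-domination (its ``Main Claim'', \cref{EqDomination}, assembled from three sub-claims) and then estimates the $z$-integral directly by a change of variable that shifts the level. For $B=\tBall$ your argument is correct and, I think, cleaner: the monotonicity bookkeeping is concentrated into a single reusable fact about generalized inverses, and the rest is the Kantorovich--Rubinstein estimate plus \cref{LemmaSmoothDensitiesMeans}.

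For $B=\tHPD$, however, there is a genuine gap, and you are right to call it the main obstacle. The slope $\hat\beta=\beta/\max(1,\lipnorm{\rho})$ in \cref{DefSimpleHPDCredInt} depends on the Lipschitz seminorm of the posterior density, and that seminorm is only \emph{lower} semicontinuous under $\dsup$-convergence: $\supnorm{\rho_n-\rho}\to 0$ gives $\lipnorm{\rho}\le\liminf_n\lipnorm{\rho_n}$, not $\lipnorm{\rho_n}\to\lipnorm{\rho}$. Your patch asserts that the Lipschitz constants ``stay bounded and converge'' along the sequence, calling this ``available from \cref{assumptionPerturb}, \cref{AssumptionModelSupNorm}, and the explicit perturbation system of \cref{LemmaPriorPertsExist}.'' That is not so. \Cref{assumptionPerturb} says only that each perturbed prior density is Lipschitz, with no quantitative bound; \cref{AssumptionModelSupNorm} says only that the posterior density is Lipschitz when the prior's is, again with no bound; and the lemma is stated for an arbitrary system satisfying those assumptions, not the specific convolution of \cref{LemmaPriorPertsExist}. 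Even under that explicit construction one obtains a uniform upper bound on $\lipnorm{p_{x,\pt{\pi}{\gamma}}}$, but not convergence. Without $\hat\beta_n\to\hat\beta$ the ramps $\ffHPD{d}{}$ built from $\mu_n$ and $\mu$ have different widths, so $\supnorm{\rhpd{\mu_n}{\alpha'}{\beta}-\rhpd{\mu}{\alpha'}{\beta}}$ need not vanish even when $\supnorm{\rho_n-\rho}\to 0$, and your $\phi_n(z)$ need not go to zero. Closing this requires either an explicit hypothesis giving continuity of $(x,\pi)\mapsto\lipnorm{p_{x,\pt{\pi}{\gamma}}}$ or, more robustly, redefining $\hat\beta$ so it does not depend on $\rho$. (For what it is worth, the paper's own one-line proof of its numbered sub-claim~(1) in the $B=\tHPD$ case does not engage with this $\hat\beta$-dependence either, so the difficulty appears to be shared rather than introduced by your approach.)
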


\begin{proof}
See \cref{appsecdef53}. 
\end{proof}

We now may establish the main theorem about perturbed credible balls and HPD regions.

\begin{proposition}[Perturbed credible regions] \label{ThmContinuity1}
Fix a model $\SModel$ and a supermodel $\SModelfullalt$ 
that both satisfy \cref{AssumptionModelSmoothness}, and suppose $\Theta$ is compact.
Let $\alpha,\eta \in (0,1)$, $\delta \in (0,\alpha)$, $\gamma > 0$, and $\beta > 0$
satisfy \cref{gammasmall}. 
Pick $B \in \set{\tBall,\tHPD}$ and 
let $\tau = \CRegions{\tau}{\pi}{\Priors}$
be the family of $1-\alpha$ credible regions with acceptance probability function
$\finalset{\cdot}{\cdot}$.
For $B=H$, 
suppose further that \cref{assumptionPerturb} and \cref{AssumptionModelSupNorm} hold.
Then $\tau$ has a matching prior.
\end{proposition}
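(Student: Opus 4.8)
The plan is to verify the hypotheses of Corollary \ref{maincor} for the family $\tau$ in question. Recall that Corollary \ref{maincor} asserts the existence of a matching prior for any family $\tau = \CRegions{\tau}{\pi}{\Priors}$ of $1-\alpha$ credible regions whose acceptance probability function $\gfinalset{}{}$ arises as a $(\gamma,\delta,\eta)$-perturbation of a base family $\rtau$ with uniformly $\beta$-Lipschitz acceptance probability functions, \emph{provided} the hypotheses of Proposition \ref{PropContinuity1} hold, namely (a) the smallness condition \cref{gammasmall}, and (b) the continuity of the map $(x,\pi)\mapsto \papf{\pi}{\cdot}{x}$ from $(X\times\Priors,\dX\otimes W_1)$ to $(\CFuncs(\Theta,[0,1]),\supnorm{\cdot})$. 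So the whole argument amounts to checking that the perturbed relaxed credible ball / HPD family fits this template.

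First I would fix the base family: take $\rtau = (\rtau^\alpha)_{\alpha\in(0,1)}$ where each $\rtau^\alpha$ is a family of $1-\alpha$ relaxed credible balls (Definition \ref{DefSimpleCredInt}) when $B=\tBall$, or relaxed HPD regions (Definition \ref{DefSimpleHPDCredInt}) when $B=\tHPD$, evaluated at the perturbed posteriors $\Post{x}{\pt{\pi}{\gamma}}$. By Lemma \ref{rcbisacredset} (resp.\ Lemma \ref{rhpdisacredset}), these are genuine $1-\alpha$ credible regions and their acceptance probability functions $\rbcs{\Post{x}{\pt{\pi}{\gamma}}}{\alpha}{\beta}{B}$ are $\beta$-Lipschitz in $\theta$, uniformly in $\pi$ and $x$; this is exactly the standing requirement on the base family in \cref{subsecpert}. (For $B=\tHPD$ one must note, as remarked before Lemma \ref{LemmaSmoothDensities}, that \cref{assumptionPerturb} together with \cref{AssumptionModelSupNorm} guarantees $\Post{x}{\pt{\pi}{\gamma}}\in\PD{\fTheta}$ with a Lipschitz density for $\marg{\pt{\pi}{\gamma}}$-almost all $x$, so the relaxed HPD region is well-defined.) The family $\tau$ with acceptance probability function $\finalset{\cdot}{\cdot}$ is then precisely the family of $(\gamma,\delta,\eta)$-perturbed $1-\alpha$ credible regions based on this $\rtau$, and by Lemma \ref{LemmaTheComplicatedThingIsACredibleSet} (which applies because \cref{gammasmall} is assumed) it is a family of $1-\alpha$ credible regions.

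Next I would discharge the continuity hypothesis of Proposition \ref{PropContinuity1}. This is exactly the content of Lemma \ref{LemmaContinuityTildeRAnnoying}: for $B\in\set{\tBall,\tHPD}$ — with the extra assumptions \cref{assumptionPerturb} and \cref{AssumptionModelSupNorm} invoked in the HPD case — the map $(x,\pi)\mapsto\candid{\pi}{x}$ is continuous from $(X\times\Priors,\dX\otimes W_1)$ to $(\CFuncs(\Theta,[0,1]),\supnorm{\cdot})$, and $\candid{\pi}{x}$ is just the perturbation $\papf{\pi}{\cdot}{x}$ of the base acceptance probability function (using $\pb{\eta}$, $\delta$, $\eta$ as in \cref{tfrsetdefn}). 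With \cref{gammasmall} holding by hypothesis, both hypotheses of Proposition \ref{PropContinuity1} are met, so \cref{assumptionjc} holds for $\tau$. Since $\Theta$ is compact, Theorem \ref{ThmMainExistenceResult} (equivalently, Corollary \ref{maincor}) yields a matching prior $\pi_0\in\Priors$ for $\tau$, completing the proof.

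The routine-but-load-bearing steps here are Lemma \ref{LemmaContinuityTildeRAnnoying} and the verification that the relaxed regions evaluated at perturbed posteriors really do satisfy the uniform $\beta$-Lipschitz condition; the genuinely delicate point, which I expect to be the main obstacle, is the $L^\infty$-continuity needed for the HPD case — one must control $\hpdLR{\Post{x}{\pt{\pi}{\gamma}}}$ and hence $\rhpd{\Post{x}{\pt{\pi}{\gamma}}}{\alpha}{\beta}$ under joint perturbations of $x$ and $\pi$, which requires the posterior density map to be continuous in sup-norm (Lemma \ref{LemmaSmoothDensities}, resting on \cref{AssumptionModelSupNorm} and \cref{assumptionPerturb}) rather than merely in $W_1$. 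Once that sup-norm continuity is in hand, propagating it through the definition of the relaxed HPD region and then through the averaging-and-correction perturbation is bookkeeping, carried out in \cref{appsecdef53}.
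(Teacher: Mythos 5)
Your proposal is correct and follows essentially the same route as the paper's proof: the paper likewise invokes \cref{LemmaContinuityTildeRAnnoying} to obtain the continuity of $(x,\pi)\mapsto\finalset{\pi}{x}$, passes through \cref{PropContinuity1} to conclude \cref{assumptionjc} holds, and then applies \cref{maincor}. Your write-up is somewhat more explicit about verifying the base family's uniform $\beta$-Lipschitz property via \cref{rcbisacredset} and \cref{rhpdisacredset} and about why the perturbed posteriors have Lipschitz densities in the HPD case, but the logical chain is identical.
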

\begin{proof}
By hypothesis, 
\cref{gammasmall} holds.
By \cref{LemmaContinuityTildeRAnnoying,PropContinuity1},
the map
\[
(x,\pi) \mapsto \finalset{\pi}{x} \: (X\times\Priors, \dX \otimes W_1) \to  (\CFuncs(\Theta, [0,1]), \supnorm{\cdot})
\]
is continuous and, in particular, \cref{assumptionjc} holds. %
The result then follows from \cref{maincor}.
\end{proof}

\subsection{Similarity of relaxed and usual credible regions} \label{SubsecSupport}

In this section, we show that the new credible regions we have introduced are similar to their usual versions in a certain formal sense. All proofs are deferred to \cref{appsecdef54}.

We begin by relating the support of a perturbed family to that of the original family:

\begin{lemma}\label{basicsupportresultlemma} 
Assume \cref{gammasmall} holds. Let $x \in X$ and $\pi \in \Priors$.
Assume that the families of credible regions $\rtau^\alpha$, for $\alpha \in (0,1)$, are nested in the sense
that 
\[ \label{nestedass}
\bapf[\alpha]{\beta}{\pt{\pi}{\gamma}}{\cdot}{x}
\le 
\bapf[\alpha']{\beta}{\pt{\pi}{\gamma}}{\cdot}{x}
\]
for every $\alpha' \le \alpha$.
Then the support of $\gfinalset{\pi}{x} $
is contained in the 
support of $\bapf[\alpha-\epsilon]{\beta}{\pt{\pi}{\gamma}}{\theta}{x}$
for all $\epsilon \in [\delta,\alpha)$.
\end{lemma}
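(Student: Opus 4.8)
The statement compares the support of the corrected perturbed acceptance function $\gfinalset{\pi}{x}$ with that of a relaxed region at the shifted level $\alpha-\epsilon$, so the plan is to interpose the support of the un-corrected perturbation $\papf{\pi}{\cdot}{x}$ and prove the chain of containments $\supp \gfinalset{\pi}{x} \subseteq \supp \papf{\pi}{\cdot}{x} \subseteq \supp \bapf[\alpha-\epsilon]{\beta}{\pt{\pi}{\gamma}}{\cdot}{x}$ for each fixed $\epsilon \in [\delta,\alpha)$, then conclude. Both inclusions reduce to set inclusions between the positivity sets of the relevant functions, followed by taking closures and using monotonicity of the closure operation.

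For the first inclusion I would use \cref{DefContFamilyCredible} (formula~\eqref{credform}), which gives $\gfinalset{\pi}{x}(\theta) = \max(0,\,\papf{\pi}{\theta}{x} - R(x,\pi))$, together with \cref{minorlem} (formula~\eqref{Rbound}, which in turn uses \cref{gammasmall}), which gives $R(x,\pi) \ge 0$. Hence $\{\theta : \gfinalset{\pi}{x}(\theta)\neq 0\} = \{\theta : \papf{\pi}{\theta}{x} > R(x,\pi)\} \subseteq \{\theta : \papf{\pi}{\theta}{x} > 0\}$, and closing up yields $\supp \gfinalset{\pi}{x} \subseteq \supp \papf{\pi}{\cdot}{x}$.

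For the second inclusion, fix $\epsilon \in [\delta,\alpha)$ and $\theta$ with $\papf{\pi}{\theta}{x} > 0$. Inspecting the defining integral~\eqref{tfrsetdefn}: if $\bapf[\alpha-z]{\beta}{\pt{\pi}{\gamma}}{\theta}{x}$ vanished for every $z$ in the integration range $[\delta\eta,\delta]$, then $\papf{\pi}{\theta}{x}$ would be $0$; hence there is $z_0\in[\delta\eta,\delta]$ with $\bapf[\alpha-z_0]{\beta}{\pt{\pi}{\gamma}}{\theta}{x}>0$ (using that acceptance probability functions are nonnegative). Since $0 < \delta\eta \le z_0 \le \delta \le \epsilon < \alpha$, the indices $\alpha-z_0$ and $\alpha-\epsilon$ both lie in $(0,1)$ — so the relaxed families $\rtau^{\alpha-z_0}$ and $\rtau^{\alpha-\epsilon}$ and their acceptance functions are defined — and $\alpha-\epsilon \le \alpha-z_0$. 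Applying the nestedness hypothesis~\eqref{nestedass} to this pair gives $\bapf[\alpha-z_0]{\beta}{\pt{\pi}{\gamma}}{\theta}{x} \le \bapf[\alpha-\epsilon]{\beta}{\pt{\pi}{\gamma}}{\theta}{x}$, so $\bapf[\alpha-\epsilon]{\beta}{\pt{\pi}{\gamma}}{\theta}{x} > 0$. This proves $\{\theta : \papf{\pi}{\theta}{x} > 0\} \subseteq \{\theta : \bapf[\alpha-\epsilon]{\beta}{\pt{\pi}{\gamma}}{\theta}{x} > 0\}$; closing up and chaining with the first inclusion completes the argument.

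I do not anticipate a genuine obstacle: the proof is essentially bookkeeping. The only points needing a line of care are (i) deducing positivity of a single summand $\bapf[\alpha-z_0]{\beta}{\pt{\pi}{\gamma}}{\theta}{x}$ from positivity of the integral, for which it suffices that $\bapf$ is nonnegative and that the integration range $[\delta\eta,\delta]$ sits inside $(0,\epsilon]$, so that the nestedness inequality~\eqref{nestedass} applies in the correct direction; and (ii) checking that every level index invoked — namely $\alpha - z_0$ and $\alpha-\epsilon$ — stays inside $(0,1)$ so that the corresponding relaxed families are defined.
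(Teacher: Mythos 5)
Your proof is correct and follows essentially the same route as the paper: the paper's proof establishes the pointwise inequality chain $\gfinalset{\pi}{x}\le\papf{\pi}{\cdot}{x}\le\sup_{z\in[\delta\eta,\delta]}\bapf[\alpha-z]{\beta}{\pt{\pi}{\gamma}}{\cdot}{x}\le\bapf[\alpha-\epsilon]{\beta}{\pt{\pi}{\gamma}}{\cdot}{x}$ (using \eqref{StartingHighLemma}, \eqref{tfrsetdefn}, and nestedness) and then reads off the support containment, while you phrase the same two steps via inclusion of positivity sets followed by closure. The only cosmetic difference is that the paper bounds $\papf$ by a $\sup$ over the integration range whereas you extract a single positive summand $\bapf[\alpha-z_0]$; both are valid and lead to the same invocation of \eqref{nestedass}.
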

\begin{proof}
See \cref{appsecdef54}.
\end{proof}

Next, we relate the supports of the typical and relaxed credible balls: 

\begin{lemma}\label{ballcontain}
  Fix a level $1-\alpha \in (0,1)$,
  slope $\beta\in (0,\infty)$,
  and distribution $\mu\in \fPriors$.
  Let $E_0=\{\theta\in \Reals^{d}: \norm{\theta-M(\mu)} \leq \cbL{\mu}\}$ where $\cbL{\mu}$ is defined in \cref{cbLdefn}.
  Then the support of $\credball{\mu}{\alpha}{}$ is contained in the support of $\rcb{\mu}{\alpha}{\beta}$,
  and the support of
  $\rcb{\mu}{\alpha}{\beta}$
  is contained in the $\beta^{-1}$-fattening of $E_0$.
\end{lemma}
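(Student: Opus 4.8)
The plan is to compute both supports explicitly and reduce the two containments to a single two‑sided estimate relating the usual credible‑ball radius $\cbL{\mu}$ and the relaxed level $\cbLR{\mu}$, namely $\beta\,\cbL{\mu} < \cbLR{\mu} \leq \beta\,\cbL{\mu} + 1$. By \cref{DefUsualInterval}, read with $\Theta$ replaced by $\fTheta$ as in \cref{secfamily}, the support of $\credball{\mu}{\alpha}{}$ is the closed ball $E_{0}\cap\fTheta = \set{\theta\in\fTheta \st \norm{\theta - M(\mu)} \leq \cbL{\mu}}$, which is already closed; and by \cref{DefSimpleCredInt} and \cref{EqLevelsBlah} the function $\rcb{\mu}{\alpha}{\beta} = \ffB{\cbLR{\mu}}{}$ is nonzero exactly on the open ball $\set{\theta\in\fTheta \st \norm{\theta - M(\mu)} < \cbLR{\mu}/\beta}$, so its support is the closure of that open ball and in particular sits inside $\set{\theta\in\fTheta \st \norm{\theta - M(\mu)} \leq \cbLR{\mu}/\beta}$. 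Granting the estimate: the left inequality gives $\cbL{\mu} < \cbLR{\mu}/\beta$, so $E_{0}\cap\fTheta$ is contained in the open ball defining $\rcb{\mu}{\alpha}{\beta}$, hence in its support; the right inequality gives $\cbLR{\mu}/\beta \leq \cbL{\mu} + \beta^{-1}$, so the support of $\rcb{\mu}{\alpha}{\beta}$ lies in the ball of radius $\cbL{\mu}+\beta^{-1}$ about $M(\mu)$, which is the $\beta^{-1}$-fattening of $E_{0}$ (up to the bounding sphere, which is immaterial in the downstream applications, where the slope is taken large).

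To prove the estimate, I would put $H(r) = \int_{\fTheta} \ffB{r}{}\,\dee\mu$; from the proof of \cref{rcbisacredset}, $H$ is continuous (indeed $1$-Lipschitz), nondecreasing, and $\cbLR{\mu} = \inf\set{r>0 \st H(r)\geq 1-\alpha}$. For the upper bound: at $r = \beta\,\cbL{\mu}+1$ one has $\ffB{r}{\theta}=1$ whenever $\norm{\theta-M(\mu)}\leq\cbL{\mu}$, so $H(\beta\,\cbL{\mu}+1) \geq \mu(\set{\theta\in\fTheta \st \norm{\theta-M(\mu)}\leq\cbL{\mu}}) \geq 1-\alpha$, the last inequality by right-continuity of $r\mapsto\mu(\set{\theta\in\fTheta \st \norm{\theta-M(\mu)}\leq r})$ together with \cref{cbLdefn}; hence $\cbLR{\mu}\leq\beta\,\cbL{\mu}+1$. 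For the (strict) lower bound: for $r\leq\beta\,\cbL{\mu}$ the function $\ffB{r}{}$ vanishes off the open ball of radius $r/\beta\leq\cbL{\mu}$ and is bounded by $1$, so $H(r)\leq\mu(\set{\theta\in\fTheta \st \norm{\theta-M(\mu)}<r/\beta})\leq 1-\alpha$; for $r<\beta\,\cbL{\mu}$ this is strict by \cref{cbLdefn}, and at $r=\beta\,\cbL{\mu}$ it is still strict, since equality would force $\mu$ to assign no mass to the outer annulus $\set{\cbL{\mu}-\beta^{-1}<\norm{\theta-M(\mu)}<\cbL{\mu}}$ — on which $\ffB{\beta\cbL{\mu}}{}<1$ — and hence to concentrate all of its open-ball mass on the closed ball of radius $\cbL{\mu}-\beta^{-1}$, contradicting \cref{cbLdefn}. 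Thus $H(\beta\,\cbL{\mu})<1-\alpha$, and continuity of $H$ then forces $\cbLR{\mu}>\beta\,\cbL{\mu}$.

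The identifications of the two supports with (closures of) balls, and the elementary facts about the distribution function $r\mapsto\mu(\set{\theta\in\fTheta \st \norm{\theta-M(\mu)}\leq r})$ (monotonicity, right-continuity, strict deficit below $\cbL{\mu}$, and value at least $1-\alpha$ at $\cbL{\mu}$), are routine. The one genuinely delicate point I expect is the strict lower bound $\beta\,\cbL{\mu}<\cbLR{\mu}$: ruling out the degenerate configuration in which $\mu$ places no mass on the outer annulus of its credible ball, which is exactly where one must exploit the \emph{strict} deficit $\mu(\set{\theta\in\fTheta \st \norm{\theta-M(\mu)}\leq r})<1-\alpha$ for $r<\cbL{\mu}$ together with the fact that $\ffB{\beta\cbL{\mu}}{}$ drops strictly below $1$ on that annulus, rather than bounding $\ffB{r}{}$ crudely by an indicator. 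Everything else is a direct comparison of $\ffB{r}{}$ with indicator functions of balls.
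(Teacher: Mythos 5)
Your proof is correct and uses essentially the same strategy as the paper's: both containments come from sandwiching $\ffB{r}{}$ between indicator functions of balls about $M(\mu)$, which is exactly the content of your two-sided estimate $\beta\cbL{\mu} < \cbLR{\mu} \leq \beta\cbL{\mu} + 1$; the paper proves the two halves as separate arguments (for the second, by constructing a nearby point $\theta_0$ on the segment joining $M(\mu)$ to $\theta_1$), but the underlying inequalities are the same. One place where you are actually cleaner than the paper: the support of $\rcb{\mu}{\alpha}{\beta}$ is, by the paper's own definition of support, the closure of the open set $\{\theta\in\fTheta : \norm{\theta - M(\mu)} < \cbLR{\mu}/\beta\}$, so the strict inequality $\beta\cbL{\mu} < \cbLR{\mu}$ you establish is genuinely the right thing — the paper simply asserts the support equals the closed ball $\{\theta\in\fTheta : \norm{\theta - M(\mu)} \leq \cbLR{\mu}/\beta\}$ without justification and uses only the weak inequality, which would require a mild regularity assumption on $\fTheta$ to close the argument when equality holds. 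You also correctly note that $\cbLR{\mu} = \beta\cbL{\mu}+1$ can occur (e.g. $\mu$ supported on two concentric spheres more than $\beta^{-1}$ apart), so the second containment really only lands in the closed $\beta^{-1}$-neighbourhood of $E_0$ rather than the open fattening as defined; the paper's proof has exactly the same looseness, producing $\norm{\theta_1 - \theta_0} \leq \beta^{-1}$ where $< \beta^{-1}$ is nominally wanted, and, as you observe, it is immaterial in the downstream applications where $\beta^{-1}$ is always taken strictly below the target fattening radius.
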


\begin{proof}
See \cref{appsecdef54}.
\end{proof}

\begin{lemma}\label{basicsupportresult} 
Assume \cref{gammasmall} holds.
Let $x \in X$ and $\pi \in \Priors$.
The support of $\finalsetgen{\tBall}{\alpha}{\beta}{\gamma}{\delta}{\eta}{\pi}{x}{\cdot}$
is contained in the $\beta^{-1}$-fattening of
\[
\{\theta\in \Reals^{d}: \norm{\theta-M(\Post{x}{\pt{\pi}{\gamma}})} \leq \cbLv{\Post{x}{\pt{\pi}{\gamma}}}{\alpha-\epsilon}\}
\]
for all $\epsilon \in [\delta,\alpha)$.
\end{lemma}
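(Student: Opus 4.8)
The plan is to chain two containment relationships: first, by \cref{basicsupportresultlemma}, the support of $\finalsetgen{\tBall}{\alpha}{\beta}{\gamma}{\delta}{\eta}{\pi}{x}{\cdot}$ is contained in the support of the relaxed credible ball $\bapf[\alpha-\epsilon]{\beta}{\pt{\pi}{\gamma}}{\cdot}{x} = \rbcs{\Post{x}{\pt{\pi}{\gamma}}}{\alpha-\epsilon}{\beta}{\tBall}$ for all $\epsilon \in [\delta,\alpha)$; second, by \cref{ballcontain} applied to the posterior $\mu = \Post{x}{\pt{\pi}{\gamma}}$ at level $1-(\alpha-\epsilon)$, the support of that relaxed credible ball is contained in the $\beta^{-1}$-fattening of $E_0 = \{\theta \in \Reals^d : \norm{\theta - M(\Post{x}{\pt{\pi}{\gamma}})} \leq \cbLv{\Post{x}{\pt{\pi}{\gamma}}}{\alpha-\epsilon}\}$. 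Composing these gives exactly the claimed containment.

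The key steps, in order, would be: (i) check that the hypotheses of \cref{basicsupportresultlemma} apply — in particular the nestedness condition \cref{nestedass} for the relaxed credible ball family. This requires observing that for $\alpha' \leq \alpha$ the threshold radius satisfies $\cbLR{\mu}\big|_{\alpha'} \geq \cbLR{\mu}\big|_{\alpha}$ (a smaller $\alpha$ demands more mass, hence a larger radius), and since $\ffB{r}{\cdot}$ is nondecreasing in $r$ pointwise, the acceptance function $\ffB{\cbLR{\mu}|_{\alpha'}}{\cdot} \geq \ffB{\cbLR{\mu}|_{\alpha}}{\cdot}$ pointwise, which is \cref{nestedass}. (ii) Apply \cref{basicsupportresultlemma} to get the first containment, noting that $\Post{x}{\pt{\pi}{\gamma}} \in \fPriors$ so that the relaxed credible ball is well-defined (by \cref{rcbisacredset}). (iii) Apply \cref{ballcontain} with $\mu = \Post{x}{\pt{\pi}{\gamma}}$ and level $1-(\alpha - \epsilon)$ to get the second containment. (iv) Observe that containment of sets is preserved under taking $\epsilon$-fattenings, and conclude.

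I do not expect a serious obstacle here — this is essentially a composition of two already-established lemmas, and the only genuine content is verifying the nestedness hypothesis \cref{nestedass}, which is an elementary monotonicity argument as sketched above. The one point requiring a small amount of care is that \cref{basicsupportresultlemma} gives containment in the \emph{support} of $\bapf[\alpha-\epsilon]{\beta}{\pt{\pi}{\gamma}}{\cdot}{x}$, while \cref{ballcontain} is phrased for the support of $\rcb{\mu}{\alpha}{\beta}$; one must simply match notation, identifying $\bapf[\alpha-\epsilon]{\beta}{\pt{\pi}{\gamma}}{\cdot}{x}$ with $\rcb{\Post{x}{\pt{\pi}{\gamma}}}{\alpha-\epsilon}{\beta}$ as the acceptance probability function of the $1-(\alpha-\epsilon)$ relaxed $\Post{x}{\pt{\pi}{\gamma}}$-credible ball. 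The restriction $\epsilon \in [\delta,\alpha)$ is exactly what is inherited from \cref{basicsupportresultlemma}, and the $\beta^{-1}$-fattening in the conclusion comes verbatim from \cref{ballcontain}.
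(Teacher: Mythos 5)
Your proposal is correct and follows exactly the paper's own route: verify the nestedness hypothesis of \cref{basicsupportresultlemma} (which the paper dismisses as ``straightforward'' and you correctly justify via monotonicity of $\cbLR{\mu}$ in $\alpha$ and of $\ffB{r}{}$ in $r$), then compose with \cref{ballcontain} applied to $\mu = \Post{x}{\pt{\pi}{\gamma}}$ at level $1-(\alpha-\epsilon)$. The only difference is that you spell out the nestedness check explicitly, which the paper leaves to the reader.
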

\begin{proof}
It is straightforward to verify that the relaxed credible balls are indeed nested as required to meet the hypotheses of \cref{basicsupportresultlemma}. The result then follows from \cref{ballcontain}.
\end{proof}

\begin{lemma}\label{ballsupportresult} 
Assume \cref{gammasmall} holds.
Let $x \in X$ and $\pi \in \Priors$.
The support of
$\credset{I}{\Post{x}{\pt{\pi}{\gamma}}}{\alpha}{}$
is contained in the $\beta^{-1}$-fattening of the support of
$\finalsetgen{\tBall}{\alpha}{\beta}{\gamma}{\delta}{\eta}{\pi}{x}{\cdot}$.
\end{lemma}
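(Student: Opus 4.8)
The plan is to unwind the definitions and chain together the containment results already established. Write $\mu = \Post{x}{\pt{\pi}{\gamma}}$ for brevity, and let $\tau_{\pi}(x,\cdot)$ denote the $(\gamma,\delta,\eta)$-perturbed $1-\alpha$ credible region with acceptance probability function $\finalsetgen{\tBall}{\alpha}{\beta}{\gamma}{\delta}{\eta}{\pi}{x}{\cdot} = \gfinalset{\pi}{x}$, here built from the base family $\rtau$ of relaxed credible balls (\cref{DefSimpleCredInt}). We must show that the support of $\credset{I}{\mu}{\alpha}{}$ — that is, the support of the $1-\alpha$ $\mu$-credible ball $B(\cbL{\mu})$ of \cref{DefUsualInterval} — lies inside the $\beta^{-1}$-fattening of the support of $\gfinalset{\pi}{x}$. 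The first step is to recall from \cref{ballcontain} that the support of $\credball{\mu}{\alpha}{}$ is contained in the support of the relaxed credible ball $\rcb{\mu}{\alpha}{\beta}$ (i.e.\ of $\bapf[\alpha]{\beta}{\pt{\pi}{\gamma}}{\cdot}{x}$). So it suffices to relate the support of the relaxed credible ball at level $1-\alpha$ to the support of the perturbed region $\gfinalset{\pi}{x}$, going \emph{in the opposite direction} to \cref{basicsupportresultlemma}.

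The key mechanism is the averaging identity \cref{tfrsetdefn}: since $\papf{\pi}{\cdot}{x}$ is an integral of $\bapf[\alpha-z]{\beta}{\pt{\pi}{\gamma}}{\cdot}{x}$ over $z \in [\delta\eta,\delta]$ against a probability density, and since the relaxed credible balls are nested (larger level $\alpha - z \le \alpha$ gives a pointwise larger acceptance function, as verified in the proof of \cref{basicsupportresult}), we get $\papf{\pi}{\theta}{x} \ge \bapf[\alpha]{\beta}{\pt{\pi}{\gamma}}{\theta}{x}$ pointwise — wait, this needs care: the averaging is over levels $\alpha - z$ with $z > 0$, which are \emph{below} $\alpha$, hence pointwise \emph{at least} $\bapf[\alpha]{\beta}{\pt{\pi}{\gamma}}{\cdot}{x}$. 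Thus $\set{\theta : \papf{\pi}{\theta}{x} > 0} \supseteq \set{\theta : \bapf[\alpha]{\beta}{\pt{\pi}{\gamma}}{\theta}{x} > 0}$. The remaining gap is that $\gfinalset{\pi}{x} = \max(0, \papf{\pi}{\cdot}{x} - R(x,\pi))$ subtracts the correction $R(x,\pi)$, which can shrink the support. Here I would use the explicit geometric form of relaxed credible balls: $\ffB{r}{\theta} = \min(1,\max(0, r - \beta\norm{\theta - M(\mu)}))$, so $\papf{\pi}{\cdot}{x}$, being an average of such triangular/plateau functions all centered at $M(\mu)$, is itself a radially decreasing function of $\norm{\theta - M(\mu)}$ with Lipschitz constant $\beta$ (by \cref{perlipschitz}). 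Subtracting the constant $R(x,\pi) \le \alpha$ and truncating at $0$ therefore removes at most an annulus of width $R(x,\pi)/\beta \le \alpha/\beta$ from the outer edge of the support — but more precisely, any point in the support of $\bapf[\alpha]{\beta}{\pt{\pi}{\gamma}}{\cdot}{x}$ where $\papf{\pi}{\theta}{x} > R(x,\pi)$ is in the support of $\gfinalset{\pi}{x}$, and points where $0 < \papf{\pi}{\theta}{x} \le R(x,\pi)$ are within distance $R(x,\pi)/\beta$ of such a point by the Lipschitz/monotonicity structure. Choosing the slope $\beta$ appropriately (as in \cref{EqExpParam}) makes $R(x,\pi)/\beta$ small; but since the statement asks only for the $\beta^{-1}$-fattening, I would instead argue directly that the $\beta^{-1}$-fattening of $\set{\theta : \gfinalset{\pi}{x}(\theta) > 0}$ contains $\set{\theta : \papf{\pi}{\theta}{x} > 0}$, because near any boundary point of the latter set the function $\papf{\pi}{\cdot}{x}$ drops from a value exceeding $R(x,\pi)$ (attained somewhere interior, using $\corr{\pi}{x}(R(x,\pi)) \ge 1-\alpha > 0$ so $\gfinalset{\pi}{x}$ is not identically zero and its super-level structure is an interval) down to $0$ at rate at most $\beta$ — hence within radial distance $R(x,\pi)/\beta \le 1/\beta$ of the zero set of $\gfinalset{\pi}{x}$, wait, we need within $\beta^{-1}$; since $R(x,\pi) \le \alpha < 1$ this gives $R(x,\pi)/\beta < \beta^{-1}$, as required.

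Assembling: support of $\credball{\mu}{\alpha}{} \subseteq$ support of $\bapf[\alpha]{\beta}{\pt{\pi}{\gamma}}{\cdot}{x} \subseteq$ support of $\papf{\pi}{\cdot}{x} \subseteq \beta^{-1}$-fattening of support of $\gfinalset{\pi}{x}$, which is the claim after substituting $\epsilon \in [\delta,\alpha)$ is not even needed here since the statement has no $\epsilon$. I expect the main obstacle to be the last inclusion — carefully controlling how much the subtraction of $R(x,\pi)$ and the truncation $\max(0,\cdot)$ can erode the support, and confirming the bound is exactly $\beta^{-1}$ rather than something larger. The radial-monotonicity structure of $\papf{\pi}{\cdot}{x}$ (inherited from all the $\ffB{r}{}$ sharing the center $M(\mu)$) together with its $\beta$-Lipschitz property is what makes this quantitative bound clean, so I would make that structural observation explicit early in the proof and then the rest is a short one-dimensional estimate in the radial variable.
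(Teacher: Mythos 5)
Your outline correctly assembles the two easy inclusions (support of the usual credible ball $\subseteq$ support of $\rcb{\mu}{\alpha}{\beta}$ by \cref{ballcontain}, and $\rcb{\mu}{\alpha}{\beta} \leq \papf{\pi}{\cdot}{x}$ by nestedness), and you correctly identify that the real work is in controlling how much the subtraction of $R(x,\pi)$ and truncation at $0$ can shrink the support. But the final step of your chain has a direction error, and the chain itself proves the wrong statement.

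The error: you claim that since $\papf{\pi}{\cdot}{x}$ is $\beta$-Lipschitz (``drops at rate at most $\beta$''), the radial distance from a point where $\papf{\pi}{\theta}{x}>0$ to a point where $\papf{\pi}{\theta'}{x}>R(x,\pi)$ is at most $R(x,\pi)/\beta$. Lipschitz continuity gives exactly the opposite inequality: if the slope is at most $\beta$, the radial gap over which the value drops by $R(x,\pi)$ is at \emph{least} $R(x,\pi)/\beta$, with no upper bound. In fact the intermediate inclusion ``$\supp\,\papf{\pi}{\cdot}{x} \subseteq \beta^{-1}$-fattening of $\supp\,\gfinalset{\pi}{x}$'' can fail: $\papf{\pi}{\cdot}{x}$ is an \emph{average} of triangular/plateau functions $\ffB{r_z}{}$ whose individual supports are balls of radius $r_z\beta^{-1}$, so its radial cross-section decreases at slope $\beta$ times the \emph{fraction} of $z$ still active, which near the outer edge is tiny. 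If the mixture has a small tail out to a very large $r_\delta$, the support of $\papf{\pi}{\cdot}{x}$ is the ball of radius $r_\delta\beta^{-1}$, which can lie arbitrarily far outside the $\beta^{-1}$-fattening of $\supp\,\gfinalset{\pi}{x}$. The inclusion you inserted is both false in general and stronger than what the lemma asks.

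The paper's proof routes around this by never invoking the support of $\papf{\pi}{\cdot}{x}$ at all. From $\papf{\pi}{\cdot}{x} \geq \rcb{\mu}{\alpha}{\beta}$ and $R(x,\pi)\leq\alpha$ (\cref{Rbound}), one gets $\gfinalset{\pi}{x} \geq \max(0, \rcb{\mu}{\alpha}{\beta}-\alpha)$, so it suffices to show the support of the credible ball is in the $\beta^{-1}$-fattening of $\supp\,\max(0, \rcb{\mu}{\alpha}{\beta}-\alpha)$. The point is that $\rcb{\mu}{\alpha}{\beta}(\theta) = \min(1,\max(0, r_1 - \beta\norm{\theta-M(\mu)}))$ has slope \emph{exactly} $\beta$ on its ramp, so one can compute the radii of all relevant sets explicitly: $\supp\,\max(0,\rcb{\mu}{\alpha}{\beta}-\alpha)$ is the ball of radius $(r_1-\alpha)\beta^{-1}$, and $\supp\,\rcb{\mu}{\alpha}{\beta}$ (which contains the credible ball by \cref{ballcontain}) is the ball of radius $r_1\beta^{-1}$; since $\alpha<1$, adding $\beta^{-1}$ to the former radius exceeds the latter. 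Your plan would work if you replaced ``$\papf{\pi}{\cdot}{x}$ is Lipschitz'' with this explicit single-ramp computation for $\rcb{\mu}{\alpha}{\beta}$, which is exactly what the paper does.
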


\begin{proof}
See \cref{appsecdef54}.
\end{proof}

\cref{basicsupportresult} and \cref{ballsupportresult} show that,
in a strong sense, our approximations to credible balls are quite close to the usual ones.

Although the support of the relaxed HPD region may not be contained in a slight fattening of the original HPD region, the smallest \emph{density} at some point in the support of the acceptance function of the relaxed HPD region is never much smaller than the density at every point in the support of the acceptance function of the original HPD region:

\newcommand{\onething}{\mathbf{1}}
\begin{lemma}\label{relaxedhpddensprop}
Fix an absolutely continuous distribution $\mu \in \PD{\Theta}$ with density $\rho$.
Let $A$ and $A_{\beta}$ denote the respective supports of the usual and relaxed HPD regions, 
$\hpdreg{\mu}{\alpha}{}$ and $\rhpd{\mu}{\alpha}{\beta}$.
Then
\[
\essinf_{\theta \in A} \rho(\theta) - \frac{\lipnorm{\rho}}{\beta} \leq \essinf_{\theta \in A_{\beta}} \rho(\theta) \leq \essinf_{\theta \in A} \rho(\theta) + \frac{\lipnorm{\rho}}{\beta}.
\]
\end{lemma}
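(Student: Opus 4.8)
The plan is to reduce the two-sided estimate to a comparison of the usual and relaxed HPD thresholds $\hpdL{\mu}$ and $\hpdLR{\mu}$; the statement is phrased in terms of essential infima precisely because the two supports can be genuinely far apart (e.g.\ for multimodal $\rho$), so no set-containment is available. Throughout I may assume $\lipnorm{\rho} < \infty$, since otherwise the relaxed region is undefined and the claim is vacuous; then $\rho$ is continuous, so $\{\theta : \rho(\theta) \ge \hpdL{\mu}\}$ is closed and equals $A$ (the support of the usual HPD region, \cref{DefUsualHPD}), while $A_\beta = \cl\{\theta : \rho(\theta) > \hpdLR{\mu} - \hat\beta^{-1}\}$ (the support of the relaxed region, \cref{DefSimpleHPDCredInt}, since by \cref{EqLevelsHPDBlah} the mpf vanishes exactly on $\{\rho \le \hpdLR{\mu} - \hat\beta^{-1}\}$). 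Set $e = \hpdLR{\mu} - \hat\beta^{-1}$, so that $\{\rho > e\} \subseteq A_\beta \subseteq \{\rho \ge e\}$.

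First I would prove the identity $\essinf_{\theta \in A}\rho(\theta) = \hpdL{\mu}$. Here $\ge$ is immediate since $\rho \ge \hpdL{\mu}$ on $A$. For $\le$: for each $\varepsilon > 0$ the set $\{\theta : \hpdL{\mu} \le \rho(\theta) < \hpdL{\mu} + \varepsilon\}$ has positive $\mu$-measure, for otherwise $\mu(\{\rho \ge \hpdL{\mu} + \varepsilon\}) = \mu(\{\rho \ge \hpdL{\mu}\}) \ge 1-\alpha$, contradicting the definition of $\hpdL{\mu}$ as a supremum (one uses $\mu(\{\rho \ge \hpdL{\mu}\}) \ge 1-\alpha$, which holds by left-continuity of $d \mapsto \mu(\{\rho \ge d\})$). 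Since $\mu \ll$ Lebesgue measure, this set also has positive Lebesgue measure, so $\rho$ is not bounded below by $\hpdL{\mu}+\varepsilon$ almost everywhere on $A$; letting $\varepsilon \downarrow 0$ gives $\le$.

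Next I would sandwich $\hpdLR{\mu}$. From \cref{EqLevelsHPDBlah} one has the pointwise bounds $\charfunc{\{\rho \ge d\}} \le \ffHPD{d}{} \le \charfunc{\{\rho > d - \hat\beta^{-1}\}}$; integrating against $\mu$ and using that $d \mapsto \mu(\{\rho \ge d\})$ and $d \mapsto \int \ffHPD{d}{}\,\dee\mu$ are non-increasing (the latter is moreover $\hat\beta$-Lipschitz, hence continuous, so its value at $\hpdLR{\mu}$ is $\ge 1-\alpha$) yields $\hpdL{\mu} \le \hpdLR{\mu}$ and $\hpdLR{\mu} \le \hpdL{\mu} + \hat\beta^{-1}$, i.e.\ $\hpdL{\mu} - \hat\beta^{-1} \le e \le \hpdL{\mu}$. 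The lower bound of the lemma follows at once: since $A_\beta \subseteq \{\rho \ge e\}$, we get $\essinf_{\theta \in A_\beta}\rho(\theta) \ge e \ge \hpdL{\mu} - \hat\beta^{-1} = \essinf_{\theta \in A}\rho(\theta) - \hat\beta^{-1}$, with $\hat\beta^{-1} = \max(1,\lipnorm{\rho})/\beta$. For the upper bound, put $c = \essinf_{\theta \in A_\beta}\rho(\theta) \ge e$; then $\rho \ge c$ almost everywhere on $A_\beta \supseteq \{\rho > e\}$, so $\{e < \rho < c\}$ is Lebesgue-null and hence $\mu$-null, and since $\ffHPD{\hpdLR{\mu}}{} \le 1$ and vanishes where $\rho \le e$,
\[
1 - \alpha \;\le\; \int \ffHPD{\hpdLR{\mu}}{}\,\dee\mu \;\le\; \mu(\{\rho > e\}) \;\le\; \mu(\{e < \rho < c\}) + \mu(\{\rho \ge c\}) \;=\; \mu(\{\rho \ge c\}),
\]
so $c \le \hpdL{\mu} = \essinf_{\theta \in A}\rho(\theta)$. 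Combining the two estimates gives $\essinf_{\theta \in A}\rho(\theta) - \hat\beta^{-1} \le \essinf_{\theta \in A_\beta}\rho(\theta) \le \essinf_{\theta \in A}\rho(\theta)$, which is the lemma.

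The argument is mostly bookkeeping; the part I expect to require the most care is the essinf identity and its analogue for $A_\beta$—specifically, checking that the relevant super-level and ramp sets carry positive $\mu$-mass (equivalently, by absolute continuity, positive Lebesgue measure), handling the degenerate case $c = e$ in the last display (where $\{e < \rho < c\}$ is empty and the chain still closes via $\mu(\{\rho > e\}) \le \mu(\{\rho \ge e\})$), and verifying that the suprema defining $\hpdL{\mu}$ and $\hpdLR{\mu}$ are attained via the stated (left-)continuity. I would also note that the constant obtained for the lower bound is in fact $\hat\beta^{-1} = \max(1,\lipnorm{\rho})/\beta$, which coincides with the stated $\lipnorm{\rho}/\beta$ in the regime $\lipnorm{\rho} \ge 1$ that is relevant to the applications.
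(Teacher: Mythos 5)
Your argument follows the same blueprint as the paper's proof: establish the pointwise domination $\charfunc{\{\rho \ge d\}} \le \ffHPD{d}{} \le \charfunc{\{\rho > d - \hat\beta^{-1}\}}$, convert it into the threshold sandwich $\hpdL{\mu} \le \hpdLR{\mu} \le \hpdL{\mu} + \hat\beta^{-1}$, and then translate thresholds into essential infima. You differ in two worthwhile respects. First, you actually prove the identity $\essinf_{\theta\in A}\rho(\theta) = \hpdL{\mu}$ (and its relaxed analogue), whereas the paper asserts both by ``inspection''; your argument via left-continuity of $d \mapsto \mu(\{\rho\ge d\})$ (giving $\mu(\{\rho \ge \hpdL{\mu}\}) \ge 1-\alpha$) and positivity of $\mu$- and Lebesgue-mass of each band $\{\hpdL{\mu} \le \rho < \hpdL{\mu}+\epsilon\}$ is exactly the missing bookkeeping. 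Second, your upper bound is genuinely sharper: the paper obtains $\essinf_{\theta\in A_\beta}\rho(\theta) \le \hpdLR{\mu} \le \hpdL{\mu}+\hat\beta^{-1}$, paying the extra $\hat\beta^{-1}$ on that side, while you show directly, via the observation that $\{e < \rho < c\}$ is Lebesgue- and hence $\mu$-null, that $\essinf_{\theta\in A_\beta}\rho(\theta) \le \hpdL{\mu} = \essinf_{\theta\in A}\rho(\theta)$ with no slack. Both routes prove the stated inequalities. You are also right to flag the constant: what actually falls out of this argument (and the paper's) is $\hat\beta^{-1}=\max(1,\lipnorm{\rho})/\beta$, not $\lipnorm{\rho}/\beta$ as printed; they agree only when $\lipnorm{\rho}\ge 1$, so the lemma as stated is slightly optimistic in the regime $\lipnorm{\rho}<1$ and should either carry $\hat\beta^{-1}$ or assume $\lipnorm{\rho}\ge 1$.
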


\begin{proof}
See \cref{appsecdef54}.
\end{proof}

\subsection{Proof of the main theorem}\label{SubsecProofMainThm}

\begin{proof}[of \cref{ThmMatchingPriorsExist}] 
We begin by pointing out that there exist constants $\eta$, $\delta$, $\gamma$, $\beta >0$ that satisfy both the requirements in the statement of \cref{ThmContinuity1},
and the additional requirement that
\[ \label{IneqAddReqLate}
\max(\beta^{-1},\eta,\delta, \gamma) < \epsilon < \alpha.
\] To see this, we introduce an explicit parameterized family of constants
\[ \label{EqExpParam}
\eta(a) = a, \ \delta(a) = a, \ \gamma(a) = a^{4}, \ \beta(a) = a^{-1}, \quad a > 0.
\]
Then it is straightforward to see that there exists some constant $A_{0} = A_{0}(\epsilon, \alpha, C) > 0$
such that, for all $a \in (0,A_{0})$, the constants $(\eta(a), \delta(a), \gamma(a), \beta(a))$ satisfy the requirements of \cref{ThmContinuity1} and \cref{IneqAddReqLate}. 

Consider the statement of \cref{ThmMatchingPriorSimple}, with \cref{AssumptionModelSmoothness} in place of \cref{assumptionde} when $B=\tBall$, and 
\cref{AssumptionModelSmoothness} and \cref{AssumptionModelSupNorm} in place of \cref{assumptionde} when $B = \tHPD$.
Let $\tau = \CRegions{\tau}{\pi}{\Priors}$
be the family of $1-\alpha$ credible regions with acceptance probability function
$\finalset{\cdot}{\cdot}$.
By \cref{ThmContinuity1}, $\tau$ has a matching prior $\pi_0$,
establishing Part (1) of \cref{ThmMatchingPriorSimple}.

The first containment relationship in Part (2) follows from \cref{basicsupportresult} and the fact that $\delta \le  \epsilon < \alpha$ and $\beta^{-1} \le \epsilon$.
The second containment relationship in Part (2) follows from \cref{ballsupportresult} and the fact that $\beta^{-1} < \epsilon$. 

We now prove Part (3) of \cref{ThmMatchingPriorSimple}. 
Pick $\pi \in \Priors$ and $x \in X$.
Let $\Theta^{\beta}_{\alpha}$ be the support of $\rhpd{\Post{x}{\pi_{\gamma}}}{\alpha}{\beta}$ and let $\rho$ be the support of $\Post{x}{\pi_{\gamma}}$. By \cref{DefSimpleHPDCredInt}, $\Theta^{\beta}_{\alpha}\subseteq \Theta^{\beta}_{\alpha-\epsilon}$. As $\delta \eta<\epsilon$, by \cref{tfrsetdefn}, $\Theta_{\tau}\subseteq \Theta^{\beta}_{\alpha-\epsilon}$.
Thus, we have
\[
\essinf_{\theta\in \Theta_{\tau}} \rho(\theta)\geq \essinf_{\theta\in \Theta^{\beta}_{\alpha-\epsilon}} \rho(\theta).
\]
By \cref{relaxedhpddensprop}, we have
\[
\|\essinf_{\theta\in \Theta_{\alpha-\epsilon}}\rho(\theta)-\essinf_{\theta\in \Theta^{\beta}_{\alpha-\epsilon}}\rho(\theta)\|\leq \frac{\lipnorm{\rho}}{\beta}.
\]
Then, we have
\[
\essinf_{\theta\in \Theta_{\tau}} \rho(\theta)\geq \essinf_{\theta\in \Theta_{\alpha-\epsilon}}\rho(\theta)-\frac{\lipnorm{\rho}}{\beta},
\]
completing the proof. 
\end{proof}

\section{Application to the Bernoulli model: proofs}\label{SecAppBernProofs}

In this section, we give a proof of  \cref{ThmBernoulliMain} that follows the proof of \cref{ThmMatchingPriorsExist} quite closely. The essential difficulty is checking part 4 of \cref{assumptionjcAlt} - we need to see that a sufficiently rich family of measures is preserved under the map $\Q{S}$. 

Let $\lambda$ denote Lebesgue measure on $[0,1]$.
Fix $\FS$ to be the collection of finite subsets of $[0,1]$ that have size $|S| > 1000$ and satisfy
\[ \label{EqDefFFin}
\frac{|S \cap I | / |S|}{\lambda(I)} \in (0.99, 1.01)
\]
for all intervals $I \subseteq [0,1]$ of length $\lambda(I) > 0.01$.
For $a,b > 0$, define 
\[
\AGD{a}{b} = \{ \pi \in \Priors \, : \, \pi([0,a] \cup [1-a,1]) \leq 1 - b \}.
\]
Using notations from \cref{sec:existenceproof}, we use $\Restrict{\AGD{a}{b}}{S}$ to denote the restriction of $\AGD{a}{b}$ on $S$. 

\begin{lemma} \label{LemmaZInjBern}
There exist universal constants $A,B >0$ so that for all $0< a <A, 0 < b <B$ and all $S \in \FS$, 
\[
\Q{S}(\PM{S}) \subseteq \Restrict{\AGD{a}{b}}{S}
\]
if the associated family of credible regions falls into one of the following cases:
\begin{enumerate}
\item The family of credible regions is given by \cref{DefUsualInterval} and $0 < \alpha < 0.2$;
\emph{or}
\item The family of credible regions is given by \cref{DefSimpleCredInt} and $0 < \alpha + 2 \beta^{-1} < 0.2$; 
\emph{or}
\item The family of credible regions is given by \cref{DefContFamilyCredible} with underlying family of credible regions given \cref{DefUsualInterval}, the parameters satisfy \cref{gammasmall}, and 
$0 < \alpha + \gamma +  2 \beta^{-1} < 0.2$.
\end{enumerate}

\end{lemma}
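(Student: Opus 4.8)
The plan is to unwind both sides to a finite, combinatorial inequality and then exploit the extreme rigidity of the one-sample Bernoulli model. Fix $S \in \FS$ and $\pi \in \PM{S}$. By the definitions of $\AGD{a}{b}$ and of the restriction map, the assertion $\Q{S}(\pi) \in \Restrict{\AGD{a}{b}}{S}$ is the same as $\Q{S}(\pi)\bigl(S \cap (a,1-a)\bigr) \ge b$. Writing out \cref{EqNMMap}, the normalizing denominator $\sum_{t \in S}\{\pi(t) + z^{+}_{t}(\pi)\} = 1 + \sum_{t \in S} z^{+}_{t}(\pi)$ is at most $1$, since $z^{+}_{t}(\pi) = \min\{0, z_t(\pi)\} \le 0$; so it suffices to show that the interior part of the numerator, $\sum_{s \in S,\ a < s < 1-a}\bigl(\pi(s) + z^{+}_{s}(\pi)\bigr)$, is bounded below by a universal constant $b$. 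At this point I would record the two features that drive everything: for this model there are only two posteriors, $\Post{0}{\pi} \propto (1-\theta)\,\pi$ and $\Post{1}{\pi} \propto \theta\,\pi$, hence only two acceptance functions $\psi_0, \psi_1 : \Theta \to [0,1]$, each of the one-peaked shape dictated by whichever of \cref{DefUsualInterval}, \cref{DefSimpleCredInt}, or \cref{DefContFamilyCredible} is in force; and since $X = \{0,1\}$ the coverage at $\theta$ is exactly $\mathrm{cov}_\pi(\theta) = \theta\,\psi_1(\theta) + (1-\theta)\,\psi_0(\theta)$, so that $z^{+}_{s}(\pi) = -\bigl(\mathrm{cov}_\pi(s) - (1-\alpha)\bigr)_{+}$.

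The heart of the proof is a localization estimate: over-coverage of a parameter near an endpoint forces the associated posterior, and hence $\pi$ itself, to pile up mass near that endpoint. If $\mathrm{cov}_\pi(s) > 1-\alpha$ for some $s \in [0,A]$, then since the $x=1$ term contributes at most $s \le A$, $\psi_0(s)$ must be close to $1$; by the shape of $\psi_0$ (an interval about $M(\Post{0}{\pi})$ in the credible-ball case, a plateau/tent about it in the relaxed and perturbed cases) this bounds the distance from $s$ to $M(\Post{0}{\pi})$ and thus from $M(\Post{0}{\pi})$ to $0$, and then the elementary Markov-type bound $\cbL{\mu} \le \tfrac{1-\alpha}{\alpha}\,M(\mu)$ (with its relaxed analogue, using that $\mathrm{cov}$ is built from $\Post{0}{\pi}$-mass), together with $\alpha < 0.2$, forces $\Post{0}{\pi}$ to place a definite fraction of its mass within a controlled neighbourhood of $0$; because $\Post{0}{\pi} \propto (1-\theta)\pi$, the same then holds for $\pi$. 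A symmetric statement holds at the endpoint $1$ using $\psi_1$ and $\Post{1}{\pi}$. The stronger numerical hypotheses in cases 2 and 3 ($\alpha + 2\beta^{-1} < 0.2$, resp.\ $\alpha + \gamma + 2\beta^{-1} < 0.2$) enter exactly here: the tent slopes contribute an $O(\beta^{-1})$ widening, and the $(\gamma,\delta,\eta)$-perturbation together with the post-averaging correction $R(x,\pi)$ shift the effective level by at most $O(\gamma + \delta)$, and these are absorbed into the slack $0.2 - \alpha - \cdots > 0$.

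With the localization in hand, I would close the argument using the equidistribution property built into $\FS$, which converts ``interval of length $\ell$'' into ``$\approx \ell\,|S|$ points of $S$'': the points $s$ that can carry over-coverage lie, apart from the small endpoint neighbourhoods forced above, in the interior set where $\psi_0 = \psi_1 = 1$ (an intersection of two short intervals); bounding $\sum_{a < s < 1-a}\bigl(\mathrm{cov}_\pi(s) - (1-\alpha)\bigr)_{+}$ against $\pi\bigl(S \cap (a,1-a)\bigr)$ together with the mass $\pi$ is compelled to keep near the endpoints, and balancing these contributions, yields $\sum_{s \in S,\ a < s < 1-a}\bigl(\pi(s) + z^{+}_{s}(\pi)\bigr) \ge b$ for suitable universal $A, B$ and every $a < A$, $0 < b < B$; cases 2 and 3 are run through the same machinery with the non-indicator profiles of $\psi_0, \psi_1$. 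I expect the genuine obstacle to be making the localization estimate uniform over all $\pi \in \PM{S}$: one must control the joint behaviour of $\Post{0}{\pi}$ and $\Post{1}{\pi}$ (they are linked by a monotone likelihood ratio, so their credible regions are ordered, which should help), and in cases 2 and 3 also handle the non-indicator shapes and the implicitly defined correction $R(x,\pi)$; the remainder is bookkeeping with the constants.
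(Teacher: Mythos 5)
Your plan takes a genuinely different route from the paper's, and the first step already fails. You reduce the problem to showing that the interior numerator $\sum_{a<s<1-a}\bigl(\pi(s)+z^+_s(\pi)\bigr)$ is bounded below by a universal constant $b$, on the grounds that the normalizing denominator is at most $1$. But under your reading $z^+_s\le 0$, so that interior numerator is at most $\pi\bigl((a,1-a)\cap S\bigr)$, which vanishes whenever $\pi$ is supported entirely in $[0,a]\cup[1-a,1]$; no universal constants $A,B$ can make a positive lower bound hold uniformly over $\PM{S}$. The paper never attempts such a single uniform bound. It splits $\PM{S}$ into two cases: for priors already in $\Restrict{\AGD{a}{b}}{S}$ it shows $\Q{S}$ cannot increase endpoint concentration by much (giving $\Q{S}(\pi)\in \Restrict{\AGD{a}{b/2}}{S}$), and for priors concentrated at the endpoints it proves a completely separate near-uniformity estimate on $\Q{S}(\pi)$.

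The concentrated case is also where your localization runs the wrong way. You argue that over-coverage at some $s$ near $0$ forces $\pi$ to pile up near $0$; the paper instead starts from the hypothesis that $\pi$ is concentrated near an endpoint, deduces that $M(\Post{0}{\pi})$ and the credible-ball radius are of order $a+b$, hence $\frset{\pi}{\theta}{0}=1$ (full rejection, i.e.\ \emph{under}-coverage) on a long interval such as $[0.05,0.5]$, and then uses the equidistribution built into $\FS$ to conclude $z_\theta$ is bounded below by a positive constant on at least $0.44\,|S|$ points. That forces the normalizing denominator of $\Q{S}$ to be of order $|S|$, so every atom of $\Q{S}(\pi)$ has mass $O(1/|S|)$ and the endpoint mass of $\Q{S}(\pi)$ is tiny. (Note also that if $z^+$ is the positive part $\max\{0,z\}$, as the M\"uller--Norets map and the requirement that $\Q{S}(\pi)$ be a probability measure demand and as the paper evidently intends despite the stated $\min$, the denominator is $\ge 1$, not $\le 1$, and a numerator-only reduction fails for a second reason.) The case split and the widespread-under-coverage implication are the substance of the lemma; a single balancing inequality does not replace them.
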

\begin{proof} 
See \cref{secbernproof}.
\end{proof}

We next check that the map sending data and a prior to a posterior is appropriately continuous; the proof is a very small modification of the proof of \cref{implication4}.

\begin{lemma} \label{LemmaContPriorPosteriorBern}
Fix $a,b > 0$.
  Then the map
  \[
  (x, \pi) \mapsto \Post{x}{\pi} \: (X \times \AGD{a}{b}, \dX \otimes W_{1} ) \to (\Priors, W_{1})
  \]
  is Lipschitz.
\end{lemma}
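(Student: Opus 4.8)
The plan is to reuse the argument for \cref{implication4} essentially unchanged; the only place where \cref{assumptionde} enters that proof and fails for the Bernoulli model is the uniform lower bound on the normalising constant $\int_{\Theta}\cd(\theta,x)\,\pi(\dee\theta)$, which in \cref{implication4} comes from boundedness of $\log\cd$. Restricting to $\AGD{a}{b}$ restores this bound: for $x\in\{0,1\}$ and $\theta\in[a,1-a]$ one has $a\le\cd(\theta,x)=\theta^{x}(1-\theta)^{1-x}\le 1-a$, and every $\pi\in\AGD{a}{b}$ puts mass at least $b$ on $[a,1-a]$, so
\[ \label{bernmargbound}
\int_{\Theta}\cd(\theta,x)\,\pi(\dee\theta)\ \ge\ \int_{[a,1-a]}\cd(\theta,x)\,\pi(\dee\theta)\ \ge\ ab\ >\ 0,\qquad x\in\{0,1\},\ \pi\in\AGD{a}{b}.
\]
In particular $\marg{\pi}(\{x\})\ge ab$ for both $x$, so $\marg{\pi}$ has full support on $X=\{0,1\}$ and therefore dominates $\nu$; hence, by the remark following \eqref{stEOSUThs}, the posterior is version-independent and given by \eqref{stEOSUThs} for every $x\in\{0,1\}$.

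For the Lipschitz estimate, first dispose of the case $x\ne x'$: since $X$ has two points, $\dX(x,x')=\dX(0,1)$ is a fixed positive constant, while $W_1(\Post{x}{\pi},\Post{x'}{\pi'})\le\mathrm{diam}(\Theta)=1$, so this case contributes at most $\dX(0,1)^{-1}\dX(x,x')$. Now fix $x=x'$ and abbreviate $g=\cd(\cdot,x)$, which is $1$-Lipschitz with $0\le g\le 1$. By Kantorovich--Rubinstein duality (applicable since $\Theta$ is bounded) it suffices to bound $\abs{\int f\,\dee\Post{x}{\pi}-\int f\,\dee\Post{x}{\pi'}}$ for $f\in\Lip{\Theta}{1}$, and, replacing $f$ by $f-f(0)$, we may assume $\supnorm{f}\le 1$. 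Writing each posterior integral as $\int fg\,\dee\pi\big/\int g\,\dee\pi$ and telescoping,
\[
\int f\,\dee\Post{x}{\pi}-\int f\,\dee\Post{x}{\pi'}
&= \frac{\int fg\,\dee\pi}{\int g\,\dee\pi}\cdot\frac{\int g\,\dee\pi'-\int g\,\dee\pi}{\int g\,\dee\pi'}
+\frac{\int fg\,\dee\pi-\int fg\,\dee\pi'}{\int g\,\dee\pi'}.
\]
Here $\abs{\int fg\,\dee\pi/\int g\,\dee\pi}=\abs{\int f\,\dee\Post{x}{\pi}}\le 1$; the denominators are at least $ab$ by \eqref{bernmargbound}; $\abs{\int g\,\dee(\pi-\pi')}\le\lipnorm{g}\,W_1(\pi,\pi')=W_1(\pi,\pi')$ by duality; and, since $\lipnorm{fg}\le\supnorm{f}\lipnorm{g}+\supnorm{g}\lipnorm{f}\le 2$, also $\abs{\int fg\,\dee(\pi-\pi')}\le 2W_1(\pi,\pi')$. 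Combining, $\abs{\int f\,\dee\Post{x}{\pi}-\int f\,\dee\Post{x}{\pi'}}\le 3(ab)^{-1}W_1(\pi,\pi')$, and taking the supremum over $f$ gives $W_1(\Post{x}{\pi},\Post{x}{\pi'})\le 3(ab)^{-1}W_1(\pi,\pi')$.

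Putting the two cases together, the displayed map is Lipschitz with constant $\max\{3(ab)^{-1},\dX(0,1)^{-1}\}$. I do not expect any genuine obstacle here: the computation is the same product-rule-for-Lipschitz-functions argument used in \cref{implication4}, and the one new ingredient — that the normalising constant stays bounded away from $0$ — is exactly what membership in $\AGD{a}{b}$ provides. This is precisely why the Bernoulli analysis of \cref{SecAppBern} goes through: \cref{LemmaZInjBern} guarantees that the maps $\Q{S}$ keep us inside $\AGD{a}{b}$, so continuity of the posterior map need only be verified on that set.
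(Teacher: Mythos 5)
Your proof is correct, and it takes a genuinely different route from the paper's. The paper proves this lemma by patching the proof of \cref{implication4} at the one place it breaks: the weak-convergence argument via $\Post{x}{\pi}$-continuity sets fails when $\cd$ is not bounded away from zero, so the paper splits the integral over $[0,\epsilon]$ and $[\epsilon,1]$, re-establishes the continuity conclusion, and then hands off to the density-based Lipschitz computation of \cref{implication4} (which first proves Lipschitzness on the dense subset $\PD{\Theta}$ of absolutely continuous priors and then extends by continuity). You bypass both steps: you derive the normalising-constant lower bound $\int g\,\dee\pi\ge ab$ directly from membership in $\AGD{a}{b}$, and then get a one-shot Lipschitz estimate for \emph{arbitrary} $\pi,\pi'\in\AGD{a}{b}$ via Kantorovich--Rubinstein duality, with no detour through continuity sets, weak convergence, or dense subfamilies of priors. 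Your argument is shorter and yields an explicit constant $\max\{3(ab)^{-1},\dX(0,1)^{-1}\}$; the paper's approach has the modest advantage of showing exactly which single implication of the earlier, more general proof needs repair, which fits its stated strategy of highlighting how the Bernoulli case deviates from \cref{assumptionde}. One small point worth making explicit when you replace $f$ by $f-f(0)$: the bound $\supnorm{f-f(0)}\le 1$ uses that $\Theta=[0,1]$ has diameter $1$, which is fine here but should be flagged so the computation of $\lipnorm{fg}\le 2$ is seen to depend on it.
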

\begin{proof}
See \cref{secbernproof}.
\end{proof}

Finally, we need the following straightforward bound:

\begin{lemma} \label{LemmaTrivContAGD}
Fix $a,b>0$ and $0 < \epsilon < \frac{ab}{2}$.
Then the $\epsilon$-fattening of $\AGD{a}{b}$ is contained in $\AGD{\frac{a}{2}}{b - \frac{2 \epsilon}{a}}$.
\end{lemma}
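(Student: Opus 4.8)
The statement is elementary: I need to show that if $\pi'$ is within Wasserstein distance $\epsilon$ of some $\pi \in \AGD{a}{b}$, then $\pi' \in \AGD{\frac{a}{2}}{b - \frac{2\epsilon}{a}}$, i.e., $\pi'([0,\frac a2]\cup[1-\frac a2,1]) \le 1 - (b - \frac{2\epsilon}{a}) = (1-b) + \frac{2\epsilon}{a}$. The plan is to bound the $\pi'$-mass of the shrunken boundary set $[0,\frac a2]\cup[1-\frac a2,1]$ in terms of the $\pi$-mass of the larger set $[0,a]\cup[1-a,1]$, using the fact that moving mass from the shrunken set out past the boundary of the larger set costs at least $\frac a2$ per unit mass under any coupling.

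First I would fix $\pi \in \AGD{a}{b}$ and $\pi'$ with $W_1(\pi,\pi') < \epsilon$, and choose a coupling $\tau \in \Pi(\pi,\pi')$ with $\int |x-y|\,\dee\tau(x,y) < \epsilon$ (such a coupling exists since $\Theta = [0,1]$ is compact, so the infimum in the definition of $W_1$ is attained, or one simply takes a near-optimal coupling). Let $G = [0,\frac a2]\cup[1-\frac a2,1]$ be the shrunken boundary set and $G' = [0,a]\cup[1-a,1]$ the original one. The key observation is that if $y \in G$ and $x \notin G'$, then $|x - y| \ge \frac a2$, since the gap between $G$ and the complement of $G'$ is exactly $\frac a2$ on each side. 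Therefore
\[
\frac a2 \cdot \tau\bigl( (\complement G') \times G \bigr) \le \int |x-y|\,\dee\tau(x,y) < \epsilon,
\]
where $\complement G' = [0,1]\setminus G'$. This gives $\tau\bigl((\complement G') \times G\bigr) < \frac{2\epsilon}{a}$.

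Then I close the argument by decomposing $\pi'(G) = \tau\bigl([0,1]\times G\bigr) = \tau\bigl(G' \times G\bigr) + \tau\bigl((\complement G')\times G\bigr) \le \pi(G') + \frac{2\epsilon}{a} \le (1-b) + \frac{2\epsilon}{a}$, where the last inequality uses $\pi \in \AGD{a}{b}$. Hence $\pi'\bigl([0,\tfrac a2]\cup[1-\tfrac a2,1]\bigr) \le 1 - \bigl(b - \tfrac{2\epsilon}{a}\bigr)$, which is exactly the condition for $\pi' \in \AGD{\frac a2}{b - \frac{2\epsilon}{a}}$. Finally I would note that the hypothesis $\epsilon < \frac{ab}{2}$ guarantees $b - \frac{2\epsilon}{a} > 0$, so $\AGD{\frac a2}{b-\frac{2\epsilon}{a}}$ is a sensible (nonvacuous) set in the notation of the preceding lemmas. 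There is no real obstacle here; the only mild care needed is in justifying the existence of a near-optimal coupling and in the geometric estimate $|x-y|\ge \frac a2$ for $y\in G$, $x\notin G'$ — both routine.
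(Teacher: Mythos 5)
Your proof is correct and takes essentially the same approach as the paper: both use a (near-)optimal coupling, the geometric observation that transporting mass from $[0,\tfrac a2]\cup[1-\tfrac a2,1]$ out past $[0,a]\cup[1-a,1]$ costs at least $\tfrac a2$ per unit, and then (explicitly in the paper, implicitly in your integral estimate) Markov's inequality to bound the offending mass by $\tfrac{2\epsilon}{a}$. The only difference is cosmetic — you phrase it in terms of the coupling measure $\tau$ while the paper uses paired random variables $X,Y$.
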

\begin{proof}
See \cref{secbernproof}.
\end{proof}

We can now prove the main result of this section:

\begin{proof} [of \cref{ThmBernoulliMain}]

The proof closely mimics the proof of \cref{ThmMatchingPriorsExist} assuming a trivial system of prior perturbations (i.e., $\pt{\pi}{\gamma}=\pi$).
The final steps of this proof are contained in \cref{SubsecProofMainThm}, though we need to make a few small changes in some other details.
Rather than rewriting the entirety of a fairly long argument, we simply list the small changes that must be made to the proof of \cref{ThmMatchingPriorsExist}:

\begin{enumerate}
\item Assume $\epsilon < \min\{\alpha, \frac{(0.2-\alpha)}{3}\}$.  We note that this bound on $\epsilon$ does not appear in the statement of  \cref{ThmBernoulliMain}. However, if the conclusion of that theorem holds for some $\epsilon'$, it holds immediately for all $\epsilon > \epsilon'$, and so this bound can be omitted. 
Recall that, in the proof of \cref{ThmMatchingPriorsExist}, we choose $\beta$ to satisfy $\beta^{-1} < \epsilon$.

\item \cref{ThmBernoulliMain} contains only claims about credible balls, so all references to HPD regions are not needed.

\item In order to show that matching priors exist, the proof of \cref{ThmMatchingPriorsExist} invokes \cref{ThmMainExistenceResult} (this occurs exactly once, in the proof of \cref{jcimplyztv}).
This should be replaced by a reference to  \cref{WeakThmMainExistenceResult}, 
which requires us to also establish the existence of sets 
$\FS \subseteq \PowerSet(\Theta)$, $\Main,\oMain \subseteq \Priors$, $\oMain$ open,
satisfying the properties outlined in \cref{assumptionjcAlt}.

We choose $\FS$ as defined above.
For constants $a,b,r_{\Main}>0$ that we specify below,
we choose $\Main$ to be $\AGD{a}{b}$ 
and choose $\oMain$ to be $\Main_{r_{\Main}}$, i.e., the $r_{\Main}$-fattening of $\Main$.
\cref{cfinite} of \cref{assumptionjcAlt} is clearly satisfied.
\cref{copen} of \cref{assumptionjcAlt} is satisfied by the $r_{\Main}/2$-fattening of $\Main$.  
\cref{cimage} of \cref{assumptionjcAlt} is satisfied for all $a,b > 0$ sufficiently small by part 3 of \cref{LemmaZInjBern}.
We note that \cref{LemmaZInjBern} requires that $\alpha + 3 \epsilon \leq 0.2$, which is satisfied for our range of $\epsilon$.
Finally, \cref{ccont} of \cref{assumptionjcAlt} is exactly \cref{assumptionjc} on a smaller set, so it is established in the proof of \cref{ThmMatchingPriorsExist} (with the replacements outlined here).

\item All references to \cref{AssumptionModelSmoothness} should be replaced by references to \cref{LemmaContPriorPosteriorBern}.
By \cref{LemmaTrivContAGD},  \cref{LemmaContPriorPosteriorBern} gives the same conclusion as \cref{AssumptionModelSmoothness} for priors in $M_{r_{M}}$ for $r_{M}$ sufficiently small (and in particular for our choice of $r_{M}$ below).

\item In the statement (and proof) of \cref{PropContinuity1} and \cref{LemmaContinuityTildeRAnnoying}, replace $\PM{\Theta}$ by $M_{r_{M}}$ and replace every occurrence of \cref{AssumptionModelSmoothness} by \cref{LemmaContPriorPosteriorBern}.

\item All references to \cref{assumptionjc} should be replaced by a reference to \cref{ccont} of \cref{assumptionjcAlt}, which gives the same conclusion for priors in $M_{r_{M}}$.
Note that \cref{assumptionjc} is in fact proved under the conditions of \cref{ThmMatchingPriorsExist}; after making the substitutions contained in this list, the same argument establishes \cref{ccont} of \cref{assumptionjcAlt}.

\end{enumerate}

Finally, we must specify the constants $a,b, r_{M}$.
We choose $a,b > 0$ to be any constant allowed in the statement of \cref{LemmaZInjBern}.
Next, we choose $r_{M} > 0$ to be any constant so that the $r_{M}$-fattening of $\AGD{a}{b}$ is contained in $\AGD{a'}{b'}$ for some pair $a',b' > 0$; such an $r_{M} > 0$ always exists by \cref{LemmaTrivContAGD}. 
\end{proof}

We note that the proof of \cref{ThmBernoulliMain} used very few details of the Bernoulli model - only the properties discussed in these lemmas. This is not an accident, and indeed the same basic strategy can be applied for other models with unbounded log-likelihoods. The main difficulty comes in checking the analogue of \cref{LemmaZInjBern}, which says that the map $\Q{S}$ preserves a class of measures $\AGD{a}{b}$ that is not too concentrated near the singularities of the log-likelihood. 

\section
[Proofs deferred from \cref*{secmorenote}]
{Proofs deferred from \cref{secmorenote}}
\label{AppRoutineAnalysis}

\subsection{Overview}

This section contains several proofs deferred from Section \ref{secmorenote}. Section \ref{AppConsAssump1} contains proofs of \cref{implication4} and \cref{implication5}, both of which concern consequences of \cref{assumptionde}. Section \ref{AppSupExist} contains proofs of \cref{LemmaModelExtensionsExist} and \cref{LemmaPriorPertsExist}, both of which concern the existence of supermodels.

\subsection
[Consequences of \cref*{assumptionde}]
{Consequences of \cref{assumptionde}}
\label{AppConsAssump1}

\begin{proof} [of \cref{implication4}]

  We first show that the map
  \[\label{sahoeushduehdue}
  (x, \pi) \mapsto \Post{x}{\pi} \: (X \times \Priors, \dX \otimes W_{1} ) \to (\Priors, W_{1})
  \]
  is continuous. Pick $x\in X$ and $\pi\in \Priors$.
  Pick a sequence $\{x_n: n\in \Nats\}\subseteq X$ and a sequence $\{\pi_n: n\in \Nats\}\subseteq \Priors$
  such that $x_n \to x$ and $W_{1}(\pi_n,\pi) \to 0$. It suffices to show that $W_{1}(\Post{x_n}{\pi_n},\Post{x}{\pi}) \to 0$.

Let $\cd$ be as in \cref{assumptionde}. For  $B \in \BorelSets{\Theta}$, define $\Phi(B) = \int_B \cd(\theta,x) \pi(\dee \theta)$ and for $n \in \mathbb{N}$ define $\Phi_{n}(B) = \int_B \cd(\theta,x_{n}) \pi_{n}(\dee \theta)$. Recall that by \cref{assumptionde}, $\inf_{x \in X, \, \theta \in \Theta} \cd(\theta,x)>0$  and so $\Phi(\Theta) > 0$; thus, by \cref{stEOSUThs}, $\Post{x}{\pi}(B) = \Phi(B)/\Phi(\Theta)$. 

A subset $A$ of a Borel probability space $(Y,\BorelSets Y,P)$ is a $P$-continuity set if $P(\boundary A)=0$, where $\boundary A$ denotes the boundary of $A$. Let $B$ be a $\Post{x}{\pi}$-continuity set. Since $\Phi(\Theta) > 0$, it is also a $\Phi$-continuity set; by \cref{assumptionde}, $\inf_{x \in X, \, \theta \in \Theta} \cd(\theta,x)>0$, so $B$ is also a $\pi$-continuity set.

Pick $\epsilon>0$. By the compactness of $\Theta$ and uniform continuity of $\cd$, there exists $N_1\in \Nats$ 
  such that $\abs{ \cd(\theta,x_n)-\cd(\theta,x) } < \epsilon$ for all $\theta\in \Theta$ and all $n>N_1$.
  By the weak convergence of $\pi_n \to \pi$ and $\pi$-continuity of $B$, 
  there exists $N_2\in \Nats$ such that $|\int_{B} \cd(\theta, x)\pi_n(\dee \theta)-\int_{B}\cd(\theta, x)\pi(\dee \theta)|<\epsilon$ for all $n>N_2$.
  Let $N=\max\set{N_1,N_2}$. For every $n>N$, we have
  \begin{align*}
    &\abs[\Big]{\int_{B} \cd(\theta, x_n)\pi_n(\dee \theta)-\int_{B}\cd(\theta, x)\pi(\dee \theta)} \\
    &=\abs[\Big]{\int_{B} \cd(\theta, x_n)\pi_n(\dee \theta)-\int_{B} \cd(\theta, x)\pi_n(\dee \theta)+\int_{B} \cd(\theta, x)\pi_n(\dee \theta)-\int_{B}\cd(\theta, x)\pi(\dee \theta) } \\
    &\leq \abs[\Big]{\int_{B} \cd(\theta, x_n)\pi_n(\dee \theta)-\int_{B} \cd(\theta, x)\pi_n(\dee \theta)} \\
    & \qquad \qquad + \abs[\Big]{\int_{B} \cd(\theta, x)\pi_n(\dee \theta)-\int_{B} \cd(\theta, x)\pi(\dee \theta)} < 2\epsilon.
  \end{align*}
  Thus, we have $\int_B \cd(\theta,x_n) \pi_n(\dee \theta) \to \int_{B} \cd(\theta,x) \pi(\dee \theta) = \Phi(B)$. Since the Wasserstein metric metrizes weak convergence of probability measures on a compact set, this implies that  the map given by \cref{sahoeushduehdue} is continuous.

  It is well-known that $\PD{\Theta}$ is a dense subset of $\Priors$ under the Wasserstein metric.
  To complete the proof, it suffices to show that the map $(x, \pi) \mapsto \Post{x}{\pi}$ is Lipschitz continuous on $X \times \PD{\Theta}$.
  Let $R = \sup_{\theta_{1},\theta_{2} \in \Theta} \norm{\theta_{1}-\theta_{2}}$.
  Fix priors $\mu, \nu \in \PD{\Theta}$ with densities $g_{\mu}, g_{\nu}$.
  For data $x,y \in X$, the associated posterior distributions can be assumed to have densities given by the formulas
  \begin{align*}
    p_{x,\mu} (\theta) &= Z_{\mu,x} \, g_{\mu}(\theta) \,\cd(\theta,x) \qquad \text{and} \\
    p_{y,\nu} (\theta) &= Z_{\nu,y} \, g_{\nu}(\theta) \,\cd(\theta,y).
  \end{align*}
  Fix a 1-Lipschitz function $h \: \Theta \to \Reals$.
  There exists a constant $c$ such that $\supnorm{h-c} \leq R$ and so we assume $c=0$ for the following argument, without loss of generality.
  Let $M \equiv \sup_{\theta\in \Theta, x\in X} \abs {\log \cd(\theta,X) }$.
  By assumption, $M < \infty$.
  Recall that $\Theta$ is compact and that $\cd$ is $\mC$-Lipschitz.
  
  By 
  the triangle inequality and the fact that $\cd$ is bounded and $C'$- Lipschitz,
  \begin{align} 
    & \abs[\Big]{ \int_{\Theta} (g_{\mu}(\theta) \cd(\theta,x)h(\theta) - g_{\nu}(\theta) \cd(\theta,y)) h(\theta) \dee \theta }  \nonumber \\
    &\leq \abs[\Big]{ \int_{\Theta} (g_{\mu}(\theta) - g_{\nu}(\theta)) h(\theta) \cd(\theta,x) \dee \theta } + \abs[\Big]{ \int_{\Theta} g_{\nu}(\theta) h(\theta) ( \cd(\theta,x) - \cd(\theta,y)) d \theta } \nonumber\\
    &\leq \lipnorm{ h(\cdot) \cd(\cdot,x)} W_{1}(\mu,\nu) + \supnorm{h} \int_{\Theta} g_{\nu}(\theta)( \cd(\theta,x) - \cd(\theta,y)) d \theta \nonumber\\
    &\leq (\lipnorm{h(\cdot)} \supnorm{\cd(\cdot,x)} + \supnorm{h(\cdot)} \lipnorm{\cd(\cdot,x)})\,W_{1}(\mu,\nu) + R\,\mC\, \dX(x,y)\\
    &\leq (e^M+\mC\,R)\,W_{1}(\mu,\nu) + R\,\mC\, \dX(x,y).\label{IneqGenWassChecking}
  \end{align}

  Dropping the constants $M, \mC, R$, and taking $h=1$, we have
  \[
  \abs{ Z_{\mu,x}^{-1} - Z_{\nu,y}^{-1} } = O( W_{1}(\mu,\nu) ) + O(\dX(x,y)).
  \]
  But then the bound $Z_{\mu,x} \le e^M$ implies
  \[
  \abs{ Z_{\mu,x} - Z_{\nu,y} } = O(W_{1}(\mu,\nu)) + O(\dX(x,y)).
  \]
  Combining this with \cref{IneqGenWassChecking} 
  completes the proof.
\end{proof}

\begin{proof} [of \cref{implication5}]

Pick a set $\BR \subseteq \PD{\fTheta}$ satisfying \cref{eqesssup} and let \[N = \sup_{\pi \in \BR} \, \Linfnorm{}{ \abver_{\pi} }  <\infty.\]
Fix $\epsilon > 0$, $x_1,x_2 \in X$, and $\mu_1,\mu_2 \in \BR$ with densities $\abver_{1}(\theta),\abver_{2}(\theta)$, respectively. For $i,j\in \set{1,2}$,
let $g_{i,j}(\theta)=\abver_{i}(\theta)\,\cd(\theta,x_{j})$
and $Z_{i}^{-1} = \norm{g_{i,i}}_1$, so that $Z_{i} g_{i,i}$ is a density function for $\Post{x_{i}}{\pi_{i}}$.

Fix $i\in \{1,2\}$.
By \cref{assumptionde}, there exist $0<c_1<c_2<\infty$ such that $c_1\leq \cd(\theta,x)\leq c_2$ for every $x\in X, \theta\in \Theta$.
Thus, we have
\[
Z_{i}^{-1}=\int_{\Theta} g_{i,i}(\theta)\dee \theta \leq c_2 \int_{\Theta} \abver_{i}(\theta) \dee \theta=c_2.
\]
Similarly, $Z_{i}^{-1}\geq c_1$. 
Thus
$Z_i\in [c_{2}^{-1},c_{1}^{-1}]$.
Then, for $G' = \int_{\Theta} \dee \theta$,
\[
\abs{ Z_{1}^{-1}-Z_{2}^{-1} }
&\leq \norm{g_{1,1} - g_{2,2}}_{1} \\
&\leq \norm{g_{1,1} - g_{1,2}}_{1} + \norm{g_{1,2}-g_{2,2}}_{1} \\
&\leq \Linfnorm{}{ \abver_{1} } \,\norm{ \cd(\cdot,x_{1}) - \cd(\cdot,x_{2})}_{1} 
           + \Linfnorm{}{ \abver_{1} - \abver_{2} } \, \norm{\cd(\cdot,x_{2})}_{1} \\
&\leq  N\, G' \mC \, \dX(x_{1},x_{2})  + c_2 \, G' \dsup(\mu_1,\mu_2) ,\label{zlipeq}
\]
where the last inequality used 
\cref{assumptionde}
to bound $\norm{ \cd(\cdot,x_{1}) - \cd(\cdot,x_{2})}_{1}$ and $\norm{\cd(\cdot,x_{2})}_{1}$.
Thus, 
there exists $\delta > 0$, such that
$\dX(x_1,x_2) + \dsup(\mu_{1},\mu_{2}) < \delta$ implies
\[ \label{IneqDiffInvNorm}
\abs{Z_{1}^{-1}-Z_{2}^{-1}}<\epsilon.
\]
As $Z_{i}^{-1}\in [c_{2}^{-1},c_{1}^{-1}]$ for $i\in \{1,2\}$, 
the same statement holds for $\abs{Z_{1}-Z_{2}}<\epsilon$.

We have
\[ \label{IneqLipBd38}
\dsup(\Post{x_1}{\mu_1},\Post{x_2}{\mu_2})
&= \Linfnorm{}{ Z_{1}g_{1,1}-Z_{2}g_{2,2} } \leq T_1 + T_2 + T_3,
\]
where
\[
  T_1 &= \Linfnorm{}{Z_{1}g_{1,1}-Z_{2}g_{1,1}} 
                 \leq c_{1}^{-2} \abs {Z_{1}^{-1} - Z_{2}^{-1}} \, \Linfnorm{}{g_{1,1}}  
                 \leq c_{1}^{-2}  N\,  c_{2} \, \abs{Z_{1}^{-1} - Z_{2}^{-1}},
\\T_2 &= \Linfnorm{}{Z_{2}g_{1,1}-Z_{2}g_{2,1}} \leq c_{1}^{-1}\, c_{2} \, \dsup(\mu_1,\mu_2), \text{ and}
\\T_3 &= \Linfnorm{}{Z_{2}g_{2,1}-Z_{2}g_{2,2}} 
                \leq c_{1}^{-1} \,N \, \Linfnorm{}{\cd(\cdot,x_1)-\cd(\cdot,x_2)}
                \leq c_{1}^{-1} \,N \, \mC\, \dX(x_{1},x_{2}),
\]
where the first and last inequality use \cref{assumptionde}. 
Thus, applying Inequality~\eqref{IneqDiffInvNorm} to bound the difference $|Z_{1}^{-1} - Z_{2}^{-1}|$ appearing in our bound on $T_{1}$,
there exists $\delta > 0$, such that
$\dX(x_1,x_2) + \dsup(\mu_{1},\mu_{2}) < \delta$ implies
$T_1,T_2,T_3<\epsilon/3$.
By Inequalities \cref{zlipeq} and \cref{IneqLipBd38}, we have shown that the map
  \[
  (x, \pi) \mapsto \Post{x}{\pi} \: (X \times \BR, \dX \otimes \dsup ) \to (\PD{\fTheta}, \dsup)
  \]
from $X \times \BR$ to $\PD{\fTheta}$ is Lipschitz continuous.

Let $\pi\in \BR$ with Lipschitz density function $\abver_{\pi}$. As the product of two bounded Lipschitz functions is Lipschitz, $p_{\pi,x}$ is Lipschitz, completing the proof. 
\end{proof}

\subsection{Existence of supermodels} \label{AppSupExist}

Before proving \cref{LemmaModelExtensionsExist}, we first quote the following classic result in general topology: 

\begin{theorem}%
\label{mcextension}
Let $S$ be a metric space,  let $E \subseteq S$, and let $f: E \to \Reals$ be $M$-Lipschitz.
There exists an $M$-Lipschitz extension $\hat{f}: S \to \Reals$ of $f$ to $S$. 
Furthermore, $\hat{f}$ satisfies
\[ \label{McshaneObs1}
\inf_{x\in S} \hat{f}(x) = \inf_{x\in E} f(x)
\qquad\text{and}\qquad
\sup_{x\in E} f(x) < \infty \implies \sup_{x\in S'} \hat{f}(x)  < \infty
\]
for all compact sets $S'$ satisfying $E \subseteq S' \subseteq S$.
\end{theorem}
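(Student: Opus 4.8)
The final statement is the classical McShane--Whitney Lipschitz extension theorem, and the plan is to give the standard explicit construction rather than invoke it as a black box. Writing $d$ for the metric on $S$ and assuming $E \neq \emptyset$ (otherwise the displayed conclusions are vacuous or trivial), I would set
\[
\hat f(x) = \inf_{y \in E} \bigl( f(y) + M\, d(x,y) \bigr), \qquad x \in S.
\]
The one step that needs a small argument is checking this infimum is a real number rather than $-\infty$: fixing any $y_0 \in E$, the triangle inequality $d(x,y) \ge d(y,y_0) - d(x,y_0)$ together with the Lipschitz bound $f(y) \ge f(y_0) - M\,d(y,y_0)$ gives $f(y) + M\,d(x,y) \ge f(y_0) - M\,d(x,y_0)$ for every $y \in E$, so $\hat f(x) \ge f(y_0) - M\,d(x,y_0) > -\infty$. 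Everything after this point is routine.

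Next I would check that $\hat f$ extends $f$: for $x \in E$, the choice $y = x$ shows $\hat f(x) \le f(x)$, while the $M$-Lipschitz bound $f(x) - f(y) \le M\,d(x,y)$ shows each term of the infimum is at least $f(x)$, so $\hat f(x) = f(x)$. Then the Lipschitz estimate: for $x_1, x_2 \in S$ and any $y \in E$ we have $\hat f(x_1) \le f(y) + M\,d(x_1,y) \le f(y) + M\,d(x_2,y) + M\,d(x_1,x_2)$, and taking the infimum over $y$ yields $\hat f(x_1) \le \hat f(x_2) + M\,d(x_1,x_2)$; swapping $x_1$ and $x_2$ gives $\lvert \hat f(x_1) - \hat f(x_2)\rvert \le M\, d(x_1,x_2)$. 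For the infimum identity, the inequality $\inf_{x\in S}\hat f(x) \le \inf_{x\in E} f(x)$ is immediate since $\hat f$ extends $f$, while the reverse holds because every term $f(y) + M\,d(x,y)$ defining $\hat f(x)$ satisfies $f(y) + M\,d(x,y) \ge f(y) \ge \inf_{x\in E} f(x)$.

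For the remaining claim, let $S'$ be compact with $E \subseteq S' \subseteq S$. Since $\hat f$ is Lipschitz it is continuous, hence bounded on the compact set $S'$; alternatively one obtains the explicit bound $\hat f(x) \le f(y_0) + M\,\mathrm{diam}(S')$ for all $x \in S'$, using any fixed $y_0 \in E$ and $\mathrm{diam}(S') < \infty$ by compactness. I expect no genuine obstacle here — the construction is classical — so the ``main obstacle'' is purely expository: stating the finiteness argument of the first step cleanly and disposing of the degenerate $E = \emptyset$ case.
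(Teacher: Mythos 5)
Your proof is correct. It rests on the same explicit McShane-type extension that the paper's remark invokes, but you work directly with the infimum formula $\hat f(x)=\inf_{y\in E}\bigl(f(y)+M\,d(x,y)\bigr)$, which satisfies $\hat f\ge\inf_E f$ pointwise and therefore gives the displayed infimum identity and compact-set boundedness immediately, whereas the paper inspects McShane's original construction (which yields the supremum-flavored dual statement) and then applies it to $-f$ before negating. This is a small streamlining — one formula variant versus the other plus a negation — rather than a different method; your side observation that the compact-set boundedness does not actually use the hypothesis $\sup_E f<\infty$, since a Lipschitz function is automatically bounded on a compact set, is also correct.
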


\begin{remark}
We note that \cref{McshaneObs1} does not appear in the statement of \citep[][Thm.~1]{mcshane34}. 
However, by inspecting the (very explicit) construction of $\hat{f}$ in the original proof,
one can immediately see that $\hat{f}$ satisfies
\[ \label{McshaneObs1Orig}
\sup_{x\in S} \hat{f}(x) = \sup_{x\in E} f(x)
\qquad\text{and}\qquad
\inf_{x\in E} f(x) > -\infty \implies \inf_{x\in S'} \hat{f}(x) > -\infty
\]
for all compact set $S'$ satisfying $E \subseteq S' \subseteq S$. 
In order to obtain the desired inequalities (\cref{McshaneObs1}), 
we may apply the original theorem to find an $M$-Lipschitz extension $\cd$ of $-f$,
which then satisfies \cref{McshaneObs1Orig}. 
Then $-q$ is an $M$-Lipschitz extension of $f$ satisfying \cref{McshaneObs1}.

Note also that this construction gives an explicit and fairly simple formula for the construction of $\hat{f}$.
\end{remark}

\begin{proof} [of \cref{LemmaModelExtensionsExist}]
Let $\cd$ be the function whose existence is guaranteed by \cref{assumptionde} for $\SModel$.
Consider a measurable function $p \: \fTheta \times X \to \Reals$ satisfying: 
\begin{enumerate}
\item for all $\theta \in \fTheta$, $p(\theta,\cdot)$ is a probability density with respect to $\nu$; and
\item for all $\theta \in \Theta$ and $x \in X$, $p(\theta,x) = \cd(\theta,x)$.
\end{enumerate}
For $\theta \in \fTheta$, define $P'_\theta$ to be the probability measure on $X$ with density $p(\theta,\cdot)$ with respect to $\nu$. We then obtain a statistical model $\SModelfullalt$, which by (2) above is a supermodel of $\SModel$.
We now find such a function $p$ so that, moreover, the associated model $\SModelfullalt$ satisfies \cref{assumptionde}.

By \cref{mcextension}, there exists a map $\hat{\cd} \, : \, \Reals^{d} \times X \to \Reals$ 
such that $\hat{\cd}$ is $\mC$-Lipschitz,
\[ \label{EqExtMain}
\forall \theta \in \Theta,\ \forall x \in X,\ 
\hat{\cd}(\theta,x) = \cd(\theta,x),
\] 
\[ \label{IneqExtLower}
\inf_{\theta \in \fTheta, \, x \in X} \hat{\cd}(\theta,x) = \inf_{\theta \in \Theta, \, x \in X} \cd(\theta,x),
\]
and, since $\fTheta$ is compact,
\[\label{IneqExtUpper}
\sup_{\theta \in \fTheta, \, x \in X} \hat{\cd}(\theta,x) < \infty.
\]
\cref{IneqExtLower} and \cref{IneqExtUpper} imply that $\log \hat{\cd}$ is bounded on $\fTheta \times X$.

Note that $\hat{\cd}(\theta,\cdot)$ is, in general, not guaranteed to be a probability density with respect to $\nu$.
To fix this, define $Z(\theta) = \int \hat{\cd}(\theta,x) \nu(\dee x)$ 
for all $\theta \in \fTheta$. 
Note that $Z$ is Lipschitz because $\hat{\cd}$ is and $\nu(X) < \infty$ by \cref{assumptionde}.
By \cref{IneqExtLower} and \cref{IneqExtUpper}, there exist constants $c_{1}, c_{2}$ so that 
\[
0 < c_{1} \equiv \inf_{\theta \in \fTheta} Z(\theta) \leq \sup_{\theta \in \fTheta} Z(\theta) \equiv c_{2} < \infty.
\]
In particular, $\log Z$ is bounded.
Defining
\[ \label{EqExplicitSupermodel}
p(\theta,x) = Z(\theta)^{-1} \hat{\cd}(\theta,x),
\] 
we obtain a probability density $p(\theta,\cdot)$ with respect to $\nu$ for every $\theta \in \fTheta$, thus (1) is satisfied.
Note that, for $\theta \in \Theta$, $\cd(\theta,\cdot)$ is a probability density and so $Z(\theta) = 1$.
Thus, by \cref{EqExtMain}, for all $\theta \in \Theta$,
\[
p(\theta,x) = Z(\theta)^{-1} \hat{\cd}(\theta,x) = \hat{\cd}(\theta,x) = \cd(\theta,x),
\]
and so (2) holds. It remains to verify that $p$ has the properties required to witness that $\SModelfullalt$ satisfies \cref{assumptionde}.

We first verify that $p$ is Lipschitz. 
As $Z$ is bounded away from zero,
for all $x_{1}, x_{2} \in X$ and $\theta_{1},\theta_{2} \in \fTheta$,
\begin{gather*}
\abs{p(\theta_{1},x_{1}) - p(\theta_{2},x_{2})} 
= \frac{ \abs{ Z(\theta_{2})(\hat{\cd}(\theta_{1},x_{1}) - \hat{\cd}(\theta_{2},x_{2})) + \hat{\cd}(\theta_{2},x_{2}) (Z(\theta_{2}) - Z(\theta_{1}))}}
        {Z(\theta_{1}) \, Z(\theta_{2})} \\
\leq c_{1}^{-1}|\hat{\cd}(\theta_{1},x_{1}) - \hat{\cd}(\theta_{2},x_{2})| + c_{1}^{-2} \sup_{\theta \in \fTheta, \, x \in X} \hat{\cd}(\theta,x) |Z(\theta_{1}) - Z(\theta_{2})|.
\end{gather*}
Recall that $\hat{\cd}$ is bounded on $\fTheta \times X$
and that $\hat{\cd}$ and $Z$ are Lipschitz.
It thus follows that $p$ is Lipschitz.
Finally, since $\log \hat{\cd}$ and $\log Z$ are both bounded, their difference, $\log p$, is too. This completes the proof.
\end{proof}

\begin{proof}[of \cref{LemmaPriorPertsExist}]
Let $\fTheta = \Theta_{3}$ be the 3-fattening of $\Theta$; since $\Theta$ is compact, $\fTheta$ is as well. 
Let $\SModelfullalt$ be the supermodel of $\SModel$ whose existence is guaranteed by \cref{LemmaModelExtensionsExist}, 
and let $p$ be the density whose existence is guaranteed by \cref{assumptionde} for $\SModelfullalt$.

Fix $\gamma > 0$. Without loss of generality,  we may assume that $0 < \gamma < 1$.
Let $\bg : \Reals^d \to \NNReals$ 
be the density of some probability distribution on $\Reals^d$  with mean zero such that
\begin{enumerate}
\item the support of $\bg$ is contained in the unit ball; and
\item $\bg$ is Lipschitz (and thus bounded). 
\end{enumerate}
For $x > 0$, let $\bg^{(c)}(x) \defas c^{-d} \bg(\frac{x}{c})$, i.e., 
the probability density after the change of variable transformation $x \mapsto c x$. 

Finally, we define our family of prior perturbations by the convolution formula 
\[ \label{EqExplicitFamilyPerturbations}
\pi_{\gamma} =  \bg^{(\gamma)} \ast \pi.
\]

We now check that this is indeed a family of prior perturbations, and that it satisfies \cref{assumptionPerturb}.

Fix $\mu, \nu \in \Priors$ and $\epsilon > 0$. Let $\theta_{1} \sim \mu$, $\theta_{2} \sim \nu$ be coupled so that 
\[
\EE[\norm{\theta_{1} - \theta_{2}}_{2}] \leq W_{1}(\mu,\nu) + \epsilon;
\]
such a coupling exists by the definition of the Wasserstein distance. 
Let $\xi \sim \bg^{(\gamma)}$ be independent of these random variables.
Then $\theta_{1}' = \theta_{1} + \xi \sim \pt{\mu}{\gamma}$ and $\theta_{2}' = \theta_{2} + \xi \sim \pt{\nu}{\gamma}$.
Thus $(\theta_{1}',\theta_{2}')$ defines a coupling of $\pt{\mu}{\gamma}$ and $\pt{\nu}{\gamma}$, 
and then
\[
W_{1}(\mu, \pt{\mu}{\gamma}) 
\leq \EE[\norm{\theta_{1} - \theta_{1}'}_{2}] 
= \EE[\norm{\xi}_{2}] \leq \gamma
\]
and 
\[
W_{1}(\pt{\mu}{\gamma}, \pt{\nu}{\gamma}) 
\leq \EE[\norm{\theta_{1}' - \theta_{2}'}_{2}] 
= \EE[\norm{\theta_{1} - \theta_{2}}_{2}] 
\leq W_{1}(\mu,\nu) + \epsilon.
\]
Since $\epsilon > 0$ was arbitrary, 
these two inequalities show that \cref{EqExplicitFamilyPerturbations} 
does define a system of prior perturbations with $D(\gamma)=1$.

It remains to show that this family of prior perturbations satisfies \cref{assumptionPerturb}.
It is straightforward to verify that $\pt{\pi}{\gamma}$ is Lipschitz since $\bg^{(\gamma)}$ is. 
Note that $\pt{\mu}{\gamma}$ and $\pt{\nu}{\gamma}$ admit densities,
which we denote by $p_{\mu, \gamma}$ and $p_{\mu, \gamma}$.
For a measurable function $u$ from $\Reals^n$ to $\Reals^n$, let $\Linfnorm{T}{u}$ denote the $L^\infty$-norm with respect to Lebesgue measure restricted to $T \subseteq \smash{\Reals^n}$.
So we have,
\begin{equation*}
\Linfnorm{\fTheta}{p_{\mu,\gamma}}
= \Linfnorm[\Big]{\fTheta}{ \int_{\Theta} \bg^{(\gamma)}(\cdot - z) \mu(\dee z) } 
\leq \Linfnorm{}{\bg^{(\gamma)}} 
= \gamma^{-d} \Linfnorm{}{\bg},
\end{equation*} 
so in particular the $L^\infty$ norm of any perturbed prior is uniformly bounded by $\gamma^{-d}\Linfnorm{}{\bg}$. 
Next, for every $ \theta \in \fTheta$,  
\begin{align*}
\abs {p_{\mu,\gamma}(\theta) - p_{\nu,\gamma}(\theta) } 
&= \abs[\Big]{\int \bg^{(\gamma)}(\theta-z) (\mu(\dee z) - \nu(\dee z))} \\
&\leq \lipnorm{\bg^{(\gamma)}} \, W_{1}(\mu, \nu) 
= \gamma^{-d}\lipnorm{\bg} \, W_{1}(\mu,\nu).
\end{align*}
Taking $D'(\gamma) = \gamma^{-d} \max \set{ \lipnorm{\bg}, \Linfnorm{}{\bg} }$ completes the proof.
\end{proof}

\section
[Proofs deferred from \cref*{sec:existenceproof}]
{Proofs deferred from \cref{sec:existenceproof}}
\label{AppNSASec}

\subsection{Overview}

This section includes proofs deferred from \cref{sec:existenceproof}, with subsections labelled in the same order as that section.

\subsection
[Proofs deferred from \cref*{SecExNSMatch}]
{Proofs deferred from \cref{SecExNSMatch}}
\label{Appdef43}

\begin{proof} [of \cref{restrictionpreserves}]
  We first verify conditions (1) and (2) from \cref{credfamdefn}.
  Regarding (1), \cref{aoeuduinhd2} is equivalent to the statement
  \[\label{uhtdute}
  \forall \theta \in\Theta,\
  \charfunc{\tau'_{\pi}(x,u)}(\theta) \charfunc{\fTheta}(\theta) = \charfunc{\tau_{\extn{\pi}{\Theta}}(x,u)}(\theta) \charfunc{\fTheta}(\theta).
  \]
  Because $\fTheta \in \BorelSets{\Theta}$, the right hand side is product measurable and
  so its restriction to $\fTheta$ is product measurable on the trace $\sigma$-algebra.

  Regarding (2), let $\pi \in \Restrict{G}{\fTheta}$. Then $\extn{\pi}{\Theta}(\fTheta) = 1$.
  Thus, by \cref{stEOSUThs}, for $\margalt{\pi}$-almost all $x \in X$, $\Postalt{x}{\pi} = \Post{x}{\extn{\pi}{\Theta}}$ on $\BorelSets{\fTheta}$ and so
  \[
  \Postalt{x}{\pi}(\tau'_{\pi}(x,u))
  = \Post{x}{\extn{\pi}{\Theta}}(\tau'_{\pi}(x,u))
  = \Post{x}{\extn{\pi}{\Theta}}(\tau_{\extn{\pi}{\Theta}}(x,u)\cap \fTheta)
  = \Post{x}{\extn{\pi}{\Theta}}(\tau_{\extn{\pi}{\Theta}}(x,u)).
  \]
  This implies (2).

Next, we verify the form of the rejection probability function for $\tau'$.  Let $\frset{}{}{},\frsetalt{}{}{}$ be the rejection probability functions for $\tau,\tau'$, respectively.
  For all $\theta \in \fTheta$, $x \in X$, and $\pi \in \Restrict{G}{\fTheta}$,
  \[\label{aeoundhuane}
  \frsetalt{\pi}{\theta}{x}
  &= 1 - \int_{[0,1]} \charfunc{\tau'_{\pi}(x,u)}(\theta) \, \dee u
  \\&= 1 - \int_{[0,1]} \charfunc{\tau_{\extn{\pi}{\Theta}}(x,u)}(\theta) \, \dee u
  = \frset{\extn{\pi}{\Theta}}{\theta}{x},
  \]
  where the first equality is by definition and the second follows from \cref{uhtdute}.

  Finally, we check that \cref{assumptionztv} holds for our appropriate subset and submodel.  Suppose \cref{assumptionztv} holds for $\tau = \CRegions{\tau}{\pi}{\Priors}, \Main$ in $\SModel$.
  Let $z'$ be defined as in \cref{EqDefZMap}, but for $\tau'$ in $\SModelalt{\fTheta}$.
  By \cref{aeoundhuane} and the definition of $z,z'$,
  we have $z'_{\theta}(\pi) = z_{\theta}(\extn{\pi}{\Theta})$ for all $\pi \in \PM{\fTheta}$.
  Let $\theta \in \fTheta$, let $B$ be an open set in $[-1,1]$,
  and define $\rho : \PM{\fTheta} \to \Priors$ to be $\rho{(\pi)} = \extn{\pi}{\Theta}$ for $\pi \in \PM{\fTheta}$.
  Then the inverse image of $B$ in $\FPM{\fTheta}\cap \Restrict{\Main}{\fTheta}$ is
  \[
  (z'_{\theta})^{-1}[B] \cap \FPM{\fTheta}\cap \Restrict{\Main}{\fTheta}
  &=\set{ \pi \in \FPM{\fTheta}\cap \Restrict{\Main}{\fTheta} : z'_{\theta}(\pi) \in B }
  \\&= \rho^{-1}[\set{ \pi \in \FPM{\Theta}\cap \Restrict{\Main}{\fTheta} : z_{\theta}(\pi) \in B }]
  \]
  By \cref{assumptionztv}, the right hand side is the inverse image of an open set.
  Because $\rho$ is continuous with respect to total variation, the result follows.
\end{proof}

We now prove the main result. 

\begin{proof}[of \cref{hyperconfidence}]
  Let $\Main$ be a subset of $\PM{\Theta}$ such that \cref{assumptionztv} holds for $\tau,\Main$. 
  Let $\fS$ be the set of all nonempty, finite subsets of $\Theta$.
  For $T \in \fS$,
  let $\cD_{T}$ be the set of all functions $X \times [0,1] \to \BorelSets{T}$;
  let $\cC_{T}$ be the set of all functions $\tau : \PM{T} \to \cD_{T}$; and
  let $\cR_{T}$ be the set of all functions $\frset{}{}{}: T \times X \times \Priors \to [0,1]$.

  We now define the key relations.
  \begin{enumerate}
  \item
    Let $\cH$ be the set of all quartuples $(T,\tau',\frsetalt{}{}{}, M')$, for $T \in \fS$, $\tau' \in \cC_{T}$, $\frsetalt{}{}{} \in \cR_{T}$ and $M'\in \PowerSet(\PM{T})$ such that
    $(\tau'(\pi))_{\pi \in \PM{T}}$ is a family of credible regions in the submodel $\SModelalt{T}$,
    $\frsetalt{}{}{}$ is the rejection probability function for $\tau'$, 
    $\Q{T}(M')\subseteq M'$,
    $M'$ is a nonempty compact set of $\Reals^{|T|}$, and
    \cref{assumptionztv} holds for $\tau', M'$.

  \item
    Let $\cP$ be the set of all $(T,\tau',\frsetalt{}{}{}, M', \pi)$,
    for $T \in \fS$, $\tau' \in \cC_{T}$, $\frsetalt{}{}{} \in \cR_{T}$, and $\pi \in \PM{T}$, such that $(T,\tau',\frsetalt{}{}{}, M') \in \cH$
    and $\pi$ is a matching prior for $\tau'$ in $\SModelalt{T}$.
  \end{enumerate}

  We are now in a place to state a special case of  \cref{muresult}.
  In particular, \cref{muresult} implies that the following statement holds:
  \[
  \forall (T,\tau',\frsetalt{}{}{},M') \in \cH,\ \exists \pi \in \PM{T},\ (T,\tau',\frsetalt{}{}{},M',\pi) \in \cP.
  \]
  We may then invoke the transfer principle to arrive at the following nonstandard statement:\footnote{
    Technically, we must verify that the sets/functions that we have constructed and intend to
    extend---namely, $\cH,\cP,\Priors,\BorelSets{\Theta}$, etc.---have finite ``rank'' relative to a base set,
    in this case $Z=\Theta \cup X \cup \Reals$,
    where an element is of rank 0 if it is an element of $Z$; rank 1 if it is otherwise in $Z \cup \PowerSet{(Z)}$;
    rank 2 if it is otherwise in $Z \cup \PowerSet{(Z)} \cup \PowerSet{(Z \cup \PowerSet{(Z)})}$; etc.
    Viewing functions as relations, and coding tuples as relations, the objects we have defined above all have small rank.

    We have also taken liberties in our logical statements to make them more readable. Technically, variables cannot play the role of function or relation symbols.
    Formally, function evaluation is carried out by an evaluation operator associated with the rank of the function space.
    We have also taken liberties around quantification. Formally, quantification is of the form $\forall v \in c$, where $v$ is a variable and $c$ is a constant corresponding to a fixed set in the universe. We have allowed terms in place of constants, e.g., in $\exists \pi \in \PM{T}$,
    $T$ is a variable. We've also allowed tuples $\tuple{a_1,\dots,a_k}$ instead of variables.
    Formally, tuples can by encoded and decoded using function terms. These rewrite rules commute with the process of transferring the sentence into the nonstandard universe, and so we have kept tuples for readability.
  }
  \[\label{transfermuresult}
  \forall (T,\tau',\frsetalt{}{}{},M') \in \NSE{\cH},\ \exists \pi \in \NSE{\PM{T}},\  (T,\tau',\frsetalt{}{}{},M',\pi) \in \NSE{\cP}.
  \]

  By the definition of matching priors
  and the rejection probability function identity of \cref{restrictionpreserves},
  the following statement holds:
  \begin{alignat*}{1}
    \forall (T,\tau',\frsetalt{}{}{},M',\pi) \in \cP,\
    & \exists \pi' \in \Priors,\
    \\ &(\forall B \in \BorelSets{\Theta},\ \pi'(B) = \pi(B \cap T) )
    \\ &\land (\forall \theta \in T,\ \textstyle \int \frset{\pi'}{\theta}{x}\,P_{\theta}(\dee x) \le \alpha).
  \end{alignat*}
  Again, by transfer, we have
  \begin{alignat*}{1}
    \forall (T,\tau',\frsetalt{}{}{},M',\pi) \in \NSE{\cP},\
    & \exists \pi' \in \sPriors,\
    \\ &(\forall B \in \NSE{\BorelSets{\Theta}},\ \pi'(B) = \pi(B \cap T) )
    \\ &\land (\forall \theta \in T,\ \textstyle \int \NSE{\frset{\pi'}{\theta}{x}}\,\NSE{P_{\theta}}(\dee x) \le \alpha).
  \end{alignat*}
  
  We now pick a hyperfinite set $T_0\subseteq \NSE{\Theta}$ such that $\NSE{\Q{T_0}}(\Restrict{\NSE{\Main}}{T_0})\subseteq \Restrict{\NSE{\Main}}{T_0}$ and $\Restrict{\NSE{\Main}}{T_0}$ is a nonempty *compact. By hypothesis and \cref{restrictionpreserves},
  \cref{assumptionztv} holds for $\xi, \Restrict{\NSE{\Main}}{T_0}$, where $\xi$ is the *restriction of $\NSE{\tau}$. Let $\frsetalt{}{}{}$ denote the *rejection probability function associated with $\xi$.
  We know that $(T_0,\xi,\frsetalt{}{}{},\Restrict{\NSE{\Main}}{T_0})\in \NSE{\cH}$ is true.
  Thus, we obtain the desired result by
  combining this fact with the
  previous logical sentence, \cref{transfermuresult}.
\end{proof}

\subsection
[Proofs deferred from \cref*{SecWassPuss}]
{Proofs deferred from \cref{SecWassPuss}}
\label{appsecdef44}

This section contains the proof of \cref{pdclose}. Before giving the main argument, we recall several useful results.
First, recall that all finite Borel measures on metric spaces are Radon. 
Thus, $P_{\theta}$ is Radon for every $\theta\in \Theta$.

\begin{lemma}%
\label{apushdown}
Let $Y$ be a Hausdorff topological space with Borel $\sigma$-algebra $\BorelSets Y$.
Suppose $\mu$ is a Radon probability measure on $Y$. Then $\mu(B)=\Loeb{\NSE{\mu}}(\ST^{-1}(B))$ for every $B\in \BorelSets Y$.
\end{lemma}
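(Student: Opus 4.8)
\textbf{Proof plan for \cref{apushdown}.}

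The plan is to verify the identity $\mu(B)=\Loeb{\NSE{\mu}}(\ST^{-1}(B))$ by showing the two quantities are sandwiched together, exploiting the Radon (inner/outer regularity) property of $\mu$. First I would fix $B\in\BorelSets{Y}$ and pick $\epsilon>0$. By inner regularity there is a compact $K\subseteq B$ with $\mu(K)>\mu(B)-\epsilon$, and by outer regularity there is an open $U\supseteq B$ with $\mu(U)<\mu(B)+\epsilon$. The key topological facts are: for a compact set $K$, $\NSE{K}\subseteq \ST^{-1}(K)$ (every point of $\NSE{K}$ is near-standard with standard part in $K$, by compactness), and for an open set $U$, $\ST^{-1}(U)\subseteq \NSE{U}$ (if $\ST(y)\in U$ then $y\in\NSE{U}$, since the monad of a point in an open set is contained in the nonstandard extension of that set). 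Combining these with $K\subseteq B\subseteq U$ gives the chain of inclusions $\NSE{K}\subseteq \ST^{-1}(K)\subseteq \ST^{-1}(B)\subseteq \ST^{-1}(U)\subseteq \NSE{U}$.

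Applying $\Loeb{\NSE{\mu}}$, which is monotone, and using that $\Loeb{\NSE{\mu}}(\NSE{A})=\SP{(\NSE{\mu}(\NSE{A}))}=\mu(A)$ for any Borel $A$ (since $\NSE{\mu}(\NSE{A})=\NSE{(\mu(A))}\approx\mu(A)$ by transfer), I would deduce
\[
\mu(B)-\epsilon < \mu(K) = \Loeb{\NSE{\mu}}(\NSE{K}) \leq \Loeb{\NSE{\mu}}(\ST^{-1}(B)) \leq \Loeb{\NSE{\mu}}(\NSE{U}) = \mu(U) < \mu(B)+\epsilon.
\]
Since $\epsilon>0$ was arbitrary, this forces $\Loeb{\NSE{\mu}}(\ST^{-1}(B))=\mu(B)$. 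One small point to address along the way: one must check $\ST^{-1}(B)$ is $\Loeb{\NSE{\mu}}$-measurable, but this is immediate from the sandwich, since $\NSE{K}$ and $\NSE{U}$ are internal (hence in $\Loeb{\NSE{\BorelSets{Y}}}$) and the Loeb measure is complete, so a set trapped between two measurable sets of equal measure is measurable.

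The main obstacle, such as it is, is getting the two topological inclusions $\NSE{K}\subseteq\ST^{-1}(K)$ and $\ST^{-1}(U)\subseteq\NSE{U}$ exactly right and making sure the Hausdorff hypothesis is used where needed (it guarantees that standard parts, when they exist, are unique, so that $\ST$ is well-defined as a function). Everything else is a routine regularity-plus-transfer argument; no serious difficulty is anticipated. I would present the topological inclusions as a short preliminary observation (or cite them if they appear in the nonstandard-analysis references quoted earlier, e.g.\ \citet{NSAA97}) and then give the displayed sandwich as the body of the proof.
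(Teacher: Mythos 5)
The paper does not actually prove \cref{apushdown}: it is introduced in \cref{appsecdef44} as a result to be ``recall[ed]'' from the nonstandard measure-theory literature (it is essentially Anderson's nonstandard representation theorem for Radon measures), and the paper moves directly on to \cref{pdcontinuity}, which invokes it. Your proposal therefore cannot be compared against a paper proof, but it is a correct and complete argument, and it is the standard one: regularity gives $K\subseteq B\subseteq U$ with $\mu(U\setminus K)<2\epsilon$, the two topological inclusions $\NSE{K}\subseteq\ST^{-1}(K)$ (compactness gives near-standardness with standard part in $K$; Hausdorffness makes $\ST$ a well-defined function) and $\ST^{-1}(U)\subseteq\NSE{U}$ (monads of points of $U$ lie in $\NSE{U}$) give the sandwich $\NSE{K}\subseteq\ST^{-1}(B)\subseteq\NSE{U}$, and Loeb completeness closes the measurability gap. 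One cosmetic point: $\NSE{\mu}(\NSE{A})=\NSE{(\mu(A))}$ is \emph{equal} to $\mu(A)$, not merely infinitesimally close, since $\mu(A)$ is a standard real; this strengthens rather than weakens the argument. The proof is sound.
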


Let $\Pi$ be an internal probability measure on $(\NSE{Y},\NSE{\BorelSets Y})$ where $Y$ is compact.
By \cref{pushdown}, the push-down measure $\pd{\Pi}$ of $\Pi$ is a standard probability measure
on $(Y, \BorelSets Y)$.
We first prove the following lemma.
Let $\closure{U}$ denote the closure of a set $U$.

\begin{lemma}\label{pdcontinuity}
  Let $Y$ be a compact metric space with Borel $\sigma$-algebra $\BorelSets Y$.
  Let $\Pi$ be an internal probability measure on $(\NSE{Y},\NSE{\BorelSets Y})$.
  Then we have
  $\Pi(\NSE{U})\approx \pd{\Pi}(U)$ for every $\pd{\Pi}$-continuity set $U$.
\end{lemma}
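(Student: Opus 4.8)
The plan is to exploit the asymmetry between open and closed sets under the standard‑part map, using the $\pd{\Pi}$‑continuity hypothesis to close the resulting gap. Throughout I will use that $Y$ is compact, so $\NS{\NSE Y}=\NSE Y$ and $\ST\colon \NSE Y\to Y$ is everywhere defined, together with two standard facts: for an open set $V\subseteq Y$ one has $\ST^{-1}(V)\subseteq \NSE V$, and for a closed set $G\subseteq Y$ one has $\NSE G\subseteq \ST^{-1}(G)$ (in the first case the monad of a standard point $y\in V$ lies inside $\NSE V$ since some ball $B(y,\epsilon)\subseteq V$ and $\NSE{B(y,\epsilon)}\subseteq\NSE V$ by transfer; in the second, if $\ST(x)\notin G$ then the open set $Y\setminus G$ contains a ball around $\ST(x)$, whose $\NSE{}$‑extension contains $x$, contradicting $x\in\NSE G$). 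I will also use that for an internal set $A\in\NSE{\BorelSets Y}$ the Loeb measure satisfies $\Loeb{\Pi}(A)=\ST(\Pi(A))$, and that by \cref{pushdown} the push‑down $\pd{\Pi}$ is a completed regular Borel probability measure on $(Y,\BorelSets Y)$, in particular finite and additive.

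First I would set $V=\interior{U}$ and $G=\closure{U}$, so that $V$ is open, $G$ is closed, $V\subseteq U\subseteq G$, and $\boundary U=G\setminus V$ is closed, hence Borel and $\pd{\Pi}$‑measurable. Since $U$ is a $\pd{\Pi}$‑continuity set, $\pd{\Pi}(\boundary U)=0$, so by additivity $\pd{\Pi}(V)=\pd{\Pi}(G)$; write $c$ for this common value. By completeness of $\pd{\Pi}$ and the sandwich $V\subseteq U\subseteq G$ with $\pd{\Pi}(G\setminus V)=0$, the set $U$ is $\pd{\Pi}$‑measurable with $\pd{\Pi}(U)=c$, which is the quantity appearing in the statement.

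Next I would assemble a squeeze. By transfer of $V\subseteq U\subseteq G$ we have $\NSE V\subseteq\NSE U\subseteq\NSE G$, and all three are internal subsets of $\NSE Y$, so $\ST(\Pi(\NSE V))\le \ST(\Pi(\NSE U))\le \ST(\Pi(\NSE G))$. On the lower side, $\ST^{-1}(V)\subseteq\NSE V$ together with \cref{defnpushdown} and monotonicity of $\Loeb{\Pi}$ gives $c=\pd{\Pi}(V)=\Loeb{\Pi}(\ST^{-1}(V))\le\Loeb{\Pi}(\NSE V)=\ST(\Pi(\NSE V))$. On the upper side, $\NSE G\subseteq\ST^{-1}(G)$ gives $\ST(\Pi(\NSE G))=\Loeb{\Pi}(\NSE G)\le\Loeb{\Pi}(\ST^{-1}(G))=\pd{\Pi}(G)=c$. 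Chaining, $c\le \ST(\Pi(\NSE U))\le c$, hence $\ST(\Pi(\NSE U))=c=\pd{\Pi}(U)$, i.e.\ $\Pi(\NSE U)\approx\pd{\Pi}(U)$, as required.

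I do not expect a genuinely hard step here; the one point to get right is the direction of the inclusions $\ST^{-1}(V)\subseteq\NSE V$ versus $\NSE G\subseteq\ST^{-1}(G)$ (they go opposite ways for open versus closed sets) and checking that $\NSE V,\NSE U,\NSE G$ are internal so that their Loeb measures are literally the standard parts of their internal $\Pi$‑measures. The continuity‑set hypothesis is exactly what forces the outer two quantities to coincide, and cannot be dropped: for a generic Borel $U$ the same argument yields only $\pd{\Pi}(\interior U)\le \ST(\Pi(\NSE U))\le \pd{\Pi}(\closure U)$.
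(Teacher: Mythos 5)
Your proof is correct, and it is the same sandwich idea the paper uses, but executed more carefully. Both arguments rest on the two facts that $\ST^{-1}(V)\subseteq\NSE V$ for open $V$ and $\NSE G\subseteq\ST^{-1}(G)$ for closed $G$, together with the defining identity $\pd{\Pi}(C)=\Loeb{\Pi}(\ST^{-1}(C))$. The paper applies this by writing $\ST^{-1}(U)\subseteq\NSE U\subseteq\ST^{-1}(\closure{U})$ directly, which is only valid when $U$ is open (the proof even says ``As $U$ is open$\ldots$''), even though the lemma is stated for arbitrary $\pd{\Pi}$-continuity sets; this is harmless for how the lemma is later applied in \cref{cpartition}, where it is only invoked on open sets, but it leaves a gap between statement and proof. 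Your version patches this by inserting $V=\interior{U}$ and $G=\closure{U}$ and using $\pd{\Pi}(V)=\pd{\Pi}(G)$, which follows from $\pd{\Pi}(\boundary U)=0$ and additivity; this proves the lemma as actually stated, for any Borel continuity set. A secondary stylistic difference: you work entirely with $\Loeb{\Pi}$, whereas the paper detours through $\Loeb{\NSE{\pd{\Pi}}}$ and \cref{apushdown}; that detour is not needed for this lemma's conclusion, and your route is a bit more economical.
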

\begin{proof}
  Pick any $\pd{\Pi}$-continuity set $U$.
  As $U$ is a $\pd{\Pi}$-continuity set and $\pd{\Pi}$ is Radon, by \cref{apushdown}, we have
  \[
  \Loeb{\NSE{\pd{\Pi}}}(\ST^{-1}(U))=\pd{\Pi}(U)=\pd{\Pi}(\closure{U})=\Loeb{\NSE{\pd{\Pi}}}(\ST^{-1}(\closure{U})).
  \]
  By the construction of $\pd{\Pi}$, we also have $\Loeb{\Pi}(\ST^{-1}(U))=\pd{\Pi}(U)$ and $\Loeb{\Pi}(\ST^{-1}(\closure{U}))=\pd{\Pi}(\closure{U})$.
  Thus, $\Loeb{\Pi}(\ST^{-1}(U))=\Loeb{\NSE{\pd{\Pi}}}(\ST^{-1}(U))$ and $\Loeb{\Pi}(\ST^{-1}(\closure{U}))=\Loeb{\NSE{\pd{\Pi}}}(\ST^{-1}(\closure{U}))$.
  As $U$ is open and $X$ is compact, we have $\ST^{-1}(U)\subseteq \NSE{U}\subseteq \ST^{-1}(\closure{U})$.
  As $\Loeb{\Pi}(\ST^{-1}(U))=\pd{\Pi}(U)$ and $\Loeb{\Pi}(\ST^{-1}(\closure{U}))=\pd{\Pi}(U)$,
  we have $\Pi(\NSE{U})\approx \pd{\Pi}(U)$ for all $\pd{\Pi}$-continuity set $U$. 
\end{proof}

\begin{lemma}\label{cpartition}
  Let $Y$ be a compact metric space with Borel $\sigma$-algebra $\BorelSets Y$.
  Let $\Pi$ be an internal probability measure on $(\NSE{Y},\NSE{\BorelSets Y})$.
  For every $n\in \Nats$, there exists a finite partition $\{A_1,\dotsc,A_m\}$ consisting of Borel sets with diameter no greater than $\frac{1}{n}$ such that $\Pi(\NSE{A_i})\approx \NSE{\pd{\Pi}}(\NSE{A_i})$ for all $i\leq m$.
\end{lemma}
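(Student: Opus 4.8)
The plan is to construct the partition by covering $Y$ with finitely many small balls whose topological boundaries are $\pd{\Pi}$-null, and then to disjointify that cover. First I would fix $n \in \Nats$. For a fixed $y \in Y$, the boundaries $\boundary B(y,r)$ for varying $r$ are pairwise disjoint, since $\boundary B(y,r)$ is contained in the sphere of radius $r$ about $y$ and distinct spheres are disjoint. By \cref{pushdown}, $\pd{\Pi}$ is (the completion of) a finite Borel measure on $Y$, so at most countably many members of the family $\{\boundary B(y,r) : r \in (\tfrac{1}{4n},\tfrac{1}{2n})\}$ can have positive $\pd{\Pi}$-measure. Hence there is a radius $r_y \in (\tfrac{1}{4n},\tfrac{1}{2n})$ with $\pd{\Pi}(\boundary B(y,r_y)) = 0$; in other words $B(y,r_y)$ is a $\pd{\Pi}$-continuity set of diameter at most $\tfrac{1}{n}$. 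Since $Y$ is compact, finitely many such balls $B_1,\dots,B_k$ cover $Y$.

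Next I would disjointify: set $A_i = B_i \setminus \bigcup_{j<i} B_j$ for $1 \le i \le k$, discard those $A_i$ that are empty, and relabel the remaining sets as $A_1,\dots,A_m$. Then $\{A_1,\dots,A_m\}$ is a finite Borel partition of $Y$ with $\mathrm{diam}(A_i) \le \mathrm{diam}(B_i) \le \tfrac1n$, and $\boundary A_i \subseteq \bigcup_{j\le k}\boundary B_j$, a finite union of $\pd{\Pi}$-null sets, so each $A_i$ is a $\pd{\Pi}$-continuity set.

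To finish, I would verify the infinitesimal closeness. A clean way to stay within \cref{pdcontinuity} is to apply it only to open continuity sets. Writing $A_i = B_i \setminus \bigcup_{j<i}(B_i \cap B_j)$ and using that $\Pi$ is internally (hyperfinitely) additive together with $\NSE{(B_i \cap B_j)} = \NSE{B_i}\cap\NSE{B_j}$, one obtains $\Pi(\NSE{A_i}) = \Pi(\NSE{B_i}) - \Pi\bigl(\NSE{(\bigcup_{j<i}(B_i\cap B_j))}\bigr)$, where both $B_i$ and $\bigcup_{j<i}(B_i\cap B_j)$ are open $\pd{\Pi}$-continuity sets, since their boundaries are contained in $\bigcup_{j\le k}\boundary B_j$. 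Applying \cref{pdcontinuity} to these open sets then gives $\Pi(\NSE{A_i}) \approx \pd{\Pi}(B_i) - \pd{\Pi}(\bigcup_{j<i}(B_i\cap B_j)) = \pd{\Pi}(A_i)$, the last equality because $\bigcup_{j<i}(B_i\cap B_j)\subseteq B_i$. Finally $\pd{\Pi}(A_i)$ is a standard real number, so by transfer $\NSE{\pd{\Pi}}(\NSE{A_i}) = \pd{\Pi}(A_i)$, whence $\Pi(\NSE{A_i}) \approx \NSE{\pd{\Pi}}(\NSE{A_i})$ for every $i \le m$, as required.

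The only step requiring care is this last transition from the standard to the nonstandard setting, i.e.\ the passage $\Pi(\NSE{A_i}) \approx \pd{\Pi}(A_i)$; everything else (choosing radii with null boundary, extracting a finite subcover, the boundary-of-Boolean-combination inclusions, and the fact that a probability measure charges at most countably many pairwise disjoint sets) is routine point-set topology. The device above routes this step entirely through \cref{pdcontinuity} for open sets; alternatively one could run the proof of \cref{pdcontinuity} directly on each $A_i$, which needs the inclusion $\ST^{-1}(\interior{A_i}) \subseteq \NSE{A_i} \subseteq \ST^{-1}(\closure{A_i})$, valid because $Y$ is a compact metric space (interior points sit inside small open balls contained in $A_i$, and every point of $\NSE{Y}$ is near-standard).
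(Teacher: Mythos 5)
Your proof is correct and follows the same approach as the paper's: cover $Y$ by finitely many small open balls chosen to be $\pd{\Pi}$-continuity sets (via the uncountable-disjoint-boundaries argument), disjointify them, and transfer infinitesimal closeness to the partition cells by expressing each cell as a difference $B_i \setminus F_i$ of two open continuity sets and applying \cref{pdcontinuity} to each. Your explicit choice of radii in $(\tfrac{1}{4n},\tfrac{1}{2n})$ and your note that \cref{pdcontinuity}'s proof implicitly uses openness of $U$ are small clarifications of the same argument, not a different route.
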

\begin{proof}
  Pick $n\in \Nats$.
  For every $y\in Y$, there are uncountably many open balls centered at $y$ with diameter no greater than $\frac{1}{n}$.
  The boundaries of these open balls form a uncountable collection of disjoint sets.
  Thus, for every $y\in Y$, we can pick an open ball $U_y$ containing $y$ such that its diameter is no greater than $\frac{1}{n}$ and it is a $\pd{\Pi}$-continuity set.
  As $Y$ is compact, there is a finite subcollection of $\{U_{y} \st y\in Y\}$ that covers $Y$.
  Denote this finite subcollection by $\mathcal{K}_n=\{U_{y_1},\dotsc, U_{y_m}\}$ for some $m\in \Nats$.
  Pick $i,j\leq m$.
  Note that $\boundary(U_{y_i}\cap U_{y_j})\subseteq \boundary U_{y_i}\cup \boundary U_{y_j}$ and $\boundary(U_{y_i}\cup U_{y_j})\subseteq \boundary U_{y_i}\cup \boundary U_{y_j}$.
  Hence any finite intersection (union) of elements from $\mathcal{K}_n$ is an open $\pd{\Pi}$-continuity set.
  For every $i\leq m$, let
  \[
  \textstyle V_i=U_{y_i}\setminus \bigcup_{j<i}U_{y_j}.
  \]
  Note that $V_i=U_{y_i}\setminus F_i$ where $F_i=\bigcup_{j<i}U_{y_j}\cap U_{y_i}$.
  Moreover, both $U_{y_i}$ and $F_i$ are open $\pd{\Pi}$-continuity sets.
  By \cref{pdcontinuity}, we have $\Pi(\NSE{U_{y_i}})\approx \NSE{\pd{\Pi}}(\NSE{U_{y_i}})$ and $\Pi(\NSE{F_i})\approx \NSE{\pd{\Pi}}(\NSE{F_i})$.
  Thus, we have $\Pi(\NSE{V_i})\approx \NSE{\pd{\Pi}}(\NSE{V_i})$ hence $\{V_1,\dotsc,V_m\}$ is the desired partition.
\end{proof}

Let $Y$ be a metric space and recall that $\NSE{\Lip{Y}{1}}$ is the nonstandard extension of the set of 1-Lipschitz functions on $Y$.
For any $F\in \NSE{\Lip{Y}{1}}$, $F$ is S-continuous, i.e., $F(y_1)\approx F(y_2)$ for $y_1\approx y_2\in \NSE{Y}$.
Note that every $P\in \NSE{\PM{Y}}$ is an internal probability measure on $(\NSE{Y},\NSE{\BorelSets Y})$. 

We are now ready to prove the main result: 

\begin{proof} [of \cref{pdclose}]
  Pick $F\in \NSE{\Lip{Y}{1}}$ and fix some $n\in \Nats$.
  Note that $F(t)$ may be an infinite number for every $t\in \NSE{Y}$.
  Let $\{V^{n}_1,\dotsc,V^{n}_m\}$ be the finite partition of $Y$ as in \cref{cpartition} such that the diameter of $V^{n}_{i}$ is no greater than $\frac{1}{n}$ for every $i\leq m$.
  For any $i\leq m$, pick $y_i\in V^{n}_{i}$.
  As $F\in \NSE{\Lip{Y}{1}}$ and the diameter of every $V^{n}_{i}$ is no greater than $\frac{1}{n}$,
  we have $\abs{F(y_i)-F(y)} \leq \frac{1}{n}$ for all $i\leq m$ and all $y\in \NSE{V^{n}_i}$.
  Hence we have
  \[
  &\abs[\Big]{ \int_{\NSE{Y}} F(y)\ \Pi(\dee y)-\sum_{i\leq m}\int_{\NSE{V^{n}_{i}}} F(y_i)\ \Pi(\dee y) } \\
  &\qquad \leq \sum_{i\leq m}\int_{\NSE{V^{n}_{i}}} \abs{F(y)-F(y_i)}\ \Pi(\dee y)
  \leq \frac{1}{n}.
  \]
  Similarly, we have $\abs{\int_{\NSE{Y}} F(y)\ \NSE{\pd{\Pi}}(\dee y)-\sum_{i\leq m}\int_{\NSE{V^{n}_{i}}} F(y_i)\ \NSE{\pd{\Pi}}(\dee y)} \leq \frac{1}{n}$.

  We now compare $\sum_{i\leq m}\int_{\NSE{V^{n}_{i}}} F(y_i)\ \Pi(\dee y)$ and $\sum_{i\leq m}\int_{\NSE{V^{n}_{i}}} F(y_i)\ \NSE{\pd{\Pi}}(\dee y)$.
  Note that
  \[
  &\abs[\Big]{ \sum_{i\leq m}\int_{\NSE{V^{n}_{i}}} F(y_i)\ \Pi(\dee y)-\sum_{i\leq m}\int_{\NSE{V^{n}_{i}}} F(y_i)\ \NSE{\pd{\Pi}}(\dee y) } \\
  &\qquad=\abs[\Big]{ \sum_{i\leq m}F(y_i)(\Pi({\NSE{V^{n}_{i}}})-\NSE{\pd{\Pi}}(\NSE{V^{n}_{i}})) } \\
  &\qquad=\abs[\Big]{ \sum_{i\leq m}(F(y_1)+k_i)(\Pi({\NSE{V^{n}_{i}}})-\NSE{\pd{\Pi}}(\NSE{V^{n}_{i}})) }
  \]
  where $k_i$ is the difference between $F(y_i)$ and $F(y_1)$.
  As $F\in \NSE{\Lip{Y}{1}}$ and $Y$ is compact, we know that $k_i\in \NS{\NSE{\Reals}}$ for all $i\leq m$.
  Hence, we have
  \[
  &\abs[\Big]{ \sum_{i\leq m}(F(y_1)+k_i)(\Pi({\NSE{V^{n}_{i}}})-\NSE{\pd{\Pi}}(\NSE{V^{n}_{i}})) } \\
  &\quad=\abs[\Big]{ \sum_{i\leq m}F(y_1)(\Pi({\NSE{V^{n}_{i}}})-\NSE{\pd{\Pi}}(\NSE{V^{n}_{i}}))+\sum_{i\leq m}k_i(\Pi({\NSE{V^{n}_{i}}})-\NSE{\pd{\Pi}}(\NSE{V^{n}_{i}})) }\\
  &\quad=\abs[\Big]{\sum_{i\leq m}k_i(\Pi({\NSE{V^{n}_{i}}})-\NSE{\pd{\Pi}}(\NSE{V^{n}_{i}})) } \approx 0.
  \]
  Thus, $\abs{\int_{\NSE{Y}} F(y)\ \Pi(\dee y)-\int_{\NSE{Y}} F(y)\ \NSE{\pd{\Pi}}(\dee y)} \lessapprox {2}/{n}$.
  As $n$ is arbitrary, it follows that 
  \[ 
       \int_{\NSE{Y}} F(y)\ \Pi(\dee y) \approx \int_{\NSE{Y}} F(y)\ \NSE{\pd{\Pi}}(\dee y),
  \]
   which implies that $\NSE{W}(\Pi, \NSE{\pd{\Pi}})\approx 0$.
\end{proof}

\subsection
[Proofs deferred from \cref*{SecMainThmRes}]
{Proofs deferred from \cref{SecMainThmRes}}
\label{appsecdef45}

This section contains a proof of \cref{phipush}.  Before giving the main argument, we recall or prove several useful lemmas, starting with the following nonstandard characterization of compactness.

\begin{lemma}%
\label{gencompact}
  Let $Y$ be a compact Hausdorff space.
  For all $y\in \NSE{Y}$, there exists $x\in Y$ such that $y\in \NSE{U}$ for every open set $U$ containing $x$.
\end{lemma}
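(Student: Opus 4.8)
The plan is to argue by contradiction using the open-cover characterisation of compactness together with the transfer principle; notably, no saturation is needed here — only the finite-subcover property of $Y$ and the fact that $\NSE{}$ commutes with finite unions.

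Suppose the conclusion fails for some $y \in \NSE{Y}$. Then for every $x \in Y$ there is an open set $U_x \subseteq Y$ with $x \in U_x$ and $y \notin \NSE{U_x}$. The family $\{U_x : x \in Y\}$ is then an open cover of $Y$, so by compactness there are finitely many points $x_1,\dots,x_n \in Y$ with $Y = U_{x_1} \cup \dots \cup U_{x_n}$. Next I would transfer: the first-order statement expressing that these sets cover $Y$, namely $\forall z\,\bigl(z \in Y \to (z \in U_{x_1} \lor \dots \lor z \in U_{x_n})\bigr)$, transfers to $\forall z\,\bigl(z \in \NSE{Y} \to (z \in \NSE{U_{x_1}} \lor \dots \lor z \in \NSE{U_{x_n}})\bigr)$, i.e. $\NSE{Y} = \NSE{U_{x_1}} \cup \dots \cup \NSE{U_{x_n}}$. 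Applying this with $z = y \in \NSE{Y}$ forces $y \in \NSE{U_{x_i}}$ for some $i$, contradicting the choice of $U_{x_i}$. Hence no such $y$ exists, which is exactly the claim.

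The only points requiring care are routine bookkeeping: that the covering statement really is a first-order sentence over the base structure (taking $Y$ and the finitely many $U_{x_i}$ as constants), and that the nonstandard extension distributes over finite Boolean combinations so that the transferred sentence says precisely that the $\NSE{U_{x_i}}$ cover $\NSE{Y}$. I do not expect any genuine obstacle. For orientation, one may note that the $x$ produced is precisely a standard part of $y$ (unique, since $Y$ is Hausdorff): this is the statement that in a compact Hausdorff space every element of $\NSE{Y}$ is near-standard, which is what underpins the push-down arguments of \cref{SecWassPuss,SecMainThmRes}.
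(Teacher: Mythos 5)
Your proof is correct and follows essentially the same route as the paper's: contradiction, choosing for each $x$ an open $U_x$ with $y \notin \NSE{U_x}$, extracting a finite subcover by compactness, and transferring the finite covering statement to conclude $\NSE{Y} = \bigcup_i \NSE{U_{x_i}}$. You spell out the transfer step a bit more explicitly than the paper does, but there is no substantive difference.
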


The proof is illustrative, and so we reproduce it here for the nonexpert.
\begin{proof}
  Suppose the statement is false for some $y_0\in \NSE{Y}$.
  For every $x\in Y$, there is an open set $U_x$ containing $x$ such that $y_0\not\in \NSE{U}_x$.
  As $Y$ is compact, there is a finite subcollection of $\{U_x \st x\in Y\}$ that covers $Y$.
  Denote such finite subcollection by $\{U_{x_1},\dotsc, U_{x_n}\}$ for some $n\in \Nats$ and $x_1,\dotsc,x_n\in Y$.
  Then we have $\bigcup_{i\leq n}\NSE{U_{x_i}}=\NSE{Y}$.
  Note that $y_0\not\in \NSE{U_{x_i}}$ for all $i\leq n$.
  This is a contradiction.
\end{proof}

Fix $\alpha \in (0,1)$ and a family $\tau = (\tau_{\pi})_{\pi \in \Priors}$ of $1-\alpha$ credible regions with rejection probability function $\frset{}{}{}$.

\begin{lemma}\label{phiscts}
  Suppose \cref{ccont} of \cref{assumptionjcAlt} holds for $\tau, \frset{}{}{}, \oMain$.
  For all $\theta\in \Theta$,
  $\Pi_1,\Pi_2\in \sPriors$, and $x_1, x_2\in \NS{\NSE{X}}$,
  if there exists a $\pi_0\in \oMain$ such that $\NSE{W}(\Pi_1,\NSE{\pi_0})\approx \NSE{W}(\Pi_2,\NSE{\pi_0})\approx 0$ and $x_{1} \approx x_{2}$ then $\NSE{\frset{\Pi_1}{\theta}{x_1}}\approx \NSE{\frset{\Pi_2}{\theta}{x_2}}$.
\end{lemma}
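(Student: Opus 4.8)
The plan is to read the conclusion off from the nonstandard characterization of continuity, applied to the map $g \defas \frset{\cdot}{\theta}{\cdot} \colon X \times \oMain \to [0,1]$, which is jointly continuous by \cref{ccont} of \cref{assumptionjcAlt}; here $X \times \oMain$ carries the product metric $\dX \otimes W_1$, so it is a metric space and the monad of a point $(x_0,\pi_0)$ is the product of the monads of $x_0$ and $\pi_0$. First I would pin down the relevant standard point: since $x_1 \approx x_2$ and both are near-standard, they share a common standard part $x_0 \defas \ST(x_1) = \ST(x_2) \in X$. Since the Wasserstein topology on $\Priors$ is metrized by $W_1$, the hypothesis $\NSE{W}(\Pi_i, \NSE{\pi_0}) \approx 0$ says precisely that each $\Pi_i$ lies in the monad of $\pi_0$, so $\ST(\Pi_i) = \pi_0$ and $(x_i, \Pi_i)$ lies in the monad of $(x_0, \pi_0)$ in $X \times \oMain$.

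The only point that needs a short argument is that $(x_i, \Pi_i)$ actually belongs to the domain $\NSE{(X \times \oMain)} = \NSE{X} \times \NSE{\oMain}$, since $g$ is only asserted to be continuous on $X \times \oMain$. I would get this from the openness of $\oMain$: pick a standard $r > 0$ with $\{\mu \in \Priors : W_1(\mu,\pi_0) < r\} \subseteq \oMain$ and transfer this inclusion, obtaining $\{\Pi \in \sPriors : \NSE{W}(\Pi,\NSE{\pi_0}) < r\} \subseteq \NSE{\oMain}$; since $\NSE{W}(\Pi_i, \NSE{\pi_0}) \approx 0 < r$, this gives $\Pi_i \in \NSE{\oMain}$, and hence $\NSE{\frset{\Pi_i}{\theta}{x_i}} = \NSE{g}(x_i, \Pi_i)$. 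The nonstandard characterization of continuity of $g$ at $(x_0, \pi_0)$ then yields $\NSE{g}(x_i, \Pi_i) \approx g(x_0, \pi_0)$ for $i \in \{1,2\}$, and since $g(x_0,\pi_0) \in [0,1]$ is a standard real, transitivity of $\approx$ gives
\[
\NSE{\frset{\Pi_1}{\theta}{x_1}} \approx g(x_0,\pi_0) \approx \NSE{\frset{\Pi_2}{\theta}{x_2}},
\]
as required.

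I do not expect a genuine obstacle here: the statement is essentially the transfer of the $\varepsilon$--$\delta$ definition of joint continuity, and the only bookkeeping is establishing $\Pi_i \in \NSE{\oMain}$ via openness (as above) and observing that the monad at $(x_0,\pi_0)$ with respect to the sum metric $\dX \otimes W_1$ is the product of the monads of $x_0$ and $\pi_0$. Note that compactness of $\Priors$ is not needed for this lemma; it is used only elsewhere (via Prokhorov) to ensure $W_1$ metrizes the weak topology, whereas here $\Priors$ is simply taken to carry the $W_1$ metric directly.
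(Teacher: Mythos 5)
Your proof is correct and follows essentially the same route as the paper: both fix $x_0 = \ST(x_1) = \ST(x_2)$, invoke the joint continuity of $\frset{\cdot}{\theta}{\cdot}$ at $(x_0, \pi_0)$, transfer, and conclude by (approximate) transitivity of $\approx$. The only cosmetic difference is that you invoke the nonstandard characterization of continuity as a named principle and explicitly justify $\Pi_i \in \NSE{\oMain}$ via openness, whereas the paper carries out the $\epsilon$--$\delta$ transfer by hand and leaves the shrinking of $\delta$ to fit inside $\oMain$ implicit.
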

\begin{proof}
  Fix some $\theta\in \Theta$.
  Pick any two $\Pi_1,\Pi_2\in \sPriors$ such that there exists $\pi_0\in \oMain$ with $\NSE{W}(\Pi_1,\NSE{\pi_0})\approx \NSE{W}(\Pi_2,\NSE{\pi_0})\approx 0$ and any two $x_1\approx x_2\in \NS{\NSE{X}}$.
  We pick $x_0\in X$ such that $x_1\approx x_2\approx x_0$.
  Pick any $\epsilon\in \PosReals$.
  By \cref{assumptionjcAlt}, there is some $\delta\in \PosReals$ such that, for all $\pi\in \Priors$,
  the following statements holds:
  \begin{gather}
    \begin{split}
      (\forall x\in X)((W(\pi,\pi_0)<\delta\wedge \dX(x,x_0)<\delta) \hspace*{6.5em}
      \\ \implies \abs{\frset{\pi}{\theta}{x}-\frset{\pi_0}{\theta}{x_0} } < \epsilon). \hspace*{4em}
    \end{split}
  \end{gather}
  By the transfer principle, for all $\Pi\in \sPriors$ and $x\in \NSE{X}$, the following statements holds:
  \begin{gather}
    \begin{split}
      (\NSE{W}(\Pi,\NSE{\pi_{0}})<\delta \wedge \NSE{\dX}(x,x_0)<\delta) \hspace*{3em}
      \\ \implies (\abs{\NSE{\frset{\Pi}{\theta}{x}}-\NSE{\frset{\NSE{\pi_{0}}}{\theta}{x_{0}}}} <\epsilon).
    \end{split}
  \end{gather}
  In particular, we have $\abs{ \NSE{\frset{\Pi_{i}}{\theta}{x_{i}}}-\NSE{\frset{\NSE{\pi}_0}{\theta}{x_0}}} < \epsilon$ for $i\in \set{1,2}$.
  As $\epsilon$ was chosen arbitrarily, we have $\NSE{\frset{\Pi_1}{\theta}{x_1}}\approx \NSE{\frset{\Pi_2}{\theta}{x_2}}$.
\end{proof}

We use the following lemma in nonstandard integration theory.

\begin{lemma}%
\label{bintegral}
  Suppose $(\Omega,\cA,P)$ is an internal probability space and $F \: \Omega\to \NSE{\Reals}$ is an internal measurable function such that $\SP{F}$ exists everywhere.
  Then $\SP{F}$ is integrable with respect to $\Loeb{P}$ and $\int F(x) P(\dee x)\approx \int \SP{F}(x) \Loeb{P}(\dee x)$.
\end{lemma}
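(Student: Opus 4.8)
The plan is to reduce the statement to the classical fact that a bounded internal function is $S$-integrable, by directly comparing the internal integral of $F$ with the Loeb integral of $\SP{F}$ through a common $1/n$-partition of the range. The first step is to observe that the hypothesis forces $F$ to be bounded by a \emph{limited} constant. Indeed, the range of the internal function $\abs{F}$ is an internal subset of $\NSE{[0,\infty)}$ all of whose elements are limited; such a set is bounded above (otherwise it contains arbitrarily large, hence infinite, elements), and its supremum $m$ is limited (otherwise $m/2$ is infinite and the set meets $(m/2,\infty)$, giving an infinite element). Hence $\abs{F} < c$ everywhere for some standard integer $c$, so the range of $F$ lies in $(-c,c)$ and $\SP{F}$ takes values in $[-c,c]$. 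Since $\SP{F}$ will be exhibited below as a uniform limit of $\Loeb{\cA}$-measurable simple functions, it is $\Loeb{\cA}$-measurable and bounded, hence $\Loeb{P}$-integrable; this proves the first assertion.

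For the identity, fix a standard $n \in \Nats$ and partition $[-c,c)$ into finitely many half-open intervals $I_j = [j/n,(j+1)/n)$, with $j$ ranging over the standard-finite index set $J = \{-cn,\dots,cn-1\}$. Put $A_j = \{x \in \Omega : F(x) \in \NSE{I_j}\}$, an internal member of $\cA$, and let $F_n = \sum_{j \in J}\tfrac{j}{n}\charfunc{A_j}$. Since $\{A_j\}_{j \in J}$ partitions $\Omega$ and $\abs{F - F_n} \le 1/n$ pointwise, transfer of the elementary inequality $\abs{\int g\,\dee\mu} \le \int\abs{g}\,\dee\mu \le \supnorm{g}\,\mu(\Omega)$ yields $\abs{\int F\,\dee P - \int F_n\,\dee P} \le 1/n$. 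On the Loeb side, viewing the same $A_j$ as elements of $\cA \subseteq \Loeb{\cA}$, the simple function $g_n = \sum_{j \in J}\tfrac{j}{n}\charfunc{A_j}$ equals $\SP{F_n}$ pointwise and satisfies $\supnorm{\SP{F} - g_n} \le 1/n$, because $F(x) \in \NSE{I_j}$ forces $\SP{F(x)} \in [j/n,(j+1)/n]$; hence $\abs{\int \SP{F}\,\dee\Loeb{P} - \int g_n\,\dee\Loeb{P}} \le 1/n$. Finally, since $J$ is standard-finite, each coefficient $\tfrac{j}{n}$ is limited, and $P(A_j) \approx \Loeb{P}(A_j)$ for the internal set $A_j$ by the defining property of the Loeb measure, we get $\int F_n\,\dee P = \sum_{j \in J}\tfrac{j}{n}P(A_j) \approx \sum_{j \in J}\tfrac{j}{n}\Loeb{P}(A_j) = \int g_n\,\dee\Loeb{P}$. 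Chaining the three estimates gives $\abs{\int F\,\dee P - \int \SP{F}\,\dee\Loeb{P}} \lessapprox 2/n$ for every standard $n$, so the two integrals are infinitely close.

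The only delicate point, and the place where the hypothesis ``$\SP{F}$ exists everywhere'' does real work, is the initial reduction to a bound by a \emph{limited} constant: this is exactly what guarantees that the range is cut into standard-finitely many cells, so that ``a sum of finitely many infinitesimals is an infinitesimal'' is legitimate in the last step; replacing everywhere-existence by almost-everywhere existence can break this. Everything else — transfer of the elementary integral inequalities, $\Loeb{\cA}$-measurability of standard parts of internal functions, and $\Loeb{P}(A) = \SP{P(A)}$ for internal $A$ — consists of routine facts of Loeb-measure theory.
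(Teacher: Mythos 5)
The paper does not prove this lemma; it is quoted as a known fact from nonstandard integration theory, so there is no in-paper argument to compare against. Your proof is correct and is essentially the standard Anderson/Loeb argument for bounded internal functions. The opening reduction is the key step and you handle it properly: since $\SP{F}$ exists everywhere, every value of $F$ is limited, the range $F(\Omega)$ is internal (image of an internal set under an internal map), and an internal subset of $\HReals$ consisting only of limited elements is bounded by a standard constant by overspill. From there the three-term chain
\[
\int F\,\dee P \;\approx\; \int F_n\,\dee P \;\approx\; \int g_n\,\dee \Loeb{P} \;\approx\; \int \SP{F}\,\dee \Loeb{P},
\]
using $\abs{F-F_n}\le 1/n$, $\abs{\SP{F}-g_n}\le 1/n$, and $P(A_j)\approx\Loeb{P}(A_j)$ over the standard-finite index set, is airtight, and the uniform approximation by the simple functions $g_n$ simultaneously delivers $\Loeb{\cA}$-measurability of $\SP{F}$. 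Your closing observation is also well taken: the overspill step crucially uses internality of the full range, so replacing ``everywhere'' by ``$\Loeb{P}$-a.e.'' would genuinely break this argument and would require an additional $S$-integrability hypothesis.
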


We are now ready to prove the main technical result of this section: 

\begin{proof} [of \cref{phipush}]
  Pick any standard $\theta\in \Theta$.
  As the function $\NSE{\frset{}{}{}}$ is bounded, by \cref{bintegral}, we have
  \[
  \int \NSE{\frset{\Pi}{\theta}{x}}\NSE{\Model}_{\theta}(\dee x)
  \approx \int \SP({\NSE{\frset{\Pi}{\theta}{x}}})\Loeb{\NSE{\Model}_{\theta}}(\dee x)
  \]
  and
  \[
  \int \NSE{\frset{\NSE{\pd{\Pi}}}{\theta}{x}}\NSE{\Model}_{\theta}(\dee x)
  \approx \int \SP({\NSE{\frset{\NSE{\pd{\Pi}}}{\theta}{x}}})\Loeb{\NSE{\Model}_{\theta}}(\dee x).
  \]
  For every $x\in \NS{\NSE{X}}$, as $\pd{\Pi}\in \oMain$, by \cref{pdclose,phiscts},
  we have 
  \[
    \NSE{\frset{\Pi}{\theta}{x}} \approx \NSE{\frset{\NSE{\pd{\Pi}}}{\theta}{x}}.
  \]
  As $\theta\in \Theta$, we know that $\Loeb{\NSE{\Model}_{\theta}}(\NS{\NSE{X}})=1$.
  Hence we have
  \[
  \int \SP({\NSE{\frset{\Pi}{\theta}{x}}})\Loeb{\NSE{\Model}_{\theta}}(\dee x)
  =\int \SP({\NSE{\frset{\NSE{\pd{\Pi}}}{\theta}{x}}})\Loeb{\NSE{\Model}_{\theta}}(\dee x).
  \]
  Thus, we have the desired result.
\end{proof}

\section
[Proofs deferred from \cref*{SecSimpleExamples}]
{Proofs deferred from \cref{SecSimpleExamples}}
\label{AppSec5Proofs}

\subsection{Overview}

This section contains routine proofs from \cref{SecSimpleExamples}, with subsections ordered as in that section.

\subsection
[Proofs deferred from \cref*{subsecpert}]
{Proofs deferred from \cref{subsecpert}}
\label{AppSec51Proofs}

This section contains many routine proofs from \cref{subsecpert}, culminating in the proof of its main result \cref{PropContinuity1}. We begin by showing that the credibility function $ \corr{\pi}{x}$ is Lipschitz and monotone decreasing, and then characterize its right derivative:

\begin{lemma} \label{LemmaContinuityOfL} 
Let $x \in X$ and $\pi \in \Priors$. For all $0 < r_{1} < r_{2} < \infty$,
\[
\abs{ \corr{\pi}{x}(r_{2}) - \corr{\pi}{x}(r_{1}) } \leq \abs{ r_{2} - r_{1} }.
\]
In particular, $\corr{\pi}{x}$ is 1-Lipschitz and monotone over this range.
Finally, for $ 0 \leq r \leq 1$ and all $0 < c \le 1-r$, 
\[ 
\frac {\corr{\pi}{x}(r)-\corr{\pi}{x}(r+c)}{c} \geq  \corr{\pi}{x}(r).
\]
\end{lemma}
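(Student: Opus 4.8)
The plan is to reduce everything to a pointwise statement in $\theta$ and then integrate against the probability measure $\Post{x}{\pi}$. Write $g(\theta) = \papf{\pi}{\theta}{x}$, which takes values in $[0,1]$ by construction, so that
\[
\corr{\pi}{x}(r) = \int_{\Theta} \max(0, g(\theta) - r) \, \Post{x}{\pi}(\dee \theta), \qquad r \in [0,1].
\]

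For the Lipschitz bound and monotonicity I would argue as follows. For each fixed $\theta$, the map $r \mapsto \max(0, g(\theta) - r)$ is the composition of $r \mapsto g(\theta) - r$ (which is $1$-Lipschitz and non-increasing) with $t \mapsto \max(0,t)$ (which is $1$-Lipschitz and non-decreasing), hence is itself $1$-Lipschitz and non-increasing in $r$. Since $\Post{x}{\pi}$ is a probability measure, for $0 < r_{1} < r_{2} < \infty$ we get $|\corr{\pi}{x}(r_{2}) - \corr{\pi}{x}(r_{1})| \le \int_{\Theta} |\max(0, g(\theta) - r_{2}) - \max(0, g(\theta) - r_{1})| \, \Post{x}{\pi}(\dee\theta) \le |r_{2} - r_{1}|$, and monotonicity passes through the integral immediately. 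This gives the first two claims.

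For the final inequality I would again work pointwise. Fix $\theta$ and set $a = g(\theta) \in [0,1]$ and $s = \max(0, a - r)$. A three-way case check on the position of $a$ relative to $r$ and $r+c$ shows $\max(0, a - r) - \max(0, a - r - c) = \min(c, s)$. Under the hypotheses $r \in [0,1]$ and $0 < c \le 1 - r$ we have $c \le 1$, and $s \le a \le 1$; hence $\min(c,s) \ge c s$ in either case: if $s \le c$ then $\min(c,s) = s = s \cdot 1 \ge s c$, and if $s > c$ then $\min(c,s) = c = c \cdot 1 \ge c s$. Integrating the pointwise inequality $\max(0, g(\theta) - r) - \max(0, g(\theta) - r - c) \ge c \max(0, g(\theta) - r)$ against $\Post{x}{\pi}$ and dividing by $c > 0$ yields $\frac{\corr{\pi}{x}(r) - \corr{\pi}{x}(r+c)}{c} \ge \corr{\pi}{x}(r)$.

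There is no real obstacle here: the proof is entirely elementary. The only thing requiring any care is bookkeeping of the range constraints — namely that the stated hypotheses force both $c \le 1$ and $s \le 1$, which is exactly what makes $\min(c,s) \ge cs$ go through — together with getting the case analysis for the identity $\max(0,a-r) - \max(0,a-r-c) = \min(c, \max(0,a-r))$ right.
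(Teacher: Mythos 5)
Your proof is correct, and for the final inequality it takes a genuinely different route from the paper's. For the 1-Lipschitz bound and monotonicity both arguments are essentially the same (integrate the pointwise 1-Lipschitz, non-increasing map $r\mapsto\max(0,g(\theta)-r)$ against the posterior probability measure). For the final inequality, the paper proceeds via a convexity argument: it first observes that the decrements $\corr{\pi}{x}(s)-\corr{\pi}{x}(s+c)$ are non-increasing in $s$, then telescopes $\corr{\pi}{x}(s)=\corr{\pi}{x}(s)-\corr{\pi}{x}(1)$ over a subdivision of $[s,1]$ into $k$ equal parts to get $\corr{\pi}{x}(s)-\corr{\pi}{x}\!\parens[\big]{s+\tfrac{1-s}{k}}\geq\tfrac1k\corr{\pi}{x}(s)$, and then invokes monotonicity to conclude. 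You instead work entirely at the pointwise level, proving the exact identity $\max(0,a-r)-\max(0,a-r-c)=\min\!\parens[\big]{c,\max(0,a-r)}$, combining it with the elementary bound $\min(c,s)\geq cs$ for $c,s\in[0,1]$ (which is precisely where the hypotheses $r\in[0,1]$ and $c\leq 1-r$, together with $g\leq 1$, are used), and integrating. Your route buys something concrete: the paper's telescoping only directly produces the inequality at the grid $c=(1-s)/k$ and leaves the extension to arbitrary $c\in(0,1-r]$ somewhat informal (monotonicity alone does not interpolate between those grid points in the useful direction; one really needs the secant-slope consequence of convexity combined with $\corr{\pi}{x}(1)=0$). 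Your pointwise identity sidesteps this entirely and yields the bound for every admissible $c$ at once, so it is the more self-contained of the two proofs.
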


\begin{proof} 
Because probability measures are monotone, $\corr{\pi}{x}(\cdot)$ is also monotone by construction.
We can immediately calculate
\begin{align*}
\abs{ \corr{\pi}{x}(r_{2}) - \corr{\pi}{x}(r_{1}) }
&= | \Post{x}{\pi}(\max(0, \papf{\pi}{\theta}{x} - r_{1}))  \\
& \qquad - \Post{x}{\pi}(\max(0, \papf{\pi}{\theta}{x} - r_{2})) | \\
&\leq \abs{ \Post{x}{\pi}(r_{2} - r_{1}) } = \abs{ r_{2} - r_{1} },
\end{align*}
obtaining the first upper bound.

For all $c > 0$ and all $0 \leq s < t < \infty$,
\[ \label{IneqMonSlope}
\corr{\pi}{x}(s) - \corr{\pi}{x}(s+c) \geq \corr{\pi}{x}(t) - \corr{\pi}{x}(t+c).
\]
In other words, the rate at which $\corr{\pi}{x}(\cdot)$ decreases over an interval of fixed size is itself decreasing.
Next, fix $k \in \Nats$. Since $\corr{\pi}{x}(1) = 0$, \cref{IneqMonSlope} implies that,
for all $s \in [0,1]$,
\begin{align*}
\corr{\pi}{x}(s) &= \corr{\pi}{x}(s) - \corr{\pi}{x}(1) \\
&= \sum_{i=1}^{k} \parens[\Big]{ \corr{\pi}{x}\parens[\Big]{s + (1-s)\frac{i-1}{k}} - \corr{\pi}{x}\parens[\Big]{s+(1-s)\frac{i}{k}} }\\
&\leq k \parens[\Big]{ \corr{\pi}{x}(s) - \corr{\pi}{x}\parens[\Big]{s+(1-s)\frac{1}{k}} }. 
\end{align*}

Rearranging, we have 
\[
\corr{\pi}{x}(s) - \corr{\pi}{x}\parens[\Big]{ s + (1-s) \frac{1}{k} }
\geq \frac{1}{k} \corr{\pi}{x}(s)
\geq (1-s)\frac{1}{k} \corr{\pi}{x}(s).
\]
Because $\corr{\pi}{x}$ is monotone decreasing, this proves the final inequality.
\end{proof}

We now have:

\begin{proof} [of \cref{LemmaTheComplicatedThingIsACredibleSet}]

Let  $x \in X$ and $\pi \in \Priors$.
By \cref{minorlem}, we have $\corr{\pi}{x}(0) > 1-\alpha$.
It is clear that $\corr{\pi}{x}(1) = 0$.
Since $\corr{\pi}{x}$ is continuous by \cref{LemmaContinuityOfL}, this implies that there exists some $0 < r^* < 1$ so that $\corr{\pi}{x}(r^*) = 1- \alpha$ exactly.
Since $\corr{\pi}{x}$ is also monotone, there is a greatest such $r^*$, in which case $R(x,\pi) = r^*$. By the definition of $\gfinalset{\pi}{x}$, it then follows that $\Post{x}{\pi}(\gfinalset{\pi}{x}) = 1-\alpha$.
\end{proof}

Before proving the main result \cref{PropContinuity1}, we establish the continuity of the correction $R(x,\pi)$ applied to the perturbation in order to achieve the correct credibility.

\begin{lemma} \label{LemmaContinuityLAnnoying}
Under the assumptions of \cref{PropContinuity1},
\[ \label{EqMapContByMon}
(x,\pi) \mapsto R(x,\pi)
\: (X\times \Priors, \dX \otimes W_1) \to ([0,1],\norm{\cdot})
\]
is a continuous map.
\end{lemma}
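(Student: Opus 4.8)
The plan is to reduce the claim to two ingredients: (a) the credibility functions $(x,\pi)\mapsto \corr{\pi}{x}$ vary continuously with $(x,\pi)$ in the sup-norm over $r\in[0,1]$; and (b) for each fixed $(x,\pi)$ the equation $\corr{\pi}{x}(r)=1-\alpha$ has a \emph{unique} root in $[0,1]$, namely $R(x,\pi)$. Granting these, continuity of $R$ is a routine compactness argument: given $(x_n,\pi_n)\to(x,\pi)$, we have $r_n:=R(x_n,\pi_n)\in[0,\alpha]$ by \cref{minorlem}, so it suffices to show every subsequential limit of $(r_n)$ equals $R(x,\pi)$. If $r_{n_k}\to r^\star$, then since each $\corr{\pi_n}{x_n}$ is $1$-Lipschitz in $r$ by \cref{LemmaContinuityOfL}, combining this with (a) lets us pass to the limit in the identity $\corr{\pi_{n_k}}{x_{n_k}}(r_{n_k})=1-\alpha$ to obtain $\corr{\pi}{x}(r^\star)=1-\alpha$; by (b) this forces $r^\star=R(x,\pi)$.

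For ingredient (a) I would write, for priors $\pi_1,\pi_2$, samples $x_1,x_2$, and $g_i^r:=\max(0,\papf{\pi_i}{\cdot}{x_i}-r)$,
\[
\corr{\pi_1}{x_1}(r)-\corr{\pi_2}{x_2}(r)=\bigl[\Post{x_1}{\pi_1}(g_1^r)-\Post{x_1}{\pi_1}(g_2^r)\bigr]+\bigl[\Post{x_1}{\pi_1}(g_2^r)-\Post{x_2}{\pi_2}(g_2^r)\bigr].
\]
The first bracket is at most $\supnorm{\papf{\pi_1}{\cdot}{x_1}-\papf{\pi_2}{\cdot}{x_2}}$ in absolute value, because $t\mapsto\max(0,t-r)$ is $1$-Lipschitz; this is small and \emph{independent of $r$} by the hypothesis of \cref{PropContinuity1} that $(x,\pi)\mapsto\papf{\pi}{\cdot}{x}$ is continuous into $(\CFuncs(\Theta,[0,1]),\supnorm{\cdot})$. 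For the second bracket, $g_2^r$ is $\beta$-Lipschitz in $\theta$ by \cref{perlipschitz}, hence $g_2^r/\max(1,\beta)\in\Lip{\Theta}{1}$, so the Kantorovich--Rubinstein formula (applicable since $\Theta$ is compact) bounds it by $\max(1,\beta)\,W_1(\Post{x_1}{\pi_1},\Post{x_2}{\pi_2})$, again small and independent of $r$ by continuity of the posterior map $(x,\pi)\mapsto\Post{x}{\pi}$ in $W_1$ (which holds under \cref{AssumptionModelSmoothness}, restricted to $\Priors$ via the $W_1$-isometric embedding $\Priors\hookrightarrow\AltPriors$). Thus $\supnorm{\corr{\pi_1}{x_1}-\corr{\pi_2}{x_2}}$ is controlled by $\dX(x_1,x_2)+W_1(\pi_1,\pi_2)$, giving (a).

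For ingredient (b): by \cref{LemmaContinuityOfL} the function $\corr{\pi}{x}$ is continuous and nonincreasing, and by \cref{minorlem} it satisfies $\corr{\pi}{x}(0)>1-\alpha$ while $\corr{\pi}{x}(1)=0<1-\alpha$, so $\{r:\corr{\pi}{x}(r)\ge 1-\alpha\}=[0,R(x,\pi)]$ with $\corr{\pi}{x}(R(x,\pi))=1-\alpha$. Uniqueness of this root is exactly where the last inequality of \cref{LemmaContinuityOfL} is used: for $r\le R(x,\pi)$ we have $\corr{\pi}{x}(r)\ge 1-\alpha>0$, so for small $c>0$ that inequality gives $\corr{\pi}{x}(r)-\corr{\pi}{x}(r+c)\ge c\,\corr{\pi}{x}(r)>0$; hence $\corr{\pi}{x}$ is strictly decreasing near $R(x,\pi)$ and cannot take the value $1-\alpha$ on any nondegenerate interval, so $R(x,\pi)$ is its only root.

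I do not expect a genuine obstacle here: the substantive continuity input --- that $(x,\pi)\mapsto\papf{\pi}{\cdot}{x}$ is sup-norm continuous --- is handed to us by \cref{PropContinuity1}, and the slope bound needed for uniqueness is precisely the final display of \cref{LemmaContinuityOfL}. The only care required is to keep the two error terms in (a) uniform in $r$ (so that the \emph{sup-norm}, not merely the pointwise, distance between $\corr{\pi_1}{x_1}$ and $\corr{\pi_2}{x_2}$ is controlled) and to confirm that posterior-map continuity is indeed available in the present setting.
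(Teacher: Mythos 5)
Your proposal is correct, and both your argument and the paper's rest on the same two ingredients: (i) the sup-norm estimate $\supnorm{\corr{\mu}{x}-\corr{\nu}{y}}\le \beta C\bigl(W_1(\mu,\nu)+\dX(x,y)\bigr)+\supnorm{\papf{\mu}{\cdot}{x}-\papf{\nu}{\cdot}{y}}$, obtained from the $\beta$-Lipschitz bound on $\hr{x}{\mu}$ and \cref{AssumptionModelSmoothness}; and (ii) the quantitative slope bound $\corr{\pi}{x}(r)-\corr{\pi}{x}(r+c)\ge c\,\corr{\pi}{x}(r)$ from \cref{LemmaContinuityOfL}. Where you diverge is in how these are assembled. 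The paper's proof is fully quantitative: it sandwiches $R(y,\nu)$ between $r_-$ and $r_+$ (the thresholds for $\corr{\mu}{x}\mp f$) and uses the slope bound to convert the vertical error $f(x,\mu,y,\nu)$ directly into a horizontal one, yielding the explicit modulus $|R(x,\mu)-R(y,\nu)|\le f(x,\mu,y,\nu)/(1-\alpha)$. Your proof is soft: you use (ii) only to establish uniqueness of the root $\corr{\pi}{x}(r)=1-\alpha$, then combine the uniform convergence from (i), $1$-Lipschitzness in $r$, and compactness of $[0,\alpha]$ (via \cref{minorlem}) in a subsequence argument to force every subsequential limit of $R(x_n,\pi_n)$ to equal $R(x,\pi)$. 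Since $X\times\Priors$ is metric, sequential continuity gives continuity and you are done. Your route is shorter and more conceptual; the paper's route pays a little more to produce an explicit Lipschitz-type rate, which the paper then never actually uses (downstream, \cref{PropContinuity1} only needs continuity). So both are sound, and your version arguably better matches what is needed. One small imprecision worth fixing: you justify uniqueness by saying $\corr{\pi}{x}$ is strictly decreasing \emph{near} $R(x,\pi)$, but for $r>R(x,\pi)$ uniqueness already follows from the definition of $R$ as a supremum together with monotonicity, while for $r<R(x,\pi)$ you should apply the slope inequality at $r$ with $c=R(x,\pi)-r$ to get $\corr{\pi}{x}(R(x,\pi))\le(1-c)\corr{\pi}{x}(r)<1-\alpha$ when $c>0$ — a global argument, not a local one.
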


\begin{proof}
Fix $\mu, \nu \in \Priors$ and $x,y \in X$.
Let $\hr{z}{\xi} = \max(0, \papf{\xi}{\cdot}{z}- r)$,
which is $\beta$-Lipschitz by \cref{perlipschitz}. 
Then $\corr{\xi}{z}(r) = \Post{z}{\xi}(\hr{z}{\xi})$.
From the triangle inequality and \cref{AssumptionModelSmoothness},
\begin{align*}
\supnorm{ \corr{\mu}{x} &- \corr{\nu}{y} } = \sup_{r} \, \abs{ \corr{\mu}{x}(r) - \corr{\nu}{y}(r) } \\
&= \sup_{r} \,\abs{ \Post{x}{\mu}(\hr{x}{\mu}) - \Post{y}{\nu}(\hr{y}{\nu}) } \\
&\leq \sup_{r} \,( \abs{ \Post{x}{\mu}(\hr{x}{\mu}) -  \Post{y}{\nu}(\hr{x}{\mu}) } 
+ \abs{ \Post{y}{\nu}(\hr{x}{\mu})  - \Post{y}{\nu}(\hr{y}{\nu}) } ) \\
&\leq W_{1}(\Post{x}{\mu}, \Post{y}{\nu}) \, \sup_{r} \,\lipnorm{ \hr{x}{\mu}} 
+ \sup_{r}\, \supnorm{ \hr{x}{\mu} - \hr{y}{\nu} }  \\
&\leq \underbrace{
        \beta \, C \, \parens{ W_{1}(\mu,\nu) + \dX(x,y) } + \supnorm{ \papf{\mu}{\cdot}{x} - \papf{\nu}{\cdot}{y} }
        }_{f(x,\mu,y,\nu)}.
\end{align*}
Thus, by the continuity hypothesis, for all $r \in (0,1)$,
\begin{align}\label{ordering}
\corr{\mu}{x}(r) + f(x,\mu,y,\nu) \ge \corr{\nu}{y}(r) \ge \corr{\mu}{x}(r) - f(x,\mu,y,\nu)
\end{align}
with 
\[\label{limitst}
\text{$f(x,\mu,y,\nu) \to 0$ as $W_1(\mu,\nu) + \dX(x,y) \to 0$.}
\]
By second part of \cref{LemmaContinuityOfL}, for all $0 < c \le 1- r$,
\[
\corr{\pi}{x}(r+c) \leq \corr{\pi}{x}(r) - c\, \corr{\pi}{x}(r)
\]
and, by rearrangement,
\[\label{clesseq}
c\leq \frac{\corr{\pi}{x}(r)-\corr{\pi}{x}(r+c)}{\corr{\pi}{x}(r)}.
\]

Let 
\[
r_{+}&=\sup \set{ r \in [0,1] \st \corr{\mu}{x}(r) + f(x,\mu,y,\nu) \geq 1 - \alpha }, \\ 
r_0&=\sup \set{ r \in [0,1] \st \corr{\mu}{x}(r) \geq 1 - \alpha }, \text{ and } \\
r_{-}&=\sup \set{ r \in [0,1] \st \corr{\mu}{x}(r) - f(x,\mu,y,\nu) \geq 1 - \alpha }.
\]
Clearly, $r_{+}\geq r_0\geq r_{-}$.
By \cref{minorlem} and \cref{LemmaContinuityOfL},
$\corr{\mu}{x}(\cdot)$ is monotone nonincreasing,
$\corr{\mu}{x}(0) \geq 1-\alpha + F'$ for some constant $F' >0$, and $\corr{\mu}{x}(1) = 0$. 
By \cref{ordering} and the definition of $R$, we have
$\abs{ R(x,\mu) - R(y,\nu) }\leq \max\{r_0-r_{-},r_{+}-r_0\}$. 
By \cref{LemmaContinuityOfL}, 
$\corr{\mu}{x}(r_0)=1-\alpha$,
$\corr{\mu}{x}(r_{-})=1-\alpha+f(x,\mu,y,\nu)$,
and 
$\corr{\mu}{x}(r_{+})=1-\alpha-f(x,\mu,y,\nu)$.
By \cref{clesseq},
\[
r_0 - r_{-}
&\leq \frac{\corr{\pi}{x}(r_{-}) - \corr{\pi}{x}(r_{-}+r_0-r_{-})}{\corr{\pi}{x}(r_{-})}\\
&=\frac{f(x,\mu,y,\nu)}{1-\alpha+f(x,\mu,y,\nu)}.
\]
Similarly, by \cref{clesseq},
\[
r_{+}-r_0
&\leq \frac{\corr{\pi}{x}(r_0)-\corr{\pi}{x}(r_0+r_{+}-r_0)}{\corr{\pi}{x}(r_0)}\\
&=\frac{f(x,\mu,y,\nu)}{1-\alpha}.
\]
Hence, 
$\abs{ R(x,\mu) - R(y,\nu) }\leq \frac{f(x,\mu,y,\nu)}{1-\alpha}$.
By \cref{limitst}, this completes the proof.
\end{proof}

We are now ready to prove \cref{PropContinuity1}.

\begin{proof}[of \cref{PropContinuity1}]
By hypothesis, 
the map
\[
(x,\mu) \mapsto \papf{\mu}{\cdot}{x}
\: (X\times\Priors, \dX \otimes W_1) \to (\CFuncs(\Theta,[0,1]), \supnorm{\cdot})
\]
is continuous.
It is also the case that the map $(f,r) \mapsto (\theta \mapsto \max \parens{0, f(\theta) - r} )$ 
from  $(\CFuncs(\Theta,[0,1]) \times [0,1], \supnorm{\cdot} \times \norm{\cdot})$ 
to $(\CFuncs(\Theta,[0,1]), \supnorm{\cdot})$
is continuous. Therefore, by 
\cref{LemmaContinuityLAnnoying} and composition,
it follows that $(x,\pi) \mapsto \parens[\big]{ \theta \mapsto \max(0, \papf{\pi}{\theta}{x} - R(x,\pi)) }$ 
from $(X\times \Priors, \dX \otimes W_1)$ to $(\CFuncs(\Theta,[0,1]), \supnorm{\cdot})$ 
is continuous.

We now show that \cref{assumptionjc} holds. 
In fact, we establish the continuity for the acceptance probability, which is clearly  continuous iff the rejection probability function is.
Fix $\theta^* \in \Theta$. 
For all $\epsilon>0$, 
by \cref{LemmaContinuityLAnnoying},
there exists $\varpi>0$ such that,
for all $(x,\pi),(x',\pi') \in X \times \Priors$, 
we have 
$\supnorm{\papf{\pi}{\cdot}{x}-\papf{\pi'}{\cdot}{x'}} < \epsilon$ 
and $\abs{R(x,\pi)-R(x',\pi')} < \epsilon$ 
if $\dX(x,x')+W_{1}(\pi,\pi')<\varpi$.
The sup-norm bound implies
$\abs{\papf{\pi}{\theta^*}{x}-\papf{\pi'}{\theta^*}{x'}} < \epsilon$. 
Noting that subtraction and maximization over a finite set are continuous completes the proof.
\end{proof}

\subsection
[Proofs deferred from \cref{secfamily}]
{Proofs deferred from \cref{secfamily}}

No proofs from \cref{secfamily} were deferred.

\subsection
[Proofs deferred from \cref*{subsecpbpd}]
{Proofs deferred from \cref{subsecpbpd}}
\label{appsecdef53}

\begin{proof} [of \cref{LemmaContinuityTildeRAnnoying}]

We note that the map $(x,\pi) \mapsto \candid{\pi}{x}$ is well-defined
if and only if
the underlying acceptance probability function $\rbcs{\Post{x}{\pt{\pi}{\gamma}}}{\alpha}{\beta}{B}$ is uniformly Lipschitz in its first parameter. By \cref{assumptionPerturb} and \cref{AssumptionModelSupNorm}, $\Post{x}{\pt{\pi}{\gamma}}$ has a Lipschitz density function.
By \cref{rcbisacredset} (for $B=I$) and  \cref{rhpdisacredset} (for $B=H$), we established that $\rbcs{\Post{x}{\pt{\pi}{\gamma}}}{\alpha}{\beta}{B}$ is $\beta$-Lipschitz for $B\in \{I,H\}$, and so the map is well-defined.

Fix $x,y \in X$ and $\mu,\nu \in \Priors$. Let $\pt{\mu}{\gamma},\pt{\nu}{\gamma}$ be the corresponding perturbed priors.
In the case $B=\tBall$, let
$\set{f_{r}^{(\mu)}}_{r >0 }$,
$\set{f_{r}^{(\nu)}}_{r > 0}$
be the functions associated with distributions $\Post{x}{\pt{\mu}{\gamma}}$ and $\Post{y}{\pt{\nu}{\gamma}}$ as in \cref{EqLevelsBlah};
in the case $B=\tHPD$, 
let 
$\set{f_{r}^{(\mu)}}_{r > 0}$,
$\set{f_{r}^{(\nu)}}_{r > 0}$
be the functions associated with distributions $\Post{x}{\pt{\mu}{\gamma}}$ and $\Post{y}{\pt{\nu}{\gamma}}$ as in \cref{EqLevelsHPDBlah}. Recall from \cref{rcbisacredset} and \cref{rhpdisacredset} that these functions are always $\beta$-Lipschitz. In both cases, 
by \cref{AssumptionModelSmoothness},
for all $0 < r < \infty$,
\begin{align*}
\int_{\Theta} f_{r}^{(\mu)}(\theta) \Post{y}{\pt{\nu}{\gamma}}(\dee \theta) 
&\geq \int_{\Theta} f_{r}^{(\mu)}(\theta) \Post{x}{\pt{\mu}{\gamma}}(\dee \theta) - \lipnorm{f_{r}^{(\mu)} } \, W_{1}(\Post{x}{\pt{\mu}{\gamma}},\Post{y}{\pt{\nu}{\gamma}}) \\
&\geq  \int_{\Theta} f_{r}^{(\mu)}(\theta) \Post{x}{\pt{\mu}{\gamma}}(\dee \theta) - \beta \, C \, (D(\gamma)\,W_{1}(\mu,\nu) + \dX(x,y) ).
\end{align*}
Similarly,
\[ \label{IneqIntWrongMsr}
\int_{\Theta} f_{r}^{(\nu)}(\theta) \Post{x}{\pt{\mu}{\gamma}}(\dee \theta) 
\geq \int_{\Theta} f_{r}^{(\nu)}(\theta) \Post{y}{\pt{\nu}{\gamma}}(\dee \theta)- \beta \, C \, (D(\gamma)\,W_{1}(\mu,\nu) + \dX(x,y) ).
\]
In particular, these two values can be made arbitrarily close by letting $W_{1}(\mu,\nu) + \dX(x,y)$ go to zero.

Main Claim: for all $\mu \in \Priors$, $x \in X$, $0 < \alpha < 1$, and $z > 0$, there exists some $\delta > 0$ so that for all $\nu \in \Priors$, $y \in Y$ such that $\| x - y \|, \, W(\mu,\nu) < \delta$, we have 
\begin{equation} \label{EqDomination}
\rbcs{\Post{x}{\pt{\mu}{\gamma}}}{\alpha}{\beta}{B}
\leq
\rbcs{\Post{y}{\pt{\nu}{\gamma}}}{\alpha-z}{\beta}{B}
\qquad \text{and} \qquad
\rbcs{\Post{y}{\pt{\nu}{\gamma}}}{\alpha}{\beta}{B}
\leq
\rbcs{\Post{x}{\pt{\mu}{\gamma}}}{\alpha-z}{\beta}{B}.
\end{equation}

\newcommand{\LL}[2]{L(#2,#1)}
We fix $\mu \in \Priors$, $x \in X$, and $0 < \alpha < 1$. To establish the main claim, we establish several sub-claims. As they are rather similar, we list them in order and then prove them.
For notational convenience, let $\LL{\mu}{\alpha}$ be  $\cbLR{\mu}$ or $\hpdLR{\mu}$ as needed.

Numbered Sub-Claims:
\begin{enumerate}
\item First consider the case $B=I$. For all $\epsilon > 0$, there exists $\delta > 0$ such that for all $\nu \in \Priors$, $y \in Y$ such that $\| x - y \|, \, W(\mu,\nu) < \delta$, we have for all $r > 0$ \[\label{IneqNSC1P1}
f_{r - \epsilon}^{\nu} \leq f_{r}^{\mu} \leq f_{r + \epsilon}^{\nu}
\]
and similarly
\[\label{IneqNSC1P2}
f_{r - \epsilon}^{\mu} \leq f_{r}^{\nu} \leq f_{r + \epsilon}^{\mu},
\]
where in both expressions we should take $f_{r} = f_{0}$ in the degenerate case $r < 0$. In the case $B = H$, the same claim is true with all inequalities in \cref{IneqNSC1P1} and \cref{IneqNSC1P2} reversed.

\item  For all $z \geq 0$ and $\epsilon > 0$, there exists $\delta > 0$ such that for all $\nu \in \Priors$, $y \in Y$ such that $\| x - y \|, \, W(\mu,\nu) < \delta$, we have
\[ \label{IneqNumberedClaim2P1}
\LL{\Post{y}{\pt{\nu}{\gamma}}}{\alpha - z} + \epsilon 
      \geq \LL{\Post{x}{\pt{\mu}{\gamma}}}{\alpha-z} 
      \geq \LL{\Post{y}{\pt{\nu}{\gamma}}}{\alpha - z} - \epsilon.
\]
This is true for both $B=I$ and $B=H$.

\item Fix $B=I$. For all $z > 0$, there exists $\epsilon_{0} > 0$ such that for all $ 0 < \epsilon < \epsilon_{0}$, there exists $\delta > 0$ such that for all $\nu \in \Priors$, $y \in Y$ such that $\| x - y \|, \, W(\mu,\nu) < \delta$, %
\[ \label{IneqLLevelAdj1}
\LL{\Post{y}{\pt{\nu}{\gamma}}}{\alpha -z} 
   \geq \LL{\Post{x}{\pt{\mu}{\gamma}}}{\alpha} + \epsilon
\]
and similarly
\[\label{IneqLLevelAdj2}
\LL{\Post{x}{\pt{\mu}{\gamma}}}{\alpha -z} 
     \geq \LL{\Post{y}{\pt{\nu}{\gamma}}}{\alpha} + \epsilon.
\]
In the case $B=H$, the same claim is true with \cref{IneqLLevelAdj1} and \cref{IneqLLevelAdj2} replaced by
\[ 
\LL{\Post{y}{\pt{\nu}{\gamma}}}{\alpha -z} 
   \leq \LL{\Post{x}{\pt{\mu}{\gamma}}}{\alpha} - \epsilon
\]
and similarly
\[
\LL{\Post{x}{\pt{\mu}{\gamma}}}{\alpha -z} 
      \leq \LL{\Post{y}{\pt{\nu}{\gamma}}}{\alpha} - \epsilon.
\]

\end{enumerate}

We now check our numbered sub-claims in the same order.

\begin{enumerate}
\item In the case $B=I$, this follows immediately from \cref{LemmaSmoothDensitiesMeans}.
In the case $B=H$, this follows immediately from \cref{AssumptionModelSupNorm}; the conditions are satisfied by \cref{AssumptionModelSupNorm,rhpdisacredset}. 

\item We consider only the case $B=I$; the case $B=H$ is essentially the same, with some inequalities flipped. We begin by assuming the first inequality in \cref{IneqNumberedClaim2P1} does not hold, so that there exist some $z \geq 0$, $\epsilon > 0$ and some $c > 0$  so that for all $\delta > 0$, there exists a measure $\nu = \nu_{\delta} \in \Priors$ and data point $y = y_{\delta} \in X$ with $\| x - y\|, \, W(\mu,\nu) < \delta$ such that
\[
\LL{\Post{y}{\pt{\nu}{\gamma}}}{\alpha - z} + \epsilon + c 
     \leq \LL{\Post{x}{\pt{\mu}{\gamma}}}{\alpha - z}.
\]
For the same families of measures and points $\nu = \nu_{\delta}, y = y_{\delta}$ indexed by $\delta > 0$, then, there must  exist some $c' > 0$ so that for all $\delta > 0$, 
\[ \label{IneqSubclaim2ThingToCont}
\Post{x}{\pt{\mu}{\gamma}}(f_{\LL{\Post{y}{\pt{\nu}{\gamma}}}{\alpha-z} + \epsilon}^{\mu}) + c' 
     \leq \Post{x}{\pt{\mu}{\gamma}}(f_{\LL{\Post{x}{\pt{\mu}{\gamma}}}{\alpha-z}}^{\mu}) = 1 - (\alpha -z).
\]
By numbered sub-claim \textbf{(1)}, however, we have the pointwise inequality
\[
f_{\LL{\Post{y}{\pt{\nu}{\gamma}}}{\alpha-z} + \epsilon}^{\mu} \geq f_{\LL{\Post{y}{\pt{\nu}{\gamma}}}{\alpha-z}}^{\nu}
\]
for all $\delta > 0$ sufficiently small. Applying this pointwise inequality, and then \cref{IneqIntWrongMsr}, 
we have for all $\delta > 0$ sufficiently small
\begin{align*}
\Post{x}{\pt{\mu}{\gamma}}(f_{\LL{\Post{y}{\pt{\nu}{\gamma}}}{\alpha-z} + \epsilon}^{\mu}) 
      &\geq \Post{x}{\pt{\mu}{\gamma}}(f_{\LL{\Post{y}{\pt{\nu}{\gamma}}}{\alpha-z}}^{\nu}) \\
&\geq \Post{y}{\pt{\nu}{\gamma}}(f_{\LL{\Post{y}{\pt{\nu}{\gamma}}}{\alpha-z}}^{\nu}) -  \beta \, C \, (D(\gamma)\,W_{1}(\mu,\nu) + \dX(x,y) ) \\
&= 1- (\alpha-z) - \beta \, C \, (D(\gamma)\,W_{1}(\mu,\nu) + \dX(x,y) ).
\end{align*}
But for $\delta > 0$ sufficiently small, this contradicts \cref{IneqSubclaim2ThingToCont}.
This completes the proof of the first inequality in \cref{IneqNumberedClaim2P1}, in the case $B=I$.
The proof of the second inequality in \cref{IneqNumberedClaim2P1}, and the case $B=H$, are essentially identical.

\item We consider the case $B=I$. By \cref{rcbisacredset}, we can write
\[\label{eq1toadj2}
\LL{\Post{x}{\pt{\mu}{\gamma}}}{\alpha  - z} = \LL{\Post{x}{\pt{\mu}{\gamma}}}{\alpha} + C
\]
for some $C$ that is \emph{strictly} greater than $0$. 
By numbered sub-claim \textbf{(2)}, for all $\delta>0$ sufficiently small we also have 
\[
\LL{\Post{y}{\pt{\nu}{\gamma}}}{\alpha-z} 
    \geq \LL{\Post{x}{\pt{\mu}{\gamma}}}{\alpha-z} - \frac{C}{2}.
\]
Putting the two previous equations together, we have 
\[
\LL{\Post{y}{\pt{\nu}{\gamma}}}{\alpha-z} 
    \geq \LL{\Post{x}{\pt{\mu}{\gamma}}}{\alpha} + \frac{C}{2}.
\]
This establishes \cref{IneqLLevelAdj1}. We now establish \cref{IneqLLevelAdj2}.
By numbered sub-claim \textbf{(2)}, for all $\delta>0$ sufficiently small we also have
\[\label{eq2toadj2}
\LL{\Post{y}{\pt{\mu}{\gamma}}}{\alpha} 
     \geq \LL{\Post{x}{\pt{\nu}{\gamma}}}{\alpha} - \frac{C}{2}.
\]
Putting together \cref{eq1toadj2} and \cref{eq2toadj2}, we have
\[
\LL{\Post{y}{\pt{\mu}{\gamma}}}{\alpha-z} 
      \geq \LL{\Post{x}{\pt{\nu}{\gamma}}}{\alpha} + \frac{C}{2}.
\]
This establishes \cref{IneqLLevelAdj2}. The case $B=H$ is the same. 
\end{enumerate}

Finally, the main claim follows immediately from numbered sub-claims \textbf{(1)}  and \textbf{(3)}.
The result now follows almost immediately from \cref{EqDomination}. Recall from \cref{tfrsetdefn} that 
\[
\papf{\Post{x}{\pt{\mu}{\gamma}}}{\theta}{x}  
= \int_{[\delta \eta,\delta]} \rbcs{\Post{x}{\pt{\mu}{\gamma}}}{\alpha - z}{\beta}{B} \,\pb{\eta}^{(\delta)}(z) \,\dee z.
\]

By \cref{EqDomination}, for all $\epsilon > 0$ there exists $\delta_{0} > 0$ 
so that for $\nu \in \Priors$, $y \in Y$ such that $\| x - y \|, \, W(\mu,\nu) < \delta_{0}$, we have 
\begin{align*}
\papf{\Post{y}{\pt{\nu}{\gamma}}}{\theta}{x} &=  \int_{[\delta \eta,\delta]} \rbcs{\Post{y}{\pt{\nu}{\gamma}}}{\alpha-z}{\beta}{B} \,\pb{\eta}^{(\delta)}(z)\,\dee z \\
&\geq  \int_{[\delta \eta,\delta]} \rbcs{\Post{x}{\pt{\mu}{\gamma}}}{\alpha-z - \epsilon}{\beta}{B} \,\pb{\eta}^{(\delta)}(z) \,\dee z\\
&\geq  \int_{[\delta \eta+\epsilon,\delta-\epsilon]} \rbcs{\Post{x}{\pt{\mu}{\gamma}}}{\alpha-z}{\beta}{B} \,\pb{\eta}^{(\delta)}(z-\epsilon) \,\dee z.\\
&\geq \int_{[\delta \eta,\delta]} \rbcs{\Post{x}{\pt{\mu}{\gamma}}}{\alpha-z}{\beta}{B} \,\pb{\eta}^{(\delta)}(z) \,\dee z \\
& \qquad - \int_{[\delta \eta,\delta]} \rbcs{\Post{x}{\pt{\mu}{\gamma}}}{\alpha-z}{\beta}{B} \,|\pb{\eta}^{(\delta)}(z) - \pb{\eta}^{(\delta)}(z-\epsilon) | \,\dee z\\
& \qquad - \int_{[\delta \eta, \delta \eta + \epsilon] \cup[\delta-\epsilon,\delta]} \pb{\eta}^{(\delta)}(z-\epsilon) \,\dee z \\
&= \papf{\Post{x}{\pt{\mu}{\gamma}}}{\theta}{x} \\
& \qquad -  \int_{[\delta \eta,\delta]} \rbcs{\Post{x}{\pt{\mu}{\gamma}}}{\alpha-z}{\beta}{B} \,|\pb{\eta}^{(\delta)}(z) - \pb{\eta}^{(\delta)}(z-\epsilon) | \,\dee z \\
&\qquad - \int_{[\delta \eta, \delta \eta + \epsilon] \cup[\delta-\epsilon,\delta]} \pb{\eta}^{(\delta)}(z-\epsilon) \,\dee z,
\end{align*}
where in this calculation we freely use the fact that $0 \leq \rbcs{\Post{x}{\pt{\mu}{\gamma}}}{\alpha}{\beta}{B} \leq 1$ holds pointwise. Since $\pb{\eta}$ is continuous (hence uniformly continuous on compact sets), this shows that for all $\epsilon' > 0$, there exists $\delta' > 0$ so that for all $\nu \in \Priors$, $y \in Y$ such that $\| x - y \|, \, W(\mu,\nu) < \delta'$, we have 
\[
\papf{\Post{y}{\pt{\nu}{\gamma}}}{\theta}{x}  \geq \papf{\Post{x}{\pt{\mu}{\gamma}}}{\theta}{x} - \epsilon'.
\]

The proof of the reverse inequality is essentially the same, and completes the proof of the lemma.
\end{proof}

\subsection[Proofs deferred from \cref*{SubsecSupport}]
{Proofs deferred from \cref{SubsecSupport}}
\label{appsecdef54}

\begin{proof}[of \cref{basicsupportresultlemma}]

By \cref{StartingHighLemma},
$
\gfinalset{\pi}{x}
\leq
\papf{\pi}{\cdot}{x}$.
By \cref{tfrsetdefn},
\[
\papf{\pi}{\cdot}{x}
\leq
\sup_{z \in [ \delta \eta,\delta]}
     \bapf[\alpha-z]{\beta}{\pt{\pi}{\gamma}}{\cdot}{x}
      .
\]
Thus, by \cref{nestedass}, for every $\epsilon \in [\delta,\alpha)$,
\[
\papf{\pi}{\cdot}{x}
\leq
     \bapf[\alpha-\epsilon]{\beta}{\pt{\pi}{\gamma}}{\cdot}{x}
.
\]
Thus, the support of
$\gfinalset{\pi}{x}$
is contained in the support of
$     \bapf[\alpha-\epsilon]{\beta}{\pt{\pi}{\gamma}}{\cdot}{x}$
 for all $\epsilon \in [\delta,\alpha)$.
\end{proof}

\begin{proof}[of \cref{ballcontain}]
By \cref{DefUsualInterval}, the support of $\credball{\mu}{\alpha}{}$ is 
  \[
  S_{0}=\{\theta \in \fTheta: \norm{\theta - M(\mu)} \leq \cbL{\mu} \},
  \]
  where the reader may recall from \cref{cbLdefn} that
  $\cbL{\mu} \defas \inf R_0$ for
  \begin{equation}
    R_0 = \{r>0: \mu(\{\theta\in \fTheta: \norm{\theta-M(\mu)} \le r  \}  )\geq 1-\alpha\}.
  \end{equation}
  The support of
$\rcb{\mu}{\alpha}{\beta}$
is $S_{1}=\{\theta \in \fTheta :  \norm{\theta-M(\mu)} \leq r_{1}\beta^{-1}\}$ 
where
\[
r_1=\cbLR{\mu} \defas \inf \, R_1
\]
for
$R_1 = \{r > 0 \st \int_{\fTheta} f_{r}^{(I)} \,\dee\mu \geq  1- \alpha \}$.

We first show that the support of $\credball{\mu}{\alpha}{}$ is contained in the support of $\rcb{\mu}{\alpha}{\beta}$.
Pick $\theta_{0}\in S_{0}$. We want to show that $\norm{\theta_{0}-M(\mu)} \leq r_{1}\beta^{-1}$. 
It suffices to show that $\cbL{\mu} \leq r_{1}\beta^{-1}$. Pick some $r\in R_1$. 
It is sufficient to show 
\[
\mu(\{\theta\in \fTheta: \norm{\theta-M(\mu)} \le r\beta^{-1}\})\geq 1-\alpha. 
\]
Define $v : \fTheta\to \Reals$ by
\begin{equation*}
v(\theta) \defas
\begin{cases}
1, & \norm{ \theta - M(\mu)} \leq r\beta^{-1}, \\
0, & \text{otherwise.}
\end{cases}
\end{equation*}
Note that $f_{r}^{I}(\theta)\leq v(\theta)$ for every $\theta\in \fTheta$. As $\int_{\fTheta} f_{r}^{I} \dee \mu\geq 1-\alpha$, we have
\[
\mu(\{\theta\in \fTheta: \norm{\theta-M(\mu)} \le r\beta^{-1}\})\geq 1-\alpha.
\]

We now show that the support of
$\rcb{\mu}{\alpha}{\beta}$
is contained in the $\beta^{-1}$-fattening of $E_0$.
Pick $\theta_1\in S_1$.
To finish the proof, it suffices to find a $\theta\in E_0$ such that $\norm{\theta-\theta_1}<\beta^{-1}$.
The result is immediate if $r_1<1$. Suppose $r_1\geq 1$.
Let $\theta_0\in \Reals^{d}$ be any point on the line segment connecting $M(\mu)$ and $\theta_1$ satisfying
$\norm{\theta_0-M(\mu)} \le r_{1}\beta^{-1}-\beta^{-1}$ and $\norm{\theta_1-\theta_0}\leq \beta^{-1}$.
We now show that $\theta_0\in E_0$, completing the proof.

Pick $r_0 \in R_0$.
It suffices to show that $r_1\beta^{-1}-\beta^{-1} \le r_0$, which is equivalent to $r_1\le \beta r_0+1$.
As $f_{r}^{(I)}(\cdot)$ is an increasing function of $r$, 
it suffices to show that $\int_{\fTheta} f_{\beta r_0+1}^{(I)} \,\dee\mu \geq  1- \alpha $.
Define $u : \fTheta\to \Reals$ by
\begin{equation*}
u(\theta) \defas
\begin{cases}
1, & \norm{ \theta - M(\mu) } \leq r_0, \\
0, & \text{otherwise.}
\end{cases}
\end{equation*}
Note that $f_{\beta r_0+1}^{(I)}(\theta)=1$ whenever $u(\theta)=1$.
Hence $f_{\beta r_0+1}^{(I)}(\theta)\geq u(\theta)$ for every $\theta\in \fTheta$.
As $r_0 \in R_0$,
we know that $\int_{\fTheta} u \,\dee\mu \geq  1- \alpha $.
Thus we have $\int_{\fTheta} f_{\beta r_0+1}^{(I)} \,\dee\mu \geq  1- \alpha $, completing the proof.
\end{proof}

\begin{proof} [of \cref{ballsupportresult}]

By \cref{Rbound} and the definition of $\papf{\pi}{\cdot}{x}$, to finish the proof,
it suffices to show that the support of $\credset{I}{\Post{x}{\pt{\pi}{\gamma}}}{\alpha}{}$ 
is contained in the $\beta^{-1}$-fattening of the support of $\max\{0,\rcb{\Post{x}{\pt{\pi}{\gamma}}}{\alpha}{\beta}-\alpha\}$.

By \cref{DefUsualInterval}, the support of $\credball{\Post{x}{\pt{\pi}{\gamma}}}{\alpha}{}$ is
\[
S_{0}=\{\theta \in \fTheta: \norm{\theta - M(\Post{x}{\pt{\pi}{\gamma}})} < \cbL{\Post{x}{\pt{\pi}{\gamma}}} \},
\]
where the reader may recall from \cref{cbLdefn} that
$\cbL{\Post{x}{\pt{\pi}{\gamma}}} \defas \inf R_0$ for
\begin{equation}
R_0 = \{r>0: \Post{x}{\pt{\pi}{\gamma}}(\{\theta\in \fTheta: \norm{\theta-M(\Post{x}{\pt{\pi}{\gamma}})} \le r  \}  )\geq 1-\alpha\}.
\end{equation}
The support of
$\rcb{\Post{x}{\pt{\pi}{\gamma}}}{\alpha}{\beta}$
is 
\[
S_{1}=\{\theta \in \fTheta :  \norm{\theta-M(\Post{x}{\pt{\pi}{\gamma}})}<r_{1}\beta^{-1}\},
\]
where $r_1=\cbLR{\Post{x}{\pt{\pi}{\gamma}}} \defas \inf R_1$ for
\[
R_1 = \{r > 0 \st \int_{\fTheta} f_{r}^{(I)} \,\dee\Post{x}{\pt{\pi}{\gamma}} \geq  1- \alpha \}.
\]

Pick $\theta_0\in S_0$. 
It suffices to show there exists $\theta_{1}$ with $\norm{\theta_{0}-\theta_{1}}\leq \beta^{-1}$ and
$\rcb{\Post{x}{\pt{\pi}{\gamma}}}{\alpha}{\beta}(\theta_{1})-\alpha>0$,
or equivalently, $\norm{\theta_{1}-M(\Post{x}{\pt{\pi}{\gamma}})}<r_{1}\beta^{-1}-\alpha\beta^{-1}$.
It suffices to show that $\norm{\theta_{0}-M(\Post{x}{\pt{\pi}{\gamma}})}<r_{1}\beta^{-1}-\alpha\beta^{-1}+\beta^{-1}$.
As $0<\alpha<1$,
it suffices to show that $\norm{\theta_{0}-M(\Post{x}{\pt{\pi}{\gamma}})}<r_{1}\beta^{-1}$.
But this follows from the first statement of \cref{ballcontain}.
\end{proof}

\begin{proof}[of \cref{relaxedhpddensprop}]

We use notation from \cref{DefUsualHPD} and \cref{DefSimpleHPDCredInt}.
We begin by observing that, for all $d \geq 0$ and $\theta \in \Theta$, 
we have the pointwise inequality 
\[
\onething_{\rho(\theta) \geq d} \leq f_{d}^{(H)}(\theta) \leq \onething_{\rho(\theta) \geq d- \hat{\beta}^{-1}}.
\] 

This implies the corresponding inequality for the levels
\[\label{IneqLevelIneq}
\hpdL{\mu} \leq  \hpdLR{\mu} \le \hpdL{\mu} + \hat{\beta}^{-1}.
\]
Inspecting \cref{DefUsualHPD} and \cref{DefSimpleHPDCredInt}, we note that 
\[
\essinf_{\theta \in A} \rho(\theta) = \hpdL{\mu}
\]
and 
\[
\essinf_{\theta \in A_{\beta}} \rho(\theta) \in [\hpdLR{\mu} - \hat{\beta}^{-1}, \hpdLR{\mu}]. 
\]
Combining these bounds with the bound in \cref{IneqLevelIneq} completes the proof.
\end{proof}

\section
[Proofs deferred from \cref*{SecAppBern}]
{Proofs deferred from \cref{SecAppBern}}
\label{secbernproof}

\begin{proof}[of \cref{LemmaZInjBern}]
We prove the stronger statement: there exists a constant $D > 0$ such that
\[ \label{IneqActualContainment}
\Q{S}(\PM{S}) \subseteq \bigcap_{0 <a,b < D} \Restrict{\AGD{a}{\frac{b}{2}}}{S}.
\] 
This immediately implies the statement of the lemma.
Roughly speaking, \cref{IneqActualContainment} implies that $\Q{S}(\pi)$ is never ``too concentrated" in a very small interval near 0 and 1.
Our proof consist of two parts: first, we show that $\Q{S}(\pi)$ is never \emph{much more} concentrated than $\pi$ in \emph{any} interval; next, we show that $\Q{S}(\pi)$ is, in fact, very uniform if $\pi$ is heavily concentrated near 0 and 1.
The proofs for the three cases are nearly identical; we specify the single major difference where it occurs.

Checking that $\Q{S}(\pi)$ is not too much more concentrated than $\pi$ is straightforward.
Inspecting the definition of $\Q{S}$, we have 
\[
\Q{S}(\pi)(s) \leq \frac{1}{2}(\pi(s) + 1)
\]
for all $s \in S$.
In particular, for all $U \subseteq S$,
\[ \label{IneqNoFastConc}
\Q{S}(\pi)(U) \leq 1 - \frac{1}{2}(1 - \pi(U)).
\]

The other direction is slightly longer.
We fix some $a,b > 0$, put $c = \frac{\alpha}{1000}$, and consider a measure $\pi \in \PM{S} \setminus \Restrict{\AGD{a}{b}}{S}$.
Without loss of generality, assume that $\pi([0,a]\cap S) \geq \pi([1-a,1]\cap S)$.
We note that the posterior having observed zero, $\Post{0}{\pi}$, assigns most of its mass to $[0,a]\cap S$.
In particular, by monotonicity of the function $\cd(\cdot,0)$, we have 
\[
\frac{\Post{0}{\pi}([0,a]\cap S)}{\Post{0}{\pi}((a,1-a)\cap S)} \geq \frac{\pi([0,a]\cap S)}{\pi((a,1-a)\cap S)} \geq \frac{0.5(1-b)}{b}
\]
and 
\[
\frac{\Post{0}{\pi}([0,a]\cap S)}{\Post{0}{\pi}([1-a,1]\cap S)} \geq \frac{1-a}{a} \frac{\pi([0,a]\cap S)}{\pi([1-a,a]\cap S)} \geq \frac{1-a}{a}.
\]
Rearranging these two bounds, we have
\[
\frac{2b}{1-b} \Post{0}{\pi}([0,a]\cap S) \geq  \Post{0}{\pi}((a,1-a)\cap S)
\]
and 
\[
\frac{a}{1-a}\Post{0}{\pi}([0,a]\cap S)  \geq \Post{0}{\pi}([1-a,1]\cap S).
\]
Summing,
\[
(\frac{a}{1-a} + \frac{2b}{1-b}) \Post{0}{\pi}([0,a]\cap S) 
&\geq \Post{0}{\pi}((a,1-a)\cap S) + \Post{0}{\pi}([1-a,1]\cap S) 
\\&= 1 - \Post{0}{\pi}([0,a]\cap S),
\]
and so rearranging again gives
\[\label{IneqRightBeforeDiffCaseBern}
\Post{0}{\pi}([0,a]\cap S) \geq \frac{1}{1 + \frac{a}{1-a} + \frac{2b}{1-b}}.
\] 

For the next short part of the proof, our three cases require slightly different arguments.
We begin by considering the first case: credible regions given by \cref{DefUsualInterval}.

Using notation as in \cref{DefUsualInterval}, Inequality~\eqref{IneqRightBeforeDiffCaseBern} implies that, for all $a,b <C_{0}$ sufficiently small,
\[ \label{IneqMeanBern}
M(\Post{0}{\pi}) \leq a + 2b + c.
\]
When this happens, the ball of radius $(a + 2b + c)$ around $M(\Post{0}{\pi})$ contains the interval $[0,a]$; thus, for $a,b < \min(C_{0}, c)$ (so that $a + 2b + c < \alpha$), we also have
\[ \label{IneqLevelBern}
\cbL{\Post{0}{\pi}} \leq M(\Post{0}{\pi}) \leq a + 2b + c.
\]
But this implies that the associated rejection function satisfies
\[ 
\frset{\pi}{\theta}{0} = 1 
\]
for all $\theta \in S \cap [2a + 4b + 2c,0.5]$. 

Let $C_{1} = 0.01$; for $a,b < \min (C_{0}, C_{1},c) \equiv C_{2}$, this implies 
\[ \label{InMainConcAllThree}
\frset{\pi}{\theta}{0} = 1 
\]
for all $\theta \in S \cap [0.05,0.5]$.

We now check that \cref{InMainConcAllThree} holds in the other two cases, possibly for different values of $C_{0},C_{1}, C_{2}$.
In the second case, this follows immediately from Inequalities~\eqref{IneqMeanBern} and \eqref{IneqLevelBern} along with \cref{ballcontain}; in the third case, it follows from  Inequalities \eqref{IneqMeanBern} and \eqref{IneqLevelBern} along with \cref{basicsupportresult}.

We now continue from Equality~\eqref{InMainConcAllThree} in all three cases.
From this equality and our assumption on $\FS$, we have
\[ \label{IneqManyBig}
|S \cap [0.05,0.5]|  \geq 0.44 |S| > 0.
\] 
Next, note that  $\cd(\theta,0) \geq \frac{1}{2}$ for all $\theta \in S \cap [0.05,0.5]$.
Thus, for such $\theta$,
\[ \label{Conc1}
z_{\theta}(\pi) \geq \frac{1}{2}(1-\alpha).
\] 

Let $\pi_{\max} = \max_{\theta \in S} \pi(\theta)$.
Combining Inequalities~\eqref{IneqManyBig} and \eqref{Conc1} the formula for $\Q{S}(\pi)$ immediately gives, for all $\theta \in S$,
\[ \label{IneqStrongUnif}
\Q{S}(\pi)(\theta) \leq \frac{\pi_{\max}+1}{ \pi_{\max} + \frac{1}{2}(1-\alpha)|S \cap [0.05,0.5] | } \leq \frac{8}{(1-\alpha) |S|}
\]
for appropriate sufficiently small $a,b < C_{2}$.

To complete the proof, consider $ 0 < a,b < C_{2}$.
If $\pi \in \PM{S} \cap \Restrict{\AGD{a}{b}}{S}$, then Inequality~\eqref{IneqNoFastConc} implies $\Q{S}(\pi) \in \Restrict{\AGD{a}{\frac{1}{2}b}}{S}$.
If $\pi \in \PM{S} \setminus \Restrict{\AGD{a}{b}}{S}$, then Inequality~\eqref{IneqStrongUnif}  and the fact that $a,b < 0.001$ imply $\Q{S}(\pi) \in \Restrict{\AGD{a}{b}}{S}$.
Thus, for $\pi\in \PM{S}$, we have $\Q{S}(\pi) \in \Restrict{\AGD{a}{\frac{1}{2}b}}{S}$.
This completes the proof of Inequality~\eqref{IneqActualContainment} and thus the lemma.
\end{proof}

\begin{proof} [of \cref{LemmaContPriorPosteriorBern}]
We note that the conditional density $\cd$ is uniformly continuous and uniformly bounded away from infinity, but not uniformly bounded away from 0. The proof of \cref{implication4} applies here with the exception of one implication: since $\cd$ is not uniformly bounded away from 0, there may exist $\Post{x}{\pi}$-continuity sets that are not $\pi$-continuity sets. We give here the resolution to this difficulty, continuing from the sentence ``Let $B$ be a $\Post{x}{\pi}$-continuity set."

We note that, since $\pi \in \AGD{a}{b}$, $\Phi(\Theta) \geq b a (1-a) > 0$. Thus, since $B$ is a $\Post{x}{\pi}$-continuity set, it is also a $\Phi$-continuity set. Next, assume WLOG that $x=1$. For $x=1$ and fixed $\epsilon > 0$, $\log(q)$ is uniformly bounded on $[\epsilon,1]$. Thus, if $B$ is a $\Phi$-continuity set, then by the argument in the proof of \cref{implication4} it is also a $\Phi$-continuity set for the restriction of $\pi$ to $[\epsilon,1]$. Thus, for all $\epsilon > 0$, 
\[ \label{SillyBernCont1}
\Phi_{n}([\epsilon,1]\cap B) \mapsto \Phi([\epsilon,1] \cap B).
\]

On the other hand, $\cd(\theta,1) \leq \epsilon$ for $\theta \in [0,\epsilon]$. Thus, for all $n \in \Nats$,
\[ \label{SillyBernCont2}
\Phi_{n}([0,\epsilon) \cap B) \leq \epsilon \pi([0,\epsilon) \cap B) \leq \epsilon.
\]
Combining \cref{SillyBernCont1} and \cref{SillyBernCont2}, we see that 
\[
\Phi_{n}(B) \mapsto \Phi(B).
\]
That is, $\Phi_{n}$ converges weakly to $\Phi$. On the other hand, since $\pi \in \AGD{a}{b}$, we have already seen that $\Phi(\Theta) \geq b a (1-a) > 0$. 
Thus, $\Post{\pi_{n}}{1} \mapsto \Post{\pi}{1}$.
The same argument gives the same convergence guarantee for $x=0$.

Having shown that 
$\Post{\pi_{n}}{x} \mapsto \Post{\pi}{x}$, 
the remainder of the proof continues as in the proof of \cref{implication4} with $M$ replaced by
$\max_{\theta \in [0,1]} \log(\cd(x,\theta)) < \infty$ where it appears. 
\end{proof}

\begin{proof} [of \cref{LemmaTrivContAGD}]
Let $\mu$ be in the $\epsilon$-fattening of $\AGD{a}{b}$, and let $\nu \in \AGD{a}{b}$ be within distance $\epsilon$ of $\mu$. Then it is possible to couple $X \sim \mu$, $Y \sim \nu$ so that $\expect[ \| X - Y \|] \leq \epsilon$. For $c > 0$, define $I(c) = [0,c] \cup [1-c,1]$. Using Markov's inequality in the last line, we compute:
\begin{align*}
\mu(I(\frac{a}{2})) &= \mathbb{P}[X \in I(\frac{a}{2})] \\
&\leq \mathbb{P}[Y \in I(a)] + \mathbb{P}[\|X-Y\| \geq \frac{a}{2}] \\
&\leq (1-b) + \frac{2 \epsilon}{a}.
\end{align*}
\end{proof}

\section
{Three Concrete Examples}
\label{SecAppExamples}

\subsection{Bernoulli Example}
\label{SecAppExamplesBernoulli}

In this section, we provide a rigorous and detailed proof that a particular prior is matching in a simple Bernoulli model. This example appears as \cref{ExBernoulliMatching} in the main paper.

Recall the usual Bernoulli model with sample space $X = \{0,1\}$, parameter space $\Theta = [0,1]$ and conditional density $\cd(\theta,x) = \theta^{x} (1 - \theta)^{1-x}$. For $\alpha \in (0,0.5)$, define the probability measure $\pi_{\alpha} = \mathrm{Unif}(\{\alpha,1-\alpha\})$.
Consider the set estimator
$\tau: X \to \BorelSets{\Theta}$ given by $\tau(0) = [0,1-\alpha)$ and $\tau(1)=(\alpha,1]$. (We do not need random sets here.)

\begin{theorem} \label{ThmNewBernEx1}
The prior $\pi_{\alpha}$ is matching prior at level $1-\alpha$ for the set estimator $\tau$. 
\end{theorem}

\begin{proof}
By Bayes rule, the posterior probability on $\theta = \alpha$ given the observation $x=0$ is
\begin{align*} 
\Post{0}{\pi_{\alpha}}(\{\alpha\}) 
&= \frac{P_\alpha \{0\}\, \pi_\alpha\{\alpha\} }
           {P_\alpha \{0\}\, \pi_\alpha\{\alpha\}  + P_{1-\alpha} \{0\}\, \pi_\alpha\{1- \alpha\} } \\
&= \frac{(1-\alpha)}{(1-\alpha) + \alpha} \\
&= 1-\alpha. 
\end{align*} 
Since the prior is concentrated on the two points $\{\alpha,1-\alpha\}$, the full posterior, given $X=0$, is
\[ 
\Post{0}{\pi_{\alpha}} = (1-\alpha) \delta_{\alpha} + \alpha \delta_{1-\alpha}.
\] 
Similarly, the full posterior, given $X=1$, is
\[
\Post{1}{\pi_{\alpha}} = \alpha \delta_{\alpha} + (1-\alpha) \delta_{1-\alpha}.
\] 

We are now in a place to establish that $\tau$ is a $1-\alpha$ credible region with respect to $\tau$. In particular,
\[ 
\Post{0}{\pi_{\alpha}}(\tau(0)) 
= \Post{0}{\pi_{\alpha}}([0,1-\alpha)) 
= \Post{0}{\pi_{\alpha}}(\{\alpha\}) = 1-\alpha 
\] 
and similarly for $\Post{1}{\pi_{\alpha}}(\tau(1))$.
To establish the matching property, it remains to verify that $\tau$ is a confidence set at level $1-\alpha$. To do so, it must hold that, 
for all $\theta \in [0,1]$,
\[
  A_{\theta}
  \defas P_{\theta} (\set{ x \in X \st \theta \in \tau(x) })
  \ge 1-\alpha.
  \]
There are three cases:
\begin{enumerate}
    \item If $\theta \in (\alpha,1-\alpha)$, then $\theta \in \tau(0) \cap \tau(1)$, so $A_\theta = 1$. 
    \item  If $\theta \in [0,\alpha]$, then $\theta \in \tau(0)$ but $\theta \not\in \tau(1)$. Thus, $A_\theta = P_\theta \{0 \} \ge 1-\alpha$.
    \item  If $\theta \in [1-\alpha,1]$, then $\theta \in \tau(1)$ but $\theta \not\in \tau(0)$. Thus, $A_\theta = P_\theta \{1 \} \ge 1-\alpha$.
\end{enumerate}

\end{proof}

\subsection{Binomial Example}
\label{SecAppExamplesBinomial}

Let $n\ge 1$ be a positive integer and 
consider the binomial model with $n$ independent trials with an unknown success probability $\theta$, i.e., the model with sample space $X = \{0,1,2,\dots,n\}$, parameter space $\Theta = [0,1]$, and conditional density $\cd(\theta,x) = {n \choose x} \theta^{x} (1 - \theta)^{n-x}$ with respect to counting measure. 

Put $\theta_0=0$ and let $\theta_1,\dots,\theta_{n} \in (0,1/2)$, with $\theta_0 < \theta_1 < \dotsm < \theta_{n}$, 
and consider the set estimator $\tau : X \to \BorelSets{\Theta}$ given by
\[
\tau(x) &= [\theta_{x},1-\theta_{n-x}), \text{ for $x=0$,}\\
\tau(x) &= (\theta_{x},1-\theta_{n-x}), \text{ for $x=1,\dots,n-1$,}\\
\tau(x) &= (\theta_{x},1-\theta_{n-x}], \text{ for $x= n$.}
\]
We first derive conditions on $\theta_1,\dots,\theta_n$ such that $\tau$ is a confidence set at level $1-\alpha$. 
After that point, we will derive a prior $\pi_{\alpha}$ such that $\pi_{\alpha}$ is a matching prior for $\tau$ at level $1-\alpha$.

To begin, recall that $\tau$ is a confidence set at level $1-\alpha$ iff,
for all $\theta \in [0,1]$,
\[
  A_{\theta}
  \defas P_{\theta} (\set{ x \in X \st \theta \in \tau(x) })
  \ge 1-\alpha.
  \]
By design, 
$A_0 = P_{0} \{0\} = 1 \ge 1 - \alpha$ and 
$A_1 = P_{1} \{n\} = 1 \ge 1- \alpha$. 
It remains to check coverage for $\theta \in (0,1)$.
Let $F_\theta(k) = P_\theta(\{x \in X : x \le k\})$ and let $\bar F_\theta(k) = P_{\theta} (\{x \in X : x \ge k \})$.
Note that $F_{\theta}(k) = \bar F_{1-\theta}(n-k)$.
Of course, if $ \theta < \theta'$, then $F_{\theta}(k) \ge F_{\theta'} (k)$ and $ \bar F_{\theta}(k) \le \bar F_{\theta'}(k)$.
There are essentially two cases:
\begin{itemize}

\item
Let $k \in \{0,\dots,n-1\}$.
If $\theta \in (\theta_{k},\theta_{k+1}]$, then $\theta \in \tau(x)$ iff $x \le k$,
and so $A_\theta = F_{\theta} (k) \ge F_{\theta_{k+1}}(k)$.
Symmetrically,
if $\theta \in [1-\theta_{n-k+1},1-\theta_{n-k})$, then $\theta \in \tau(x)$ iff $x \ge k$, 
and so $A_\theta = \bar F_{\theta} (k) = F_{1-\theta}(n-k) \ge F_{\theta_{n-k+1}}(n-k)$.

\item
If $\theta \in (\theta_{n},1-\theta_{n})$ then $\theta \in \tau(x)$ for all $x \in X$, and so $A_{\theta} = 1 \ge 1-\alpha$.

\end{itemize}

Thus, for $\tau$ to be a confidence interval at level $1-\alpha$, it suffices to have
\begin{itemize}
\item $F_{\theta_{k+1}}(k) \ge 1-\alpha$ for $k \in \{0,\dots,n-1\}$.
\item $F_{\theta_{n-k+1}}(n-k) \ge 1-\alpha$ for $k \in \{1,\dots,n\}$.
\end{itemize}

It is clear to see that these two sets of conditions are equivalent. The following result follows from monotonicity and continuity of the tail probabilities with respect to the parameter:
\begin{lemma}
For every $k \in \{0,1,\dots,n-1\}$, there is a unique solution  $\theta_{k+1}$ to the equation $F_{\theta_{k+1}}(k) = 1-\alpha$.
Moreover, $\theta_k < \theta_{k+1}$ and, for sufficiently small $\alpha$, we have $\theta_{n} < 1/2$.
\end{lemma}

For $k \in \{0,1,\dots,n-1\}$, let $\theta_{k+1}$ be the unique solution to the equation $F_{\theta_{k+1}}(k) = 1-\alpha$. 
It follows from the above lemma that $\tau$ is then a confidence set at level $1-\alpha$. 
We now proceed to identify a prior to establish credibility of the same set estimator.

Let $\pi_{\alpha} = \sum_{k=1}^{n} p_k (\delta_{\theta_k} + \delta_{1-\theta_k}) $.
To have the correct posterior coverage, we must verify that $\Post{k}{\pi_{\alpha}}(\tau(k)) =  1-\alpha$ for all $k \in \{0,\dots,n\}$.
It is easy to check that
\[
\Post{k}{\pi_{\alpha}}(\tau(k)) 
              &= \Post{k}{\pi_{\alpha}} (\{\theta_{k+1},\dots,\theta_{n}\} ), &&\text{ for $k=0$,}\\
\Post{k}{\pi_{\alpha}}(\tau(k)) 
              &= \Post{k}{\pi_{\alpha}} (\{\theta_{k+1},\dots,\theta_{n},1-\theta_{n},\dots,1-\theta_{n-k+1}\} ), &&\text{ for $k = 1, \dots,n-1$,}\\
\Post{k}{\pi_{\alpha}}(\tau(k)) 
              &= \Post{k}{\pi_{\alpha}} (\{1-\theta_{n},\dots,1-\theta_{n-k+1}\} ), &&\text{ for $k= n$.}
\]
Note that, by symmetry in the likelihood and prior, $\Post{k}{\pi_{\alpha}} = \Post{n-k}{\pi_{\alpha}} \circ r^{-1}$ for the reflection $r(\theta)=1-\theta$.
As a result, the condition for $k$ above is equivalent to the condition for $n-k$.

For $x =0,\dots,n$, let $Z(x) = \sum_{k=1}^{n} p_k \big( q(\theta_k,x) + q(1-\theta_k,x) \big)$ be the marginal probability of observing $x$ successes,
which satisfies $Z(x) = Z(n-x)$ on the basis of the symmetries mentioned above.
In terms of parameters $p_1,\dots,p_{n}$ defining our prior,
the $\floor{ n/2 }$ unique conditions above are equivalent to the equations
\[\label{redundanteqns}
\sum_{y=x+1}^{n} p_y q(\theta_y,x) + \sum_{y=n-x+1}^{n} p_y q(1-\theta_y,x)  = (1-\alpha) Z(x), \text{ for $x = 0, 1,\dots, \floor{ n/2} $.}
\]

In addition to the constraint that
$2\sum_{y=1}^{n} p_y = 1$ and $0 \le p_y \le 1$ for all $y$,
we have a linear system of $\floor{n/2}$ equations for $n$ unknowns,
given by
$ 
(A'+B'-(1-\alpha)C') [p_1,\dots,p_n]^T = 0$
where $A'$, $B'$, and $C'$ are the first $\floor{n/2}$ rows of the matrices
\[
A=
\begin{bmatrix}
q(\theta_1,0) & q(\theta_2,0) & \dotsm & q(\theta_n,0) \\
0 & q(\theta_2,1) & \dotsm & q(\theta_n,1) \\
0 & 0 & \ddots & \vdots \\
0 & 0 & 0 & q(\theta_n,n-1) \\
0 & 0 & 0 & 0 
\end{bmatrix},
\]
\[
B=
\begin{bmatrix}
0 & 0 & 0 & 0 \\
0 & 0 & 0 & q(1-\theta_n,1) \\
0 & 0 & q(1-\theta_{n-1},2) & q(1-\theta_n,2) \\
0 & \iddots & \dotsm & \vdots \\
0 & q(1-\theta_2,n-1) & \dotsm & q(1-\theta_n,n-1)  \\
q(1-\theta_1,n) & q(1-\theta_2,n) & \dotsm & q(1-\theta_n,n) 
\end{bmatrix},
\]
and $C_{x,k} = q(\theta_k,x) + q(1-\theta_k,x)$ for all $x = 0,\dots,n$ and $k=1,\dots,n$. (These matrices correspond to all $n+1$ equations of the form \cref{redundanteqns}.)

It is useful to consider special cases. For $n=1$ trial, we are back in the Bernoulli problem (\cref{ExBernoulliMatching} and \cref{SecAppExamplesBernoulli}). For $\alpha \in (0,0.5)$, the set estimator $\tau: X \to \BorelSets{\Theta}$ is given by $\tau(0) = [0,1-\alpha)$ and $\tau(1)=(\alpha,1]$, 
 exactly as in \cref{ExBernoulliMatching}. The system of linear equations that must be satisfied by a matching prior for $\tau$ has a unique solution that lies in the probability simplex. Not surprisingly, it is the uniform distribution on $\{\alpha,1-\alpha\}$ from \cref{ExBernoulliMatching}.

For $n=2$ trials, and $\alpha \in (0,0.2)$,
the set estimator
$\tau: X \to \BorelSets{\Theta}$ is given by $\tau(0) = [0,1-b)$, $\tau(1)=(a,1-a)$ and $\tau(2) = (b,1]$, where $a = 1 - \sqrt{1-\alpha}$ and $b = \sqrt{\alpha}$.
The system of linear equations yields a unique matching prior
 $\pi_\alpha = p \,\mathrm{Unif}(\{a,1-a\}) + (1-p) \mathrm{Unif}\,(\{b,1-b\})$,
 where
\[
p=
\frac{b^3}{2 b^3+a-a^2-a b}.
\]
It is straightforward to verify that $0 < p < 1$ for $\alpha$ in the above range.

For $n \ge 3$ trials, the matching prior is not unique.

\subsection{Gaussian Example}
\label{SecAppExamplesGaussian}

The following example is the classical highest-posterior-density credible region (see \cref{DefUsualHPD}) for Gaussian data.

\begin{example}[Gaussian model] \label{ExGaussianMatching}
Consider sample space $X = \mathbb{R}^{n}$, parameter space $\Theta = \mathbb{R}$, and model with conditional density 
$\cd(\theta,x) = \prod_{i=1}^{n} \frac{1}{\sqrt{2 \pi}} e^{-\frac{1}{2}(x_{i}-\theta)^{2}} $. Consider the usual $1-\alpha$ confidence set $\tau$,
given by $\tau(x_1,\dots,x_n) = \{ \theta \in \Reals : |\overline{x}-\theta| \leq z_{\alpha/2} / \sqrt{n} \}$, where $\overline{x}$ is the empirical mean.
Then the  
improper prior with density $\rho(\theta) \equiv 1$ is a matching prior at level $1-\alpha$ for $\tau$. %
\end{example}

Note that the density $\rho$ of the ``prior" in \cref{ExGaussianMatching} is not integrable, and so does not correspond to an element of $\Priors$, as we study in this work. 
Nonetheless, one can compute a posterior for such ``improper" priors, and in this case it is straightforward to check that the posterior is integrable.

\section
[Details Deferred from \cref*{SecFindingMP}]
{Details Deferred from \cref{SecFindingMP}}
\label{sec8app}

\subsection{Convergence}
\label{SecAppFindingMP}

Recall the Bernoulli model, credible region, and matching prior defined in \cref{SecAppExamples}. That is, we have a sample space $X = \{0,1\}$, parameter space $\Theta = [0,1]$ and conditional density $\cd(\theta,x) = \theta^{x} (1 - \theta)^{1-x}$. For $\alpha \in [0,0.5]$, define the measure $\pi_{\alpha} = \mathrm{Unif}(\{\alpha,1-\alpha\})$ and acceptance functions $(\theta,x) \mapsto \psi_{\alpha}(\theta,x)$, where $\psi_{\alpha}(\cdot,0) = \textbf{1}_{[0,1-\alpha)}$, $\psi_{\alpha}(\cdot,1) = \textbf{1}_{(\alpha,1]}$. 

Here, we consider small perturbations of these matching priors and set estimators. Consider parameters $0 < c < \frac{\alpha}{2} < 0.2$. We recall that the families of credible regions in \cref{DefSimpleCredInt}  and \cref{DefContFamilyCredibleMain} come with a collection of parameters: constants $\beta^{-1}, \rtau,\eta,\delta$ and smoothing function $h$ with support $[h_{\min}, h_{\max}] \subset (0, \infty)$. To slightly ease the notational burden, for the remainder of this section, we consider any fixed collection of such parameters with $0 < \beta^{-1}, \rtau,\eta,\delta, h_{\min}, h_{\max} < c$. We will not explicitly notate dependence of our functions on these parameters. Define $\pi_{\alpha,c} = \mathrm{Unif}([\alpha-c,\alpha+c] \cup [1-\alpha-c,1-\alpha+c])$, and let $\psi_{\alpha,c}$ denote the corresponding acceptance function from \cref{DefContFamilyCredibleMain}, with parameters as above.

\begin{theorem} \label{ThmNewBernEx2}
The acceptance function $\psi_{\alpha,c}$ represents a credible region with respect to the prior $\pi_{\alpha,c}$. Furthermore, there exists a constant $C = C(\alpha)$ such that, for $0 < c < C$, the set estimator is also a confidence set at level $1-\alpha' \geq 1- \alpha - (2 + \frac{1}{1-\alpha})c$.
\end{theorem}

\begin{remark}
As one can see in the proof of this theorem, the set estimators studied in \cref{ThmNewBernEx1} (whose acceptance functions are $\psi_{\alpha}$) are the limits (in, e.g., the $L^{2}([0,1])$ topology on functions from $[0,1]$ to $[0,1]$) of the set estimators (with acceptance functions $\psi_{\alpha,c}$) 
studied in this theorem as $c$ goes to 0.
\end{remark}

\begin{proof}
Fix $0 < c < \frac{\alpha}{2} < 0.2$ and let $I =[\alpha-c,\alpha+c] \cup[1-\alpha-c,1-\alpha+c]$ denote the support of the prior $\pi_{\alpha,c}$.

Consider the observation $x=0$. (The case $x=1$ is essentially the same by symmetry.)
By a straightforward calculation, when $x=0$, the posterior density is
\[
\dee\Post{0}{\pi_{\alpha}}(\theta)/\dee\theta = \frac{1-\theta}{2c},\quad \theta \in I.
\] 
A straightforward calculation shows that the posterior mean of $\Post{0}{\pi_{\alpha}}$ is
$
2 \alpha(1-\alpha).
$ 
Note that
\begin{align*}
\Post{0}{\pi_{\alpha}}([\alpha-c,\alpha+c]) &= \int_{\alpha-c}^{\alpha+c} \frac{1-\theta}{2c}\, \dee \theta \\
&= \frac{1}{2c}(2c - \frac{1}{2}((\alpha+c)^{2}-(\alpha-c)^{2})) \\
&=1-\alpha. 
\end{align*} 
In particular, the interval $[0,C]$ is a credible region at level $\alpha$ provided $\alpha + c \leq C \leq 1 - \alpha -c$. This demonstrates that these intervals have $(1-\alpha)$ credibility. They contain the usual credible balls and relaxed credible balls, since the slopped edges fall outside $I$. Note that these intervals are generally not confidence sets at level $\alpha$, so this calculation does not give us a matching prior.

Next, denote by $\phi_{\alpha',c}$ the acceptance function given in \cref{DefSimpleCredInt} with the same parameter $\beta^{-1} \in (0,c)$ as $\psi_{\alpha,c}$ but a possibly different level $\alpha' = \alpha - \epsilon$ for some $\epsilon > 0$. Before analyzing $\psi_{\alpha,c}$, we collect facts about the simpler $\phi_{\alpha',c}$:

Observe that, by construction, $\phi_{\alpha',c} \in \{0,1\}$ except for $\theta$ in a union of at most two intervals of length $\beta^{-1}$, over which the acceptance probability drops quickly. Since the interval $[\alpha-c,\alpha+c]$ is a credible ball at level $\alpha \geq \alpha'$ and the posterior mean when $x=0$ is 
$2 \alpha(1-\alpha)$, the set $\{\theta \, : \, \phi_{\alpha',c}(\theta,0) = 1 \}$ must be an interval containing 0 as long as $c < 1 - 5 \alpha + 4 \alpha^{2}$. Thus, for $c$ sufficiently small there is in fact only \textit{one} interval on which $\phi_{\alpha',c}(\cdot,0) \in (0,1)$; we restrict ourselves to such values of $c$ for the remainder of the proof. Furthermore, $\phi_{\alpha',c}(\theta,0) = 1$ for all $\theta < 1-\alpha-c-\beta^{-1}$.

Let $\hat{\psi}_{\alpha,c}^{x}$ denote the ``inflated" acceptance function given in \cref{tfrsetdefnMain}. 
We are now in a position to analyze the coverage of $\hat{\psi}_{\alpha,c}$: 
\begin{enumerate}
\item \textbf{$\theta \in [\alpha + c + \beta^{-1},1-\alpha-c - \beta^{-1}]$:} 
We note that $\hat{\psi}_{\alpha,c}(\theta,0) = \psi_{\alpha,c}(\theta,0)= 1$ 
in this case, so these points are always covered.
\item \textbf{$\theta \in [0,\alpha+c+\beta^{-1}]$:} We note that $\hat{\psi}_{\alpha,c}(\theta,0) = 1$ in this case, so the coverage is at least
\[ 
P_{\theta}\{0\} = 1 - \theta \geq 1 - \alpha - c - \beta^{-1}.
\] 
\item \textbf{$\theta \in [1-\alpha-c-\beta^{-1},1]$:} We obtain the same bound on the coverage as in the previous case by symmetry.
\end{enumerate}

Thus, for $c$ sufficiently small, the ``coverage'' of $\hat{\psi}_{\alpha,c}$ satisfies
\[ \label{IneqCoverageInflated}
\inf_{\theta \in [0,1]} \int_X \hat{\psi}_{\alpha,c}(\theta,x)  \, P_{\theta}(\dee x) 
\ge 1 - \alpha - c - \beta^{-1}.
\] 

We now return to $\psi_{\alpha,c}^{x}$.  Denote by $R_{\alpha,c}$ the function satisfying
\[ 
\psi_{\alpha,c}(\theta,x) = \max(0, \hat{\psi}_{\alpha,c}(\theta,x) -R_{\alpha,c}(x)).
\]
Note that this is the same object as denoted by $R$ in \cref{DefContFamilyCredibleMain}. 

Recall that $h$ has support $[h_{\min}, h_{\max}]$, and so the  $\Post{0}{\pi_{\alpha}}$-credibility of $\hat{\psi}_{\alpha,c}$ is at most $c \, h_{\max}$.
For any fixed $r > 0$ and level $\alpha' \leq \alpha$, the fact that $[0,1-\alpha-c]$ has posterior credibility at least $1-\alpha$, together with the fact that $\hat{\psi}_{\alpha,c}(\theta,0) = 1$ 
for $\theta \leq \alpha+c$, implies that $\max(0, \hat{\psi}_{\alpha,c}(\cdot,0) -r) = \hat{\psi}_{\alpha,c}(\cdot,0) -r$ for $\theta \in [0,\alpha+c]$. Thus,
\[
1-\alpha &= \Post{0}{\pi_{\alpha}}(\max(0, \hat{\psi}_{\alpha,c}(\cdot,0) -r)) 
\\&\leq \Post{0}{\pi_{\alpha}}(\hat{\psi}_{\alpha,c}(\cdot,0)) - r(1-\alpha) \leq 1-\alpha + c h_{\max} - r(1-\alpha).
\] 

Inspecting the outer terms, we see $R_{\alpha,c}(0)(1-\alpha) \leq c \, h_{\max}$, so $R_{\alpha,c}(0) \leq \frac{c \,h_{\max}}{1-\alpha}$. Combining this with \cref{IneqCoverageInflated}, the coverage satisfies the following lower bound:
\begin{align*} 
\inf_{\theta \in [0,1]} \int_X \psi_{\alpha,c}(\theta,x) \, P_{\theta}(\dee x) 
&\geq \inf_{\theta \in [0,1]} \int_X [\hat{\psi}_{\alpha,c}(\theta,x) - R_{\alpha,c}(x)] \, P_{\theta}(\dee x) \\
&\geq  1 - \alpha - c - \beta^{-1} - \frac{c \, h_{\max}}{1-\alpha} \\
&\geq 1 - \alpha - \Big(2 + \frac{1}{1-\alpha}\Big)c.
\end{align*}
\end{proof}

\newpage

\subsection{Approximately Matching Priors in the Binomial Problem} \label{SecAppBin}

In this section,
we investigate our heuristic algorithm in the binomial model defined in
\cref{SecAppExamplesBinomial}.
Like in the Bernoulli model, 
the heuristic seems to quickly identify approximately matching priors for the family of relaxed credible balls.
As the number of trials $n$ grows, and as we attempt to find sharper relaxed credible balls (i.e., with growing slope $\beta$), numerical issues require high-precision numerical libraries. 
Ultimately, we believe that new algorithmic ideas may be needed, perhaps combining iteration with other tactics.

In \cref{binomial:n2Beta5,binomial:n2Beta220}, we present the approximately matching priors and relaxed credible balls obtained for the binomial model with $n=2$ trials. (These and all other results are at level $\alpha = 0.05$, unless stated otherwise, and use a 1200-point discretization, with iterations starting from the uniform distribution.) 
In \cref{binomial:n2Beta5},  the slope of the relaxed credible balls is $\beta=5$.
After only one iteration starting from the uniform prior, the coverage is within 0.013 of the target level $\alpha=0.05$. (On average, we are off by only 0.002.)
In \cref{binomial:n2Beta220}, the slope is $\beta = 220$. 
Here, we observe convergence after $k=20$ iterations, obtaining coverage within 0.01.  %
(The particular choice for $\beta$ here was based on considerations of numerical stability and convergence.)
In our experiments, we consistently obtained more concentrated priors as the slope $\beta$ increased, with mass collapsing towards four point masses.
In particular, the approximately matching prior for $\beta=220$ has mass tightly concentrated around four points: $a,b,1-b,1-a$ where $a=1-\sqrt{1-\alpha} \approx 0.025$ and $b=\sqrt{\alpha}\approx0.22$. These locations agree with the hand-constructed exact matching prior for $n=2$ described in \cref{SecAppExamplesBinomial}. In fact, the amount of probability mass associated to each of these four point masses agrees with the exact matching prior up to two digits of precision.

\begin{figure}[t]
\centering %
\small Prior (CDF)\\
\includegraphics[width=.8\linewidth,trim=20pt 45pt 30pt 40pt,clip]{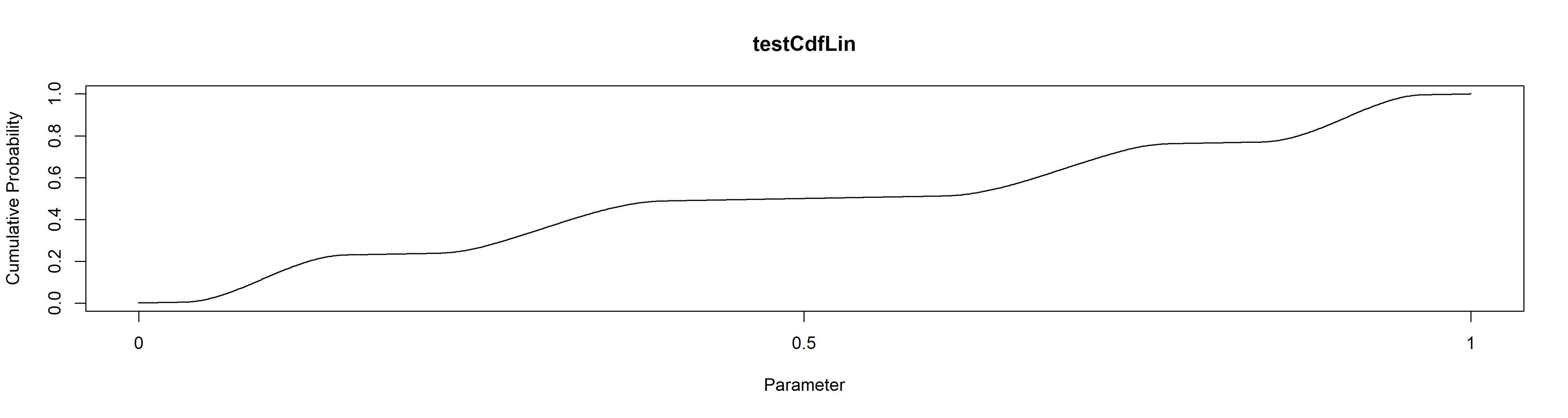}\\
Credible Region (rejection probability function) for $x=0,1,2$ successes\\
\includegraphics[width=.8\linewidth,trim=20pt 45pt 30pt 40pt,clip]{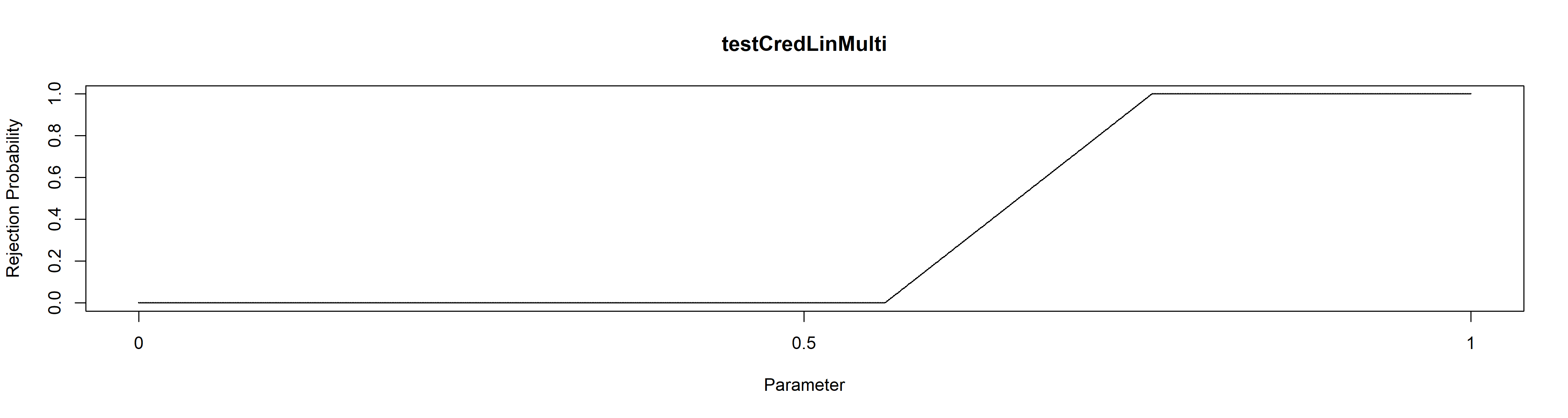}\\
\includegraphics[width=.8\linewidth,trim=20pt 45pt 30pt 40pt,clip]{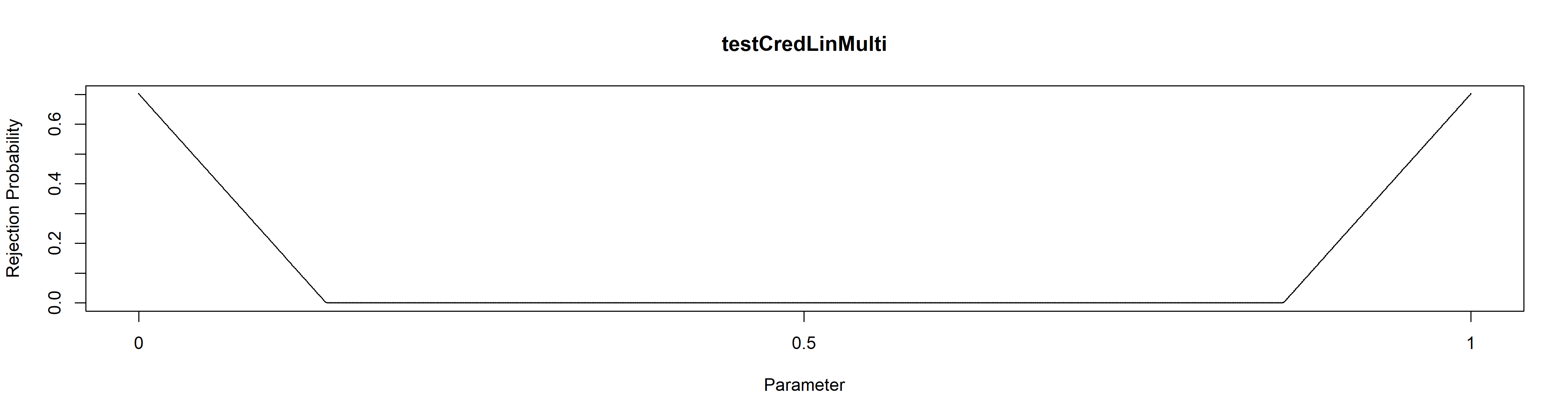}\\
\includegraphics[width=.8\linewidth,trim=20pt 45pt 30pt 40pt,clip]{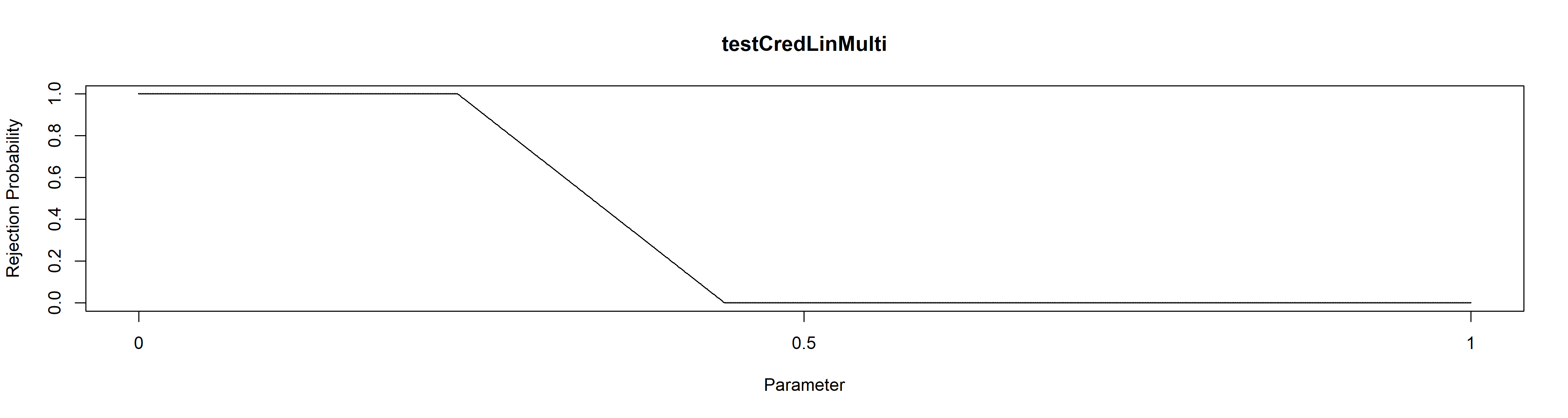}
\caption{Approximately matching prior and relaxed credible ball for $n=2$ trials, slope $\beta=5$, level $\alpha=0.05$, $k=2$ iterations.} 
\label{binomial:n2Beta5}
\end{figure}

\begin{figure}[t]
\centering %
\small 
Prior (CDF)\\
\includegraphics[width=.8\linewidth,trim=20pt 45pt 30pt 40pt,clip]{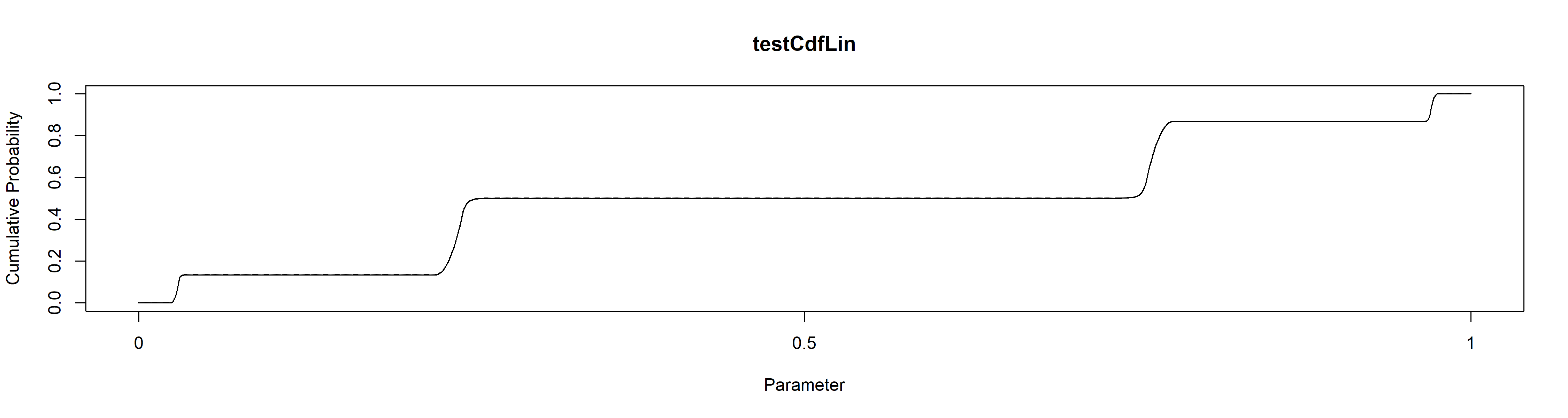}\\
Credible Region (rejection probability function) for $x=0,1,2$ successes\\
\includegraphics[width=.8\linewidth,trim=20pt 45pt 30pt 40pt,clip]{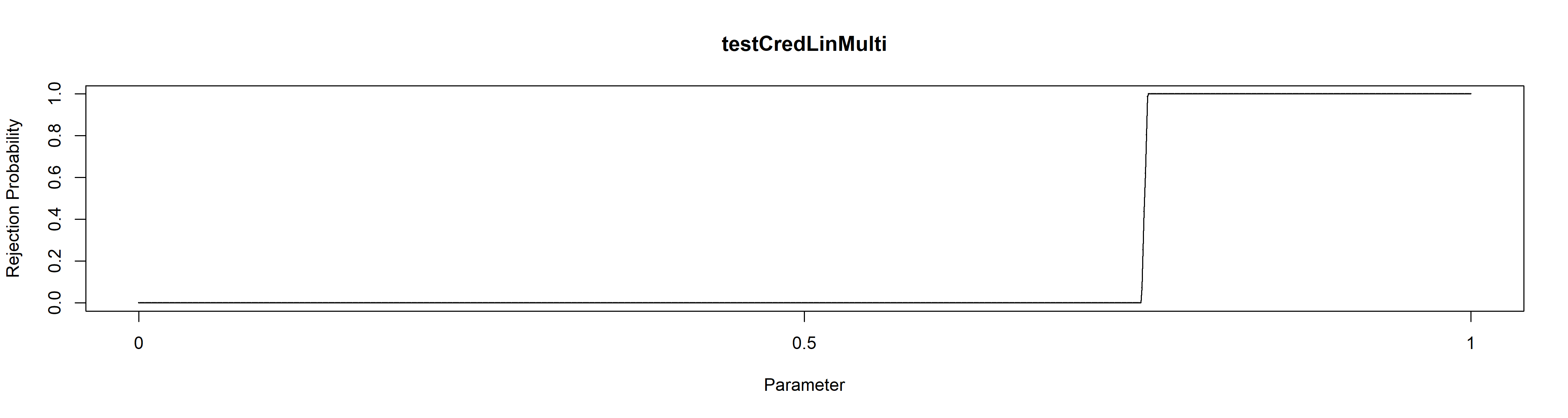}\\
\includegraphics[width=.8\linewidth,trim=20pt 45pt 30pt 40pt,clip]{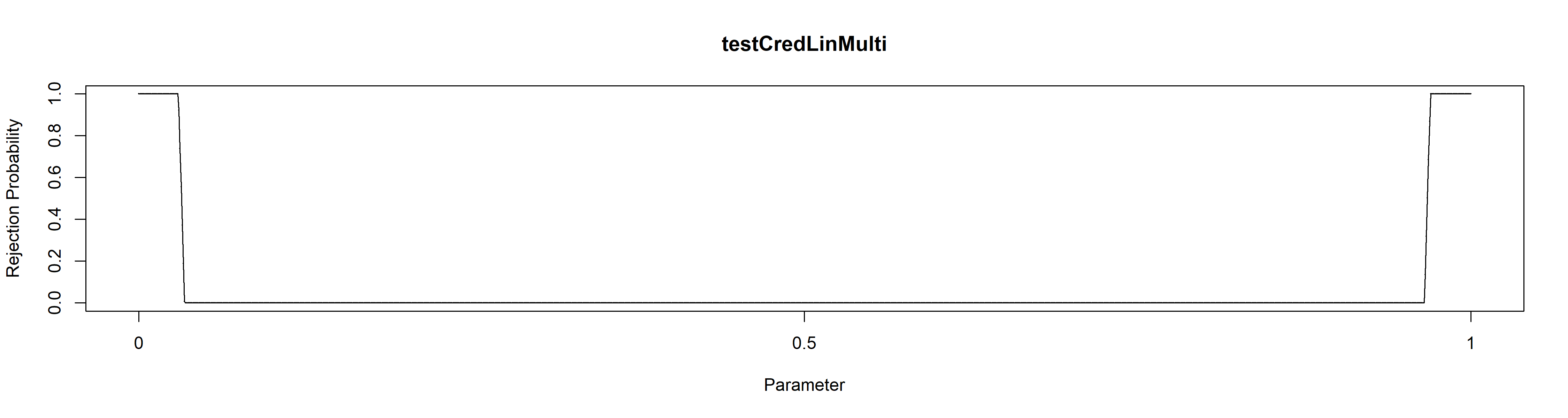}\\
\includegraphics[width=.8\linewidth,trim=20pt 45pt 30pt 40pt,clip]{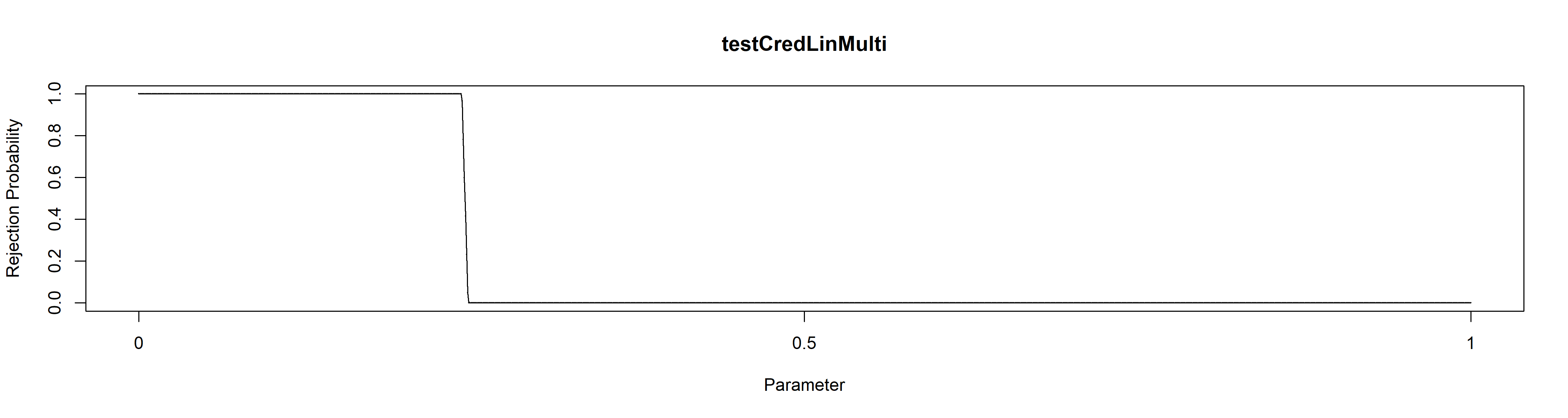}
\caption{Approximately matching prior and relaxed credible ball for $n=2$ trials, slope $\beta=220$, level $\alpha=0.05$, $k=20$ iterations.} 
\label{binomial:n2Beta220}
\end{figure}

If our hand-constructed matching priors were in fact the limiting fixed points of our heuristic algorithm as $\beta \to \infty$, we would expect the approximately matching priors for $n$ trials to concentrate around $2n$ point masses, with the location and masses described in \cref{SecAppExamplesBinomial}. In fact, for $n \ge 3$, the apparent limiting fixed point changes, although we cannot rule out the possibility that numerical issues or slow convergence explain the shift. In \cref{binomial:Beta220priors}, we present the matching priors obtained for $n=1,\dots,5$ trials, with $\beta=220$, $\alpha=0.05$, and $k=20$ iterations each. We obtain coverage within 0.01 or better in all cases except for $n=5$ where further iterations would be required. %
In \cref{binomial:credreg220}, we visualize the credible regions for $n=2,3,4$ trials, in terms of their rejection probability functions. For $n\ge 3$, the credible regions are larger than the ones we constructed by hand, potentially explaining why the associated approximately matching priors no longer resemble our hand-construct exact matching priors.

\begin{figure}[t]
\centering
\includegraphics[width=0.8\linewidth,trim=20pt 45pt 30pt 40pt,clip]{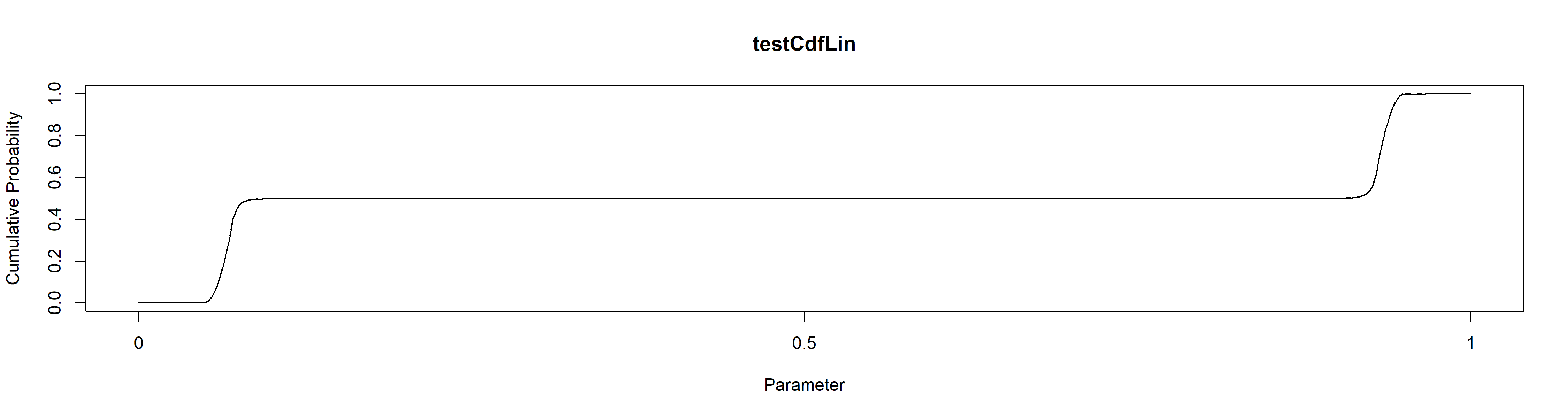}\\
\includegraphics[width=0.8\linewidth,trim=20pt 45pt 30pt 40pt,clip]{figures/binomial/n2Beta220/testCdfLin.png}\\
\includegraphics[width=0.8\linewidth,trim=20pt 45pt 30pt 40pt,clip]{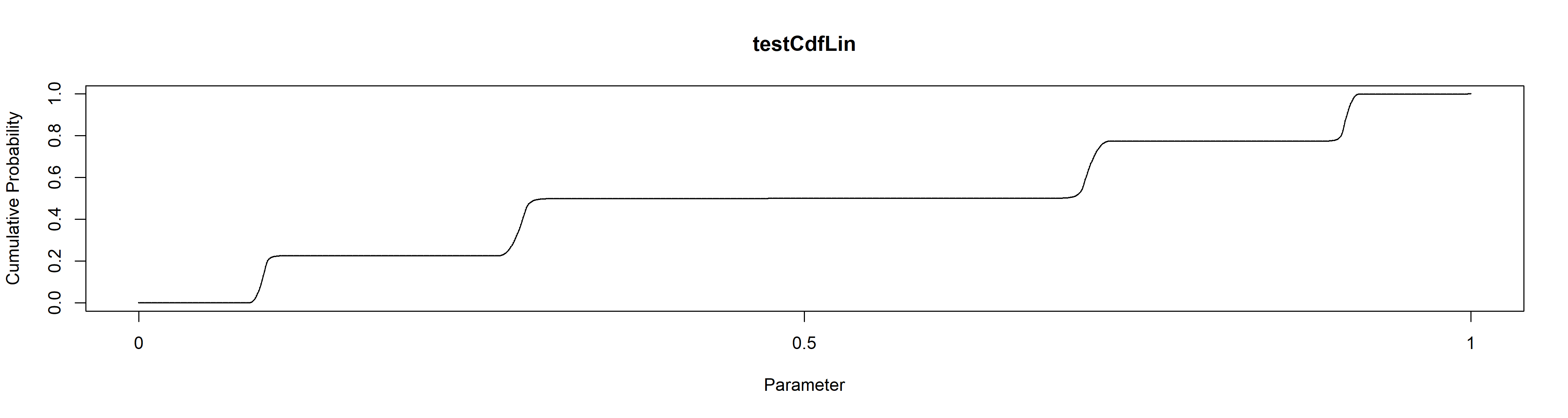}\\
\includegraphics[width=0.8\linewidth,trim=20pt 45pt 30pt 40pt,clip]{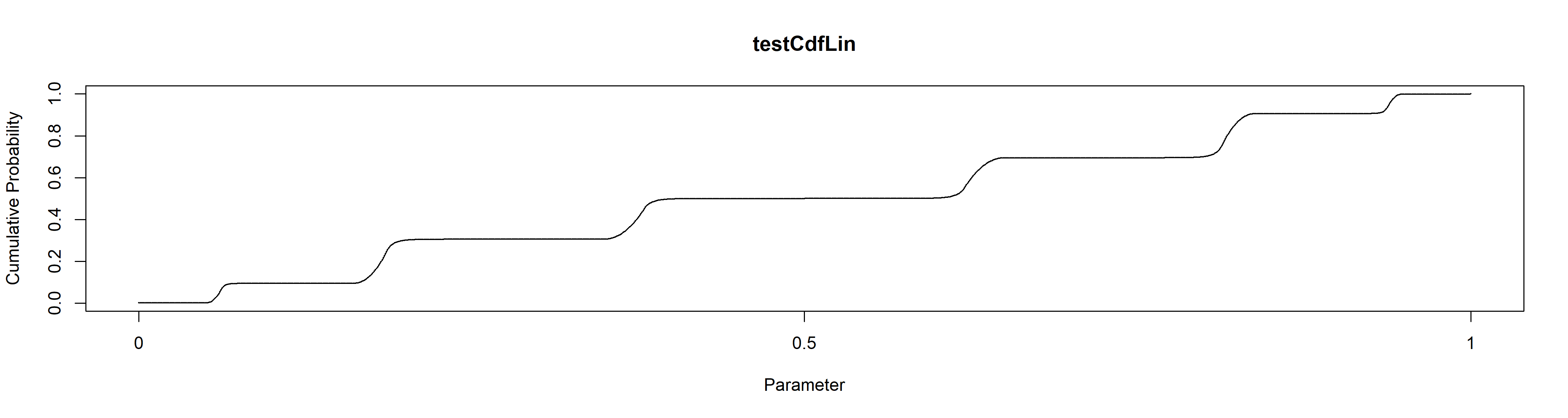}\\
\includegraphics[width=0.8\linewidth,trim=20pt 45pt 30pt 40pt,clip]{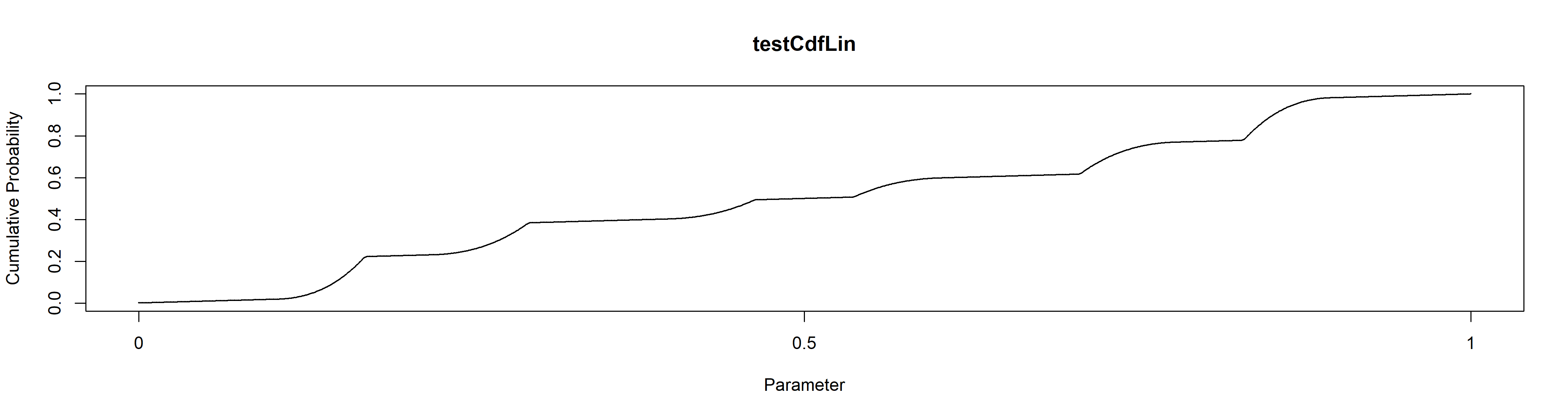}\\
\caption{Approximately matching priors for $n=1,2,3,4,5$ trials, level $\alpha=0.05$, slope $\beta=220$, and $k=20$ iterations.} 
\label{binomial:Beta220priors}
\end{figure}

\begin{figure}[t]
\centering
\small
$n=2$ trials\\
\scalebox{0.5}{
\begin{minipage}{1\linewidth}
\centering
\includegraphics[width=1\linewidth,trim=50pt 65pt 30pt 40pt,clip]{figures/binomial/n2Beta220/testCredLin0.png}\\
\includegraphics[width=1\linewidth,trim=50pt 65pt 30pt 40pt,clip]{figures/binomial/n2Beta220/testCredLin1.png}\\
\includegraphics[width=1\linewidth,trim=50pt 65pt 30pt 40pt,clip]{figures/binomial/n2Beta220/testCredLin2.png}
\end{minipage}%
}%
\\$n=3$ trials\\
\scalebox{0.5}{
\begin{minipage}{1\linewidth}
\centering
\includegraphics[width=1\linewidth,trim=50pt 65pt 30pt 40pt,clip]{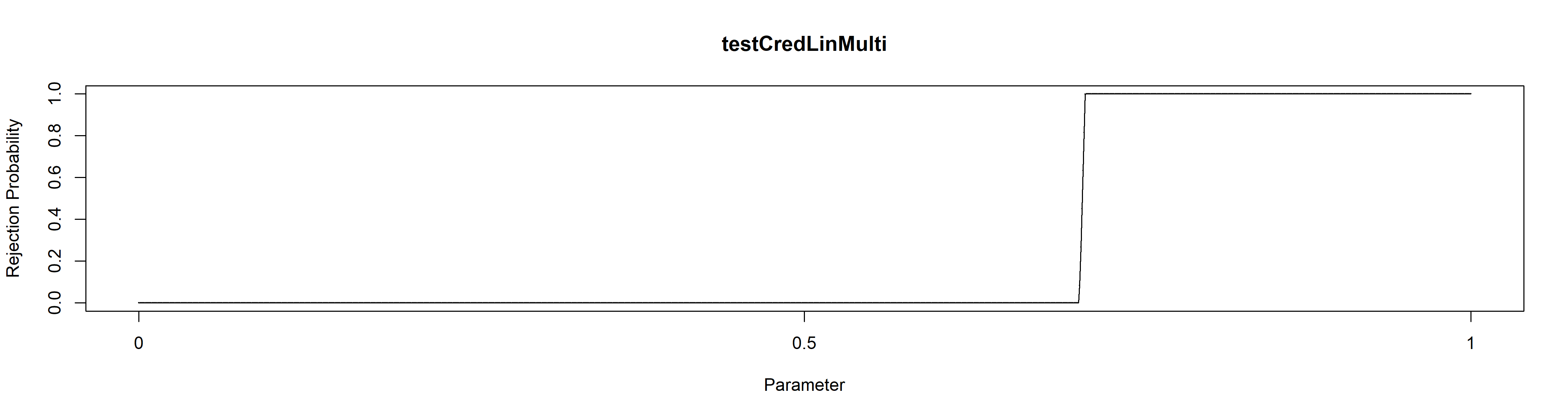}\\
\includegraphics[width=1\linewidth,trim=50pt 65pt 30pt 40pt,clip]{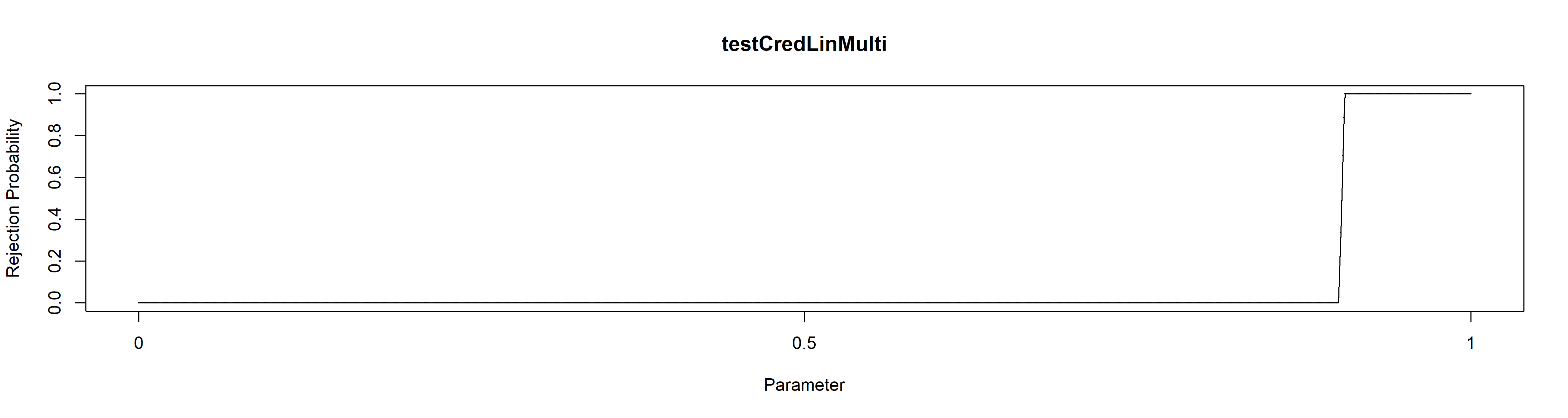}\\
\includegraphics[width=1\linewidth,trim=50pt 65pt 30pt 40pt,clip]{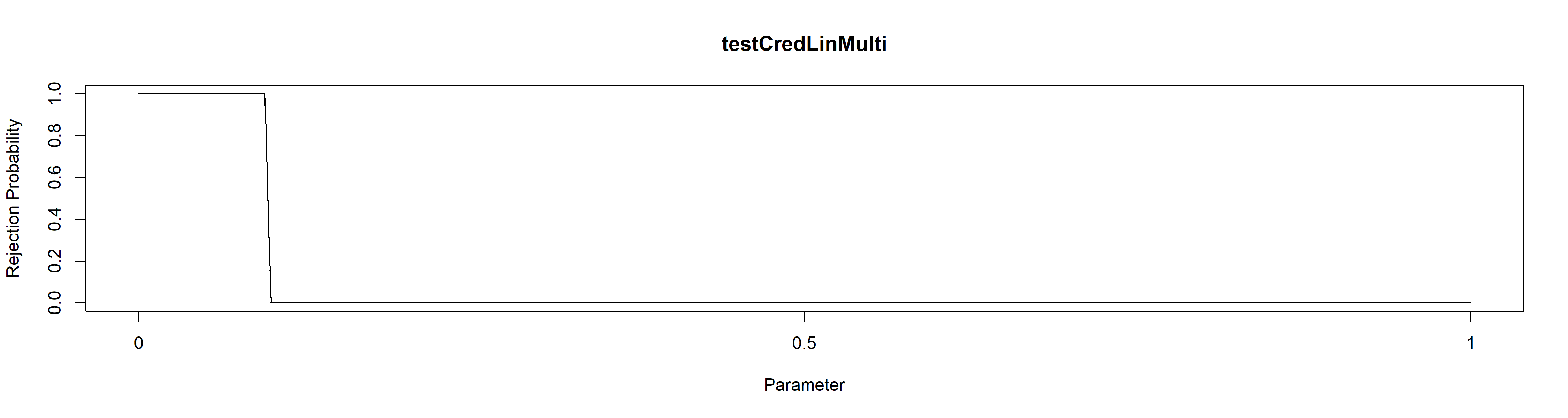}
\includegraphics[width=1\linewidth,trim=50pt 65pt 30pt 40pt,clip]{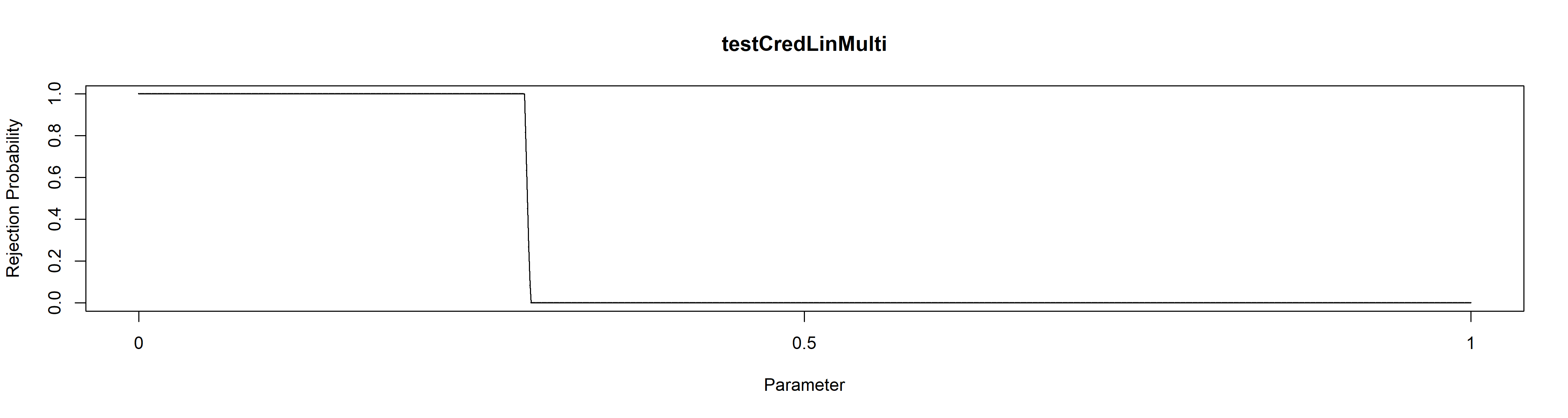}
\end{minipage}%
}%
\\$n=4$ trials\\
\scalebox{0.5}{
\begin{minipage}{1\linewidth}
\centering
\includegraphics[width=1\linewidth,trim=50pt 65pt 30pt 40pt,clip]{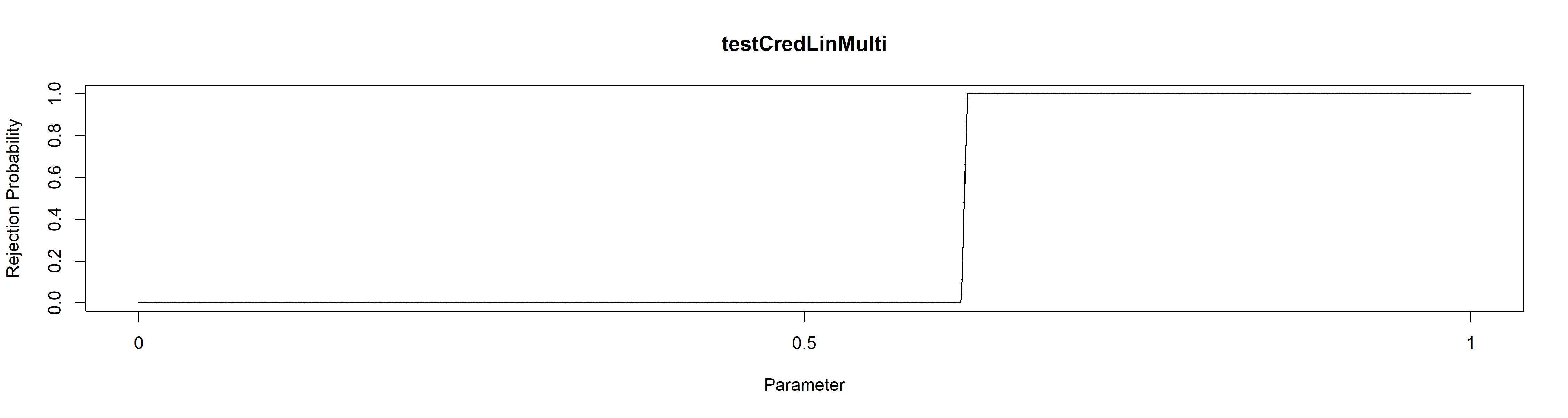}\\
\includegraphics[width=1\linewidth,trim=50pt 65pt 30pt 40pt,clip]{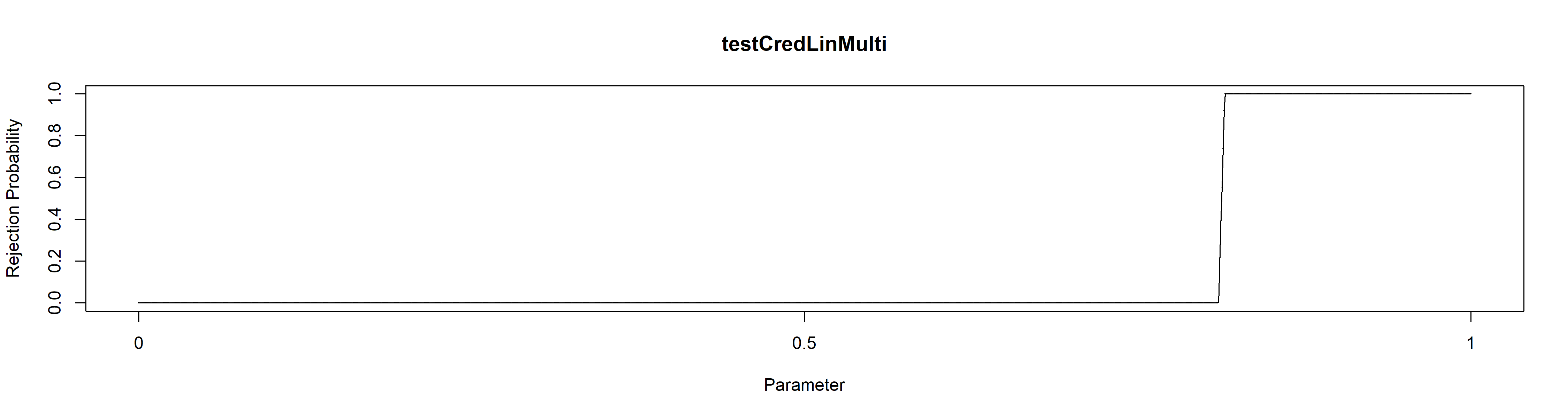}\\
\includegraphics[width=1\linewidth,trim=50pt 65pt 30pt 40pt,clip]{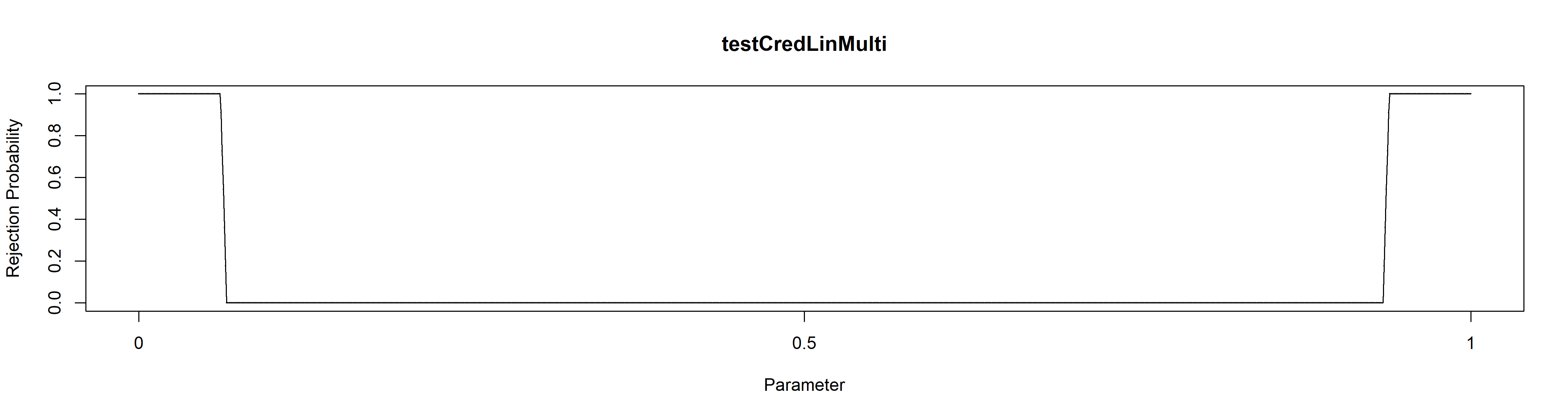}
\includegraphics[width=1\linewidth,trim=50pt 65pt 30pt 40pt,clip]{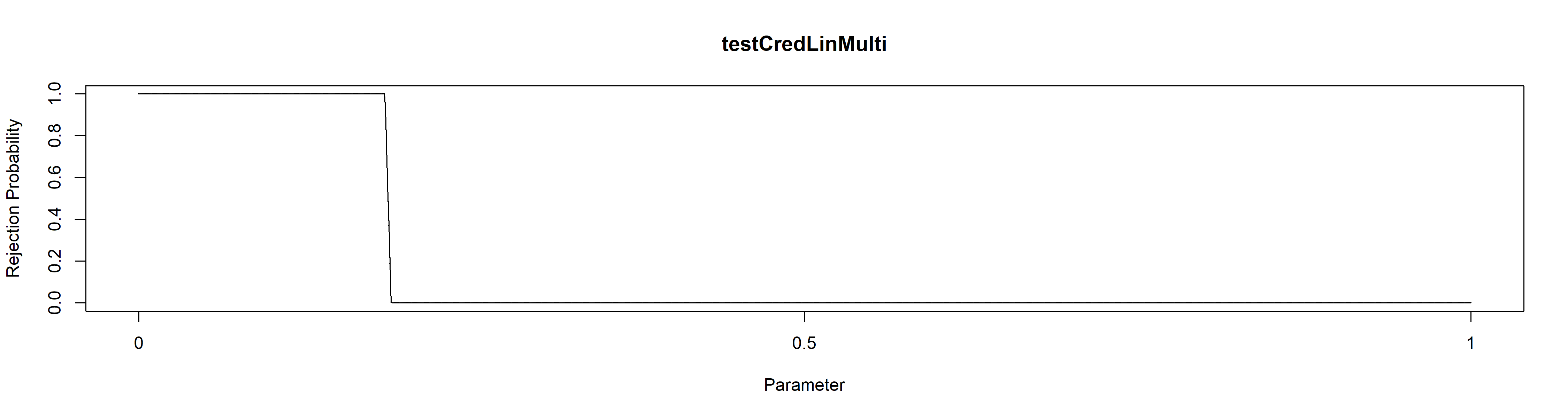}
\includegraphics[width=1\linewidth,trim=50pt 65pt 30pt 40pt,clip]{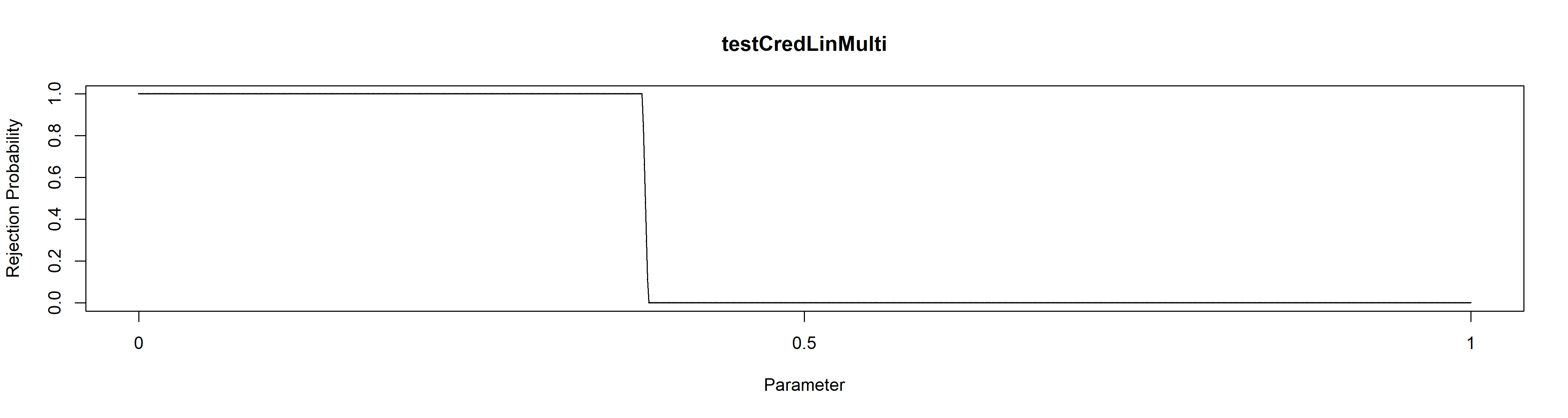}
\end{minipage}%
}%
\caption{Peturbed-and-relaxed credible balls for $n=2,3,4$ trials, level $\alpha=0.05$, slope $\beta=220$, and $k=20$ iterations. For $n$ trials, we plot the rejection probability function associated to $x=0,\dots,n$ successes.} 
\label{binomial:credreg220}
\end{figure}

In summary, our observations based on numerical experiments for the binomial model agree with our observations in \cref{SecFindingMP} for the Bernoulli model. Although we do not yet have theorems guaranteeing that our estimated priors are close to true matching priors, in practice we seem to find approximately matching priors quite quickly, provided the slope is not too extreme.

\vfill

\end{document}